\documentclass[11pt,reqno]{amsart}
\usepackage[a4paper,margin=28mm]{geometry}
\usepackage{amsmath,amssymb,amsthm,mathtools,mathrsfs}
\usepackage[expansion=false]{microtype}
\usepackage{booktabs,array}
\usepackage{xcolor}
\usepackage[numbers,sort&compress]{natbib}
\usepackage[hidelinks]{hyperref}
\usepackage{aliascnt}
\usepackage[nameinlink,noabbrev]{cleveref}
\usepackage{enumitem}

\newtheorem{theorem}{Theorem}[section]
\newaliascnt{lemma}{theorem}
\newtheorem{lemma}[lemma]{Lemma}
\aliascntresetthe{lemma}
\newaliascnt{proposition}{theorem}
\newtheorem{proposition}[proposition]{Proposition}
\aliascntresetthe{proposition}
\newaliascnt{corollary}{theorem}
\newtheorem{corollary}[corollary]{Corollary}
\aliascntresetthe{corollary}
\newaliascnt{claim}{theorem}

\aliascntresetthe{claim}
\theoremstyle{definition}
\newaliascnt{definition}{theorem}

\aliascntresetthe{definition}
\newaliascnt{remark}{theorem}
\newtheorem{remark}[remark]{Remark}
\aliascntresetthe{remark}
\newaliascnt{example}{theorem}
\newtheorem{example}[example]{Example}
\aliascntresetthe{example}

\newcommand{\kk}{\Bbbk}
\newcommand{\Jac}{\operatorname{Jac}}

\newcommand{\HH}{\operatorname{HH}}
\newcommand{\Der}{\operatorname{Der}}
\newcommand{\Inn}{\operatorname{Inn}}
\newcommand{\Mat}{\operatorname{M}}

\newcommand{\cM}{\mathcal M}
\newcommand{\A}{\mathcal A}

\newcommand{\rights}{\rightsquigarrow}

\title[The $E_6$ normal forms and the $E_{6,\infty}$ order]
{The $E_6$ normal forms: semisimple deformations,
geometry, DT invariants, and the $E_{6,\infty}$ order}
\author{Shengyong Pan}
\address{School of Mathematics and Statistics, Beijing Jiaotong University,
Beijing, 100044, China.}
\address{Beijing Key Laboratory of Biological Big Data and Topological Statistics, Beijing Jiaotong University,
Beijing, 100044, China.}
\email{shypan@bjtu.edu.cn}
\subjclass[2020]{Primary 16G20, 14E30; Secondary 16E40, 16G30, 14N35}
\keywords{Contraction algebra, Jacobi algebra, semisimple deformation,
Gopakumar--Vafa invariant, order, quasi-hereditary algebra, Ringel duality}
\date{October 2026}

\begin{document}
\begin{abstract}
We study the completed Jacobi algebras of $x^3+xy^3+p(y)$ for
$\operatorname{ord}_yp=n\ge4$.  The explicit normal-form morsification,
the filtered length and centre proofs, and the cyclic terminal
identities give finite free semisimple deformations.  For every parameter
of finite order we construct a polynomial curve neighbourhood and a
local projective small contraction; the resulting geometric GV and
Donaldson--Thomas calculations identify the block multiplicities.
The one-dimensional limit $E_{6,\infty}$ is studied separately as an
order: we determine its normalization, conductor, radical-idealizer
chain, and maximal overorders. The canonical chain
yields a nine-dimensional finite quotient whose Ringel dual is
isomorphic to its opposite. An expanded construction gives a
choice-independent $22$-dimensional quotient and a $40$-dimensional
Ringel dual; a corner quotient relates the two finite constructions.
The length and centre formulas are proved by a filtered comparison
with a homogeneous matrix order and by terminal identities at the
exceptional endpoint. 
\end{abstract}
\maketitle

\section{Introduction}

Contraction algebras of smooth threefold flops are finite-dimensional
symmetric algebras. Wemyss asked whether their genus-zero
Gopakumar--Vafa data can be recovered from the abstract algebra by
deforming it to a semisimple algebra. Jelisiejew and Smoktunowicz
proved existence and uniqueness for Types $A$ and $D$ \cite{JS2026}.

Agata Smoktunowicz and Alicja Smoktunowicz subsequently studied
polynomial potentials $x^3+xy^3+p(y)$, with
$p(y)\in\mathbb C[y]$ and $\operatorname{ord}_yp=n\ge4$
\cite{SS2026}. Their Theorem~7.1 assumes that the completed Jacobi
algebra is a contraction algebra of the stated type, with nilpotent
augmentation ideal and dimension $27$ for $n=4$, or $4n+12$ for
$n>4$. Under these hypotheses they construct a flat deformation to
\[
 \begin{cases}
 \mathbb C^6\times\Mat_2(\mathbb C)^3\times\Mat_3(\mathbb C),&n=4,\\[1mm]
 \mathbb C^7\times\Mat_2(\mathbb C)^{n-1}\times\Mat_3(\mathbb C),&n>4.
 \end{cases}
\]
Their Theorem~4.3 treats the order-four case under the nilpotence
and dimension-$27$ hypotheses. Thus semisimple-deformation
existence for the polynomial family under these hypotheses is
already established in \cite{SS2026}.

The present paper gives a uniform explicit deformation and an
independent filtered argument for the length and centre formulas.
It also treats arbitrary formal $p(y)$ of finite order. The
uniqueness arguments, geometric realization and
Donaldson--Thomas calculations, and the study of the
one-dimensional limit $E_{6,\infty}$ are separate contributions;
these conclusions do not follow from the existence theorem of
\cite{SS2026}. The present paper focuses on the $E_6$ family and its
one-dimensional limit. The general case will be addressed in
forthcoming joint work of Joseph Karmazyn and Shengyong Pan
\cite{KP}, devoted to centres and semisimple deformations of
contraction algebras.

Throughout, $\kk$ is an algebraically closed field of
characteristic zero. Geometric constructions are carried out over
$\mathbb C$. Jacobi ideals in formal power-series algebras are
closed in the augmentation-adic topology, and all modules are
left modules unless otherwise stated.

Brown and Wemyss' noncommutative normal-form theorem contains the infinite exceptional family
$f_n=x^3+xy^3+y^n, n\ge4$,
and its completed Jacobi algebra
\[
A_n=\frac{\kk\langle\!\langle x,y\rangle\!\rangle}
 {\overline{(3x^2+y^3,\ y^2x+yxy+xy^2+n y^{n-1})}}.
\]
Write
\[
 \Gamma_n:A_n\longrightarrow A_n\oplus A_n,
 \qquad a\longmapsto([x,a],[y,a]),
 \qquad \HH_0(A_n)=A_n/[A_n,A_n].
\]
Here $[A_n,A_n]$ is the vector-space span of commutators.
For $n\ge5$, Brown--Wemyss compute
\[
 \dim_\kk A_n=4(n+3).
\]
This dimension formula is proved in \cite{BW2025}.
The endpoint $A_4$ is slightly exceptional; its completed local Jacobi algebra has dimension $27$.

The first result of this paper is a uniform deformation which simultaneously exposes all simple blocks.
Set
\begin{equation}\label{eq:Phi}
 \Phi_{n,t}=x^3+xy^3+y^n+\frac t3y^3-t^2y
\end{equation}
and let $R=\kk[[t]]$ and $K=\kk\{\!\{t\}\!\}$, the Puiseux field $\bigcup_{m\ge1}\kk((t^{1/m}))$.
Write
\[
 \A_n=R\langle\!\langle x,y\rangle\!\rangle/
 \overline{(\partial_x\Phi_{n,t},\partial_y\Phi_{n,t})}.
\]
The bar denotes closure in the $(t,x,y)$-adic topology.

\begin{theorem}\label{thm:main-existence}
For every $n\ge5$, $\A_n$ is finite free of rank $4(n+3)$ over $R$, its special fibre is $A_n$, and
$\A_n\otimes_RK\cong
 K^{7}\times \Mat_2(K)^{n-1}\times\Mat_3(K)$.
For $n=4$, $\A_4$ is finite free of rank $27$ and
$\A_4\otimes_RK\cong
 K^{6}\times\Mat_2(K)^3\times\Mat_3(K)$.
Consequently, $A_n\rights K^{7}\times\Mat_2(K)^{n-1}\times\Mat_3(K)
 \quad(n\ge5)$
in the equivalent deformation senses of \cite[Theorem~2.20]{JS2026}, with the displayed $n=4$ modification.
\end{theorem}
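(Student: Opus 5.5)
The argument has three parts: freeness and specialization, the structure of the generic fibre, and the deformation-theoretic conclusion.

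\textbf{Step 1: freeness and special fibre.} The special fibre statement is formal. Reducing $\partial_x\Phi_{n,t}=3x^2+xy^2\cdot 0+\dots$ and $\partial_y\Phi_{n,t}$ modulo $t$ gives exactly the cyclic derivatives of $f_n$. Since the ideals are closed, $\A_n/t\A_n\cong A_n$.

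For freeness, note that $\A_n$ is $t$-adically complete and separated, and its special fibre has dimension $4(n+3)$ (respectively $27$), as computed in \cite{BW2025}. By topological Nakayama, $\A_n$ is then generated over $R$ by lifts of a basis of $A_n$. It is therefore finitely generated, of rank at most $4(n+3)$ over $R$.

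Freeness follows once we show $t$-torsion-freeness. Equivalently, it suffices to show $\dim_K \A_n\otimes_RK\ge 4(n+3)$, since a finitely generated $R$-module whose generic rank equals its fibre dimension is free.

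So the whole theorem reduces to a lower bound on the generic fibre. That bound follows from exhibiting enough simple representations of the generic Jacobi algebra.

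\textbf{Step 2: the generic fibre.} After base change to $K$, the completed Jacobi algebra of $\Phi_{n,t}$ decomposes as a product of local pieces. There is one piece for each point of the noncommutative critical locus near the origin.

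We would locate these points explicitly.

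\emph{One-dimensional representations.} For $(x,y)$ scalars, the conditions are $3x^2+y^3+ty^2-t^2=0$ and $3xy^2+ny^{n-1}+\ldots=0$, after correctly computing $\partial_y$ of $\tfrac t3y^3-t^2y$, namely $ty^2-t^2$. Newton-polygon analysis in Puiseux series gives the commutative critical points. We expect $7$ of them for $n\ge5$ and $6$ for $n=4$, all nondegenerate. Nondegeneracy means the local Jacobi algebra at each point is $K$.

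\emph{Higher-dimensional representations.} For $2\times2$ and $3\times3$ representations, we would make an ansatz with $y$ diagonalizable with distinct Puiseux eigenvalues of order $t^{1/(n-3)}$ or $t^{1/2}$. We then solve the matrix equations and check that the representation is simple. We also check that its local Jacobi algebra is $\Mat_k(K)$. This holds because the Hessian of the potential restricted to $\operatorname{End}(K^k)$ modulo conjugation is nondegenerate, so the deformation space $\operatorname{Ext}^1$ vanishes.

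\emph{Dimension count.} The count is $7+4(n-1)+9=4n+12$, and $6+12+9=27$ for $n=4$. This matches the upper bound from Step 1. Hence the surjection from $\A_n\otimes K$ onto the product of these simple algebras is an isomorphism, and this also proves freeness.

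\textbf{Step 3: conclusion.} Once freeness and the generic-fibre description are in hand, the final deformation statement is immediate from \cite[Theorem~2.20]{JS2026}. The finite free family over $\kk[[t]]$ with semisimple generic fibre is one of the equivalent formulations given there.

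\textbf{Main obstacle.} The main obstacle is Step 2: finding all $n-1$ two-dimensional points and the single three-dimensional point, and proving they are pairwise non-isomorphic with vanishing $\operatorname{Ext}^1$. The $y^n$ term breaks the homogeneity, so the valuations must be matched carefully.

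We would first treat the weighted-homogeneous leading part, obtained by rescaling $y=t^{\alpha}Y$ and $x=t^{\beta}X$. We then solve this limit problem exactly and lift each solution by a Hensel/implicit-function argument over $K$. The lifting is valid because nondegeneracy means the relevant Jacobian is invertible.

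The $n=4$ case needs a separate rescaling, since the $y^4$ term competes with $xy^3$ at the same weight. This is the source of the modified count there.
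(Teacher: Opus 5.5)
\textbf{Verdict: there is a genuine gap in Step~2.} Your outer architecture is the paper's. \cref{lem:topNak} gives $\dim_K(\A_n\otimes_RK)\le\dim_\kk A_n$, and it also proves the $t$-adic completeness you assert. A surjection onto pairwise non-isomorphic matrix blocks of total dimension $4(n+3)$, resp.\ $27$, then forces freeness and the isomorphism. For $n=4$ only $\dim A_4\le27$ is needed, which the paper proves in \cref{app:four}. This dimension count also makes your $\operatorname{Ext}^1$/Hessian nondegeneracy checks unnecessary: irreducibility, Burnside and the Chinese remainder theorem suffice.

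\textbf{Why your lifting method fails.} Step~2 is the whole content, and you leave it at ``we would'' and ``we expect''. Moreover, the method you propose for the higher-dimensional blocks does not work as stated. For the two-dimensional branches with $n\ge5$ the true solutions have $v_t(y)\in\{1/2,1/(n-3)\}$ and $v_t(x)=\tfrac32v_t(y)$. At every such scaling the weighted-homogeneous leading part of $\Phi_{n,t}$ is $x^3+xy^3$ alone: its weight $\tfrac92v_t(y)$ is strictly below those of $y^n$, $\tfrac t3y^3$ and $t^2y$. The critical two-dimensional representations of $x^3+xy^3$ form a one-parameter family: $Y=\operatorname{diag}(\lambda,\omega\lambda)$ with $X$ off-diagonal and $X^2=-\tfrac{\lambda^3}{3}I$, for arbitrary $\lambda$. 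This is the generic representation of the $E_{6,\infty}$ order in \cref{thm:Ainf-exact}. So the limit problem has no isolated solutions, the linearization has a kernel not coming from conjugation, and Hensel/IFT cannot start there. No other choice of the two exponents $\alpha,\beta$ fixes this, because $v_t(x)<1$ on these branches.

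\textbf{The missing idea.} You need the paper's reduction to scalar equations along that family.
\begin{itemize}
\item Take $Y=\operatorname{diag}(\lambda_i)$. The $(i,j)$ entry of $y^2x+yxy+xy^2$ is $X_{ij}(\lambda_i^2+\lambda_i\lambda_j+\lambda_j^2)$. So for $X$ to connect eigenspaces, the eigenvalues must lie in one orbit $\lambda,\omega\lambda,\omega^2\lambda$.
\item Then $Y^3$ is scalar, \eqref{eq:r1} makes $X^2$ scalar, and the diagonal of \eqref{eq:r2} forces $X_{ii}=-Q_n(\lambda_i)/(3\lambda_i^2)$.
\item For $k=2$ the only remaining condition is $\operatorname{tr}X=0$, which is $P_n(\lambda)=0$ in \eqref{eq:P}. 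Its Newton polygon gives the $n-1$ roots, and a valuation check shows the off-diagonal entry is nonzero.
\item For $k=3$ the remaining condition is that the forced diagonal has trace $-r$, where $\lambda^3=c=-3r^2$. This is solved by the formal implicit function theorem, with explicit $X=\mathbf 1(v_0,v_1,v_2)-rI_3$. Here $v_t(\lambda)=2/3$, which is not one of the orders in your ansatz.
\item Non-isomorphism is then just distinctness of $Y$-spectra.
\end{itemize}

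\textbf{Correct the scalar system.} $\partial_x\Phi_{n,t}=3x^2+y^3$ has no $t$-terms; $ty^2-t^2$ belongs only to the $y$-relation, and misplacing it changes the count. Eliminating $x$ correctly gives $H_n(z)=Q_n(z)^2+3z^7$. For $n\ge5$ its edge polynomial is $1+3u^7$, giving seven separable roots. For $n=4$ the edge polynomial $(4u^3-1)^2$ is a square, so a second-order expansion is needed to split each of the three roots into two.
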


Here and below a semisimple algebra over the Puiseux field descends, in the deformation sense, to the corresponding split semisimple $\kk$-algebra; we freely switch between these formulations using \cite[Theorem~2.20]{JS2026}.

The finite-freeness step is isolated in \cref{lem:topNak}.  It is useful beyond the present family: if the special fibre of a completed formal algebra is finite dimensional, then lifts of a basis already topologically generate over $R$; if the generic rank reaches the special-fibre dimension, torsion is impossible.

Our second main result is uniqueness.

\begin{theorem}\label{thm:main-unique}
Let $S$ be a finite-dimensional semisimple $\kk$-algebra to which $A_n$ flatly deforms.
Then
\[
 S\cong
 \begin{cases}
 \kk^6\times\Mat_2(\kk)^3\times\Mat_3(\kk),&n=4,\\[1mm]
 \kk^7\times\Mat_2(\kk)^{n-1}\times\Mat_3(\kk),&n\ge5.
 \end{cases}
\]
In particular the semisimple deformation is unique up to isomorphism.
\end{theorem}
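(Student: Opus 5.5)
Uniqueness will come from invariants of $A_n$ that are semicontinuous or constant in flat families and that pin down the block structure of a semisimple algebra. Write the putative deformation as $S\cong\prod_i\Mat_{d_i}(\kk)$. Two numerical constraints are immediate. Flatness gives $\sum_i d_i^2=\dim_\kk A_n$, which is $4(n+3)$ for $n\ge5$ and $27$ for $n=4$. The number of blocks equals $\dim \HH_0(S)=\dim Z(S)$.

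For a flat family, $\dim \HH_0$ is upper semicontinuous. It is the cokernel of the commutator map $A\otimes A\to A$, whose rank can only increase on generization. Hence
\[
 \#\{\text{blocks of }S\}\le \dim\HH_0(A_n).
\]
Dually, the centre is the kernel of $\Gamma_n$, so $\dim Z$ is also upper semicontinuous. I would therefore compute $\dim\HH_0(A_n)$ and $\dim Z(A_n)$ using the filtered comparison with the homogeneous matrix order promised in the abstract. The expectation is that both equal the number of blocks of the deformation from \cref{thm:main-existence}, namely $n+7$ for $n\ge5$ and $10$ for $n=4$. That gives $\#\text{blocks}\le n+7$.

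A count of blocks together with $\sum d_i^2$ does not determine the multiset $\{d_i\}$, so further constraints are needed. I would use the abelianization, i.e.\ characters: commutative quotients give one-dimensional representations. Consider $A_n^{ab}=\kk[[x,y]]/(3x^2+y^3,\;3xy^2+ny^{n-1})$, the Milnor algebra of $f_n$. Its dimension is the Milnor number $\mu(E_6)=6$ for $n=4$, and $7$ for $n\ge5$. The intended inequality would be
\[
 \#\{i:d_i=1\}\le \dim A_n^{ab},
\]
which should follow from semicontinuity of the dimension of the abelianization along $A_n\otimes_\kk$-families. I would also bound the number of $2$-dimensional blocks by the dimension of the quotient by the standard identity $s_4$ (the Amitsur--Levitzki ideal), which is likewise upper semicontinuous. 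Possibly a bound on the quotient by $[[a,b]^2,c]$-type identities would be used instead.

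With these bounds in hand, the finish is an elementary integer optimization. Take $\sum d_i^2=4n+12$, at most $n+7$ blocks, at most $7$ of size $1$, and at most $n-1$ of size $\le2$ beyond those. Then the remainder $4n+12-7-4(n-1)=9$ must come from a single block of size $3$. Equality is forced in each inequality, and we obtain the stated $S$; the case $n=4$ is analogous with $27=6+12+9$.

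The main obstacle is the filtered computation of $\dim\HH_0(A_n)$, or of the centre, for all $n$, together with the correct bound on the number of blocks of size $\le2$. I would first try to get the latter from the homogeneous matrix-order model, where polynomial-identity quotients can be computed explicitly.
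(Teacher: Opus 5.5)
There is a genuine gap, and it sits in your final integer optimization. Every invariant you propose ($\dim\HH_0$, $\dim Z$, the dimension of the abelianization, the dimension of the quotient by the $s_4$-ideal) is upper semicontinuous. Each therefore gives only an \emph{upper} bound on some count of blocks of $S$: $\#\text{blocks}\le n+7$, $m_1\le 7$, $m_1+4m_2\le\cdots$. Upper bounds on the number of small blocks cannot exclude targets with fewer, larger blocks, and nothing forces $m_1=7$ or $m_2=n-1$. For instance, for $n\ge5$ the algebra $\kk^7\times\Mat_2(\kk)^{n-5}\times\Mat_5(\kk)$ has dimension $7+4(n-5)+25=4n+12$. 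It has only $n+3$ blocks, seven of size one and $n-5$ of size two, so it satisfies every constraint on your list. This is not a matter of choosing the wrong invariants. Every unital algebra of dimension $d$ degenerates to $\kk\oplus V$ with $V^2=0$: rescale a complement of $\kk1$ by $t$ and let $t\to0$. So that algebra deforms to both $\kk^4$ and $\Mat_2(\kk)$, and no argument built solely from semicontinuity at the special point can yield uniqueness. A rigidity input at $[A_n]$ is indispensable.

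The paper supplies that input geometrically. Contraction algebras are symmetric and $4$-periodic, so Tate duality gives $\dim\HH^1(A_n,A_n)=\dim\HH^2(A_n,A_n)$. Together with $\dim Z(A_n)=\dim Z(S_n)$ for the constructed target $S_n$, this makes the tangent space of $\cM_d$ at $[A_n]$ have dimension $\dim O_{S_n}$. Hence $[A_n]$ is a smooth point and lies on a unique irreducible component. Semisimple orbits are open because $\HH^2=0$, so for any semisimple target $S$ the closures $\overline{O_S}$ and $\overline{O_{S_n}}$ are both components through $[A_n]$, hence equal, and therefore $O_S=O_{S_n}$. If you want to keep your counting approach, smoothness provides exactly the missing lower bound. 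At a smooth point the unique component has dimension equal to the tangent dimension, which forces $\#\text{blocks}(S)=n+7$ (respectively $10$) exactly. Then $\sum_i(d_i^2-1)=3n+5$ (respectively $17$) and your abelianization bound $m_1\le7$ (respectively $m_1\le6$) do force $(7,n-1,1)$ (respectively $(6,3,1)$), since blocks of size $\ge2$ contribute $3,8,15,\dots$. The $s_4$ bound then becomes unnecessary.
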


The centre calculation
\begin{equation*}
 \dim Z(A_4)=10,\qquad \dim Z(A_n)=n+7\quad(n\ge5)
\end{equation*}
combines with the symmetric $4$-periodicity of contraction algebras.
Tate--Hochschild duality then gives $h^1(A_n)=h^2(A_n)$.
Since the semisimple algebra in \cref{thm:main-existence} has the same centre dimension, the tangent space of the algebra-structure scheme at $[A_n]$ has exactly the dimension of the semisimple orbit; hence $[A_n]$ is smooth.
Smoothness makes the irreducible component through $[A_n]$ unique, while every semisimple orbit is open in its component.  This forces any two semisimple targets to coincide.

The normal-form subfamily is treated first. For arbitrary higher
coefficients, a weighted associated-graded comparison reduces the
length and centre computations to a homogeneous quotient of the
one-dimensional order. The endpoint of order four is handled by
seven terminal relations. We then construct projective curve
neighbourhoods, identify the GV invariants through proper critical
loci, and study the normalized $E_{6,\infty}$ order. Standard and
costandard filtrations determine the dimensions of the Ringel duals.

The paper is orgnized as follows. In Section~\ref{sec:normal-forms}, we construct explicit semisimple
deformations of the $E_{6,n}$ normal-form subfamily.
Section~\ref{sec:all-p} treats the full formal parameter family
$x^3+xy^3+p(y)$, establishing the dimension and centre formulas
and constructing finite free semisimple deformations.
In Section~\ref{sec:geometric-neighbourhood}, we construct a
projective curve neighbourhood and its contraction for every
parameter. Section~\ref{sec:gv-comparison} identifies the
semisimple block multiplicities with the geometric
Gopakumar--Vafa invariants.
Section~\ref{sec:remaining} proves uniqueness of the semisimple
target for the full formal $E_6$ family.
We then turn to the one-dimensional limit $E_{6,\infty}$:
Section~\ref{sec:E6inf} describes its structure and normalization,
while Section~\ref{sec:BT} studies its conductors, idealizers,
Bruhat--Tits branch, and associated quasi-hereditary algebras
and Ringel duals.
Section~\ref{sec:finite-infinite} gives an explicit formal bridge
between the finite family and the one-dimensional limit.
Finally, Appendix~\ref{app:center} establishes uniform terminal
cyclic identities and the resulting centre formulas, and
Appendix~\ref{app:four} treats the order-four endpoint by explicit
terminal relations.

\section*{Acknowledgments}

The author thanks Joseph Karmazyn for useful discussions. This work was supported by the Beijing Natural Science Foundation (Grant Nos. 1262017, 1252011). 
ChatGPT was used to check and polish the English writing of this manuscript.

\section{The \texorpdfstring{$E_{6,n}$}{E6,n} normal-form subfamily}\label{sec:normal-forms}
\subsection{The completed family and a finite-freeness lemma}

The cyclic derivatives of \eqref{eq:Phi} are
\begin{equation}\begin{aligned}
 3x^2+y^3&=0,\label{eq:r1}\end{aligned}\end{equation}
\begin{equation}\begin{aligned}
 y^2x+yxy+xy^2+Q_n(y)&=0,\label{eq:r2}
\end{aligned}\end{equation}
where
$Q_n(z)=nz^{n-1}+tz^2-t^2$.
Thus $\A_n/(t)\cong A_n$.

We first isolate the completion argument used at the end of the proof.

\begin{lemma}\label{lem:topNak}
Let $R=\kk[[t]]$, let
$\widehat F=R\langle\!\langle x_1,\ldots,x_r\rangle\!\rangle$,
and let $\mathcal B=\widehat F/J$ for a closed two-sided ideal $J$.
Put $\mathfrak n=(x_1,\ldots,x_r)\mathcal B$ and suppose
$B_0:=\mathcal B/t\mathcal B$
is finite dimensional over $\kk$ and $B_0/\overline{\mathfrak n}\cong\kk$.
Let $d=\dim_\kk B_0$ and $K_0=\kk((t))$.
Then:
\begin{enumerate}[label=\rm(\roman*)]
\item $\mathcal B$ is a finite $R$-module and is $t$-adically complete and separated;
\item $\dim_{K_0}(\mathcal B\otimes_RK_0)\le d$;
\item if $\mathcal B\otimes_RK_0$ has a quotient of $K_0$-dimension $d$, then $\mathcal B$ is free of rank $d$ over $R$ and that quotient map is an isomorphism.
\end{enumerate}
The same conclusions hold after any algebraic extension of $K_0$, in particular over $K=\kk\{\!\{t\}\!\}$.
\end{lemma}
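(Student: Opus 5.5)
The proof is a topological Nakayama argument. Set $\mathfrak m=(t,x_1,\dots,x_r)\mathcal B$. First we show that $\mathcal B$ is complete and separated for the $\mathfrak m$-adic topology. The free algebra $\widehat F$ is complete for its $(t,x)$-adic topology and $J$ is closed, so $\mathcal B=\widehat F/J$ is complete and separated in the quotient topology. Since $\widehat F$ is the inverse limit of the finite-rank $R$-modules $\widehat F/(t,x)^k$ and $J$ is closed, this quotient topology is the $\mathfrak m$-adic one.

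Next we show that $\mathfrak m^N\subseteq t\mathcal B$ for some $N$. The algebra $B_0$ is finite dimensional and local with residue field $\kk$. Its maximal ideal $\overline{\mathfrak n}$ is therefore nilpotent: the powers $\overline{\mathfrak n}^k$ form a decreasing chain of subspaces, which stabilises at some $I$ with $I=\overline{\mathfrak n}I$, and then $I=0$ by Nakayama for the finite-dimensional module $I$. So $\overline{\mathfrak n}^N=0$, which means $\mathfrak n^N\subseteq t\mathcal B$. Hence $\mathfrak m^N\subseteq t\mathcal B$ and $\mathfrak m^{Nk}\subseteq t^k\mathcal B$. Since also $t^k\mathcal B\subseteq\mathfrak m^k$, the $t$-adic and $\mathfrak m$-adic topologies coincide, and $\mathcal B$ is $t$-adically complete and separated.

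For (i), lift a $\kk$-basis $b_1,\dots,b_d$ of $B_0$ to $\mathcal B$ and let $M=\sum Rb_i$. Then $\mathcal B=M+t\mathcal B$, and iterating gives $\mathcal B=M+t^k\mathcal B$ for every $k$. Given $z\in\mathcal B$, we write $z=m_0+tz_1$, $z_1=m_1+tz_2$, and so on. The series $\sum t^km_k$ converges in $M$, because $M$ is the image of the complete module $R^d$. This gives $z\in M+\bigcap_k t^k\mathcal B=M$ by separatedness. The one subtlety is that $M$ is closed, and this follows because it is the image of a compact-type module. Concretely, $\mathcal B$ is a quotient of $R^d$, so it is a finite module over the noetherian ring $R$, and its submodules are closed by Artin--Rees.

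For (ii), the surjection $R^d\to\mathcal B$ gives $\dim_{K_0}\mathcal B\otimes K_0\le d$.

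For (iii), write $\mathcal B\cong R^s\oplus T$ with $T$ torsion, using the structure theorem over the DVR $R$. Then $d=\dim B_0=s+\dim T/tT$, and $s=\dim_{K_0}\mathcal B\otimes K_0$. If $\mathcal B\otimes K_0$ surjects onto a space of dimension $d$, then $s\ge d$, so $s=d$ and $T/tT=0$. Since $T$ is finitely generated, this forces $T=0$, so $\mathcal B$ is free of rank $d$. The quotient map is then a surjection between $d$-dimensional spaces, hence an isomorphism.

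For an algebraic extension $L/K_0$, dimensions are preserved under $\otimes_{K_0}L$. A quotient of dimension $d$ of $\mathcal B\otimes L$ again forces $s\ge d$. The main point to handle with care is the identification of the quotient topology on $\widehat F/J$ with the $t$-adic topology, and the Nakayama step above is exactly what is needed for it.
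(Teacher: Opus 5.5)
Your proposal is correct and follows essentially the same route as the paper. Nilpotence of $\overline{\mathfrak n}$ gives $\mathfrak m^N\subseteq t\mathcal B$, hence the $\mathfrak m$-adic and $t$-adic topologies agree; successive approximation gives $R^d\twoheadrightarrow\mathcal B$; and the DVR structure theorem handles (iii).

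One small point: your closing ``subtlety'' that $M$ must be closed is not needed, because your series argument already gives $z-m\in\bigcap_k t^k\mathcal B=0$ directly. Its Artin--Rees justification is also circular: $M$ is dense in $\mathcal B$, and ``$\mathcal B$ is a quotient of $R^d$'' is exactly what is being proved. You should simply delete that sentence.
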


\begin{proof}
Let $I=(t)+\mathfrak n$.  Since $B_0$ is a finite-dimensional local algebra, its radical $\overline{\mathfrak n}$ is nilpotent.  Hence there exists $N$ such that
$\mathfrak n^N\subseteq t\mathcal B$.
Every product of $N$ elements of $I$ either contains a factor $t$ or lies in $\mathfrak n^N$, so
$I^N\subseteq t\mathcal B$.
Iterating gives
$I^{Nq}\subseteq t^q\mathcal B\subseteq I^q$ for $q\ge1$.
Thus the $I$-adic and $t$-adic topologies on $\mathcal B$ are equivalent.  Since $J$ is closed and $\widehat F$ is $I$-adically complete and separated, so is $\mathcal B$, and therefore $\mathcal B$ is $t$-adically complete and separated.

Choose $e_1,\ldots,e_d\in\mathcal B$ lifting a $\kk$-basis of $B_0$.
Modulo $t$ they span, hence
\[
 \mathcal B=Re_1+\cdots+Re_d+t\mathcal B.
\]
Starting with any $b\in\mathcal B$, choose coefficients modulo $t$, then correct the error modulo $t^2$, and continue inductively.  Completeness of $R$ and $\mathcal B$ makes the coefficient sequences converge, giving $b=\sum_{i=1}^d a_i(t)e_i, a_i(t)\in R$.
Therefore $R^d\twoheadrightarrow\mathcal B$, proving (i), and tensoring with $K_0$ proves (ii).

Since $R$ is a DVR, the structure theorem for finite $R$-modules gives
\[
 \mathcal B\cong R^s\oplus\bigoplus_{j=1}^{q}R/(t^{a_j}),
 \qquad a_j\ge1.
\]
Consequently, $\dim_\kk B_0=s+q,
 \dim_{K_0}(\mathcal B\otimes_RK_0)=s$.
If the generic fibre has a quotient of dimension $d=s+q$, then (ii) forces $s=d$ and hence $q=0$.  Thus $\mathcal B\cong R^d$, and a surjection between two $d$-dimensional generic fibres is an isomorphism.  Scalar extension does not change the argument.
\end{proof}

\begin{remark}
\Cref{lem:topNak} shows that flatness need not be checked before constructing the generic fibre.  It is enough to produce a semisimple quotient whose dimension already equals that of the special fibre.
\end{remark}

\subsection{Seven scalar branches}

Set $B_n=\A_n\otimes_RK$.
A one-dimensional representation has $y=z$ and $x=a$.  Equations \eqref{eq:r1}--\eqref{eq:r2} become
$3a^2+z^3=0,
 3az^2+Q_n(z)=0$.
For $z\ne0$ we eliminate $a$ to obtain
\begin{equation}\label{eq:H}
 H_n(z):=Q_n(z)^2+3z^7=0,
 \qquad
 a=-\frac{Q_n(z)}{3z^2}.
\end{equation}

\begin{proposition}\label{prop:scalar}
For every $n\ge5$, \eqref{eq:H} has exactly seven roots of positive $t$-valuation, counted without multiplicity, and all have valuation $4/7$.  They give seven pairwise nonisomorphic one-dimensional representations of $B_n$.
For $n=4$ there are exactly six positive-valuation roots.
\end{proposition}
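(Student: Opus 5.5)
The plan is to treat \eqref{eq:H} as a polynomial in $z$ over $K$ and read off the valuations of its roots from the Newton polygon with respect to the $t$-adic valuation. First I will confirm that \eqref{eq:H} really classifies the one-dimensional representations. Suppose $z\neq0$ and $a=-Q_n(z)/(3z^2)$. Then $3az^2+Q_n(z)=0$ holds by construction, and
\[
3a^2+z^3=\frac{Q_n(z)^2+3z^7}{3z^4}=\frac{H_n(z)}{3z^4},
\]
so both relations \eqref{eq:r1}--\eqref{eq:r2} hold exactly when $H_n(z)=0$. The root $z=0$ never occurs, because the constant term of $H_n$ is $t^4\neq0$.

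For a character of the completed algebra $B_n$, the generators $x,y$ must go to elements of positive valuation, so that $(t,x,y)$-adic series converge. I will check that a root $z$ of valuation $4/7$ gives $a$ of positive valuation: the dominant term of $Q_n(z)$ is $-t^2$, so $v(a)=2-8/7=6/7>0$. Distinct roots $z$ give distinct values of $y$, hence pairwise nonisomorphic characters.

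Next I expand $H_n$:
\[
H_n=n^2z^{2n-2}+2ntz^{n+1}-2nt^2z^{n-1}+3z^7+t^2z^4-2t^3z^2+t^4 .
\]
Its Newton polygon has the points $(0,4)$, $(2,3)$, $(4,2)$, $(7,0)$, $(n-1,2)$, $(n+1,1)$ and $(2n-2,0)$, where each point is $(\text{exponent of }z,\ t\text{-valuation of the coefficient})$.

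\emph{Case $n\ge5$.} Here $2n-2\ge8>7$. The lower hull consists of the segment from $(0,4)$ to $(7,0)$, of slope $-4/7$, followed by a horizontal segment from $(7,0)$ to $(2n-2,0)$. Checking the remaining points against the line $4-4z/7$ is routine: for example $(2,3)$ and $(4,2)$ lie strictly above it, and the points with $z\ge n-1\ge4$ lie above it as well. So exactly seven roots, counted with multiplicity, have valuation $4/7$, and all other roots have valuation $0$.

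To see that these seven roots are distinct, I substitute $z=ct^{4/7}$. Only the endpoints of the segment contribute to the leading term, so the residual polynomial is $1+3c^7$. It has seven distinct roots $c$ with $c^7=-1/3$, because we are in characteristic zero. By Hensel's lemma over the Puiseux field, each simple residual root lifts uniquely to a root of $H_n$, which gives seven distinct roots.

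\emph{Case $n=4$.} Here $2n-2=6$. The lower hull from $(0,4)$ to $(6,0)$ has slope $-2/3$, and the point $(3,2)$ lies on this segment. The residual polynomial is
\[
t^4-8t^2z^3+16z^6=(4z^3-t^2)^2,
\]
so we get six roots of valuation $2/3$, counted with multiplicity. The remaining roots have valuation $0$, since the rest of the polygon is the horizontal piece from $(6,0)$ to $(7,0)$.

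This step is the main obstacle, because the residual polynomial is a square and the naive argument gives no distinctness. To separate the roots, I will rewrite $H_4=0$ as
\[
Q_4(z)=\pm\sqrt{-3}\,z^{7/2}.
\]
The polynomial $Q_4=4z^3+tz^2-t^2$ has three simple roots $z_0$ of valuation $2/3$, and $Q_4'(z_0)=12z_0^2+\cdots$ has valuation $4/3$. For each choice of sign and of $z_0$, a Newton iteration (Hensel's lemma) produces a root $z=z_0+\delta$ with $v(\delta)=7/3-4/3=1>2/3$. The two signs give distinct roots, since their leading corrections $\delta$ differ at valuation $1$. Roots coming from different $z_0$ are also distinct, since the $z_0$ differ at valuation $2/3$. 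This yields six distinct roots of positive valuation, and hence six characters, by the same argument as in the first step.
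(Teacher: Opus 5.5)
Your proposal is correct. For $n\ge5$ it is the paper's argument: the Newton edge from $(0,4)$ to $(7,0)$, the separable edge polynomial $1+3c^7$, and the horizontal continuation carrying the valuation-zero roots.

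For $n=4$ you separate the double roots by a different route.

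\emph{The paper's route.} It substitutes $t=s^3$ and $z=s^2(u_0+sv)$, then reads off the separable second-order equation $(12v+1)^2+3/4=0$. So it stays inside polynomial Newton--Puiseux and applies simple-root lifting one order further down.

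\emph{Your route.} You factor $H_4=(Q_4-\sqrt{-3}\,z^{7/2})(Q_4+\sqrt{-3}\,z^{7/2})$ and perturb from the three simple roots $z_0$ of $Q_4$.

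The two computations agree. Your leading correction $\delta\approx\pm\sqrt{-3}\,z_0^{7/2}/Q_4'(z_0)\approx\pm\sqrt{-3}\,s^3/24$, combined with $z_0=s^2(u_0-s/12+\cdots)$, reproduces exactly the paper's two values $v=(-1\pm\sqrt{-3}/2)/12$. Your version explains the splitting conceptually, as the two square roots of $-3z^7$, and gives the separation valuation $7/3-4/3=1$ in one line. The price is working with the non-polynomial function $z^{7/2}$; the paper's version needs only polynomial residual equations.

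Two points need care when you write this out.

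First, the textbook Hensel criterion $v(G(z_0))>2\,v(G'(z_0))$ fails in the raw variable $z$, since $7/3<8/3$. It does hold after normalizing $z=t^{2/3}w$. With $s=t^{1/3}$ the equation becomes
\[
4w^3-1+sw^2\mp\sqrt{-3}\,s\,w^{7/2}=0
\]
over $\kk[[s]]$. Here $w^{7/2}$ is a power series in $w-w_0$ around the unit $w_0$, so $v_s(g(w_0))=1>0=v_s(g'(w_0))$ and the simple-root lift applies. Alternatively, argue with the Newton polygon of $\delta\mapsto G(z_0+\delta)$.

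Second, the ``dominant term $-t^2$'' argument for $v(a)>0$ does not carry over to $n=4$. There $4z^3$ and $-t^2$ cancel to leading order. The simplest uniform argument is $3a^2=-z^3$, which gives $v(a)=\tfrac32 v(z)>0$ for every $n$; here $v(a)=1$, consistent with $v(Q_4(z))=7/3$ from your own rewriting.
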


\begin{proof}
For $n\ge5$ expand $H_n(z)=n^2z^{2n-2}+2ntz^{n+1}-2nt^2z^{n-1}
 +t^2z^4-2t^3z^2+t^4+3z^7$.
The first lower Newton edge joins $(0,4)$ to $(7,0)$; every other displayed point lies strictly above that edge or on the horizontal continuation.  The edge polynomial is $1+3u^7$,
which is separable in characteristic zero.  Newton--Puiseux therefore gives exactly seven distinct roots with $v_t(z)=4/7$.  The remaining roots have valuation zero and do not belong to the completed local branch.  From $Q_n(z)^2=-3z^7$ one obtains $v_t(a)=6/7>0$.

For $n=4$, put $t=s^3$ and $z=s^2u$.  Dividing \eqref{eq:H} by $s^{12}$ gives
\[
 (4u^3-1+su^2)^2+3s^2u^7=0.
\]
At $s=0$ the three roots $u_0^3=1/4$ are double.  Write $u=u_0+sv+O(s^2)$.  The first nonzero coefficient is
\[
 u_0^4\bigl((12v+1)^2+3u_0^3\bigr)
 =u_0^4\bigl((12v+1)^2+3/4\bigr),
\]
which has two distinct roots in $v$.  Thus each $u_0$ splits into two Puiseux branches, giving six distinct positive-valuation roots.
\end{proof}

\subsection{The two-dimensional branches}

Fix a primitive cube root $\omega\in\kk$.
For $\lambda\in K^\times$ set
\[
 Y_\lambda=\begin{pmatrix}\lambda&0\\0&\omega\lambda\end{pmatrix},
 \qquad
 a_\lambda=-\frac{Q_n(\lambda)}{3\lambda^2}.
\]
For the diagonal entries of the second relation to be $a_\lambda$ and $-a_\lambda$, one obtains
\begin{equation}\label{eq:P}
 P_n(\lambda)=
 n(1+\omega^{n-3})\lambda^{n-1}
 +2t\lambda^2-(1+\omega)t^2=0.
\end{equation}
Note that $1+\omega^{n-3}\ne0$.

\begin{proposition}\label{prop:two}
For $n\ge5$, \eqref{eq:P} has exactly $n-1$ distinct positive-valuation Puiseux roots.  Each root yields an irreducible representation
\[
 B_n\twoheadrightarrow\Mat_2(K),
\]
and the resulting $n-1$ simple modules are pairwise nonisomorphic.
For $n=4$ the same construction gives three such branches.
\end{proposition}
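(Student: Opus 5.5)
The plan is to write down an explicit $2\times2$ representation with $y\mapsto Y_\lambda$ and $x\mapsto X_\lambda=\begin{pmatrix}a_\lambda&b\\ c&-a_\lambda\end{pmatrix}$, and to reduce everything to the scalar equation \eqref{eq:P}.

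\textbf{Step 1: the relations reduce to \eqref{eq:P}.} Since $\omega^3=1$ we have $Y_\lambda^3=\lambda^3 I$. Because $X_\lambda$ is traceless, $X_\lambda^2=(a_\lambda^2+bc)I$. Hence \eqref{eq:r1} is equivalent to
\[
bc=-\tfrac{\lambda^3}{3}-a_\lambda^2 .
\]
For \eqref{eq:r2}, put $y_1=\lambda$ and $y_2=\omega\lambda$. The $(i,j)$ entry of $Y^2X+YXY+XY^2$ is $x_{ij}(y_i^2+y_iy_j+y_j^2)$.
\begin{itemize}
\item Off the diagonal the factor is $\lambda^2(1+\omega+\omega^2)=0$, so these entries hold automatically, whatever $b$ and $c$ are.
\item The $(1,1)$ entry is $3\lambda^2a_\lambda+Q_n(\lambda)=0$, which is the definition of $a_\lambda$.
\item The $(2,2)$ entry requires $-3\omega^2\lambda^2a_\lambda+Q_n(\omega\lambda)=0$, that is, $\omega^2Q_n(\lambda)+Q_n(\omega\lambda)=0$.
\end{itemize}
Expanding $Q_n$ and dividing by $\omega^2$ (using $\omega^{-2}=\omega$) turns this into exactly $P_n(\lambda)=0$. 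So each root $\lambda$ of \eqref{eq:P}, together with $b=1$ and $c=-\lambda^3/3-a_\lambda^2$, gives a representation of $B_n$. For positive-valuation $\lambda$ both $\lambda$ and $a_\lambda$ have positive valuation, so the representation extends continuously to the completion.

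\textbf{Step 2: counting the roots.} The Newton polygon of $P_n$ has vertices $(0,2)$, $(2,1)$ and $(n-1,0)$.
\begin{itemize}
\item For $n\ge6$ there are two edges. The first has edge polynomial $2u^2-(1+\omega)$ and gives two roots of valuation $1/2$. The second has edge polynomial $n(1+\omega^{n-3})u^{n-3}+2$ and gives $n-3$ roots of valuation $1/(n-3)$.
\item For $n=5$ the three vertices are collinear. The single edge polynomial is $-5\omega u^4+2u^2-(1+\omega)$. Its discriminant in $u^2$ is $4+20\ne0$, and $0$ is not a root, so it is separable.
\end{itemize}
All edge polynomials are separable in characteristic zero, so Newton--Puiseux gives exactly $n-1$ distinct positive-valuation roots.

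\textbf{Step 3: irreducibility, surjectivity and pairwise distinctness.}
\begin{itemize}
\item \emph{Irreducibility.} $Y_\lambda$ has the distinct eigenvalues $\lambda$ and $\omega\lambda$, so its only invariant lines are the coordinate axes. These are $X_\lambda$-invariant only if $bc=0$. With $b=1$, $c=0$ would give $a_\lambda^2=-\lambda^3/3$ and $3a_\lambda\lambda^2+Q_n(\lambda)=0$, so $\lambda$ would be a positive-valuation root of $H_n$. By \cref{prop:scalar} such a root has valuation $4/7$, while our roots have valuation $1/2$ or $1/(n-3)$, and neither equals $4/7$. Hence $c\ne0$ and the representation is irreducible.
\item \emph{Surjectivity.} $K$ is algebraically closed, so Burnside's theorem gives $B_n\twoheadrightarrow\Mat_2(K)$.
\item \emph{Pairwise distinctness.} The trace of $y$ is $(1+\omega)\lambda$ with $1+\omega\ne0$. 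Distinct roots therefore give distinct characters, hence nonisomorphic simple modules.
\end{itemize}

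\textbf{Step 4: the case $n=4$, which is the main obstacle.} Here $P_4=4(1+\omega)\lambda^3+2t\lambda^2-(1+\omega)t^2$. The point $(2,1)$ lies strictly above the segment from $(0,2)$ to $(3,0)$. There is a single edge with separable polynomial $4(1+\omega)u^3-(1+\omega)$, giving three roots of valuation $2/3$, and Steps 1 and 3 go through verbatim except for the irreducibility check. The scalar roots for $n=4$ also have valuation $2/3$, with leading coefficient $u_0^3=1/4$, which coincides with the two-dimensional branches. So the valuation argument fails.

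For this case I would substitute $t=s^3$ and $\lambda=s^2(u_0+sw+\cdots)$ into \eqref{eq:P} to solve for the first correction $w$. Then I would compare it with the corrections $v$ of the scalar branches found in \cref{prop:scalar}, which are the roots of $(12v+1)^2+3/4=0$. Equivalently, I would compute the leading $s$-order of $c=-\lambda^3/3-a_\lambda^2$ directly and show that it is nonzero. I expect this finite Puiseux comparison to be the only delicate point.
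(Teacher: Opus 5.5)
Your argument for $n\ge5$ is correct and is essentially the paper's proof. It uses the same reduction of the diagonal entries of \eqref{eq:r2} to \eqref{eq:P}, the same Newton polygons and separable edge polynomials, Burnside, and distinct $Y$-spectra; your trace $(1+\omega)\lambda$ is a tidy way to express the last point. The variations are minor. You normalize $b=1$, $c=\Delta_\lambda$ instead of the symmetric choice $b_\lambda^2=\Delta_\lambda$. You also obtain $c\ne0$ by noting that $c=0$ would make $\lambda$ a positive-valuation root of $H_n$, hence of valuation $4/7$ by \cref{prop:scalar}, whereas the paper compares $v_t(a_\lambda^2)\ge2$ with $v_t(\lambda^3)\le 3/2$. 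Both work. (With $b=1$ your continuity sentence is slightly inaccurate, since not every entry of $X_\lambda$ has positive valuation. The conclusion still holds because $Y_\lambda$ and $X_\lambda^2=-\tfrac{\lambda^3}{3}I$ are topologically nilpotent; alternatively, conjugate by $\operatorname{diag}(1,c^{-1/2})$ to reach the symmetric form.)

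The gap is the case $n=4$. There the whole content of the claim is $c\ne0$. You correctly diagnose that valuations cannot decide it: both $a_\lambda^2$ and $\lambda^3$ have valuation $2$, and the scalar branches share $u_0^3=1/4$. But you stop at a plan, so irreducibility of the three $n=4$ branches is not actually proved. It closes in two lines. Since $n-3=1$, \eqref{eq:P} reads $(1+\omega)(4\lambda^3-t^2)+2t\lambda^2=0$, so exactly
\[
 Q_4(\lambda)=(4\lambda^3-t^2)+t\lambda^2=\frac{\omega-1}{\omega+1}\,t\lambda^2=(1+2\omega)\,t\lambda^2,
 \qquad
 a_\lambda=-\frac{(1+2\omega)t}{3},
 \qquad
 a_\lambda^2=-\frac{t^2}{3},
\]
using $(1+2\omega)^2=-3$. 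Hence $c=\Delta_\lambda=\tfrac13(t^2-\lambda^3)=\tfrac{t^2}{4}+o(t^2)\ne0$, because $\lambda^3=t^2/4+o(t^2)$. In your Puiseux language, put $t=s^3$ and $\lambda=s^2(u_0+sw+\cdots)$. The $s^7$ coefficient then gives $12w+1=1+2\omega$, so $(12w+1)^2=-3\ne-\tfrac34=(12v+1)^2$, and no two-dimensional branch is a scalar branch. This agrees with $\beta_\lambda^2=t^2/4+o(t^2)$ in the proof of \cref{ap:thm:all-parameters}. The paper's one-line remark that the $n=4$ check is ``identical to the one-edge calculation'' passes over exactly the point you flagged.
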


\begin{proof}
For $n=5$ the Newton points $(0,2),(2,1),(4,0)$ are collinear, giving four roots of valuation $1/2$.
For $n\ge6$ the lower polygon has two edges
\[
 (0,2)\longrightarrow(2,1),
 \qquad
 (2,1)\longrightarrow(n-1,0).
\]
Thus two roots have valuation $1/2$ and $n-3$ roots have valuation $1/(n-3)$.  The edge polynomials
\[
 2u^2-(1+\omega),
 \qquad
 n(1+\omega^{n-3})u^{n-3}+2
\]
are separable.  For $n=4$ the single lower edge joins $(0,2)$ to $(3,0)$ and gives three roots of valuation $2/3$.

For any such root define
$\Delta_\lambda=-\frac{\lambda^3}{3}-a_\lambda^2,
b_\lambda^2=\Delta_\lambda$,
and
\begin{equation*}
 X_\lambda=
 \begin{pmatrix}a_\lambda&b_\lambda\\ b_\lambda&-a_\lambda\end{pmatrix}.
\end{equation*}
Then $X_\lambda^2=-\lambda^3I_2/3$, so \eqref{eq:r1} holds.  In \eqref{eq:r2} the off-diagonal coefficient of $X_{ij}$ is
\[
 \lambda_i^2+\lambda_i\lambda_j+\lambda_j^2,
\]
which vanishes for $(\lambda_i,\lambda_j)=(\lambda,\omega\lambda)$; the diagonal equations are exactly \eqref{eq:P}.

It remains to see $b_\lambda\ne0$.  For $n=5$, the edge polynomial in $u^2$ has discriminant $24$,
so all four roots are simple. In every branch with $n\ge5$,
$v_t(a_\lambda)\ge1$ and $v_t(\lambda^3)\le3/2<2$.
Thus $v_t(a_\lambda^2)>v_t(\lambda^3)$.  The $n=4$ valuation check is identical to the one-edge calculation and gives $\Delta_\lambda\ne0$.  Thus $Y_\lambda$ has two distinct eigenspaces and $X_\lambda$ connects them, so the representation is irreducible.  Burnside's theorem gives the full image $\Mat_2(K)$.
Distinct $\lambda$ give distinct spectra of $Y_\lambda$, hence pairwise nonisomorphic modules.
\end{proof}

\subsection{The three-dimensional branch}

Let $\lambda_i=\alpha\omega^i$ ($i=0,1,2$) with $\alpha^3=c$, and put
\[
 Y=\operatorname{diag}(\lambda_0,\lambda_1,\lambda_2),
 \qquad
 a_i=-\frac{Q_n(\lambda_i)}{3\lambda_i^2}.
\]
A root-of-unity sum gives
\begin{equation*}
 \tau(c):=a_0+a_1+a_2=
 \begin{cases}
 -t,&3\nmid n,\\
 -t-nc^{(n-3)/3},&3\mid n.
 \end{cases}
\end{equation*}
Set $r=-\tau(c)$ and impose
 $c=-3r^2$.
If $3\nmid n$ take $c=-3t^2$ and $r=t$.
If $3\mid n$, with $q=(n-3)/3$, solve $c+3(t+nc^q)^2=0$.
The derivative with respect to $c$ at $(c,t)=(0,0)$ is $1$, so the formal implicit-function theorem gives a unique solution $c(t)=-3t^2+O(t^3)$.

\begin{proposition}\label{prop:three}
For every $n\ge4$, $B_n$ has an irreducible three-dimensional representation and hence a quotient $\Mat_3(K)$.
\end{proposition}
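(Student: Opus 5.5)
The plan is to use the same ansatz as in \cref{prop:two}: take $Y=\operatorname{diag}(\lambda_0,\lambda_1,\lambda_2)$ as set up above and look for a matrix $X=(X_{ij})$ with diagonal entries $X_{ii}=a_i$. The relations \eqref{eq:r1}--\eqref{eq:r2} then become algebraic conditions on the off-diagonal entries.

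\textbf{Relation \eqref{eq:r2}.} The $(i,j)$ entry of $Y^2X+YXY+XY^2$ is $(\lambda_i^2+\lambda_i\lambda_j+\lambda_j^2)X_{ij}$. For $i\ne j$ the ratio $\lambda_j/\lambda_i$ is a primitive cube root of unity, so this coefficient vanishes. Hence every off-diagonal entry is unconstrained. The diagonal equation reads $3\lambda_i^2a_i+Q_n(\lambda_i)=0$, which holds by the definition of $a_i$. So \eqref{eq:r2} holds for any choice of off-diagonal entries.

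\textbf{Relation \eqref{eq:r1}.} Since $\lambda_i^3=c$ for all $i$, we have $Y^3=cI$, and \eqref{eq:r1} becomes $X^2=-\tfrac c3 I$. I would look for $X$ of the form $X=D+M$, where $D=\operatorname{diag}(a_i)$ and $M$ is chosen so that $X$ is as simple as possible.
\begin{itemize}
\item \emph{Rank-one ansatz.} Set $X=-\tfrac r{?}\cdots$; more concretely, look for $X=\mu I+ uv^{T}$ with $v^{T}u$ tuned so that $X^2=-\tfrac c3 I$.
\item \emph{Preferred choice: the all-ones matrix.} Let $J$ be the all-ones matrix, so $J^2=3J$. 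Try $X=D+\beta J-\beta I$. The trace condition $\tau=-r$ and $c=-3r^2$ are exactly what make this work. An $X$ with $X^2=\gamma I$ and eigenvalues $\pm\sqrt\gamma$ has trace $\pm\sqrt\gamma$ when it has eigenvalue multiplicities $(2,1)$. With $\gamma=-c/3=r^2$, the trace must be $\pm r$, and this matches $\tau=-r$.
\end{itemize}

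\textbf{Existence of $X$.} Concretely, we need $X$ with prescribed diagonal $(a_0,a_1,a_2)$ and $X^2=r^2I$. Writing $X=r(2P-I)$ with $P$ a rank-one idempotent gives $\operatorname{tr}X=r(2-3)=-r$, consistent with the trace condition. A rank-one idempotent $P=uw^{T}$ with $w^{T}u=1$ has diagonal entries $p_i=u_iw_i$, which can be any triple with $\sum p_i=1$. The required values are $p_i=(a_i+r)/(2r)$, and these sum to $(\tau+3r)/(2r)=1$. So we may choose $u_i=1$ and $w_i=p_i$.

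\textbf{Irreducibility.} The off-diagonal entries of $X$ are $2r\,u_iw_j=2r\,p_j$. If every $p_j\ne0$, then $X$ connects every pair of the distinct eigenlines of $Y$. Any invariant subspace is spanned by $Y$-eigenvectors, since the $\lambda_i$ are distinct because $c\ne0$, and it is then forced to be everything. Burnside's theorem gives surjectivity onto $\Mat_3(K)$. I could also choose the alternative $X=r(I-2P)$, which has trace $r$; this would need $\tau=r$, so I keep the sign above.

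\textbf{Main obstacle.} The main obstacle is checking $a_i\ne -r$ for each $i$, i.e.\ $p_i\ne0$. I would settle this by valuations. We have $v(\lambda_i)=2/3$, since $c=-3t^2+\cdots$. Then $a_i=-(n\lambda_i^{n-3}+t-t^2\lambda_i^{-2})/3$, where $v(t^2\lambda_i^{-2})=2-4/3=2/3$ and the other terms have larger valuation. So $v(a_i)=2/3<1=v(r)$, hence $a_i+r\ne0$.

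If some coincidence occurred for $n=4$, where $\lambda^{n-3}=\lambda$ has valuation $2/3$ too, I would check the leading coefficient $-(n\alpha\omega^i-t^2\alpha^{-2}\omega^{-2i})/3$. This is $\omega^i$ times a fixed nonzero quantity unless a cancellation occurs, and I would verify that one small computation directly. Here $3\nmid 4$, so $c=-3t^2$ exactly, and the leading coefficient is $-\omega^i\alpha(4-t^2/\alpha^3)/3=-\omega^i\alpha(4+1/3)/3\ne0$.
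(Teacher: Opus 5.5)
Your construction is the paper's: with $u=\mathbf 1$ and $w_i=(a_i+r)/(2r)$, the matrix $r(2P-I)$ is exactly $\mathbf 1(v_0,v_1,v_2)-rI_3$ with $v_i=a_i+r$, and irreducibility goes the same way in both, via $v_t(a_i)=2/3<1=v_t(r)$, the distinct eigenlines of $Y$, and Burnside. Your explicit $n=4$ check, $4\lambda_i-t^2\lambda_i^{-2}=\tfrac{13}{3}\omega^i\alpha$, usefully rules out the possible cancellation between the two valuation-$2/3$ terms that the paper's one-line valuation remark passes over; just delete the unfinished exploratory bullets that precede the rank-one idempotent construction.
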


\begin{proof}
Put $v_i=a_i+r$ and write $\mathbf1=(1,1,1)^T$.  Since $\sum a_i=-r$, we have $\sum v_i=2r$.  Define
$X=\mathbf1(v_0,v_1,v_2)-rI_3$.
If $U=\mathbf1(v_0,v_1,v_2)$, then $U^2=2rU$, so
$X^2=U^2-2rU+r^2I_3=r^2I_3=-\frac c3I_3$.
Since $Y^3=cI_3$, relation \eqref{eq:r1} holds.  The diagonal entry $X_{ii}$ is $a_i$, so the diagonal part of \eqref{eq:r2} vanishes by definition.  If $i\ne j$, then
$\lambda_i^2+\lambda_i\lambda_j+\lambda_j^2=0$,
so the off-diagonal part vanishes as well.

Finally $v_t(\lambda_i)=2/3$, hence the $t^2/\lambda_i^2$ term shows $v_t(a_i)=2/3$, while $v_t(r)=1$.  Thus $v_i=a_i+r\ne0$ for all $i$.  Since $Y$ has three distinct eigenvalues, any common invariant subspace is the span of a subset of the three eigenspaces; the nonzero off-diagonal entries of $X$ rule out every proper nonempty subset.  The representation is irreducible, and Burnside's theorem gives image $\Mat_3(K)$.
\end{proof}

\subsection{Dimension saturation and proof of the existence theorem}

Distinct irreducible finite-dimensional representations have distinct maximal two-sided annihilators; distinct maximal two-sided ideals are comaximal.  Therefore Propositions~\ref{prop:scalar}, \ref{prop:two} and \ref{prop:three} and the Chinese remainder theorem give, for $n\ge5$, a surjection
\begin{equation}\label{eq:CRT-surj}
 B_n\twoheadrightarrow K^7\times\Mat_2(K)^{n-1}\times\Mat_3(K).
\end{equation}
The right-hand side has dimension
\[
 7+4(n-1)+9=4(n+3)=\dim_\kk A_n
\]
by Brown--Wemyss \cite{BW2025}.  The special fibre is finite dimensional, so \cref{lem:topNak} applies.  It gives
\[
 \dim_KB_n\le\dim_\kk A_n,
\]
whereas \eqref{eq:CRT-surj} gives the reverse inequality.  Hence equality holds, \eqref{eq:CRT-surj} is an isomorphism, and $\A_n$ is finite free.  This proves \cref{thm:main-existence} for $n\ge5$.

For $n=4$, the terminal-relation argument in \cref{app:four} gives $\dim A_4=27$.  The three propositions give a quotient
\[
 B_4\twoheadrightarrow K^6\times\Mat_2(K)^3\times\Mat_3(K)
\]
of dimension $6+12+9=27$.  The same saturation lemma finishes the endpoint.

\subsection{Centres of the \texorpdfstring{$E_{6,n}$}{E6,n} algebras}\label{sec:center}

The uniqueness argument requires the centre dimension at the special point.

\begin{proposition}\label{prop:center-count}
For the completed local Jacobi algebras $A_n$,
\[
 \dim_\kk Z(A_n)=
 \begin{cases}
 10,&n=4,\\
 n+7,&n\ge5.
 \end{cases}
\]
\end{proposition}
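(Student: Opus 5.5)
The plan is to squeeze $\dim_\kk Z(A_n)$ between a lower bound from the deformation of \cref{thm:main-existence} and an upper bound from a spanning set of $\HH_0(A_n)$.

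\emph{Lower bound.} Regard the centre of $\A_n$ as the kernel of the $R$-linear map $\Gamma:\A_n\to\A_n\oplus\A_n$, $a\mapsto([x,a],[y,a])$. By \cref{thm:main-existence} this is a map between free $R$-modules of finite rank. Its rank over $K$ is at least its rank on the special fibre, since rank is lower semicontinuous under specialisation. Hence
\[
\dim_\kk Z(A_n)=\dim_\kk\ker\Gamma_n\ \ge\ \dim_K\ker(\Gamma\otimes K)=\dim_K Z(B_n).
\]
The generic fibre $B_n$ is split semisimple, so $\dim_K Z(B_n)$ is its number of simple blocks. This number is $7+(n-1)+1=n+7$ for $n\ge5$, and $6+3+1=10$ for $n=4$. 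This gives the lower bound in both cases.

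\emph{Upper bound.} $A_n$ is a contraction algebra, hence symmetric, so $Z(A_n)\cong\HH_0(A_n)^*$ and $\dim Z(A_n)=\dim A_n/[A_n,A_n]$. It therefore suffices to exhibit a spanning set of $\HH_0(A_n)$ with $n+7$ elements (respectively $10$ for $n=4$). Modulo commutators, every element is a combination of cyclic words in $x,y$, and I would reduce these in the following order.
\begin{enumerate}[label=\rm(\arabic*)]
\item Using \eqref{eq:r1}, replace every $x^2$ by $-y^3/3$. Every cyclic word then has the form $xy^{b_1}xy^{b_2}\cdots xy^{b_k}$ with all $b_i\ge1$, or is a pure power $y^m$.
\item Multiply \eqref{eq:r2} by $y^k$ and pass to $\HH_0$. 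The three terms $y^2xy^k$, $yxy^{k+1}$, $xy^{k+2}$ become cyclically equal, giving
\[
3\,xy^{k+2}\equiv -n\,y^{n-1+k}\pmod{[A_n,A_n]}.
\]
So words with a single $x$ reduce to $x$, $xy$, and powers of $y$.
\item Multiply \eqref{eq:r2} by $xy^{c}$ and similar words, and take cyclic traces, to lower the number of $x$'s or the exponents in words with two or more $x$'s. This should leave only a few short survivors, such as $xyxy$.
\end{enumerate}
Finally, bound the surviving pure powers $y^m$ by the nilpotency degree of $y$ in $\HH_0$. This degree is $m\le n-1$ or so, controlled by the $y^{n-1}$ term. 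Counting survivors should give $n+7$: roughly $1,y,\dots,y^{n}$ together with $x$, $xy$, and a few words with two $x$'s. The count is $10$ at $n=4$.

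\emph{Main obstacle.} The hard part is step (3) together with the truncation of powers of $y$. One must show that words with many $x$'s, and high powers of $y$, vanish in $\HH_0$. This requires precisely the uniform ``terminal cyclic identities'' announced for \cref{app:center}, and for $n=4$ the seven terminal relations of \cref{app:four}. I would first derive these for $n\ge5$ by a filtration on $y$-degree, weighting $x$ as $3/2$. In the associated graded the relations become $3x^2+y^3$ and $y^2x+yxy+xy^2$, whose $\HH_0$ is computable. The term $ny^{n-1}$ then only truncates. The $n=4$ case needs separate hand computation, since there the $y^3$ degree interacts with $y^{n-1}=y^3$. Once the spanning count matches the lower bound, equality follows.
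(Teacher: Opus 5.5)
\textbf{The lower bound is fine; the upper bound has a genuine gap.} Your lower bound is correct and matches the paper's specialization step. You apply semicontinuity directly to $\ker\Gamma$, which even avoids using symmetry there. The upper bound, however, is missing the idea that actually truncates $\HH_0(A_n)$, and your picture of the survivors is wrong.

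Your step (2), the paper's $[y^jx]=-\frac n3[y^{n+j-3}]$, only converts one-$x$ classes into pure powers; it never kills a power of $y$. Two-$x$ classes do not survive either. Rotating $r_yx$ and $r_yxy$ gives $[xyx]=-\frac13[y^4]$, $[yxyx]=\frac23[y^5]+\frac{n^2}{3}[y^{2n-4}]$ and $[y^2xyx]=\frac16[y^6]+\frac{n^2}{6}[y^{2n-3}]$. So $\HH_0(A_n)$ is spanned by $[x]$, $[xy]$ and pure powers, and everything hinges on which $[y^m]$ survive.

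They survive far past $y^{n-1}$: for $n\ge6$ the basis is $[1],\dots,[y^{n+3}],[y^{n+6}],[x],[xy]$. The cut-off at $y^{n+6}$ is $J(A_n)^{n+7}=0$. The paper imports this from the Brown--Wemyss corank sequence, equivalently from $\dim A_n=4n+12$, not from the $y^{n-1}$ term. That leaves $n+9$ candidates.

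\textbf{Where the two missing eliminations come from.} They come from the overlap ambiguity $xyxxyy$: reduce either the middle $x^2$ or the final $xy^2$. The difference of the two reductions is an algebra identity $\rho_r=0$; for example $nz^q(xyx+zy/3)=0$ when $n-1=3q$. Multiplying $\rho_r$ by $y$ and by $y^2$ before tracing gives $[y^{n+4}]+\frac{n^2}{3}[y^{3n-5}]=0$ and $[y^{n+5}]+\frac{n^2}{3}[y^{3n-4}]=0$.
\begin{itemize}
\item For $n\ge6$ the second terms vanish by nilpotence, which removes $[y^{n+4}]$ and $[y^{n+5}]$.
\item For $n=5$ these are two independent relations among $[y^9],[y^{10}],[y^{11}]$.
\item For $n=4$ you also need $y^{10}=0$ from the terminal relations; the identities then tie $[y^8]$ and $[y^9]$ to $[y^7]$.
\end{itemize}

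\textbf{Why your filtration does not supply this.} With weights $3/2,1$, the associated graded of the two relations alone is the $E_{6,\infty}$ order $C$. This is free of rank $12$ over $\kk[[z]]$, and its $\HH_0$ is infinite dimensional: every $[y^m]\ne0$, as the trace of the generic $2\times2$ representation shows.

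So ``the term $ny^{n-1}$ only truncates'' is precisely what has to be proved. The truncation comes from the initial form $nz^qE_r$ of the overlap element. In the matrix model this is a nonzero multiple of $u^{n+3}D$, and it does not lie in the ideal generated by the initial forms of the two relations.

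Even the paper's filtered version of this, \cref{lem:weighted-upper-bound}, identifies $\operatorname{gr}A_n$ with $G_{n+3}$ only after using $\dim A_n=4n+12$. It then bounds $Z(A_n)$ through $\operatorname{gr}Z(A_n)\subseteq Z(\operatorname{gr}A_n)$, not through $\HH_0$ of the graded algebra.
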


\begin{proof}
The uniform proof is \cref{thm:cyc-rank}.  It begins with the two relations
\[
 r_x=3x^2+y^3,
 \qquad
 r_y=y^2x+yxy+xy^2+n y^{n-1}.
\]
The six-letter overlap in \cref{app:center} yields the two terminal
cyclic relations \eqref{eq:cyc-terminal} for all congruence classes
of $n-1$ modulo three.  Together with the $12$-word spanning
reduction, these show $\dim\HH_0(A_n)\le n+7$ for $n\ge5$
and $\dim\HH_0(A_4)\le10$; the $n=5$ endpoint is handled
by its distinct relations involving $[y^9],[y^{10}],[y^{11}]$.
The finite free deformation of Theorem~\ref{thm:main-existence} has
exactly $n+7$ simple blocks for $n\ge5$ and ten for $n=4$,
so specialization gives the reverse inequality.
Since $A_n$ is symmetric, $Z(A_n)\cong\HH_0(A_n)^*$.
The resulting bases and the rank increment are stated in
Theorem~\ref{thm:cyc-rank}.
\end{proof}

\begin{remark}\label{rem:center-audit}
The rank formula is proved by uniform terminal cyclic identities
and saturation by a separately constructed flat family; it does
not rely on extrapolating row-echelon pivots from finitely many $n$.
The $n=4$ length follows from terminal relations and dimension
saturation in \cref{app:four}.
\end{remark}

The centre dimensions agree with the number of simple blocks in Theorem~\ref{thm:main-existence}:
\[
 7+(n-1)+1=n+7,
\]
and $6+3+1=10$ for $n=4$.

\subsection{Periodicity, smoothness, and uniqueness}

We now turn the centre equality into uniqueness.
We use two standard facts about contraction algebras.  First, they are symmetric algebras; second, they are $4$-periodic as bimodules.  These properties are recalled in Jasso--Keller--Muro
\cite[\S1.2 and Remark~4.2.3]{JKM}.

\begin{lemma}\label{lem:h1h2}
Let $A$ be a finite-dimensional symmetric $\kk$-algebra satisfying
\[
 \Omega^4_{A^e}(A)\cong A
\]
in the stable bimodule category.  Then
$\dim_\kk\HH^1(A,A)=\dim_\kk\HH^2(A,A)$.
In particular this holds for every contraction algebra $A_n$.
\end{lemma}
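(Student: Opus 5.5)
We will combine Tate--Hochschild cohomology, periodicity, and Calabi--Yau-type duality for symmetric algebras. Write $\underline{\HH}^i(A)$ for Tate--Hochschild cohomology, i.e.\ $\underline{\operatorname{Hom}}_{A^e}(\Omega^i_{A^e}A,A)$ in the stable category of $A^e$-modules. Since $A$ is finite dimensional and self-injective, $A^e$ is self-injective, so this stable category is triangulated. For $i\ge1$ we have $\underline{\HH}^i(A)=\HH^i(A,A)$. Indeed, $\operatorname{Ext}^i_{A^e}(A,A)$ is computed by stable maps out of the $i$-th syzygy, and for $i\ge1$ no correction by projective-factoring maps occurs; the correction affects only degree $0$, where $\underline{\HH}^0=Z(A)/\text{(projective part)}$. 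So it suffices to prove $\dim\underline{\HH}^1(A)=\dim\underline{\HH}^2(A)$.

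The key step is a duality in the stable bimodule category. For a symmetric algebra $A$, the enveloping algebra $A^e$ is again symmetric, and $D\!A\cong A$ as bimodules. Auslander--Reiten (Tate) duality on $\underline{\operatorname{mod}}\,A^e$ then reads
\[
 \underline{\operatorname{Hom}}_{A^e}(M,N)\cong D\,\underline{\operatorname{Hom}}_{A^e}(N,\Omega^{-1}\tau M),
 \qquad \tau=\Omega^2\circ\nu,
\]
where $\nu$ is the Nakayama functor of $A^e$, which is isomorphic to the identity because $A^e$ is symmetric. Hence the stable category is $(-1)$-Calabi--Yau: $\underline{\operatorname{Hom}}(M,N)\cong D\underline{\operatorname{Hom}}(N,\Omega M)$. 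Taking $M=\Omega^iA$ and $N=A$ gives
\[
 \underline{\HH}^i(A)\cong D\,\underline{\operatorname{Hom}}(A,\Omega^{i+1}A)=D\,\underline{\HH}^{-i-1}(A).
\]
Next use the hypothesis $\Omega^4_{A^e}A\cong A$. Since $\Omega$ is an autoequivalence of the stable category, $\underline{\HH}^{j}\cong\underline{\HH}^{j+4}$ for all $j\in\mathbb Z$. For $i=1$ this yields $\underline{\HH}^{1}\cong D\,\underline{\HH}^{-2}\cong D\,\underline{\HH}^{2}$. The two spaces are finite dimensional, since $A^e$ is finite dimensional, so their dimensions agree.

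For the final sentence, the standing facts recalled just before the lemma apply: contraction algebras are symmetric and $4$-periodic as bimodules \cite[\S1.2 and Remark~4.2.3]{JKM}. The $A_n$ are contraction algebras of the $E_6$ normal forms, so the hypotheses hold.

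The main obstacle is the bimodule Calabi--Yau statement. It requires that $A^e$ is symmetric, which follows from $A$ symmetric because the tensor product of symmetrizing forms is a symmetrizing form. It also requires checking that the Nakayama functor of $A^e$ is trivial, so that no twist by the Nakayama automorphism appears. I would first try to cite the standard Tate duality for symmetric algebras, as in Eu--Schedler, and verify the indexing carefully. The indexing must be exactly $i\mapsto -i-1$; combined with period $4$, this pairs degrees $1$ and $2$ rather than $1$ and $3$.
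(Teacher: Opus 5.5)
Your proposal is correct and follows the same route as the paper: Tate duality for the symmetric algebra pairs $\widehat{\HH}^{1}(A,A)$ with $\widehat{\HH}^{-2}(A,A)$, and four-periodicity identifies $\widehat{\HH}^{-2}(A,A)$ with $\widehat{\HH}^{2}(A,A)=\HH^2(A,A)$. The only difference is that you derive this duality from Auslander--Reiten duality, i.e.\ from the $(-1)$-Calabi--Yau property of the stable module category of the symmetric algebra $A^e$, whereas the paper cites it directly from Linckelmann.
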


\begin{proof}
For a symmetric algebra, Tate duality gives a perfect pairing
\cite[\S2]{Linckelmann2013}
\[
 \widehat{\HH}^{q}(A,A)\times
 \widehat{\HH}^{-q-1}(A,A)\longrightarrow\kk.
\]
Taking $q=1$ yields
\[
 \dim\HH^1(A,A)=\dim\widehat{\HH}^{-2}(A,A).
\]
Four-periodicity gives
\[
 \widehat{\HH}^{-2}(A,A)\cong\widehat{\HH}^{2}(A,A)=\HH^2(A,A),
\]
where the last equality uses positive degree.
\end{proof}

We next record the dimension argument in a form convenient for reuse.
Fix a $d$-dimensional vector space $V$ with a distinguished unit vector and let $\cM_d$ denote the affine scheme of associative unital algebra structures on $V$.  Let $G\subseteq\operatorname{GL}(V)$ be the subgroup fixing the unit.

\begin{lemma}\label{lem:smooth-criterion}
Let $A$ be a $d$-dimensional algebra and suppose that $A$ deforms to a semisimple algebra $S$ of dimension $d$.  If
\[
 \dim Z(A)=\dim Z(S)
 \quad\text{and}\quad
 \dim\HH^1(A,A)=\dim\HH^2(A,A),
\]
then $[A]\in\cM_d$ is a smooth point.
\end{lemma}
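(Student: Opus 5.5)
The argument is a dimension count at $[A]$, comparing the Zariski tangent space of $\cM_d$ with the dimension of an orbit that lies in a component through $[A]$. The standard deformation-theoretic description gives
\[
 \dim T_{[A]}\cM_d=\dim Z^2(A,A),
\]
the space of Hochschild $2$-cocycles; the unit constraint is absorbed by the normalized complex. From the exact sequence $0\to B^2\to Z^2\to\HH^2\to0$ and $B^2(A,A)\cong C^1/Z^1=\operatorname{End}(V)/\Der(A)$ (normalized version: cochains vanishing on the unit, of dimension $(d-1)d$ or $d^2$ up to the same shift on both sides), we get
\[
 \dim T_{[A]}\cM_d=\dim C^1-\dim\Der(A)+\dim\HH^2(A,A).
\]
Next, $\dim\Der(A)=\dim\Inn(A)+\dim\HH^1(A,A)$ and $\dim\Inn(A)=d-\dim Z(A)$. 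Substituting these and using the hypothesis $\HH^1=\HH^2$ gives
\[
 \dim T_{[A]}\cM_d=\dim C^1-d+\dim Z(A).
\]

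On the other side, the $G$-orbit of $[S]$ has dimension $\dim G-\dim\operatorname{Aut}(S)$. Since $S$ is separable, $\HH^1(S,S)=0$, so $\operatorname{Aut}(S)^\circ$ consists of inner automorphisms and has dimension $\dim\Der(S)=d-\dim Z(S)$. Here $G$ is the unit-fixing group, whose Lie algebra is exactly the normalized $C^1$, so $\dim G=\dim C^1$. Also $\HH^2(S,S)=0$, so the orbit of $[S]$ is open in $\cM_d$ and its closure is an irreducible component $\mathcal C$. By hypothesis, the flat deformation means $[A]$ lies in the closure of the orbit of $[S]$: a curve $\operatorname{Spec}R\to\cM_d$ whose generic point lands in the orbit, after the identification of \cite[Theorem~2.20]{JS2026}. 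Since $Z(A)=Z(S)$ in dimension, we obtain
\[
 \dim_{[A]}\cM_d\ \ge\ \dim\mathcal C=\dim C^1-d+\dim Z(S)=\dim T_{[A]}\cM_d.
\]
The local dimension is always at most the tangent dimension, so equality holds and $[A]$ is a smooth point.

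The main obstacle is the bookkeeping of normalizations. Fixing the unit vector changes $\dim G$, $C^1$, and the identification of $T\cM_d$ with normalized cocycles. I would handle this by working throughout with normalized Hochschild cochains relative to $\kk\cdot1$, for which the normalized complex is quasi-isomorphic to the full one. Then $G$ has Lie algebra $C^1_{\mathrm{norm}}$, inner derivations and $Z(A)$ behave as above, and the same shift appears in both the orbit and tangent computations, so it cancels. A secondary point is passing from a deformation over $\kk[[t]]$ or over the Puiseux field to membership of $[A]$ in the orbit closure. This is done by choosing a basis adapted to the unit for the free $R$-module, which gives an $R$-point of $\cM_d$ whose generic fibre lies in the $G(\overline{K})$-orbit of $S$, hence in the orbit closure.
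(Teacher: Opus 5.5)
Your proposal is correct and is essentially the paper's argument. Both identify $T_{[A]}\cM_d$ with the normalized $2$-cocycles, compute orbit dimensions from $\dim\Der=\dim\Inn+\dim\HH^1$ and $\dim\Inn(A)=d-\dim Z(A)$, use $\HH^1(S,S)=0$ to get $\dim T_{[A]}\cM_d=\dim O_S$, and then conclude from $[A]\in\overline{O_S}$ together with local dimension $\le$ tangent dimension. Your extra remark that $O_S$ is open because $\HH^2(S,S)=0$ is not needed for this lemma, since irreducibility of $\overline{O_S}$ suffices; the paper uses that openness only later, in the uniqueness argument.
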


\begin{proof}
The tangent space to $\cM_d$ at $A$ is the space of normalized Hochschild $2$-cocycles, while the tangent space to the $G$-orbit is the space of normalized $2$-coboundaries.  Hence
\begin{equation}\label{eq:tangentdim}
 \dim T_{[A]}\cM_d=\dim O_A+h^2(A).
\end{equation}
Moreover
\[
 \Der(A)/\Inn(A)=\HH^1(A,A),
 \qquad
 \dim\Inn(A)=d-\dim Z(A),
\]
so
\begin{equation}\label{eq:orbitdim}
 \dim O_A=\dim G-d+\dim Z(A)-h^1(A).
\end{equation}
Since $S$ is semisimple, $h^1(S)=0$, and therefore
\[
 \dim O_S=\dim G-d+\dim Z(S).
\]
Combining the hypotheses with \eqref{eq:tangentdim}--\eqref{eq:orbitdim} gives
\[
 \dim T_{[A]}\cM_d=\dim O_S.
\]
The deformation implies $[A]\in\overline{O_S}$ (equivalently by \cite[Theorem~2.20]{JS2026}), so the local dimension of $\cM_d$ at $[A]$ is at least $\dim O_S$.  Local dimension is always bounded above by tangent dimension.  Hence equality holds, the local ring is regular, and $[A]$ is smooth.
\end{proof}

\begin{corollary}\label{cor:smoothAn}
Every $[A_n]$ is a smooth point of the corresponding algebra-structure scheme.
\end{corollary}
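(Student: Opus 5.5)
The plan is to verify the three hypotheses of \cref{lem:smooth-criterion} for $A=A_n$, with $d=\dim_\kk A_n$. Here $d=27$ for $n=4$ and $d=4(n+3)$ for $n\ge5$.

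First, \cref{thm:main-existence} gives a finite free family $\A_n$ over $R$ whose special fibre is $A_n$. Its generic fibre over $K$ is the split semisimple algebra
\[
S=K^7\times\Mat_2(K)^{n-1}\times\Mat_3(K),
\]
or $K^6\times\Mat_2(K)^3\times\Mat_3(K)$ when $n=4$. Via \cite[Theorem~2.20]{JS2026}, as the paper agrees to do freely, we pass to the corresponding $\kk$-form $S_\kk$. The result is a flat deformation of $A_n$ to a $d$-dimensional semisimple algebra, equivalently $[A_n]\in\overline{O_{S_\kk}}$ in $\cM_d$.

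Second, compare centres. The centre of a product of matrix algebras has dimension equal to the number of blocks. This gives $\dim Z(S_\kk)=7+(n-1)+1=n+7$ for $n\ge5$ and $6+3+1=10$ for $n=4$. \Cref{prop:center-count} gives exactly the same values for $\dim Z(A_n)$, so the first hypothesis holds.

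Third, we need $h^1(A_n)=h^2(A_n)$. This is \cref{lem:h1h2}, applied using the facts recalled from \cite{JKM} that contraction algebras are symmetric and $4$-periodic as bimodules.

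With all three hypotheses in place, \cref{lem:smooth-criterion} shows that $[A_n]$ is a smooth point of $\cM_d$.

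The main obstacle is the input to the third step. \Cref{lem:h1h2} requires $A_n$ to be an actual contraction algebra; being an abstract completed Jacobi algebra is not enough. For the normal forms $f_n$ I would appeal to the Brown--Wemyss realization of these potentials as contraction algebras of smooth threefold flops, and cite the periodicity of \cite{JKM} for those flops. If that realization is only available for part of the family, the fallback is the geometric construction in \cref{sec:geometric-neighbourhood}: it produces a local projective small contraction for every finite-order parameter, and that contraction exhibits $A_n$ as a contraction algebra, after which periodicity applies. All remaining steps are formal bookkeeping from results already stated.
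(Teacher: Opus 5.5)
Your proposal is correct and matches the paper's proof: the paper combines \cref{thm:main-existence}, \cref{prop:center-count}, \cref{lem:h1h2} and \cref{lem:smooth-criterion} in exactly the way you describe. You are right to flag that $A_n$ must be an actual contraction algebra. At this point the paper takes that from the Brown--Wemyss normal-form setting, and \cref{cor:p-formal-flop-realization} later supplies it independently through the Section~\ref{sec:geometric-neighbourhood} construction, just as your fallback says.
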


\begin{proof}
Use Theorem~\ref{thm:main-existence}, Proposition~\ref{prop:center-count} and Lemmas~\ref{lem:h1h2} and \ref{lem:smooth-criterion}.
\end{proof}

We can now prove uniqueness with no additional numerical classification.

\begin{proposition}\label{prop:unique-principle}
Let $A$ be a finite-dimensional algebra such that $[A]\in\cM_d$ is smooth.  If $A$ deforms to two semisimple algebras $S$ and $T$, then $S\cong T$.
\end{proposition}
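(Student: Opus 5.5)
The argument is the one sketched in the introduction: smoothness forces a unique irreducible component through $[A]$, and that component can contain at most one open orbit. Let $d=\dim A$. By \cite[Theorem~2.20]{JS2026}, a flat deformation of $A$ to $S$ means $[A]\in\overline{O_S}$ in $\cM_d$, and likewise $[A]\in\overline{O_T}$. Both $S$ and $T$ then have dimension $d$.

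\textbf{Step 1: semisimple orbits are open.} Since $S$ is semisimple, $\HH^2(S,S)=0$. By \eqref{eq:tangentdim}, $\dim T_{[S]}\cM_d=\dim O_S$. The orbit $O_S$ is a smooth locally closed subvariety contained in $\cM_d$, so at $[S]$ we have
\[
 \dim O_S\le \dim_{[S]}\cM_d\le \dim T_{[S]}\cM_d=\dim O_S .
\]
Hence $[S]$ is a smooth point of $\cM_d$, and $O_S$ contains a neighbourhood of $[S]$ in $\cM_d$. So $O_S$ is open, and $\overline{O_S}$ is an irreducible component $C_S$ of $(\cM_d)_{\mathrm{red}}$. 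The same holds for $T$, giving a component $C_T=\overline{O_T}$. Since $O_S$ is open, $O_S$ is dense open in $C_S$.

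\textbf{Step 2: one component through $[A]$.} Because $[A]$ is a smooth point, its local ring is regular and hence a domain. Therefore exactly one irreducible component of $\cM_d$ passes through $[A]$. Both $C_S$ and $C_T$ contain $[A]$, so $C_S=C_T$.

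\textbf{Step 3: compare the orbits.} The orbits $O_S$ and $O_T$ are both dense open subsets of the irreducible variety $C_S=C_T$. They therefore intersect, and two $G$-orbits that intersect are equal, so $O_S=O_T$. Since $G$-orbits in $\cM_d$ are exactly isomorphism classes of unital algebra structures on $V$, this gives $S\cong T$.

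The only delicate point is Step 1, which uses rigidity of semisimple algebras to get openness of their orbits. I expect it to go through directly from $\HH^2(S,S)=0$ together with the tangent computation \eqref{eq:tangentdim}, so no serious obstacle is anticipated. The identification of flat deformation with orbit-closure containment is taken from \cite[Theorem~2.20]{JS2026}, exactly as in the proof of Lemma~\ref{lem:smooth-criterion}.
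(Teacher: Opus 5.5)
Your proposal is correct and follows essentially the same route as the paper's proof: a unique irreducible component through the smooth point $[A]$, openness of semisimple orbits from $\HH^2=0$, and the fact that two dense open orbits in one irreducible component must meet. The only difference is that you derive the openness of $O_S$ explicitly from \eqref{eq:tangentdim} via the chain $\dim O_S\le\dim_{[S]}\cM_d\le\dim T_{[S]}\cM_d$, a step the paper simply asserts.
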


\begin{proof}
A smooth point lies on a unique irreducible component of $\cM_d$.
Because a semisimple algebra is separable in characteristic zero, $\HH^2(S,S)=0$; hence its orbit is open in $\cM_d$ near $[S]$.  Equivalently, the closure $\overline{O_S}$ is an irreducible component.  The same holds for $T$.
The two deformations give
\[
 [A]\in\overline{O_S}\cap\overline{O_T}.
\]
By uniqueness of the irreducible component through the smooth point, the two orbit closures coincide.  Both $O_S$ and $O_T$ are nonempty open subsets of the same irreducible variety, so they intersect.  Distinct $G$-orbits are disjoint; hence the orbits coincide and $S\cong T$.
\end{proof}

\begin{proof}[Proof of Theorem~\ref{thm:main-unique}]
By Corollary~\ref{cor:smoothAn}, $[A_n]$ is smooth.  The semisimple algebra in Theorem~\ref{thm:main-existence} supplies one semisimple deformation.  Apply Proposition~\ref{prop:unique-principle} to any other semisimple target.
\end{proof}

\subsection{Consequences for Wemyss' question}

Combining existence and uniqueness gives the exceptional-family counterpart of the Type $A,D$ theorem
of Jelisiejew--Smoktunowicz.

\begin{corollary}\label{cor:wemyss}
Wemyss' semisimple-deformation question has an affirmative answer for every contraction algebra in the $E_{6,n}$ family.  The unique semisimple targets are
\[
 \begin{cases}
 \kk^6\times\Mat_2(\kk)^3\times\Mat_3(\kk),&n=4,\\
 \kk^7\times\Mat_2(\kk)^{n-1}\times\Mat_3(\kk),&n\ge5.
 \end{cases}
\]
Thus the block multiplicities are
\[
 (n_1,n_2,n_3)=
 \begin{cases}
 (6,3,1),&n=4,\\
 (7,n-1,1),&n\ge5.
 \end{cases}
\]
\end{corollary}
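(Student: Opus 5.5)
The corollary is a direct assembly of \cref{thm:main-existence} and \cref{thm:main-unique}. The plan has three steps.

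\emph{Step 1: existence.} By \cref{thm:main-existence}, $\A_n$ is a finite free $R$-algebra whose special fibre is $A_n$. Its generic fibre over the Puiseux field $K$ is the split semisimple algebra $K^7\times\Mat_2(K)^{n-1}\times\Mat_3(K)$ for $n\ge5$, and $K^6\times\Mat_2(K)^3\times\Mat_3(K)$ for $n=4$. Using the equivalent deformation formulations of \cite[Theorem~2.20]{JS2026}, we descend this to the statement that $A_n$ flatly deforms to the corresponding split semisimple $\kk$-algebra. This answers Wemyss' question affirmatively for each $A_n$.

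\emph{Step 2: uniqueness.} Let $S$ be any semisimple algebra to which $A_n$ deforms. \cref{thm:main-unique} applies directly: $[A_n]$ is a smooth point of $\cM_d$ by \cref{cor:smoothAn}, and by \cref{prop:unique-principle} two semisimple targets of the same smooth point coincide. Hence $S$ is isomorphic to the displayed algebra, and the target is unique.

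\emph{Step 3: multiplicities.} By Wedderburn's theorem over algebraically closed $\kk$, the isomorphism type of a split semisimple algebra determines the number $n_k$ of matrix blocks of each size $k$. Reading these off the unique target gives $(n_1,n_2,n_3)=(6,3,1)$ for $n=4$ and $(7,n-1,1)$ for $n\ge5$.

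As a consistency check, the dimensions $7+4(n-1)+9=4(n+3)$ and $6+12+9=27$ match $\dim A_n$. The block counts $n+7$ and $10$ match $\dim Z(A_n)$ from \cref{prop:center-count}.

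There is no genuine obstacle here. The only point requiring care is making sure the deformation notion in Wemyss' question agrees with the flat-family notion over $R=\kk[[t]]$ with Puiseux generic fibre that is used in \cref{thm:main-existence}. This is exactly what \cite[Theorem~2.20]{JS2026} provides, together with the convention, stated after \cref{thm:main-existence}, that a split semisimple algebra over $K$ descends to its $\kk$-form.
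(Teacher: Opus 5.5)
Your proposal is correct and follows the paper's own argument: the paper obtains the corollary by combining \cref{thm:main-existence} (descended via \cite[Theorem~2.20]{JS2026}) with \cref{thm:main-unique} (smoothness from \cref{cor:smoothAn} and \cref{prop:unique-principle}), exactly as you do. The multiplicities are then read off from the Wedderburn decomposition of the unique target, as in your Step~3.
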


The dimensions satisfy
$7+4(n-1)+9=4(n+3)$,
exactly the Brown--Wemyss dimension for $n\ge5$.
The proof also supplies a conceptual explanation of the three block sizes: scalar branches arise from the degree-seven Newton edge of \eqref{eq:H}, double branches from pairing two cube-root-related eigenvalues of $y$, and the unique triple branch uses all three cube-root-related eigenvalues simultaneously.

\section{The full \texorpdfstring{$x^3+xy^3+p(y)$}{x3 + xy3 + p(y)} parameter family}\label{sec:all-p}
Let $\kk$ be an algebraically closed field of characteristic zero and
let $p(y)=c_ny^n+O(y^{n+1})\in\kk[[y]]$, where $n\ge4$ and
$c_n\ne0$. Define $A_p=\Jac\bigl(x^3+xy^3+p(y)\bigr)$.
The preceding theorems concern $p(y)=y^n$.  The following theorem
also permits independent coefficients in all three residue classes
of $y\bmod3$; in particular it does not require an isomorphism
$A_p\cong A_n$.
\begin{theorem}\label{thm:full-p}
For every such $p$, the completed algebra satisfies
\[
 (\dim_\kk A_p,\dim_\kk Z(A_p))=
 \begin{cases}(27,10),&n=4,\\ (4n+12,n+7),&n\ge5.\end{cases}
\]
It admits a finite free one-parameter deformation with geometric
semisimple generic fibre $\kk^6\times\Mat_2(\kk)^3\times\Mat_3(\kk)$
for $n=4$, and $\kk^7\times\Mat_2(\kk)^{n-1}\times\Mat_3(\kk)$
for $n\ge5$.  This is the unique semisimple flat-deformation target of $A_p$.
Over $\mathbb C$, every such $A_p$ is realized by a smooth
threefold flop as in Corollary~\ref{cor:p-formal-flop-realization}.
\end{theorem}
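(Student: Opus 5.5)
The proof follows the template of Section~\ref{sec:normal-forms}, with $y^n$ replaced by $p(y)$ throughout. The one genuinely new input is an upper bound on $\dim A_p$ and on $\dim\HH_0(A_p)$.

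\textbf{Step 1: deformation and saturation.} Consider
\[
\Phi_{p,t}=x^3+xy^3+p(y)+\tfrac t3y^3-t^2y
\]
over $R=\kk[[t]]$, so that $Q_n$ is replaced by $Q_p(z)=p'(z)+tz^2-t^2$. In each Newton-polygon argument only the lowest-order term $nc_nz^{n-1}$ of $p'$ touches the relevant lower edges; the higher terms $z^{m-1}$ with $m>n$ lie strictly above them. Hence the following carry over unchanged:
\begin{itemize}
\item Proposition~\ref{prop:scalar}, with edge polynomial $1+3u^7$ (and the $n=4$ splitting, where only $c_4$ enters the leading coefficient);
\item Proposition~\ref{prop:two}, with edge polynomials $2u^2-(1+\omega)$ and $nc_n(1+\omega^{n-3})u^{n-3}+2$.
\end{itemize}
Proposition~\ref{prop:two} needs one adjustment. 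For general $p$ the two diagonal conditions for $Y_\lambda=\operatorname{diag}(\lambda,\omega\lambda)$ no longer reduce to the single equation \eqref{eq:P}, because the residues of the exponents mod $3$ mix. I would therefore use $Y=\operatorname{diag}(\lambda,\mu)$ with $\mu=\omega\lambda(1+\epsilon)$. The off-diagonal vanishing still forces $\lambda^2+\lambda\mu+\mu^2=0$, so instead I keep $\mu=\omega\lambda$ and impose $X^2=-Y^3/3$ only. This suggests recomputing the constraint directly: with $\mu=\omega\lambda$ we get $\mu^3=\lambda^3$, so $X^2$ scalar is consistent, and the diagonal equation $3a_\lambda\lambda^2+Q_p(\lambda)=0$ together with $-3a_\lambda\mu^2+Q_p(\mu)=0$ combine into one equation $\omega^2Q_p(\lambda)+Q_p(\omega\lambda)=0$. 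This is again a single Puiseux equation with the same Newton polygon.

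For the triple branch, $\tau(c)$ becomes $-t$ minus a power series in $c$ with no constant term, coming from the exponents $m\equiv0\pmod 3$. The implicit-function step is identical.

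The CRT surjection of \eqref{eq:CRT-surj} then yields a quotient of dimension $27$, respectively $4n+12$. Specialization gives the lower bounds $\dim A_p\ge 4n+12$ and $\dim Z(A_p)\ge n+7$: the number of blocks bounds the generic $\HH_0$, and upper semicontinuity applies to the cokernel of the commutator map. The matching upper bounds then make \cref{lem:topNak} give finite freeness.

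\textbf{Step 2: upper bounds, the main obstacle.} For $n\ge5$ I would filter $A_p$ by a weight with $\deg x=3$ and $\deg y=2$, so that $x^3$ and $xy^3$ have weight $9$ and $y^m$ has weight $2m>9$. The associated graded is a quotient of $\kk\langle x,y\rangle/(3x^2+y^3,\;y^2x+yxy+xy^2)$ modulo the images of the leading $y^{n-1}$ terms. This is the homogeneous quotient of the $E_{6,\infty}$ order mentioned in the introduction. Its length and its $\HH_0$ can be computed once, for all lower-order perturbations, giving $\dim\operatorname{gr}A_p\le 4n+12$ and $\dim\HH_0\le n+7$.

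The delicate point is that the perturbation $p$ may shift leading terms. I would handle this by running the terminal cyclic identities of \cref{app:center} with the extra higher-order terms carried along: those terms have strictly higher weight and are absorbed by $\mathfrak n$-adic completeness. For $n=4$ I would rerun the seven terminal relations of \cref{app:four}, checking that the higher coefficients only contribute terms in strictly deeper powers of the radical.

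\textbf{Step 3: uniqueness and geometry.} Uniqueness then follows verbatim from Lemmas~\ref{lem:h1h2} and \ref{lem:smooth-criterion} and Proposition~\ref{prop:unique-principle}. These apply because centre dimension equals block count, once we know $A_p$ is a contraction algebra and hence symmetric and $4$-periodic. That input, together with the final claim, comes from the flop realization: Corollary~\ref{cor:p-formal-flop-realization} in Section~\ref{sec:geometric-neighbourhood} constructs the polynomial curve neighbourhood. For formal $p$, I would first truncate $p$ at high order, using finite determinacy of the Jacobi algebra once its length is known to be finite, and then realize the polynomial potential geometrically.
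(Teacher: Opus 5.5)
Your overall architecture is the paper's. It uses a generic semisimple quotient for the lower bound, the weighted associated graded compared against the homogeneous $E_{6,\infty}$ quotient for $n\ge5$, and initial forms of the order-four terminal relations. It then applies \cref{lem:topNak} and gets uniqueness via the flop. However, two steps fail as stated.

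First, the uniform term $-t^2y$ in $\Phi_{p,t}$ is not harmless. The leading coefficient $c_n$ competes with the fixed coefficient of $t^2y$, so Newton-edge root counting is not the only issue: separability and $b_\lambda\ne0$ depend on $c_n$.
\begin{itemize}
\item For $p=-y^5/5$ the two-dimensional branch equation is
\[
 Q(\lambda)+\omega Q(\omega\lambda)=\omega\lambda^4+2t\lambda^2+\omega^2t^2=\omega(\lambda^2+\omega^2t)^2 .
\]
So there are two distinct roots instead of four. In general, the discriminant in $u^2$ of the $n=5$ edge polynomial is $4+20c_5$.
\item For $p=y^4/4-y^5/5$, every root satisfies $\lambda^3-t^2=\omega^2\lambda^4+2\omega t\lambda^2$. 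Hence $a_\lambda=-\bigl((\omega^2-1)\lambda^2+(1+2\omega)t\bigr)/3$, and $\Delta_\lambda=-\lambda^3/3-a_\lambda^2=0$ identically. Thus $X_\lambda$ is diagonal and all three $\Mat_2$ blocks disappear.
\end{itemize}
In both cases your CRT quotient has dimension $<d$, and the lower bound is lost. The paper uses $b_0t^2y$ in \eqref{ap:eq:deformation}, with $b_0=c_4,\,c_5^{-1},\,1$ for $n=4,\,5,\,\ge6$. This gives discriminant $24$ and $\Delta_\lambda=t^2/4+o(t^2)$. Alternatively, first rescale $x\mapsto s^3x$, $y\mapsto s^2y$ so that $c_n=1$.

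Second, Step 2 is where the theorem is actually proved, and it does not go through as described. With weights $(3,2)$, the initial forms of the two Jacobi generators are $3x^2+y^3$ and $xy^2+yxy+y^2x$. The $y^{n-1}$ term has weight $2n-2>7$ and leaves no image, so these initial forms only cut out the infinite-dimensional order $C$, of rank $12$ over $\kk[[z]]$.

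The missing idea is a syzygy with a new initial form. Resolving the overlap $xyxxyy$ of $x^2$ and $xy^2$ gives the element $\mathcal R(a,b,c)$ of \cref{ap:prop:overlap}. Its initial form is $nc_nz^qE_r$, because terms quadratic in the coefficients gain weight at least $2n-9>0$; this is exactly where $n\ge5$ enters. In the matrix model, $E_r$ is a nonzero multiple of $u^{4+r}D$, so $G_{n+3}=C/Cu^{n+3}D\twoheadrightarrow\operatorname{gr}A_p$. One must then still compute $\dim G_N=4N$ from the lattices $\mathscr D,\mathscr O$ (\cref{lem:homogeneous-order-quotient,lem:weighted-upper-bound}).

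Running the terminal cyclic identities cannot substitute for this. They are relations in $A_p/[A_p,A_p]$, not in $A_p$, so they bound neither $\dim A_p$ nor the initial ideal. Higher-order terms are handled by that weight estimate, not absorbed by completeness.

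Your $\HH_0$ route to the centre can be completed once $G_N$ is identified: $\dim\HH_0(A)\le\dim\HH_0(\operatorname{gr}A)$, and one checks $\dim\HH_0(G_N)=N+4$. Converting this into $\dim Z(A_p)\le n+7$, however, needs symmetry from the flop plus a descent from $\kk$ to $\mathbb C$, which you omit. The same descent is needed for uniqueness over arbitrary $\kk$. The paper avoids this for $n\ge5$ by using $\operatorname{gr}Z(A)\subseteq Z(\operatorname{gr}A)$ together with $\dim Z(G_N)=N+4$.
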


\subsection{A central-unit subfamily and jet stability}
Work over \(\kk\) and take completed, closed Jacobi ideals.
Write \(A_n=\Jac(x^3+xy^3+y^n)\), \(n\ge4\). The monomial case, recovered below and in the terminal-relation argument of Appendix~\ref{app:four}, has \(\dim A_4=27\), \(\dim Z(A_4)=10\), and for \(n\ge5\)
\(\dim A_n=4n+12\), \(\dim Z(A_n)=n+7\). The respective semisimple targets are
\[
S_4=\kk^6\times\Mat_2(\kk)^3\times\Mat_3(\kk),
\qquad
S_n=\kk^7\times\Mat_2(\kk)^{n-1}\times\Mat_3(\kk).
\]

\begin{theorem}\label{ap:thm:subfamily}
Fix \(n\ge4\) and a unit \(h(z)\in\kk[[z]]\). Then
$\Jac\bigl(x^3+xy^3+y^nh(y^3)\bigr)\cong A_n$.
Consequently this entire infinite-parameter subfamily has the same
dimension, centre dimension, and distinguished semisimple deformation
target as \(A_n\).
\end{theorem}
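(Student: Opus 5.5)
The plan is to show that $F_h:=x^3+xy^3+y^nh(y^3)$ and $F_1:=x^3+xy^3+y^n$ are right-equivalent modulo cyclic equivalence. That is, I want a continuous automorphism $\psi$ of $\kk\langle\!\langle x,y\rangle\!\rangle$ with $\psi(F_h)\sim_{\mathrm{cyc}}F_1$, possibly after rescaling the potential by a nonzero constant. Such a $\psi$ induces an isomorphism of completed Jacobi algebras, because cyclic derivatives transform covariantly under automorphisms and the closed Jacobi ideal is carried onto the closed Jacobi ideal. First I rescale $x$ and $y$ by constants to normalize $h(0)=1$; this is possible since $n\neq 3$, so the three monomials have independent weights. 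I then write $h(z)=1+\sum_{k\ge1}h_kz^k$. The task becomes removing the terms $h_ky^{n+3k}$ one at a time, in increasing $y$-degree.

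The basic step is a first-order substitution $x\mapsto x+\varepsilon X$, $y\mapsto y+\varepsilon Y$. It changes the potential by $X\,\partial_xF+Y\,\partial_yF$ up to cyclic equivalence, plus terms of strictly higher order. I take $Y=c\,y^{3k+1}$. Then
\[
Y\partial_yF\sim_{\mathrm{cyc}} 3c\,xy^{3k+3}+nc\,y^{n+3k}+(\text{higher}),
\]
so choosing $nc=-h_k$ cancels $h_ky^{n+3k}$. The by-product $3c\,xy^{3k+3}$ must then be removed with an $x$-substitution. Taking $X=-3c\,y^{3k}$ gives $X\partial_xF=-3c\,y^{3k}(3x^2+y^3)$. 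This cancels nothing of the form $xy^{3k+3}$, so I will instead combine two moves.

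\begin{itemize}
\item The combined move is: replace $y^{3k+3}$ by $-3x^2y^{3k}$ using $\partial_xF$, then absorb $x^3y^{3k}$-type words by $x\mapsto x+\alpha xy^{3k}$ (cyclically symmetrized), iterating.
\item The weight filtration I use gives $x$ weight $3$ and $y$ weight $2$, so that $x^3$ and $xy^3$ both have weight $9$.
\item Every correction has weight strictly larger than that of the term it removes: the removed terms have weight $2(n+3k)$, and the corrections exceed it because $n\ge4$. This guarantees that the infinite composite of substitutions converges in the adic topology.
\end{itemize}

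The main obstacle is this bookkeeping: showing that each elimination produces only terms of strictly higher weight that again lie in the span of (cyclic words)$\cdot\partial F$ plus pure powers $y^{n+3k'}$ with $k'>k$. The clean way to handle it is a finite-determinacy statement. Because $A_n$ is finite dimensional (Brown--Wemyss, and \cref{app:four} for $n=4$), the Jacobi ideal contains all words of sufficiently high length. The standard noncommutative Mather argument then shows that any perturbation lying in $\mathfrak m^2\cdot\overline{(\partial_xF,\partial_yF)}$ up to cyclic equivalence is removable by a convergent coordinate change. For the ideal-membership check I would try the centrality of $y^3$ modulo the relations first: $y^3=-3x^2$ commutes with $x$, and it commutes with $y$ trivially. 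Centrality lets me write $y^{n+3k}=y^{3k+1}\cdot y^{n-1}$ with $y^{n-1}\equiv -\tfrac1n(y^2x+yxy+xy^2)$, and pass the central factor $y^{3k}$ freely through cyclic words. The homotopy argument would then be run along $F_t=x^3+xy^3+y^nh(ty^3)$ for $t\in[0,1]$, with $\partial_tF_t$ in the relevant ideal uniformly in $t$.

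Once the isomorphism $\Jac(F_h)\cong A_n$ is established, the consequences in the statement are immediate. The dimension and centre dimension follow from \cref{thm:main-existence} and \cref{prop:center-count}, and the semisimple target is unique by \cref{thm:main-unique}.
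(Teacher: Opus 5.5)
\textbf{The gap.} Your strategy tries to prove a right-equivalence $\psi(F_h)\sim_{\mathrm{cyc}}\lambda F_1$. That statement is false in general, so the Mather-type step cannot be carried out.

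A first-order change $x\mapsto x+\varepsilon\xi_x$, $y\mapsto y+\varepsilon\xi_y$ alters the potential by $\xi_x\partial_xF+\xi_y\partial_yF$ modulo commutators, that is, by an element of $I+[\,\cdot\,,\cdot\,]$, where $I$ is the Jacobi ideal. It therefore does not move the class of the potential in $\HH_0(\Jac F)$, and rescaling moves it only along $[F]$. For $F=F_1$, the rule $[y^jx]=-\frac n3[y^{n+j-3}]$ ($j\ge2$) gives $[F_1]=(1-\frac{2n}{9})[y^n]$.

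On the other hand, $\partial_tF_t|_{t=0}=h'(0)\,y^{n+3}$. By \cref{thm:cyc-rank}, $[y^n]$ and $[y^{n+3}]$ are distinct basis vectors of $\HH_0(A_n)$ for every $n\ge4$, and so is $[y^{n+6}]$ when $n\ge5$. Hence $y^{n+3}$ is not cyclically equivalent to any element of $I$, let alone of $\mathfrak m^2 I$. Your hypothesis that $\partial_tF_t$ lies in the relevant ideal uniformly in $t$ therefore already fails at $t=0$ whenever $h'(0)\ne0$.

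Your two moves show this concretely. They produce the relations $3[xy^{3k+3}]+n[y^{n+3k}]=0$ and $3[x^3y^{3k}]+[xy^{3k+3}]=0$, which only trade the three words for one another and never kill $[y^{n+3k}]$. Centrality of $y^3$ cannot help, because it holds only modulo $I$, which is precisely the equivalence already factored out.

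Running the same tangent computation in a jet space $\kk\langle\!\langle x,y\rangle\!\rangle/\mathfrak m^N$ with $\mathfrak m^N\subseteq I$ shows more. The potential $x^3+xy^3+y^n+ty^{n+3}$ lies in the orbit of $\kk^\times F_1$ for only finitely many $t$, even though its Jacobi algebra is $A_n$ by the statement you are proving. Only the tail terms $y^{n+3k}$ with $k\ge3$ (with $k\ge2$ when $n=4$) are removable by finite determinacy. That is exactly what \cref{ap:lem:jets} and \cref{ap:prop:tails} do.

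\textbf{The missing idea.} The isomorphism is not induced by any change of potential. It must be built on the relations, using that $z=y^3=-3x^2$ is central in the quotient.

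In $\Jac(F_h)$ the second relation reads $R(x,y)+ny^{n-1}U(z)=0$, where $R=xy^2+yxy+y^2x$ and $U=h+\frac3n zh'$. Put $\beta=U^{2/(2n-9)}$ and $\alpha=\beta^{3/2}$. Then $X=\alpha(z)x$ and $Y=\beta(z)y$ satisfy $3X^2+Y^3=\beta^3(3x^2+y^3)$ and $R(X,Y)+nY^{n-1}=\beta^{7/2}(R+ny^{n-1}U)$. This gives a homomorphism $A_n\to\Jac(F_h)$.

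For bijectivity, note that $x\mapsto\alpha(y^3)x$, $y\mapsto\beta(y^3)y$ is an automorphism of the free algebra. Modulo the transported ideal, $Y^3=y^3\beta(y^3)^3$ is central, and formal inversion of $z\mapsto z\beta(z)^3$ then makes $y^3$ central as well. The displayed identities can therefore be reversed, and the two closed ideals coincide. This substitution does not carry $F_1$ to $F_h$ up to cyclic equivalence, which is consistent with the obstruction above.

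Once the isomorphism is in hand, your deduction of the dimension, centre and semisimple-target consequences is fine.
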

\begin{proof}
Denote the algebra on the left by \(A_h\), put \(z=y^3\), and set
\[
U(z)=h(z)+\frac3n zh'(z),\qquad
\beta(z)=U(z)^{2/(2n-9)},\qquad \alpha(z)=\beta(z)^{3/2}.
\]
All these are central units in \(A_h\): the relation
\(3x^2+y^3=0\) implies that \(z=-3x^2\) commutes with both \(x\)
and \(y\). The other Jacobi relation is
\(R(x,y)+ny^{n-1}U(z)=0\), where
\(R(x,y)=xy^2+yxy+y^2x\). Put
\(X=\alpha(z)x\) and \(Y=\beta(z)y\). Then
$3X^2+Y^3=\beta^3(3x^2+y^3)=0$,
and, because \(\beta^{n-9/2}=U\),
\[
R(X,Y)+nY^{n-1}
=\beta^{7/2}\bigl(R(x,y)+ny^{n-1}U(z)\bigr)=0.
\]
Thus \(X,Y\) define a homomorphism \(A_n\to A_h\).

For completeness, this is an isomorphism without any appeal to a
dimension comparison. The substitution
\(x\mapsto\alpha(y^3)x\), \(y\mapsto\beta(y^3)y\)
is an automorphism of the completed free algebra: its linear part has
two nonzero diagonal coefficients. In the quotient by the image of the
monomial Jacobi ideal, \(Y^3=-3X^2\) is central. But
\(Y^3=f(y^3)\), where \(f(z)=z\beta(z)^3\) is invertible under
formal composition. Hence \(y^3=f^{-1}(Y^3)\) is also central.
We may therefore reverse the two displayed relation calculations,
recovering exactly the Jacobi ideal of \(A_h\). This proves equality
of the two closed ideals after substitution.
\end{proof}

\begin{lemma}\label{ap:lem:jets}
Let \(F=\kk\langle\!\langle x,y\rangle\!\rangle\) with
augmentation ideal \(\mathfrak m\), and let \(I,I'\) be closed ideals
generated by corresponding lists \(r_i,r'_i\). If
\(\mathfrak m^N\subseteq I\) and
\(r_i-r'_i\in\mathfrak m^{N+1}\) for every \(i\), then \(I=I'\).
\end{lemma}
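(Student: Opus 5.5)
The plan is to prove the two inclusions $I'\subseteq I$ and $I\subseteq I'$ separately. The first is immediate. The second is an iterated Nakayama-type argument that uses the closedness of $I'$. Throughout I use that, for $k\ge0$, any two-sided ideal containing $\mathfrak m^{k}$ is open, hence closed, in the $\mathfrak m$-adic topology of $F$. I also use that the closed ideal generated by a list of elements is the $\mathfrak m$-adic closure of the algebraic two-sided ideal it generates.

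\emph{Step 1: $I'\subseteq I$.} For each $i$ we have $r'_i=r_i-(r_i-r'_i)$ with $r_i\in I$ and $r_i-r'_i\in\mathfrak m^{N+1}\subseteq\mathfrak m^N\subseteq I$. Hence every generator $r'_i$ lies in $I$. Since $I$ is a closed two-sided ideal, it contains the closed ideal $I'$ generated by the $r'_i$.

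\emph{Step 2: $I\subseteq I'+\mathfrak m^{N+1}$.} An element of the algebraic ideal generated by the $r_i$ is a finite sum of terms $a\,r_i\,b$. Each such term equals $a\,r'_i\,b+a\,(r_i-r'_i)\,b$, which lies in $I'+\mathfrak m^{N+1}$. The ideal $I'+\mathfrak m^{N+1}$ contains $\mathfrak m^{N+1}$, so it is closed. Taking closures therefore gives $I\subseteq I'+\mathfrak m^{N+1}$.

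\emph{Step 3: $\mathfrak m^N\subseteq I'$, which is the main point.} Combining $\mathfrak m^N\subseteq I$ with Step 2 gives $\mathfrak m^N\subseteq I'+\mathfrak m^{N+1}=I'+\mathfrak m\cdot\mathfrak m^N$. Substituting this inclusion into itself yields
\[
\mathfrak m^N\subseteq I'+\mathfrak m\bigl(I'+\mathfrak m\cdot\mathfrak m^N\bigr)\subseteq I'+\mathfrak m^{N+2},
\]
because $I'$ is a two-sided ideal. By induction on $k$ we get $\mathfrak m^N\subseteq I'+\mathfrak m^{N+k}$ for every $k\ge1$. The intersection $\bigcap_k(I'+\mathfrak m^{N+k})$ is the $\mathfrak m$-adic closure of $I'$, and this equals $I'$ since $I'$ is closed. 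Hence $\mathfrak m^N\subseteq I'$. I expect the only delicate point to be this use of closedness: the intersection must be identified with the closure, which holds because $\{\mathfrak m^{k}\}$ is a neighbourhood basis of $0$ in $F$.

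\emph{Step 4: conclusion.} By Step 3, $\mathfrak m^{N+1}\subseteq\mathfrak m^N\subseteq I'$. Then Step 2 gives $I\subseteq I'+\mathfrak m^{N+1}=I'$. Together with Step 1 this proves $I=I'$. The argument is symmetric in spirit but not in its hypotheses: the assumption $\mathfrak m^N\subseteq I$ is used to transfer the same containment to $I'$.
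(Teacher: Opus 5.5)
Your proof is correct and follows essentially the same route as the paper: you show $\mathfrak m^N\subseteq I'+\mathfrak m^{N+1}$, iterate and use closedness of $I'$ to get $\mathfrak m^N\subseteq I'$, and then both inclusions follow because the differences $r_i-r'_i$ lie in both ideals. The paper phrases the key step as $J^N=J^{N+1}$ in $F/I'$ followed by separatedness of the completed filtration, which is the same as your identification of $\bigcap_k(I'+\mathfrak m^{N+k})$ with the closure $I'$.
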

\begin{proof}
Modulo \(\mathfrak m^{N+1}\) the two quotient algebras agree, and
\(\mathfrak m^N\subseteq I'+\mathfrak m^{N+1}\).
In \(F/I'\), this says \(J^N=J^{N+1}\); iteration and separatedness
of the completed augmentation filtration imply \(J^N=0\).
Therefore \(\mathfrak m^N\subseteq I'\). The differences
\(r_i-r'_i\) now lie in both ideals, giving both inclusions.
\end{proof}

\begin{proposition}\label{ap:prop:tails}
Let \(h(0)\ne0\). The conclusion of Theorem~\ref{ap:thm:subfamily}
continues to hold if the potential \(y^nh(y^3)\) is replaced by
\[
y^nh(y^3)+e(y),\quad
\begin{cases}
e(y)\in y^{12}\kk[[y]],&n=4,\\
e(y)\in y^{n+9}\kk[[y]],&n\ge5.
\end{cases}
\]
In particular, the coefficients of these arbitrarily high powers
may be chosen independently and without a smallness condition.
\end{proposition}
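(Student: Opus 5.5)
The plan is to apply Lemma~\ref{ap:lem:jets} with $I$ the closed Jacobi ideal of $x^3+xy^3+y^nh(y^3)$, which equals (after the automorphism of Theorem~\ref{ap:thm:subfamily}) the Jacobi ideal of $A_n$ up to isomorphism. Let $I'$ be the Jacobi ideal of the perturbed potential. The cyclic derivatives change only in the $y$-relation: $r_x'=r_x$, while $r_y'=r_y+e'(y)$. If $e\in y^{M}\kk[[y]]$, then $e'(y)\in y^{M-1}\kk[[y]]\subseteq\mathfrak m^{M-1}$. It therefore suffices to find $N$ with $\mathfrak m^N\subseteq I$ and $M-1\ge N+1$, i.e.\ $N\le M-2$. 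This gives $N=10$ for $n=4$ and $N=n+7$ for $n\ge5$. Then Lemma~\ref{ap:lem:jets} yields $I=I'$ literally, so $\Jac$ of the perturbed potential equals $A_h\cong A_n$. The dimension, centre and semisimple-target conclusions then transfer verbatim.

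The key step is the nilpotency bound: in $A_n$ (equivalently $A_h$, since $x\mapsto\alpha x$, $y\mapsto\beta y$ preserves the $\mathfrak m$-adic filtration and so preserves the Loewy length) the radical satisfies $J^{10}=0$ for $n=4$ and $J^{n+7}=0$ for $n\ge5$. I would obtain this from the structure already used in the dimension count.

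First, $z=y^3=-3x^2$ is central, and the second relation expresses $xy^2$ modulo $y^2x+yxy$ plus $y^{n-1}$. Words in $x,y$ therefore reduce to a finite spanning set of the form $w\,y^{3k}$, with $w$ drawn from the 12 short words of the spanning reduction in \cref{app:center}. The socle is then reached through the longest surviving power of $y$.

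Next I would bound the length of the longest nonzero word by tracking the grading with $\deg x=3$ and $\deg y=2$. The homogeneous part of the relations is quasi-homogeneous of degree $6$ and $7$, and the $y^{n-1}$ term is a higher-order perturbation. A word of $\mathfrak m$-length $L$ has weighted degree at least $2L$. Since $A_n$ is symmetric with socle in the top weighted degree, computable from the dimension $4n+12$ and the terminal relations of \cref{app:center} and \cref{app:four}, this gives $J^{L}=0$ once $L$ exceeds the socle degree.

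The main obstacle is making this Loewy-length bound sharp enough; I expect it to be roughly $n+6$ or $n+7$, since the top word is essentially $y^{n+5}$-type, matching the $[y^{9}],[y^{10}],[y^{11}]$ relations at $n=5$. The margin in the threshold $n+9$ strongly suggests the intended bound is $\mathfrak m^{n+7}\subseteq I$. I would first try to read it directly from the explicit bases of Theorem~\ref{thm:cyc-rank} and Appendix~\ref{app:four}. Failing that, I would use the symmetric pairing: the socle is spanned by a single word of length $\ell$, and every word of length greater than $\ell$ vanishes.
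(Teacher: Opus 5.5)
Your reduction is the same as the paper's. Only the $y$-relation changes, by $e'(y)\in\mathfrak m^{11}$ (resp.\ $\mathfrak m^{n+8}$), so Lemma~\ref{ap:lem:jets} with $N=10$ (resp.\ $N=n+7$) gives equality of the closed ideals as soon as $J(A_h)^N=0$. That bound transfers from $A_n$ to $A_h$ through the isomorphism of Theorem~\ref{ap:thm:subfamily}.

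The gap is that you never prove this nilpotence bound, and it is needed with zero slack. $N$ must equal the Loewy length exactly: for $n\ge5$ the basis class $[y^{n+6}]$ in Theorem~\ref{thm:cyc-rank} shows $J^{n+6}\ne0$. So ``roughly $n+6$ or $n+7$'', or reading $n+7$ off the threshold $n+9$ in the statement, is not an argument. None of the routes you sketch closes this as written.
\begin{itemize}
\item The bases in Theorem~\ref{thm:cyc-rank} are bases of $\HH_0(A_n)$, not of $A_n$. They only give lower bounds on the Loewy length, and their proof already uses $J^{n+7}=0$.
\item The socle argument gives no upper bound. In a local symmetric algebra with $J^L=0\ne J^{L-1}$ one has $\operatorname{soc}A=J^{L-1}$. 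So knowing that a word of length $\ell$ spans the socle only gives $L\ge\ell+1$, not $J^{\ell+1}=0$.
\item In the weighted argument, symmetry is beside the point; what matters is the top weight of $\operatorname{gr}A$. Your threshold is misstated: you need $2L$, not $L$, to exceed that top weight. You never compute the top weight. At $n=4$ the premise fails, since $4y^3$ has weight $6<7$ and is not a higher-order perturbation of the weight-$7$ relation.
\end{itemize}

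The paper simply imports the two bounds. $J(A_4)^{10}=0$ is Proposition~\ref{prop:four-terminal}. For $n\ge5$, the Brown--Wemyss higher coranks \cite{BW2025} already sum to $4n+12$ by degree $n+6$, which gives \eqref{eq:cyc-nil}.

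Your weighted idea can be completed for $n\ge5$ into a self-contained alternative. It avoids the corank table and works for every $p$ of order $n$.
\begin{itemize}
\item Lemma~\ref{lem:weighted-upper-bound} gives a graded surjection $G_{n+3}\twoheadrightarrow\operatorname{gr}A_p$.
\item From the lattices in Lemma~\ref{lem:homogeneous-order-quotient}, the diagonal quotient $\mathscr D/u^ND\mathscr D$ of $G_N$ ends in $u$-degree $N+3$, so weight $2N+6$. The off-diagonal quotient ends in weight $2N+3$.
\item With $N=n+3$ the top weight is $2n+12$, whereas $J^{n+7}$ lies in weight at least $2n+14$.
\item The weight filtration therefore stabilizes at $F^{2n+13}$. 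This term is $0$ because $F^{3L}\subseteq J^L$ and $J$ is nilpotent, so $J^{n+7}=0$.
\end{itemize}
For $n=4$ you still need the terminal relations of Appendix~\ref{app:four}.
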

\begin{proof}
Appendix~\ref{app:four} gives \(J(A_4)^{10}=0\), and the
higher-corank sequence of Brown--Wemyss~\cite[\S1.3 and
Definition~4.9]{BW2025} gives
\(J(A_n)^{n+7}=0\) for \(n\ge5\): its listed quotients
\(J^i/J^{i+1}\), together with the total length \(4n+12\), exhaust
the algebra by degree \(n+6\). These are also the nilpotence
bounds for \(A_h\) by Theorem~\ref{ap:thm:subfamily}. The \(y\)-derivative
of \(e(y)\) lies respectively in \(\mathfrak m^{11}\) or
\(\mathfrak m^{n+8}\). Apply Lemma~\ref{ap:lem:jets} with
\(N=10\) or \(N=n+7\).
\end{proof}

\subsection{Finite dimensionality and the abelianization for every parameter}
The following result makes no assumption that the algebra is geometric
or isomorphic to a monomial normal form.

\begin{theorem}\label{ap:thm:finite}
Let \(p(y)=c_ny^n+O(y^{n+1})\in\kk[[y]]\), where
\(n\ge4\) and \(c_n\ne0\). Then
$\dim_{\kk}A_p<\infty$.
Moreover, for the commutative abelianization,
\[
\dim_{\kk}(A_p)_{\mathrm{ab}}=
\begin{cases}6,&n=4,\\7,&n\ge5.\end{cases}
\]
\end{theorem}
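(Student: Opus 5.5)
The abelianization count is a commutative intersection-multiplicity computation. Finite dimensionality I would prove by making $A_p$ a finite module over a central power series ring $\kk[[z]]$ and then showing that its localization at $z$ vanishes.

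\emph{Abelianization.} $(A_p)_{\mathrm{ab}}$ is the local ring $\kk[[x,y]]/(3x^2+y^3,\,3xy^2+p'(y))$, whose length is the intersection multiplicity of the two curves.
\begin{itemize}
\item Write $p'(y)=y^2g(y)$ with $g(y)=nc_ny^{n-3}+\cdots$, so $\operatorname{ord}g=n-3\ge1$. The second generator factors as $y^2(3x+g(y))$.
\item Put $f=3x^2+y^3$. Additivity of intersection multiplicities gives
\[
I(f,y^2(3x+g))=2I(f,y)+I(f,3x+g).
\]
\item Since $f|_{y=0}=3x^2$, we get $I(f,y)=2$.
\item Substituting $x=-g/3$ into $f$ gives $g^2/3+y^3$, of order $\min(2n-6,3)$. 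This is $2$ for $n=4$ (no cancellation since $2<3$) and $3$ for $n\ge5$.
\end{itemize}
Hence the length is $6$ for $n=4$ and $7$ for $n\ge5$. This only requires $\dim(A_p)_{\mathrm{ab}}<\infty$, which the computation itself shows.

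\emph{Finite dimensionality, step 1: a central element and finiteness over it.} Put $z:=y^3=-3x^2$ in $A_p$. It commutes with $y$ trivially, and with $x$ because it equals $-3x^2$. So $z$ is central and $A_p$ is a (complete) $\kk[[z]]$-algebra.

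I then claim $A_p$ is a finite $\kk[[z]]$-module. Modulo $z$, every word reduces to one alternating between $x$ and $y^{\pm}\in\{y,y^2\}$. The relation
\[
y^2x=-yxy-xy^2-p'(y)
\]
lets one rewrite such words. By the argument of \cref{lem:topNak}, with $(z)$ in place of $(t)$, it suffices to show $A_p/(z)$ is finite dimensional. This is a finite computation: a word-length filtration in which $x^2$ and $y^3$ vanish and the second relation eventually kills all long alternating words. I expect this to mirror the $12$-word spanning reduction of \cref{app:center}, with the leading coefficient $nc_n\neq0$ doing the work.

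This step is the main obstacle. The leading homogeneous part alone, $\Jac(x^3+xy^3)$, is the infinite one-dimensional $E_{6,\infty}$ limit. So the reduction must keep the $y^{n-1}$ term, or else use that $E_{6,\infty}$ is finite over its central parameter and apply a filtered comparison. I would try the filtered comparison first: weight $x$ by $3$ and $y$ by $2$, and show the associated graded is a quotient of a homogeneous order that is finite over $\kk[z]$.

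\emph{Finite dimensionality, step 2: the generic fibre vanishes.} Given step 1, $L:=A_p\otimes_{\kk[[z]]}\overline{\kk((z))}$ is a finite-dimensional algebra. It suffices to show $L=0$, since then $A_p$ is $z$-torsion and finite over $\kk[[z]]$, hence finite dimensional.

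If $L\neq0$ it has a simple module $V$. On $V$ we have $y^3=z\neq0$, so $y$ is semisimple with eigenvalues among $\lambda,\omega\lambda,\omega^2\lambda$, where $\lambda^3=z$; also $x^2=-z/3$. Decompose the second relation into eigenspace blocks as in \cref{prop:scalar,prop:two,prop:three}, with $t=0$.
\begin{itemize}
\item The off-diagonal blocks vanish automatically.
\item The diagonal blocks force identities of the form $p'(\lambda)^2+3\lambda^7=0$, or their two- and three-eigenvalue analogues such as the root-of-unity sum for $\tau$.
\end{itemize}
Here $\lambda=z^{1/3}$ is transcendental over $\kk$ in the Puiseux field, and these are nonzero power series in $\lambda$ because their lowest terms, of order $\le 2n-2$ or $7$, do not cancel. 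This is a contradiction, so $L=0$ and $\dim_\kk A_p<\infty$.
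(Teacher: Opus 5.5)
Your abelianization computation is correct and is a valid alternative to the paper's. The paper takes the $y$-order of the determinant $q(y)^2+3y^7=y^4(g^2+3y^3)$ of multiplication by $q+3xy^2$ on the basis $1,x$; your additivity $2I(f,y)+I(f,3x+g)$ gives the same $4+\min(2n-6,3)$. Your overall plan for finiteness (finite over $\kk[[z]]$, generic fibre zero, hence $z$-torsion) is also the paper's.

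Step~1 is misdiagnosed, though, and the misdiagnosis leaves a real gap in step~2. Since $p'(y)\in y^3\kk[[y]]=z\kk[[y]]$, the term $p'(y)$ vanishes modulo $z$. So $A_p/(z)=\kk\langle\!\langle x,y\rangle\!\rangle/\overline{(x^2,y^3,xy^2+yxy+y^2x)}$ is independent of $p$ and of $c_n$. The rules for $x^2$, $y^3$, $xy^2$ and $xyxy\to yxyx+\cdots$ (obtained by commuting $x$ with the second relation) leave exactly the twelve words $y^i,y^ix,y^ixy,y^ixyx$, $0\le i\le2$. These therefore span $A_p$ over $\kk[[z]]$. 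No filtered comparison is needed, and $nc_n\neq0$ plays no role here. Indeed $E_{6,\infty}$ is itself free of rank twelve over $\kk[[z]]$; its infinite $\kk$-dimension reflects a nonzero generic fibre, which is exactly what step~2 must rule out for $A_p$.

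The gap is that \cref{prop:scalar,prop:two,prop:three} \emph{construct} representations at $t\neq0$. They do not show that every simple module $V$ of $L=A_p\otimes\overline{\kk((z))}$ has one of those three shapes, and your step~2 never classifies simple modules.

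You need the explicit bound $\dim L\le12$ from the twelve words, hence $\dim V\le3$. Then a $Y$-eigenspace of dimension $\ge2$ contains a vector killed by the off-diagonal blocks of $X$, and that vector spans an invariant line. Without this bound the argument does not close. For example, for $p=y^4$ one has $\sum_i\alpha_i=0$, where $\alpha_i=-p'(\lambda_i)/(3\lambda_i^2)$. So a putative simple module with $Y$-multiplicities $(2,2,2)$ is not excluded by any trace count.

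Moreover, the obstructions in dimensions two and three are not diagonal-block identities. Your order count ``$\le 2n-2$ or $7$'' only covers dimension one.
\begin{itemize}
\item In dimension two, simplicity makes both off-diagonal entries of $X$ nonzero. The off-diagonal part of $X^2=-z/3$ then forces $\alpha_i+\alpha_j=0$. Its leading term is a nonzero multiple of $nc_n(1+\omega^{k(n-3)})\lambda^{n-3}$, and this is where $c_n\neq0$ is needed for finiteness.
\item In dimension three, $X^2=-z/3$ with $z\neq0$ makes $X$ diagonalizable with eigenvalues $\pm\sqrt{-z/3}$. So $\operatorname{tr}X\in\{\pm\sqrt{-z/3},\pm3\sqrt{-z/3}\}$ has $z$-order $1/2$. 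The root-of-unity sum instead gives $\operatorname{tr}X=\sum_i\alpha_i=-\sum_{3\mid m}mc_mz^{(m-3)/3}$, which is zero or of integral order $\ge1$.
\end{itemize}
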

\begin{proof}
Write \(q(y)=p'(y)\), \(z=y^3=-3x^2\) and \(R=\kk[[z]]\).
The four reduction rules displayed in the next section span
\(A_p\) over \(R\) by
$y^i, y^ix, y^ixy, y^ixyx$ for $0\le i\le2$.
Indeed, they replace \(x^2,y^3,xy^2,xyxy\) by words of smaller
length-lexicographic order \(x>y\), treating their coefficients in
\(R\) as scalars. The twelve words avoiding these four subwords are
exactly the displayed list. Reduction of polynomials, followed by
completeness, gives a surjection \(R^{12}\twoheadrightarrow A_p\):
modulo \(z\), all words of length six vanish, so
\(J(A_p)^6\subseteq zA_p\), ensuring convergence in the
\(z\)-adic topology.
Thus \(B=A_p\otimes_RK\) is an algebra of dimension at most twelve
over \(K=\kk((z))\).

We claim \(B=0\). Otherwise, over an algebraic closure \(L\) of \(K\),
\(B\otimes_K L\) has a simple module \(V\) of dimension at most three:
the square of the dimension of a simple module is bounded by the
dimension of a finite-dimensional algebra. Let \(X,Y\) be the matrices
of \(x,y\) on \(V\). The three roots \(\lambda_i=\omega^i\lambda\) of
\(T^3-z\), with \(\lambda^3=z\), are distinct. Hence \(Y\) is
diagonalizable. On each of its eigenspaces, the diagonal part of the
second Jacobi relation forces the corresponding diagonal block of
\(X\) to be scalar:
\[
X_{ii}=\alpha_i I,\qquad
\alpha_i=-\frac{q(\lambda_i)}{3\lambda_i^2}.
\]
If an eigenspace has dimension at least two, then, since \(\dim V\le3\),
the map from it to the sum of the other eigenspaces under \(X\) has
a nonzero kernel. A line in that kernel is invariant under both
\(X\) and \(Y\), contradicting simplicity unless \(\dim V=1\).
It remains to exclude simple modules with one, two, or three distinct
\(Y\)-eigenvalues.

For a one-dimensional module, \(X^2=-z/3\) and
\(3X\lambda^2+q(\lambda)=0\) give
\(q(\lambda)^2+3\lambda^7=0\). The two summands have different
\(\lambda\)-adic orders \(2n-2\) and \(7\), a contradiction.
For a two-dimensional module with distinct \(Y\)-eigenvalues
\(\lambda_i,\lambda_j\), both off-diagonal entries of \(X\) must be
nonzero for simplicity. The off-diagonal part of \(X^2=-zI/3\)
then forces \(\alpha_i+\alpha_j=0\). But the leading term of this sum
is a nonzero multiple of
\(\lambda_i^{n-3}(1+\omega^{(j-i)(n-3)})\);
\(1+\omega^k\ne0\) for every integer \(k\). Finally, in dimension
three all three \(Y\)-eigenvalues are distinct, and the trace is
\[
\operatorname{tr}X=\sum_{i=0}^2\alpha_i=-c(z),
\qquad q(y)=a(z)+yb(z)+y^2c(z),\quad c(z)\in zR.
\]
Since \(X^2=-zI/3\), its eigenvalues are \(\pm\sqrt{-z/3}\).
In dimension three its trace belongs to
\[
\bigl\{\sqrt{-z/3},-\sqrt{-z/3},
3\sqrt{-z/3},-3\sqrt{-z/3}\bigr\}.
\]
Each possibility has \(z\)-adic order \(1/2\), whereas \(-c(z)\)
is zero or has integral order at least one. Thus \(B=0\).
A finite \(R\)-module that becomes
zero upon inverting \(z\) is annihilated by some power of \(z\),
so \(A_p\) is finite dimensional over \(\kk\).

For the final assertion, abelianization gives
\[
(A_p)_{\mathrm{ab}}
\cong \frac{\kk[[y]][x]}
{(x^2+y^3/3,\;q(y)+3xy^2)}.
\]
The first quotient is free of rank two over \(\kk[[y]]\).
Multiplication by \(q(y)+3xy^2\), in its basis \(1,x\), has
determinant \(q(y)^2+3y^7\). Its cokernel has length equal to the
\(y\)-adic order of this determinant. That order is six when
\(n=4\), and seven when \(n\ge5\).
\end{proof}

\subsection{The unrestricted overlap identity}
Let \(p(y)\in y^4\kk[[y]]\), and decompose uniquely
\[
p'(y)=a(z)+y b(z)+y^2c(z),\qquad z=y^3,
\quad a,b,c\in z\kk[[z]].
\]
Once again \(z=-3x^2\) is central. Let \(R=xy^2+yxy+y^2x\).
The defining relations and their commutator imply
\begin{align*}
x^2&=-z/3,\quad y^3=z,\quad
xy^2=-yxy-y^2x-a-by-cy^2,\\
xyxy&=yxyx-b(xy-yx)+c(yxy+2y^2x+a+by+cy^2).
\end{align*}

\begin{proposition}\label{ap:prop:overlap}
In \(A_p=\Jac(x^3+xy^3+p(y))\), the following element is zero:
\begin{align*}
\mathcal R(a,b,c)={}&
 a\left(xyx+\tfrac z3y\right)
 +b\left(yxyx-\tfrac23zy^2\right)
 +c\left(4y^2xyx-\tfrac23z^2\right)\\
 &+b^2yx+b^2cy+abx+abc+ac(xy+yx)+2ac^2y\\
 &+bc(yxy+5y^2x)+3bc^2y^2
   +4c^2(y^2xy+zx)+2c^3z.
\end{align*}
The identity holds for arbitrary formal series \(a,b,c\).
\end{proposition}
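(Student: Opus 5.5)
We will obtain $\mathcal R(a,b,c)=0$ by resolving a single overlap ambiguity among the four reduction rules displayed just before the statement. All coefficients $a,b,c$ lie in $\kk[[z]]$, and $z$ is central. So throughout we treat $a,b,c,z$ as central scalars and work in the $\kk[[z]]$-span of the twelve normal words $y^i,\,y^ix,\,y^ixy,\,y^ixyx$ ($0\le i\le2$) from the proof of Theorem~\ref{ap:thm:finite}. The natural overlap, the ``six-letter overlap'' mentioned in Proposition~\ref{prop:center-count}, is the word $xyxy\cdot xy$ (equivalently, $xy\,xy^2$ read through the rule for $xy^2$). Concretely, we reduce the element $xyxy^2$ in two ways:
\begin{align*}
\text{(i)}&\quad (xyxy)\,y, \text{ using the $xyxy$ rule first;}\\
\text{(ii)}&\quad xy\,(xy^2), \text{ using the $xy^2$ rule first.}
\end{align*}
We then continue each until only normal words remain. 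Since both reductions are valid consequences of the relations in $A_p$, their difference is zero in $A_p$. The expected outcome is that this difference equals $\mathcal R(a,b,c)$ up to a unit or sign.

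The key steps, in order, are as follows.

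First, we confirm that the four rules follow from the relations. The $xyxy$ rule comes from the commutator identity $[x,R]=0$: expand $xR-Rx$ using $x^2=-z/3$ and the $xy^2$ rule, and note that $[x,a+by+cy^2]=b[x,y]+c[x,y^2]$.

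Second, we expand route (i). This gives $yxyxy-b(xy^2-yxy)+c(yxy^2+2y^2xy+ay+by^2+cy^3)$. Each non-normal subword is then reduced recursively, for instance $xy^2$ inside $yxyxy$ (via $yxy\cdot xy$). Degree bookkeeping is done with $y^3=z$ and $x^2=-z/3$.

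Third, we expand route (ii): $-xy\,(yxy+y^2x+a+by+cy^2)$, then reduce $xy^2\cdot xy$, $xy^3x=zx^2$, and similar subwords.

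Fourth, we subtract the two results and collect terms by the basis words over $\kk[[z]]$. The result should organize into the displayed terms, linear, quadratic and cubic in $(a,b,c)$.

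Because $a,b,c$ are arbitrary central series, the identity is polynomial in $a,b,c$. It is therefore enough to check it as an identity in the free $\kk[z][a,b,c]$-module on the twelve normal words, given that the rewriting terminates. Termination holds because each rule lowers the length-lexicographic order, and the $z$-adic convergence argument from Theorem~\ref{ap:thm:finite} covers the completed setting.

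The main obstacle is the bookkeeping in the recursive reductions, in particular confirming that the overlap does not resolve trivially. In other words, one must check that the non-vanishing terms of the difference are exactly the listed ones; coefficients such as $4c^2(y^2xy+zx)$ and $5y^2x$ are easy to get wrong. To control this, we first check the case $b=c=0$, where the claim reduces to $a(xyx+\tfrac z3 y)=0$, and then each single-coefficient case $a=0$ or $c=0$ separately. After that we do the full expansion, using multilinearity in the monomials $a^ib^jc^k$ as a cross-check. If a discrepancy appears, we can multiply the overlap on the left by $y$ (or use the overlap $xy^2\cdot y$ against $x\cdot y^3$) to obtain a second relation, and combine the two relations.
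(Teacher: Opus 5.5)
There is a genuine gap: the ambiguity you actually resolve produces no relation. The word you reduce, $xyxy\cdot y=xy\cdot xy^2$, is the five-letter word $xyxyy$. It is not the six-letter word $xyxy\cdot xy$ that you name first, and it resolves.

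In route (i), the only reducible subword of $yxyxy$ is the inner $xyxy$. In route (ii), the words $xy^2xy$, $xy^3x$ and $xy(a+by+cy^2)$ reduce without any real choice, because the $xy^2$/$y^3$ overlaps inside them resolve; for example, both reductions of $xy^3$ give $zx$. Carrying out your steps two and three, both routes end at the same normal form
\[
 y^2xyx+b\,yxy+2b\,y^2x+2c\,y^2xy+c\,zx+ab+(b^2+ac)\,y+2bc\,y^2+c^2z
\]
for arbitrary central $a,b,c$. So the difference is $0$.

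By contrast, $\mathcal R(a,0,0)=a(xyx+\tfrac z3y)$ is a nonzero combination of normal words. It is exactly the relation needed later, giving $zxyx=-zy^4/3$ when $p=cy^4$. Your own $b=c=0$ check would expose this, since both routes give $y^2xyx$. Neither fallback repairs it: $(xy^2)y$ versus $x(y^3)$ also resolves, and left-multiplying a resolvable ambiguity by $y$ creates nothing new.

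The missing ingredient is an ambiguity that genuinely fails to resolve. The paper reduces the six-letter word $xyxxyy$, once through its middle $x^2$ and once through its final $xy^2$. The second route produces $xyxyxy$, and the obstruction is this self-overlap of the $xyxy$ rule, reduced at letters $1$--$4$ versus $3$--$6$. At $b=c=0$, the two choices give $\tfrac z3y^2xy+\tfrac{z^2}3x+\tfrac{az}3y$ and $\tfrac z3y^2xy+\tfrac{z^2}3x-a\,xyx$, whose difference is $a(xyx+\tfrac z3y)$. At $a=c=0$, the same comparison gives $b(yxyx-\tfrac23zy^2)+b^2yx$.

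The choice of reduction inside the paper's route matters. At $b=c=0$, reducing $xyxyxy$ at its second $xyxy$ makes the two routes through $xyxxyy$ agree again.

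The rest of your plan works once it is applied to this word: treating $a,b,c,z$ as central indeterminates, doing the bookkeeping in the twelve normal words, and checking single-coefficient cases. Note that what you verify in the free module is that the \emph{difference of the two normal forms} equals $\mathcal R$, not that $\mathcal R$ vanishes there.
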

\begin{proof}
Reduce \(xyxxyy\) first using its middle \(x^2\), and then
using its final \(xy^2\). Repeatedly apply the four displayed
rules; the difference of the resulting reduced expressions is
exactly \(\mathcal R(a,b,c)\). This is an identity in the
algebra because both expressions equal \(xyxxyy\). As a consistency
check, setting \((b,c)=(0,0)\), \((a,c)=(0,0)\), or
\((a,b)=(0,0)\) gives respectively \(\rho_0\), \(\rho_1\), or
\(2\rho_2\) from the monomial proof, with the nonzero coefficient
replaced by a central series.
\end{proof}

\begin{theorem}\label{ap:thm:alltraces}
Write \(a(z)=zA(z)\), \(b(z)=zB(z)\), \(c(z)=zC(z)\), so
\(q(y)=y^3(A+yB+y^2C)\). In the vector-space cyclic quotient
\(\HH_0(A_p)=A_p/[A_p,A_p]\), the following two classes vanish:
\begin{align*}
F_1={}&\tfrac13A^3y^7+(A+\tfrac12A^2B)y^8
-\tfrac12By^9\\
&-(2C+\tfrac16B^3+ABC)y^{10}
-(B^2C+AC^2)y^{11}
-\tfrac32BC^2y^{12}-\tfrac23C^3y^{13},\\
F_2={}&\tfrac16A^3y^8+\tfrac12Ay^9
-(B+\tfrac12AB^2-A^2C)y^{10}\\
&+(2C-\tfrac13B^3+ABC)y^{11}
+\tfrac32AC^2y^{12}+BC^2y^{13}+\tfrac23C^3y^{14}.
\end{align*}
Here \(A,B,C\) are evaluated at \(z=y^3\); brackets are suppressed
in the display. In particular, at \(p(y)=y^n\) these recover the two
terminal identities
\([y^{n+4}]+\frac{n^2}{3}[y^{3n-5}]=0\) and
\([y^{n+5}]+\frac{n^2}{3}[y^{3n-4}]=0\), after division by
nonzero scalar coefficients.
\end{theorem}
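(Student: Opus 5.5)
The idea is to pass Proposition~\ref{ap:prop:overlap} to the cyclic quotient $\HH_0(A_p)$ and use trace identities there to rewrite every word as a power of $y$.

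\textbf{Reduction rules in $\HH_0$.} Since $z=y^3=-3x^2$ is central, $[A_p,A_p]$ is an $R$-submodule with $R=\kk[[z]]$. So $\HH_0(A_p)$ is an $R$-module, and $\mathcal R(a,b,c)=0$ gives a relation among classes of the twelve spanning words $y^i,y^ix,y^ixy,y^ixyx$ with central coefficients. Working modulo commutators, I will set up the following list of rules.
\begin{enumerate}[label=(\roman*)]
\item $[wx]=[xw]$, and $[u\,v]=[v\,u]$ for any words $u,v$.
\item The class of $y^2x$ is obtained from the second Jacobi relation: it gives $3[y^2x]=-[q(y)]$, so $[y^2x]=-\tfrac13[a+by+cy^2]$.
\item More generally, $[y^kx]$ is determined by multiplying the relation $R+q(y)=0$ by $y^{k-2}$ and cycling, which gives $3[y^kx]=-[y^{k-2}q(y)]$.
\item Words containing two $x$'s reduce via $x^2=-z/3$ after cycling. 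For example $[xyx]=[x^2y]=-\tfrac{z}{3}[y]$, and $[y^2xyx]$ reduces similarly using the relation multiplied by $yx$.
\item Words such as $[yxyx]$ and $[y^2xy]$ are handled by cyclically rotating to $xy\,xy$ and applying the $xyxy$ rule, or by rotating to $x y^3=xz$.
\end{enumerate}
The goal is to express every class appearing in $\mathcal R$, and in $y^j\mathcal R$ and $x\mathcal R$, as an $R$-combination of $[y^k]$, $k\ge0$.

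\textbf{Producing $F_1$ and $F_2$.} Two cyclic consequences are natural: take the class of $\mathcal R$ times a suitable monomial, chosen so that the result lands in the span of pure $y$-powers.
\begin{enumerate}[label=(\alph*)]
\item For $F_1$, multiply by $y$ (or $xy$).
\item For $F_2$, multiply by $y^2$, since $F_2$ looks like a degree shift of $F_1$ by one power of $y$, with $A$ and $B$ roles partly exchanged.
\end{enumerate}
Concretely, I would compute $[y\mathcal R]$ and $[y^2\mathcal R]$ using rules (i)–(v), substitute $a=zA$, $b=zB$, $c=zC$, $z=y^3$, and then collect coefficients to match the displayed $F_1,F_2$. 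Because the expressions are bilinear and trilinear in $a,b,c$, the bookkeeping splits by monomial type: linear in $A,B,C$; quadratic such as $A^2B$ and $B^2C$; and cubic such as $A^3,B^3,C^3,ABC$. Each type can be checked separately, using the consistency checks $(b,c)=0$, $(a,c)=0$, $(a,b)=0$ already noted, which recover $\rho_0,\rho_1,\rho_2$.

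\textbf{Main obstacle.} The hardest step is the correct elimination of the mixed words $[y^kx]$ and $[y^kxyx]$. Here the relation $R=-q(y)$ feeds $q$ back in, which generates the quadratic and cubic terms such as $\tfrac13A^3y^7$. I would first verify the pure-$A$ case, where the $A^3y^7$ coefficient should arise from substituting $[y^kx]=-\tfrac13[y^{k-2}q]$ twice. After that I would extend the computation linearly in $B$ and $C$.

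\textbf{Monomial specialization.} Put $p=y^n$, so exactly one of $A,B,C$ is the nonzero monomial $nz^{m}$ according to $n-1\bmod 3$. The display for $F_1$ or $F_2$ then collapses to two terms: a linear term $[y^{n+4}]$ or $[y^{n+5}]$, and a cubic term $[y^{3n-5}]$ or $[y^{3n-4}]$ with ratio $n^2/3$. Dividing by the nonzero leading scalar gives the stated terminal identities. The specialization should be checked in each of the three residue cases.
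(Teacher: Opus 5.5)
Your route is the paper's. You pass $\mathcal R(a,b,c)=0$ to $\HH_0(A_p)$ and multiply by $y$ and by $y^2$. You then eliminate every one-$x$ class by $[y^jx]=-\tfrac13[y^{j-2}q]$ ($j\ge2$) and every two-$x$ class by trace formulas for $T_i=[y^ixyx]$, using centrality of $z$ to reduce $i$ modulo three.

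However, one of your reduction rules fails, and it fails exactly at the step you flag as the main obstacle. In (v) you evaluate $[yxyx]$ by rotating to $xyxy$ and applying the $xyxy$ rule. That rule is the commutator of $x$ with the second Jacobi relation, so its class in $\HH_0$ is identically zero. Term by term it reads $[xyxy]-[yxyx]=0$, $b([xy]-[yx])=0$ and $c(3[y^2x]+[q])=0$, the last being just your rule (ii). No rotation of $yxyx$ contains $x^2$, so (iv) does not help either. Thus $T_1$ is left undetermined. Yet it occurs in both computations: from $a\,yxyx$ in $y\mathcal R$, and from $4c\,y^4xyx=4cz\,yxyx$ in $y^2\mathcal R$. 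It is also where cubic terms such as $\tfrac13A^3y^7$ come from.

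The missing input is the device you already use for $[y^2xyx]$: multiply the second Jacobi relation by $x$ and rotate. First, $[xy^2x]=[y^2x^2]=-\tfrac13[y^5]$. Second, applying (iii) termwise to $q=y^3(A+yB+y^2C)$ gives $[qx]=-\tfrac13[y^{-2}q^2]$. Together these give $T_1=\tfrac23[y^5]+\tfrac13[y^{-2}q^2]$. Likewise $T_0=-\tfrac13[y^4]$ and $T_2=\tfrac16[y^6]+\tfrac16[y^{-1}q^2]$; the negative powers are harmless because $q\in y^3\kk[[y]]$.

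You should also say explicitly how one-$x$ classes with $j<2$, such as $ab[yx]$ in $[y\mathcal R]$, are handled. They first absorb a factor $z=y^3$ from their coefficient, e.g. $ab[yx]=AB[y^7x]=-\tfrac13AB[y^5q]$. This is always possible because every coefficient of $\mathcal R$ lies in $z\kk[[z]]$.

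With these formulas your coefficient bookkeeping does give exactly $F_1$ and $F_2$. For instance, $a(T_1+\tfrac13[y^5])=[Ay^8]+\tfrac13[Ay^7(A+yB+y^2C)^2]$ supplies the $\tfrac13A^3y^7$ and $Ay^8$ terms. Your monomial specialization is also correct: each residue class leaves one linear and one cubic term, with ratio $n^2/3$.
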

\begin{proof}
The Jacobi relation, multiplied by \(y^{j-2}\) and rotated, gives
\([y^jx]=-\frac13[y^{j-2}q]\) for \(j\ge2\). Write
\(T_i=[y^ixyx]\). Rotation of the relations multiplied by
\(x\) and \(yx\) gives
\[
T_0=-\tfrac13[y^4],\qquad
T_1=\tfrac23[y^5]+\tfrac13[y^{-2}q^2],\qquad
T_2=\tfrac16[y^6]+\tfrac16[y^{-1}q^2].
\]
The negative powers here are only notation for formal series:
\(q\in y^3\kk[[y]]\), so \(y^{-2}q^2\) and
\(y^{-1}q^2\) are honest power series. Multiply the algebra identity
\(\mathcal R(a,b,c)=0\) by \(y\) and \(y^2\) before passing to
\(\HH_0\). Replace every one-\(x\) term by the first trace rule,
and every two-\(x\) term by the three displayed \(T_i\), using the
centrality of \(z\) to reduce higher indices modulo three. Collecting
coefficients yields precisely \(F_1\) and \(F_2\), respectively.
For example, when \(q=ny^{n-1}\), precisely one of \(A,B,C\)
is nonzero, and the resulting linear and cubic terms give the two
stated monomial formulas; at \(n=4\), these relate \(y^8\) to
\(y^7\) and \(y^9\) to \(y^8\).
\end{proof}
\begin{corollary}\label{ap:cor:bound}
For arbitrary \(p\) of order \(n\), if
\(J(A_p)^{10}=0\) when \(n=4\), respectively
\(J(A_p)^{n+7}=0\) when \(n\ge5\), then
\[
\dim\HH_0(A_p)\le
\begin{cases}10,&n=4,\\n+7,&n\ge5.\end{cases}
\]
This assertion does not imply the same bound on \(\dim Z(A_p)\)
without a symmetrizing form.
\end{corollary}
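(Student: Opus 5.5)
The plan is to use the twelve-word spanning set from the proof of Theorem~\ref{ap:thm:finite} together with the two vanishing classes $F_1,F_2$ of Theorem~\ref{ap:thm:alltraces}. Since $A_p$ is spanned over $R=\kk[[z]]$ by $y^i,y^ix,y^ixy,y^ixyx$ ($0\le i\le2$), the classes in $\HH_0(A_p)$ are spanned by the classes of $z^m$ times these words. I will show that every such class reduces to a pure power $[y^j]$, and then use nilpotence and $F_1,F_2$ to cut down the surviving range of $j$.

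\emph{Step 1: reduction to pure powers.} The one-$x$ classes satisfy $[y^jx]=-\tfrac13[y^{j-2}q]$ for $j\ge2$. For $j<2$ I first multiply by $z$, so that $[z^my^ix]=[y^{3m+i}x]$ with $3m+i\ge2$. The remaining low classes are $[x]$ and $[yx]$. For these I will use $[x]=-\tfrac1{3}[x\cdot x^{-1}]$ type rotations of the relation $3x^2+y^3$ multiplied by $x$, and show they lie in the span of higher pure powers. If that fails, I will keep them as at most two extra generators and kill them later by the parity grading $x\mapsto -x$ when $q$ allows it. The classes $[y^ixy]$ rotate to $[y^{i+1}x]$. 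The two-$x$ classes $[y^ixyx]$ are expressed through $T_0,T_1,T_2$ and multiplication by $z$. Consequently $\HH_0(A_p)$ is spanned by $[1]$ and the classes $[y^j]$ with $j\ge1$.

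\emph{Step 2: truncation.} Since $y\in J(A_p)$, the hypothesis gives $[y^j]=0$ for $j\ge10$ (respectively $j\ge n+7$). The top surviving powers are therefore controlled by $F_1$ and $F_2$.

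For $n\ge5$, the leading monomials of $F_1$ and $F_2$ are a nonzero multiple of $[y^{n+4}]$ and of $[y^{n+5}]$ respectively; the higher terms such as $y^{3n-5}$ already vanish or lie higher. The point to secure is that the lowest-order coefficient in each $F_i$ is a nonzero constant depending only on $c_n$, after reading off which of $A,B,C$ carries the leading term according to $n-1\bmod3$. Multiplying $F_1,F_2$ by powers of $y$ and using centrality of $z$ then expresses $[y^j]$ in terms of higher powers for the two lowest indices in each residue class that they reach. Descending induction from the nilpotence bound kills all $[y^j]$ with $j\ge n+4$. This leaves at most the classes $[1],[y],\dots,[y^{n+3}]$ (together with any Step~1 leftovers), giving at most $n+4$ classes, which is already within the $n+7$ bound. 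I would then recount to confirm it matches $n+7$ exactly in the monomial case; any discrepancy can only come from classes $y^j$ that are not multiplied by $y$-shifts of $F_i$.

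For $n=4$, the relations $F_1,F_2$ at leading order relate $y^8$ to $y^7$ and $y^9$ to $y^8$. Together with $[y^j]=0$ for $j\ge10$ they leave at most $[1],\dots,[y^7]$ plus leftovers, and the bound $10$ follows by the same count.

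\emph{Main obstacle.} The hard part is the leading-coefficient check. One has to verify that for arbitrary $p$ the lowest $y$-order terms of $F_1$ and $F_2$, and of their $y$-shifts, are invertible multiples of distinct pure powers. The unit parts of $A,B,C$ interact, for instance through the terms $A^3y^7$ and $Ay^8$ when $n=4$. I would handle this by a triangularity argument in the $y$-adic order: the leading term is determined by $c_n$ alone, and all corrections lie strictly higher. I would first try this in the residue class $n\equiv1\pmod3$, where $A$ leads.
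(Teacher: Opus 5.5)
\textbf{There is a genuine gap: your count is wrong in two places.} You use the same ingredients as the paper: the twelve-word span, the trace rules, the nilpotence truncation, and the classes $F_1,F_2$. But as written, your argument would prove $\dim\HH_0(A_n)\le n+6$ for the monomial algebras with $n\ge5$. That contradicts the equality $\dim\HH_0(A_n)=n+7$ of Theorem~\ref{thm:cyc-rank}, whose basis contains $[x]$, $[xy]$ and, for $n\ge6$, $[y^{n+6}]$.

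\emph{First error: Step~1 cannot remove $[x]$ and $[xy]$.} The closed Jacobi ideal lies in $\mathfrak m^2$, with degree-two components in $\kk x^2$. A commutator $[f,g]$ has zero linear part and quadratic part $[f_1,g_1]\in\kk(xy-yx)$. So every element of $\overline I+\operatorname{span}[\widehat F,\widehat F]$ has zero degree-one component and degree-two component in $\kk x^2+\kk(xy-yx)$. Hence $[x]$ and $[xy]$ are nonzero and independent of every combination of pure powers. Your proposed tools do not exist either: there is no $x^{-1}$, since $x$ is radical, and $x\mapsto-x$ does not preserve $xy^2+yxy+y^2x+p'(y)$, so there is no parity grading. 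These two classes must be carried along; they are the ``$+2$'' in the count.

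\emph{Second error: the descending induction in Step~2 uses relations that do not exist.} $\HH_0(A_p)$ is a module over the centre, not over $\kk[y]$. So ``multiplying $F_1,F_2$ by powers of $y$'' only makes sense as taking classes of $y^k\mathcal R$ in the algebra.
\begin{itemize}
\item For $k=3m+1$ or $3m+2$ with $m\ge1$, this is the $z^m$-multiple of $F_1$ or $F_2$. Its lowest term has degree at least $n+7$ (resp.\ $10$), which nilpotence has already killed.
\item For $3\mid k$ nothing new appears. For example, in the monomial case $[\rho_0]=0$ follows directly from $[xyx]=[x^2y]=-\tfrac13[zy]$.
\end{itemize}
So exactly two relations are available. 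For $n\ge6$ they remove $[y^{n+4}]$ and $[y^{n+5}]$, but not $[y^{n+6}]$.

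\emph{The correct count.} Nilpotence leaves the classes $[1],\dots,[y^{N-1}]$, with $N=10$ or $N=n+7$. The lowest terms of $F_1$ and $F_2$ lie in degrees $7,8$ (resp.\ $n+4,n+5$), with coefficients that are nonzero multiples of $c_4^3$ (resp.\ $c_n$). This is exactly your triangularity check, and it makes the two relations independent, cutting the list to $N-2$. Adding back $[x]$ and $[xy]$ gives the bound $N$.
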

\begin{proof}
The twelve-word spanning list, rotated cyclically and reduced by the
trace formulas in the preceding proof, shows that \(\HH_0(A_p)\)
is spanned by pure powers of \(y\), \([x]\), and \([xy]\).
The stated nilpotence bounds leave at most ten pure powers for
\(n=4\) and \(n+7\) for \(n\ge5\). The two displayed
relations are linearly independent on these pure-power lists:
at \(n=4\) their lowest nonzero terms have degrees seven and eight,
respectively; for \(n\ge5\) those degrees are \(n+4\) and
\(n+5\). Their coefficients are nonzero multiples of \(c_n\)
(of \(c_4^3\) when \(n=4\)). Subtracting two relations from the
spanning counts, and retaining the two mixed classes, proves the bound.
\end{proof}

\subsection{Filtered comparison with a homogeneous order}
\label{ap:sec:mechanism}

We use two elementary filtered principles. If a complete augmented
algebra has a finite-dimensional associated graded algebra, then its
dimension is the sum of the graded dimensions. Moreover,
\[
 \operatorname{gr}Z(A)\subseteq Z(\operatorname{gr}A).
\]
Indeed, the initial form of a central element commutes with every
initial form. Thus a finite-dimensional graded quotient provides
both a length bound and, when the graded quotient is identified, a
centre bound. We apply these principles to a matrix order.

\begin{lemma}\label{lem:homogeneous-order-quotient}
Let
\[
 C=\frac{\kk\langle\!\langle x,y\rangle\!\rangle}
 {\overline{(x^2+y^3/3,\;xy^2+yxy+y^2x)}},
 \qquad z=y^3.
\]
Use the matrix realization of Theorem~\ref{thm:Ainf-exact}, rescaling
$x$ by a square root of $-1/3$, and write $D=\operatorname{diag}(1,-1)$.
For $N\ge8$, $u^ND$ belongs to $C$ and generates a two-sided ideal.
The homogeneous algebra
\[
 G_N=C/(Cu^ND),\qquad \operatorname{wt}(x)=3,\quad
 \operatorname{wt}(y)=2,\quad \operatorname{wt}(u)=2,
\]
satisfies
\[
 \dim_\kk G_N=4N,\qquad \dim_\kk Z(G_N)=N+4.
\]
\end{lemma}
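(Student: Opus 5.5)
The plan is to work entirely inside the matrix realization of Theorem~\ref{thm:Ainf-exact}. After rescaling $x$ by a square root of $-1/3$, the relations become $x^2=y^3$ and $xy^2+yxy+y^2x=0$. The theorem embeds $C$ as an order in $\Mat_2(\kk[[u]])$, with $x,y$ sent to explicit matrices that are homogeneous of weights $3,2$ in $u$ together with a weight on matrix units. I expect the central element $z=y^3$ to act as a unit multiple of a power of $u$ times the identity. By the twelve-word spanning list from the proof of Theorem~\ref{ap:thm:finite}, $C$ is spanned over $\kk[[z]]$ by $y^i,y^ix,y^ixy,y^ixyx$ with $0\le i\le 2$.

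\textbf{Step 1: the Hilbert series of $C$.} Using the twelve words, I will write $C$ as an explicit $\kk[[u]]$-graded subspace of $\Mat_2(\kk[[u]])$. For each matrix entry I will list the set of weights it can take, so that $C=\bigoplus_w C_w$ with $\dim C_w\le 4$, and $\dim C_w=4$ for $w$ beyond the conductor. I will then check that $u^ND\in C$ for $N\ge8$, by showing that $8$ lies beyond the conductor exponent on the diagonal entries. This makes the ideal $I_N=C\,u^ND\,C$ homogeneous.

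\textbf{Step 2: the dimension count.} I will identify $I_N$ explicitly. Multiplying $u^ND$ on both sides by the generators $x$ and $y$ produces $u^{N+k}$ times each matrix unit for small $k$, since $D$ anticommutes with off-diagonal matrices and commutes with diagonal ones. I expect $I_N$ to be the sublattice spanned by $u^ND$, $u^{N+1}\Mat_2$-type pieces and their diagonal/off-diagonal shifts, with the exact shifts read off from the weights of $x$ and $y$. The dimension of $G_N=C/I_N$ is then the sum over weights of $\dim C_w-\dim (I_N)_w$. The bookkeeping should give $4$ per step of $u$, up to a boundary correction that cancels against the deficiency of $C$ at low weights, for a total of $4N$. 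The hypothesis $N\ge8$ is exactly what separates the low-weight deficiency of $C$ from the truncation window, so that the two corrections do not interact.

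\textbf{Step 3: the centre.} An element of $G_N$ is central if and only if its homogeneous components commute with $x$ and $y$ modulo $I_N$. Scalar matrices $u^kI$ that lie in $C$ and are not in $I_N$ are central; I will count these weights below the truncation. The additional central elements come from the top layer: homogeneous elements $c$ whose commutators $[x,c]$ and $[y,c]$ land in $I_N$ because their weight exceeds the threshold. These are essentially the socle-type elements $u^{N-k}D$ and off-diagonal elements near the top. I will count them weight by weight, expecting the scalars to contribute $N$ minus a small deficiency and the top layer to contribute the remainder, for a total of $N+4$.

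The main obstacle is Step 2 together with the top-layer part of Step 3. Both require the exact description of $I_N$ near the truncation weight and of which top-weight elements become central. I would first settle these for $N=8$ and $N=9$ by direct weight tables, and then argue that increasing $N$ by one adds exactly one weight layer. That layer contributes $4$ to the dimension and exactly one scalar $u^kI$ to the centre, while the top-layer structure merely translates; induction on $N$ then finishes the proof.
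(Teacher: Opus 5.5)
Your framework is sound: the matrix realization, the split into diagonal and off-diagonal parts, and weight-by-weight counting. But the proposal defers all of the actual content to ``bookkeeping'', and the structure you anticipate for the ideal and for the centre is wrong. Carried out as outlined, the counts would not come out.

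\textbf{The ideal and the dimension.} The missing idea is already in your remark that $D$ commutes with diagonal matrices and anticommutes with off-diagonal ones. Since $C$ is the sum of its diagonal and off-diagonal parts, this gives $u^NDC=Cu^ND$. Hence $I_N=Cu^ND$ is the image of the injective map $c\mapsto cu^ND$, a twisted translate of $C$, not ``$u^ND$ plus $u^{N+1}\Mat_2$-type pieces''.

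Concretely, put $R=\kk[[u]]$ and record off-diagonal elements as pairs $(s,t)\leftrightarrow\begin{psmallmatrix}0&us\\u^2t&0\end{psmallmatrix}$. The twelve words give the diagonal and off-diagonal lattices
$\mathscr D=\kk(1,1)+\kk u(\omega,\omega^2)+\kk u^2(\omega^2,\omega)+\kk u^3(1,1)+u^4R^2$ and $\mathscr O=\kk(1,1)+uR^2$. Then $I_N=u^ND\mathscr D\oplus u^ND\mathscr O$. In particular, the diagonal part of $I_N$ is one-dimensional in each $u$-degree $N,\dots,N+3$, and contains all diagonal matrices only from degree $N+4$ on.

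The dimension is then a lattice index: $\dim\mathscr D/u^ND\mathscr D=[R^2:u^ND\mathscr D]-[R^2:\mathscr D]=2N$, and likewise $2N$ for $\mathscr O$. The paper phrases this as the norm of $\det(u^ND)$ on two rank-six $\kk[[u^3]]$-lattices. No separation of low and high weights is involved: the count holds as soon as $u^ND\in C$ (already for $N\ge4$). So your explanation of the role of $N\ge8$ is not the right one.

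\textbf{The centre.} Your predicted sources of central elements are off. Write $X,Y$ for the matrices of Theorem~\ref{thm:Ainf-exact}; $x$ is a constant multiple of $X$.

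The elements $u^{N-k}D$ with $k\ge1$ are not central. Indeed $[X,u^{N-k}D]$ is the off-diagonal element with pair $-2u^{N-k}(1,-1)$, which does not lie in $u^ND\mathscr O=\kk u^N(1,-1)+u^{N+1}R^2$.

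The correct computation for diagonal classes: $[X,\operatorname{diag}(s,t)]$ has pair $(t-s)(1,-1)$, and $Y$ commutes with diagonal matrices. So a diagonal class is central exactly when $s-t\in u^NR$. This yields precisely the scalar classes $u^kI$ with $k\in\{0,3,4,\dots,N+3\}$, that is $N+2$ of them. Four of these lie in degrees $N$ to $N+3$ and survive only because $I_N$ is thin there. If $I_N$ contained the diagonal matrices of degree $N+1$, as your description suggests, three of them would wrongly vanish.

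For an off-diagonal class, $[Y,\cdot]$ sends the pair $(s,t)$ to $(\omega-\omega^2)u(s,-t)$. So centrality forces $(s,t)\in u^{N-1}\mathscr O$. After that, $[X,\cdot]=u^3(t-s)\operatorname{diag}(1,-1)$ lies in $u^ND\mathscr D$ automatically. This contributes the two classes $u^{N-1}X$ and $u^NX$, for a total of $N+4$.

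\textbf{The induction.} Your proposed induction on $N$ has no mechanism. Since $uI\notin C$, we have $Cu^{N+1}D\not\subseteq Cu^ND$, and there is no natural map between $G_{N+1}$ and $G_N$. The ``translation of the top layer'' you need is exactly the identity $I_N=Cu^ND$. Once that identity and the explicit lattices are in hand, the direct count above makes base cases and induction unnecessary.
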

\begin{proof}
Put $R_0=\kk[[u^3]]$ and $R=\kk[[u]]$. The twelve-word basis
in Theorem~\ref{thm:Ainf-exact} decomposes into diagonal and
off-diagonal lattices of rank six over $R_0$. Identify the latter
with a lattice of pairs by writing its matrices as
\[
 \begin{pmatrix}0&us\\u^2t&0\end{pmatrix}.
\]
Constant rescaling of $x$ does not change these lattices. With
$\omega^3=1\ne\omega$, they are
\[
\begin{aligned}
 \mathscr D={}&R_0(1,1)+R_0u(\omega,\omega^2)
 +R_0u^2(\omega^2,\omega)\\
 &+R_0u^4(\omega^2,\omega)+R_0u^5(1,1)
 +R_0u^6(\omega,\omega^2),\\
 \mathscr O={}&R_0(1,1)+R_0u(\omega,\omega^2)
 +R_0u(\omega^2,\omega)\\
 &+R_0u^2(\omega^2,\omega)+R_0u^2(1,1)
 +R_0u^3(\omega,\omega^2).
\end{aligned}
\]
These lists follow simply by separating the diagonal words
$y^i,y^ixyx$ from the off-diagonal words $y^ix,y^ixy$.
The matrix grading assigns degrees $1$ and $-1$ to $E_{12}$
and $E_{21}$, respectively; thus the matrix representing $x$
has degree three.
The diagonal lattice contains $u^6R^2$, so $u^ND\in C$.
Conjugation by $D$ fixes $y$ and sends $x$ to $-x$; it preserves
$C$. Hence $Cu^ND=u^NDC$ is a two-sided ideal.

Multiplication by $u^ND$ has determinant valuation $2N$ on each
rank-six lattice over $R_0$: on its two-dimensional fraction-field
extension this is the norm of the determinant $-u^{2N}$.
Consequently the diagonal and off-diagonal quotients each have
length $2N$, giving $\dim G_N=4N$.

We calculate the centre using the two generators $x,y$. Write
$\delta(s,t)=s-t$. The displayed lattices give
\[
\begin{gathered}
 \mathscr D\cap\kk((u))(1,1)
   =\kk[[u^3,u^4,u^5]](1,1),\\
 \delta\mathscr D=\kk u+\kk u^2+u^4R,\qquad
 \mathscr O\cap\kk((u))(1,1)=R(1,1),\\
 uD\mathscr O\subseteq\mathscr O,\qquad
 u^2\delta\mathscr O\subseteq\kk[[u^3,u^4,u^5]].
\end{gathered}
\]
For a diagonal class, commutation with $x$ modulo the ideal is
equivalent to $\delta\in u^NR$; commutation with $y$ is automatic.
Its central subspace therefore has dimension
\[
 2N-\dim_\kk(\delta\mathscr D/u^NR)
 =2N-(N-2)=N+2.
\]
For an off-diagonal class, commutation with $y$ is equivalent to
membership in $u^{N-1}\mathscr O$. Commutation with $x$ then
follows from the last inclusion in the display. The quotient
\[
 u^{N-1}\mathscr O/u^ND\mathscr O
 \cong\mathscr O/uD\mathscr O
\]
has dimension two, again by the determinant norm. The two parts
are independent, and their dimensions sum to $N+4$.
\end{proof}

\begin{lemma}\label{lem:weighted-upper-bound}
If $\operatorname{ord}p=n\ge5$, then
$\dim A_p\le4n+12$.
If equality holds, the filtration with weights $3,2$ on $x,y$
has associated graded algebra $G_{n+3}$. In particular
$\dim Z(A_p)\le n+7$.
\end{lemma}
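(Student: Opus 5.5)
We filter $A_p$ by the weights $\operatorname{wt}(x)=3$, $\operatorname{wt}(y)=2$. The Jacobi relations are $3x^2+y^3=0$, which is homogeneous of weight $6$, and $xy^2+yxy+y^2x+p'(y)=0$. The cubic part of the second relation has weight $7$, while $p'(y)$ has initial term $nc_ny^{n-1}$ of weight $2n-2\ge8$ for $n\ge5$. So the initial forms of the two relations are exactly the defining relations of the homogeneous order $C$ of \cref{lem:homogeneous-order-quotient}, after rescaling $x$. Consequently $\operatorname{gr}A_p$ is a quotient of $C$. The work is to locate which quotient, and to show it is at most $G_{n+3}$.

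The natural extra element is the terminal tail. In $A_p$ the element $p'(y)$ equals $-(xy^2+yxy+y^2x)$, and its initial part is tied to the lattice description of $C$. I would transfer the identities of \cref{ap:prop:overlap} to the graded side, with $a,b,c$ the $z$-components of $p'$ and only leading terms retained. The leading term of $\mathcal R(a,b,c)$ is a nonzero multiple of $c_n$ times a homogeneous element of $C$. The aim is to show that this element, up to a unit of $\kk[[u^3]]$ and left or right multiplication by elements of $C$, generates the ideal $Cu^{n+3}D$. Concretely, I would use the matrix realization of Theorem~\ref{thm:Ainf-exact}. There each of the three linear terms $a(xyx+\tfrac z3y)$, $b(yxyx-\tfrac23zy^2)$ and $c(4y^2xyx-\tfrac23z^2)$ becomes a scalar series times $u^kD$ with $k$ determined by $n \bmod 3$. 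Matching weights, with $\operatorname{wt}(u)=2$ and the lowest coefficient $c_n$, should give $k=n+3$. The quadratic and cubic terms in $a,b,c$ have strictly higher weight because $a,b,c\in z\kk[[z]]$ have order at least $(n-1)/3$ beyond the linear one, so they do not affect the initial form. The main obstacle is this step. One must check the residue-class computation in all three classes $n\bmod 3$, identify the leading term exactly as $u^{n+3}D$ (not some other element of the diagonal lattice), and verify that it is not already hidden by lower-weight relations.

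Granting this, $\operatorname{gr}A_p$ is a quotient of $C/(Cu^{n+3}D)=G_{n+3}$. Since $A_p$ is finite dimensional by \cref{ap:thm:finite}, the filtration is finite and separated. The first filtered principle then gives $\dim A_p=\dim\operatorname{gr}A_p\le\dim G_{n+3}=4(n+3)=4n+12$, by \cref{lem:homogeneous-order-quotient} with $N=n+3\ge8$. If equality holds, the surjection $G_{n+3}\twoheadrightarrow\operatorname{gr}A_p$ is a surjection between spaces of equal finite dimension, hence an isomorphism. The inclusion $\operatorname{gr}Z(A_p)\subseteq Z(\operatorname{gr}A_p)=Z(G_{n+3})$ then yields $\dim Z(A_p)\le N+4=n+7$.
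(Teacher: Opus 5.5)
Your proposal is essentially the paper's own proof. With $n-1=3q+r$, the initial form of $\mathcal R(a,b,c)$ is $nc_nz^qE_r$, where $E_0,E_1,E_2$ are the bracketed factors of the three linear terms and the quadratic and cubic terms gain weight at least $2n-9>0$; its image in $C$ is a nonzero multiple of $u^{n+3}D$, which gives $G_{n+3}\twoheadrightarrow\operatorname{gr}A_p$ and then the length and filtered-centre bounds exactly as you say. The step you flagged as the main obstacle is a direct computation in the rescaled matrix realization, namely $E_0\mapsto\frac{\omega-\omega^2}{3}u^4D$, $E_1\mapsto\frac{\omega-\omega^2}{3}u^5D$ and $E_2\mapsto-\frac{2(\omega-\omega^2)}{3}u^6D$, together with $z^q\mapsto u^{3q}$ and $3q+r+4=n+3$; the paper records this through the squares $-u^8/3$, $-u^{10}/3$, $-4u^{12}/3$.
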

\begin{proof}
Write $n-1=3q+r$, $0\le r\le2$. The initial Jacobi relations
are those of $C$, since $p'(y)$ has weight at least eight.
The initial form of the overlap in  Proposition~\ref{ap:prop:overlap} is
\[
 nc_nz^qE_r,\qquad
 E_0=xyx+zy/3,\quad E_1=yxyx-2zy^2/3,\quad
 E_2=4y^2xyx-2z^2/3.
\]
Terms quadratic in the coefficients have larger weight: the
weight increase is at least $2n-9>0$. Higher coefficients also
have strictly larger weight. In the generic matrix realization,
each $E_r$ is a nonzero scalar multiple of $u^{4+r}D$.
For example their squares are respectively
$-u^8/3,-u^{10}/3,-4u^{12}/3$.
Thus the initial ideal contains $Cu^{n+3}D$, and there is a
graded surjection $G_{n+3}\twoheadrightarrow\operatorname{gr}A_p$.
The preceding lemma gives the length bound. Equality makes this
surjection an isomorphism; the filtered centre inclusion then
gives the centre bound.
\end{proof}

\begin{lemma}\label{lem:order-four-upper-bound}
If $\operatorname{ord}p=4$, then
$\dim A_p\le27, J(A_p)^{10}=0$.
\end{lemma}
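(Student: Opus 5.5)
Mirroring Lemma~\ref{lem:weighted-upper-bound}, we filter $A_p$ by the weights $\operatorname{wt}(x)=3$, $\operatorname{wt}(y)=2$ and compare $\operatorname{gr}A_p$ with a quotient of the homogeneous order $C$ of Lemma~\ref{lem:homogeneous-order-quotient}. The new feature at $n=4$ is that $p'(y)=4c_4y^3+\dots$ has weight $6$. Thus the initial form of the second Jacobi relation is no longer $xy^2+yxy+y^2x$; it is $xy^2+yxy+y^2x+4c_4y^3$, which has weight $7$ against weight $6$, so the term drops out of the initial relations. Hence the initial Jacobi relations are still those of $C$. In the overlap of Proposition~\ref{ap:prop:overlap}, however, the quadratic and cubic terms in the coefficients now have weight increase $2n-9=-1<0$. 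The initial form is therefore dominated by the terms of highest degree in $A=4c_4+O(z)$, namely the cubic term. This matches the $n=4$ terminal identities of Theorem~\ref{ap:thm:alltraces}, whose lowest terms are the $A^3$ terms $\tfrac13A^3y^7$ and $\tfrac16A^3y^8$.

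The first step is to recompute the initial form of $\mathcal R(a,b,c)$ when $a=zA$, $A(0)=4c_4\neq0$. For $p=c_4y^4+\dots$ we have $b,c\in z^2\kk[[z]]$ or higher, so $a$ dominates. Among the displayed terms, $a(xyx+\tfrac z3 y)$ has weight $6+8=14$. The terms $ab,ac,\dots$ are of higher weight, because $b$ and $c$ start later. I expect $a(xyx+zy/3)$ to be the leading term, giving the initial relation $E_0=xyx+zy/3$ times the unit $A(0)z$. By Lemma~\ref{lem:weighted-upper-bound}, $E_0$ is a scalar multiple of $u^4D$ in the matrix realization, so $zE_0\sim u^7D$. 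The initial ideal then contains $Cu^{7}D$, which yields a graded surjection $G_7\twoheadrightarrow\operatorname{gr}A_p$. The catch is that Lemma~\ref{lem:homogeneous-order-quotient} needs $N\ge8$ (so that $u^ND\in C$), and $G_7$ would have dimension $28$, not $27$. Two points therefore need care: that $u^7D$ still lies in $C$, which requires checking the diagonal lattice $\mathscr D$ (it contains $u^6R^2$, so this should hold), and that one extra initial relation cuts the dimension from $28$ to $27$.

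The extra relation comes from the subleading overlap, i.e.\ from the terms quadratic in $A$, such as $ac(xy+yx)$, or more directly from the trace identity $F_1$ lifted to the algebra. Alternatively, we use the presentation of Theorem~\ref{ap:thm:finite}: the abelianization has dimension $6$ rather than $7$. In $G_7$ the commutative quotient has dimension $7$, so $\operatorname{gr}A_p$ must satisfy one further homogeneous relation, visible on the abelian scalar branch. Concretely, I would compute the weight-$14$ initial piece of $[x,\cdot]$ applied to $\mathcal R$, or of $y\mathcal R$, and show that it produces a nonzero element of the one-dimensional socle-type piece of $G_7$ not already killed. This gives $\dim A_p\le 28-1=27$.

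Nilpotence is then read off from the graded pieces. $J(A_p)^{10}=0$ will follow from the fact that the radical filtration is dominated by the weight filtration: every word of length $10$ has weight at least $20$, and I would check from the lattices $\mathscr D,\mathscr O$ modulo $u^7D$ that $\operatorname{gr}A_p$ vanishes in weights $\ge 20$ (top weight about $2\cdot 7+$ small). Lemma~\ref{ap:lem:jets} is not needed here. Completeness then transfers the vanishing from $\operatorname{gr}$ to $A_p$. The main obstacle is the second step: identifying precisely the extra initial relation that cuts $28$ to $27$, since the clean weight-increase argument of the $n\ge5$ case fails and the quadratic coefficient terms genuinely interfere. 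If this fails, the fallback is the terminal-relation argument of Appendix~\ref{app:four}, carried out with central-series coefficients as in Proposition~\ref{ap:prop:overlap}.
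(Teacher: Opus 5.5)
There is a genuine gap, and it is in your first step. With $\operatorname{wt}(x)=3$, $\operatorname{wt}(y)=2$, the term $4c_4y^3$ of $p'(y)$ has weight $6$, which is \emph{lower} than the weight $7$ of $xy^2+yxy+y^2x$. In the descending weight filtration it is therefore the initial form of the second Jacobi relation; it does not drop out. Since $3x^2+y^3$ is homogeneous of weight $6$, the initial ideal contains both $y^3$ and $x^2$.

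Hence $\operatorname{gr}A_p$ cannot be a quotient of $C$. If it were, it would also kill $y^3=z$, so it would be a quotient of $C/zC$. That algebra has dimension $12$ by Theorem~\ref{thm:Ainf-exact}, whereas $\dim A_p\ge27$ by the $27$-dimensional generic semisimple quotient and Lemma~\ref{lem:topNak}.

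No other weights rescue the comparison. Making $3x^2+y^3$ an initial form forces $\operatorname{wt}(x)=\tfrac32\operatorname{wt}(y)>\operatorname{wt}(y)$, and then $y^3$ is always lighter than $xy^2$. This is exactly why $n=4$ lies outside the $G_N$ mechanism, and why $27\ne4\cdot4+12$. So the surjection $G_7\twoheadrightarrow\operatorname{gr}A_p$, the search for one extra relation cutting $28$ to $27$, and the nilpotence count read off from $G_7$ all lack a foundation.

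Two smaller errors:
\begin{itemize}
\item In general $b(z)=5c_5z+\cdots$ and $c(z)=6c_6z+\cdots$ lie only in $z\kk[[z]]$, not in $z^2\kk[[z]]$.
\item $\mathcal R(a,b,c)$ contains no $a^2$ or $a^3$ terms at all. The $A^3$ terms of $F_1,F_2$ come from the trace substitutions, not from the algebra identity.
\end{itemize}

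The missing idea is to use the ordinary augmentation filtration. There $Ay^3$ (with $A=4c_4$) has the same degree as $xy^2+yxy+y^2x$, so it is kept in the initial relation. That is what the paper does:
\begin{itemize}
\item The initial relations start with $x^2=0$, $xy^2=-yxy-y^2x-Ay^3$ and $xyxy=yxyx$.
\item The overlap gives $y^3xyx\in J^7$, and its commutator with $x$ gives $y^6xy-y^7x\in J^9$.
\item The second Jacobi relation times $y^6$ gives $3y^8x+Ay^9\in J^{10}$.
\item Multiplying this by $x$ and by $y$ gives $y^{10}\in J^{11}$, since $A\ne0$.
\end{itemize}
The words avoiding these seven leading words number $1,2,3,4,4,4,3,3,2,1$ in degrees $0,\dots,9$, and there are none beyond. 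This gives $\dim A_p\le27$ and $J^{10}=J^{11}$, hence $J^{10}=0$ by separatedness.

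Your fallback points this way, but it defers precisely this computation. Note also that Appendix~\ref{app:four} treats only $p=cy^4$, and it relies on this lemma for the word count.
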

\begin{proof}
Write $p'(y)=y^3U(y)$, $U(0)=A=4c_4\ne0$, and use the ordinary
augmentation filtration. The initial relations, ordered
lexicographically with $x>y$ within each degree, include
\[
\begin{gathered}
 x^2=0,\qquad xy^2=-yxy-y^2x-Ay^3,\qquad
 xyxy=yxyx,\qquad y^3xyx=0,\\
 y^6xy=y^7x,\qquad y^8x=-Ay^9/3,\qquad y^{10}=0.
\end{gathered}
\]
Here the first three follow from the Jacobi relations and their
commutator. The overlap of Proposition~\ref{ap:prop:overlap} gives
$y^3xyx\in J^7$. Its commutator with $x$, before taking initial
forms, gives $y^6xy-y^7x\in J^9$; the coefficient of this term
is $2A/3$. The term $abx$ commutes with $x$, and every other
correction has order at least nine, so none changes this initial
relation. Multiplying the second Jacobi relation by $y^6$ now gives
$3y^8x+Ay^9\in J^{10}$.
Multiplication by $x$, using $x^2=-y^3/3$, yields
$Ay^9x\in J^{11}$; multiplication by $y$ yields
$3y^9x+Ay^{10}\in J^{11}$. Since $A\ne0$, these imply
$y^{10}\in J^{11}$, as asserted.

The seven homogeneous rules lower lexicographic order. The words
avoiding their left sides have degree counts
\[
 1,2,3,4,4,4,3,3,2,1\qquad(0\le d\le9),
\]
and there are no such words in higher degree. This proves a
graded spanning bound of $27$; confluence is not needed.
Moreover $J^{10}=J^{11}$, and separatedness of the completed
augmentation filtration implies $J^{10}=0$.
\end{proof}

\begin{theorem}\label{ap:thm:all-parameters}
For every \(p(y)=c_ny^n+O(y^{n+1})\) with \(c_n\ne0\),
\[
\bigl(\dim_{\kk} A_p,\dim_{\kk} Z(A_p)\bigr)
=\begin{cases}(27,10),&n=4,\\(4n+12,n+7),&n\ge5.
\end{cases}
\]
These formulas hold even when the parameters in different
\(y\bmod3\) residues vary independently.
\end{theorem}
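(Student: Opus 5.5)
The plan is to sandwich both $\dim A_p$ and $\dim Z(A_p)$ between upper bounds from the filtered comparisons of \cref{lem:weighted-upper-bound,lem:order-four-upper-bound} and lower bounds from an explicit finite free deformation, exactly as in the proof of \cref{thm:main-existence}.

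\emph{Deformation and the length lower bound.} Put $\Phi_{p,t}=x^3+xy^3+p(y)+\frac t3y^3-t^2y$ and let $\A_p$ be the completed $R$-algebra with special fibre $A_p$. By \cref{ap:thm:finite}, $A_p$ is finite dimensional and local, so \cref{lem:topNak} applies. The representations of Propositions~\ref{prop:scalar}, \ref{prop:two} and \ref{prop:three} carry over with $Q_n$ replaced by $p'(z)+tz^2-t^2$. The lower Newton edges depend only on the leading term $nc_ny^{n-1}$ together with the $t$-terms: for $H$, the edge from $(0,4)$ to $(7,0)$, respectively the $n=4$ rescaling $t=s^3$; for $P$, the edges from $(0,2)$. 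Higher coefficients of $p$ lie strictly above these edges and do not change the edge polynomials. The three-dimensional branch needs care, because $\tau(c)=-t-$(the $y^2$-residue coefficient series $c(\cdot)$ evaluated at $\alpha^3$). The same implicit-function argument solves $c+3r^2=0$, since that series lies in $z\kk[[z]]$. The valuation checks ($b_\lambda\ne0$, $v_i\ne0$) use only leading orders and so are unchanged. The Chinese remainder theorem then gives a quotient of $\A_p\otimes K$ of dimension $4n+12$ (or $27$), and \cref{lem:topNak}(ii) yields $\dim A_p\ge 4n+12$ (or $27$).

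\emph{Length equality.} Combining this lower bound with \cref{lem:weighted-upper-bound} and \cref{lem:order-four-upper-bound} forces equality. \cref{lem:topNak}(iii) then shows that $\A_p$ is finite free and that the generic fibre is exactly the semisimple algebra. Equality also lets \cref{lem:weighted-upper-bound} identify $\operatorname{gr}A_p\cong G_{n+3}$ and gives $\dim Z(A_p)\le n+7$ for $n\ge5$.

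\emph{Centre lower bound and the $n=4$ upper bound.} For the lower bound I use upper semicontinuity of the centre in the finite free family. The centre is the kernel of the $R$-linear map $a\mapsto([x,a],[y,a])$ on a free module, so its special-fibre dimension is at least the generic rank, which is the number of blocks: $n+7$, respectively $10$. For the $n=4$ upper bound, \cref{lem:order-four-upper-bound} gives $J^{10}=0$, and \cref{ap:cor:bound} then gives $\dim\HH_0(A_p)\le10$. To pass to the centre I need a symmetrizing form, since $Z\cong\HH_0^*$ requires one. I would get it from the flop realization if that were available earlier; failing that, I would instead use semicontinuity of $\HH_0$. The $\HH_0$ of the finite free family is the cokernel of the commutator map, and its generic dimension equals the number of blocks, so this gives $\dim\HH_0(A_p)\ge10$ and hence equality for $\HH_0$. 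Transferring this to $Z$ without symmetry is the obstacle.

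\emph{Main obstacle.} My first attempt at the obstacle is to bound $\dim Z$ directly by the filtered inclusion $\operatorname{gr}Z\subseteq Z(\operatorname{gr}A_p)$, computing the centre of the graded algebra given by the seven initial rules of \cref{lem:order-four-upper-bound}. This should give $10$, by a computation analogous to \cref{lem:homogeneous-order-quotient} with $N=7$ adjusted to the $n=4$ initial ideal. If that fails, I would invoke symmetry of $A_p$ via the jet-stability reduction of \cref{ap:prop:tails} together with \cref{ap:thm:subfamily}. That reduction only covers potentials modulo $y^{12}$ having the form $y^4h(y^3)$, so it does not handle all residues, which is why I prefer the graded route.
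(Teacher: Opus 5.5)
Your sandwich for the length, and for the centre when $n\ge5$, is the paper's argument. The $n=4$ centre upper bound, however, has a real gap: your preferred graded route cannot give $10$. The graded algebra defined by the seven initial rules of \cref{lem:order-four-upper-bound}, with its $27$ normal words, has an $11$-dimensional centre. It is spanned by $1$, $xyx$, $y^3$, $yxyx$, $y^2xyx$, $y^6$, $y^6x$, $y^7$, $y^7x$, $y^8$ and $y^9$. For instance, $xyx$ is central there because $x^2=0$ and $xyxy=yxyx$, and the cubic rule alone gives $xy^3=y^3x$. The higher-degree elements are checked the same way, using $y^3xyx=0$, $y^6xy=y^7x$ and $3y^8x+Ay^9=0$. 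So $\operatorname{gr}Z(A_p)\subseteq Z(\operatorname{gr}A_p)$ only yields $\dim Z(A_p)\le11$. As you note yourself, the jet-stability fallback does not reach all parameters.

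What is missing is a symmetrizing form, and it is available without circularity. \cref{cor:p-formal-flop-realization} uses only finite dimensionality (\cref{ap:thm:finite}), \cref{lem:poly-realization} and the explicit contraction of \cref{thm:p-projective-contraction}; it does not use the present theorem. The paper applies it over $\mathbb C$ to get $\dim Z(A_p)=\dim\HH_0(A_p)=10$. It then reaches an arbitrary $\kk$ in three steps: replace $p$ by a finite jet (\cref{ap:lem:jets}), descend the coefficients to a finitely generated field, and embed that field in $\mathbb C$. Kernel dimensions of the commutator matrices are unchanged by field extension.

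The lower bound also needs repair, because with the fixed term $-t^2y$ the leading-order checks are not independent of $c_n$. For $n=5$ and $\lambda=t^{1/2}u$, the two-dimensional branch equation has leading part $-5c_5\omega u^4+2u^2+\omega^2$. Its discriminant in $u^2$ is $4+20c_5$. For $p=-\tfrac15y^5$ the whole equation is exactly $\omega(\lambda^2+\omega^2t)^2$, so there are only two two-dimensional branches. Your Chinese-remainder quotient then has dimension $24$ rather than $32$. Similarly, at $n=4$ the leading coefficient of $\beta_\lambda^2$ is $\tfrac13-\tfrac1{12c_4}$, which vanishes at $c_4=\tfrac14$. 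The paper avoids this by taking $b_0=c_4,\ c_5^{-1},\ 1$ in \eqref{ap:eq:deformation}. Alternatively, first substitute $x\mapsto s^3x$, $y\mapsto s^2y$ and divide the potential by $s^9$ to normalize $c_n=1$.
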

\begin{proof}
Theorem~\ref{ap:thm:finite} gives finite length.
The generic split semisimple quotient construction for the
formal deformation
\begin{equation}\label{ap:eq:deformation}
 \mathcal A_p=\Jac_{\kk[[t]]}
 \bigl(x^3+xy^3+p(y)+ty^3/3-b_0t^2y\bigr),\qquad
 b_0=\begin{cases}c_4,&n=4,\\c_5^{-1},&n=5,\\1,&n\ge6,
 \end{cases}
\end{equation}
produces, after a Puiseux extension, a quotient of dimension
\(d=27\) or \(d=4n+12\), of block type \(S_4\) or \(S_n\)
from Theorem~\ref{thm:main-existence}. We give the construction to make the lower bound
independent of the filtered upper bounds. Put
\(Q(y)=p'(y)+ty^2-b_0t^2\). Scalar representations are the
simple roots of \(Q(y)^2+3y^7=0\), with \(x=-Q(y)/(3y^2)\).
For \(n\ge5\), the edge \(y=t^{4/7}u\) gives the seven roots
of \(b_0^2+3u^7\). For \(n=4\), set \(t=s^3\) and
\(y=s^2(u+sv)\); the three nonzero values \(u^3=1/4\)
each give two simple roots through
\((12c_4u^2v+u^2)^2+3u^7=0\). Thus the scalar-block counts
are seven and six, respectively.

For two-dimensional representations choose a root \(\lambda\ne0\)
of
\begin{equation}\label{ap:eq:generic-two}
 p'(\lambda)+\omega p'(\omega\lambda)
 +2t\lambda^2-(1+\omega)b_0t^2=0,
\end{equation}
where \(\omega^3=1\ne\omega\), put
\(a_\lambda=-Q(\lambda)/(3\lambda^2)\), and choose
\(\beta_\lambda^2=-\lambda^3/3-a_\lambda^2\). Then
\[
 Y_\lambda=\operatorname{diag}(\lambda,\omega\lambda),\qquad
 X_\lambda=
 \begin{pmatrix}a_\lambda&\beta_\lambda\\
 \beta_\lambda&-a_\lambda\end{pmatrix}
\]
satisfy both Jacobi relations. At \(n=4\), equation
\eqref{ap:eq:generic-two} has three simple roots with
\(\lambda=t^{2/3}u+O(t)\), \(u^3=1/4\), and
\(\beta_\lambda^2=t^2/4+o(t^2)\). At \(n=5\), writing
\(\lambda=t^{1/2}u\) produces
\(-5c_5\omega u^4+2u^2+b_0\omega^2\), whose discriminant
as a polynomial in \(u^2\) is
\(4+20b_0c_5=24\). For \(n\ge6\), two roots have valuation
\(1/2\), and \(n-3\) roots have valuation \(1/(n-3)\):
their nonzero leading equations are respectively
\(2u^2=(1+\omega)b_0\) and
\(nc_n(1+\omega^n)u^{n-3}+2=0\).
For \(n\ge5\), \(a_\lambda^2\) has strictly greater
valuation than \(\lambda^3\); hence every \(\beta_\lambda\ne0\).
This gives \(n-1\) pairwise distinct irreducible two-dimensional
representations (three at \(n=4\)).

For the remaining block, define
$r(t,h)=t+\sum_{\substack{m\ge n\\3\mid m}}
 m c_m h^{(m-3)/3}$.
The implicit equation \(h+3r(t,h)^2=0\) has a solution
\(h=-3t^2+O(t^3)\). Let \(\lambda_i=\omega^i\lambda\)
with \(\lambda^3=h\), put
\(a_i=-Q(\lambda_i)/(3\lambda_i^2)\), \(v_i=a_i+r(t,h)\),
and set
\[
 Y=\operatorname{diag}(\lambda_0,\lambda_1,\lambda_2),
 \qquad X=\mathbf1(v_0,v_1,v_2)-r(t,h)I_3.
\]
The root-of-unity sum gives \(\sum_i a_i=-r(t,h)\), so
\(X^2=r(t,h)^2I_3=-hI_3/3\). Its diagonal entries are
\(a_i\), and all off-diagonal terms of the other Jacobi relation
vanish since \(\lambda_i^3=h\). Each \(v_i\ne0\) by its
leading \(t\)-valuation; thus this three-dimensional representation
is irreducible. All blocks have different dimensions or distinct
\(Y\)-spectra. The irreducible representation maps jointly
surject onto the claimed product of matrix algebras.

Since $A_p$ is finite dimensional, Lemma~\ref{lem:topNak} applies to
$\mathcal A_p$. The generic quotient gives $\dim A_p\ge d$.
The reverse inequality follows from Lemmas 
\ref{lem:weighted-upper-bound} and \ref{lem:order-four-upper-bound}.
Thus $\dim A_p=d$, and Lemma~\ref{lem:topNak} shows that the
deformation is finite free and its generic quotient is the whole fibre.

For $n\ge5$, Lemma~\ref{lem:weighted-upper-bound} gives
$\dim Z(A_p)\le n+7$. Kernel dimension for the commutator map
cannot decrease under specialization of a finite free algebra,
so its generic centre gives the reverse inequality.

For $n=4$, Lemma~\ref{lem:order-four-upper-bound} and Theorem~\ref{ap:thm:alltraces}
give $\dim\HH_0(A_p)\le10$: the twelve-word cyclic spanning
list has ten pure-power classes and $[x],[xy]$, and the two
trace relations eliminate distinct lowest degrees seven and eight.
Flat specialization from the ten generic blocks gives equality.
Over $\mathbb C$, the geometric realization proved independently
in Corollary~\ref{cor:p-formal-flop-realization} makes $A_p$ symmetric;
therefore $\dim Z(A_p)=\dim\HH_0(A_p)=10$.
The same conclusion holds over any algebraically closed field
of characteristic zero. Indeed all relations are determined by
a finite jet, by Lemma~\ref{ap:lem:jets}; its finitely many coefficients
lie in a finitely generated characteristic-zero field, which
embeds into $\mathbb C$. Lengths and the kernels of the
commutator matrices are unchanged by field extension.

\end{proof}

\begin{corollary}
\label{cor:parameter-rank}
For any $p$ in \cref{thm:full-p}, let
$\Gamma_p(a)=([x,a],[y,a])$. Then
\[
\operatorname{rank}\Gamma_p=
\begin{cases}17,&\operatorname{ord}_yp=4,\\
3n+5,&\operatorname{ord}_yp=n\ge5.
\end{cases}
\]
Consequently, for series $p_n,p_{n+1}$ of orders $n,n+1$
with $n\ge5$, respectively,
\[
\operatorname{rank}\Gamma_{p_{n+1}}-\operatorname{rank}\Gamma_{p_n}=3.
\]
\end{corollary}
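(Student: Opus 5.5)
The rank of $\Gamma_p$ is determined by the dimension of its kernel. Since $A_p$ is generated topologically by $x$ and $y$, an element $a$ satisfies $[x,a]=[y,a]=0$ exactly when $a$ commutes with a dense subalgebra. Continuity of multiplication in the finite-dimensional algebra $A_p$ then gives $\ker\Gamma_p=Z(A_p)$. Rank--nullity yields
\[
 \operatorname{rank}\Gamma_p=\dim_\kk A_p-\dim_\kk Z(A_p).
\]
The plan is to substitute the pair $(\dim A_p,\dim Z(A_p))$ from \cref{ap:thm:all-parameters}, which is the same as the first assertion of \cref{thm:full-p}.

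\textbf{Order four.} Here $(\dim A_p,\dim Z(A_p))=(27,10)$, so
\[
 \operatorname{rank}\Gamma_p=27-10=17.
\]

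\textbf{Order $n\ge5$.} Here $(\dim A_p,\dim Z(A_p))=(4n+12,n+7)$, so
\[
 \operatorname{rank}\Gamma_p=(4n+12)-(n+7)=3n+5.
\]
Because this depends only on $n$ and not on the higher coefficients of $p$, two series $p_n,p_{n+1}$ of orders $n,n+1\ge5$ satisfy
\[
 \operatorname{rank}\Gamma_{p_{n+1}}-\operatorname{rank}\Gamma_{p_n}
 =\bigl(3(n+1)+5\bigr)-(3n+5)=3.
\]
The increment is stated only for $n\ge5$. It would fail at $n=4$, since $3\cdot5+5-17=3$ happens to work but the formula $3n+5$ does not apply at $n=4$.

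\textbf{Main point to check.} The only step needing care is the identification $\ker\Gamma_p=Z(A_p)$ for the completed algebra. Since $A_p$ is finite dimensional, the closed ideal quotient makes the images of the noncommutative polynomials in $x,y$ equal to all of $A_p$: the finitely many images of words up to the nilpotence degree of $J(A_p)$ already span. Hence commuting with $x$ and $y$ is equivalent to being central. Everything else is arithmetic on \cref{ap:thm:all-parameters}, whose centre computation for $n=4$ already absorbed the symmetry argument via the geometric realization.
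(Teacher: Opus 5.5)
Your argument is correct and is the paper's own proof: identify $\ker\Gamma_p$ with $Z(A_p)$, then subtract the centre dimension in \cref{ap:thm:all-parameters} from the length. One harmless slip is your aside that the increment ``would fail at $n=4$''. Your own arithmetic shows it also holds there, since $\operatorname{rank}\Gamma_{p_5}-\operatorname{rank}\Gamma_{p_4}=20-17=3$; that case is simply not part of the statement.
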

\begin{proof}
The kernel of $\Gamma_p$ is $Z(A_p)$. Subtract the centre
dimension in \cref{ap:thm:all-parameters} from the algebra length.
\end{proof}

\begin{remark}
Theorem~\ref{ap:thm:all-parameters} proves the all-parameter length,
centre, and flatness assertions for this \(E_{6,n}\) family.
It does not classify isomorphism types among parameters: general
\(y^4+\eta y^5+\theta y^6\) potentials have genuine moduli
\cite[Section~5.1]{BW2025}. The algebraic equality
$\dim\HH^1(A_p)=\dim\HH^2(A_p)$ is sufficient for uniqueness by
Lemma~\ref{lem:smooth-criterion} and Proposition~\ref{prop:unique-principle}. Over $\mathbb C$,
Theorem~\ref{thm:p-projective-contraction} and a sufficiently long polynomial
truncation realize every such $A_p$ as a smooth-flop contraction
algebra, providing this Hochschild equality.
The finite \(E_7,E_{8,5},E_{8,6}\) cases do not follow from this
two-generator calculation; they are treated via their universal
flopping algebras in the companion universal-flop manuscript.
\end{remark}

\begin{proof}[Proof of Theorem~\ref{thm:full-p}]
The original-algebra length and centre identities and the finite
free deformation are Theorem~\ref{ap:thm:all-parameters}.
The centre dimension equals that of the semisimple generic fibre.
Over $\mathbb C$, Corollary~\ref{cor:p-formal-flop-realization} and Lemma~\ref{lem:h1h2}
give $h^1(A_p)=h^2(A_p)$. Thus Lemma~\ref{lem:smooth-criterion}
proves smoothness, and Proposition~\ref{prop:unique-principle} proves uniqueness.
For an arbitrary algebraically closed characteristic-zero field,
replace $p$ by a finite jet using Lemma~\ref{ap:lem:jets}, descend its
coefficients to a finitely generated field, and embed that field
into $\mathbb C$. The finite-dimensional Hochschild cochain
spaces in each degree commute with field extension, so the same
Hochschild dimension equality holds. The two cited lemmas then
apply over the original field.
\end{proof}

\section{A curve neighbourhood for every parameter}
\label{sec:geometric-neighbourhood}

Let $p(y)=\sum_{m\ge n}c_my^m$ as in Theorem~\ref{thm:full-p}, with
$\kk=\mathbb C$.  Since the original algebra is finite-dimensional,
Lemma
~\ref{lem:poly-realization} permits replacement of $p$ by a
sufficiently long polynomial truncation $P(y)=\sum_{m=n}^{N}c_my^m$
without changing $A_p$.  The following construction is thus available
for every formal parameter, including those outside the central-unit
subfamily.

\begin{proposition}
\label{prop:p-geometric-neighbourhood}
Glue
$U_a=\operatorname{Spec}\mathbb C[a,v_2,v_1],\qquad
 U_b=\operatorname{Spec}\mathbb C[b,w_2,w_1]$
on $a,b\ne0$ by
\begin{equation}\label{eq:p-curve-glue}
 b=a^{-1},\qquad w_2=a^{-1}v_2,\qquad
 w_1=a^3v_1+3a^2v_2^2+a^{-1}v_2^3
       +\sum_{m=n}^{N}m c_m a^{2-m}v_2^{m-1}.
\end{equation}
The resulting $X_P$ is a smooth separated threefold containing the
closed curve $C=\{v_1=v_2=0\}\cong\mathbb P^1$, with
$N_{C/X_P}\cong\mathcal O_{\mathbb P^1}(1)\oplus
\mathcal O_{\mathbb P^1}(-3)$.  Its one-pointed NC deformation
algebra is
\[
 \Lambda_{\mathrm{def}}(\mathcal O_C\subset X_P)
 \cong \operatorname{Jac}(x^3+xy^3+P(y))
 \cong A_p.
\]
The projective local contraction of this curve neighbourhood is proved
in \cref{thm:p-projective-contraction} below.
\end{proposition}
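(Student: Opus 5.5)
The plan is to check, in order, that the gluing is well defined, that it produces the stated curve and normal bundle, and that the deformation algebra is the stated Jacobi algebra; the last point carries the real content.

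\emph{Gluing and smoothness.} The transition map \eqref{eq:p-curve-glue} is an isomorphism of $\{a\ne0\}\subset U_a$ onto $\{b\ne0\}\subset U_b$. Its inverse is $a=b^{-1}$, $v_2=b^{-1}w_2$, and
\[
 v_1=b^{3}\Bigl(w_1-3b^{-2}v_2^2-bv_2^3-\textstyle\sum_m mc_m b^{m-2}v_2^{m-1}\Bigr),
\]
which after substituting $v_2=b^{-1}w_2$ is regular on $b\ne0$. Both charts are affine spaces, so $X_P$ is smooth.

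\emph{Separatedness.} We check that the image of $U_a\cap U_b$ in $U_a\times U_b$ is closed. The argument is the standard one for total spaces of bundles over $\mathbb P^1$ twisted by a polynomial correction. The coordinates $a$ and $b=a^{-1}$ already separate over the base $\mathbb P^1$. The fibre coordinates are then determined by the graph equations, so the closure of the graph adds no points over $a=0$ or $b=0$.

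\emph{The curve and its normal bundle.} Setting $v_1=v_2=0$ forces $w_1=w_2=0$, so $C\cong\mathbb P^1$. The normal bundle is the linearization of the transition map along $C$. All terms $3a^2v_2^2$, $a^{-1}v_2^3$ and $a^{2-m}v_2^{m-1}$ with $m\ge4$ are at least quadratic in the fibre coordinates. The linear transition is therefore $w_2=a^{-1}v_2$ and $w_1=a^3v_1$, which are transition functions $\mathcal O(1)$ and $\mathcal O(-3)$ in the usual convention. This gives $N_{C/X_P}\cong\mathcal O(1)\oplus\mathcal O(-3)$.

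\emph{Identifying the deformation algebra.} Here I would invoke the standard description of noncommutative deformations of a $(1,-3)$-curve: the deformation algebra is the completed Jacobi algebra of a superpotential read off from the transition functions. The model is the Ferrari/Aspinwall--Katz (Brown--Wemyss "Gopakumar--Vafa via gluing") construction. There, a gluing $w_1=a^3v_1+\partial_{v_2}W(a,v_2)$ for an appropriate expansion yields $\operatorname{Jac}$ of the $a^{-2}$-coefficient potential in $x\leftrightarrow v_2$-direction data. Concretely, I would match the three correction terms one at a time:
\begin{itemize}
\item $3a^2v_2^2$ should produce the $x^3$ term;
\item $a^{-1}v_2^3$ should produce $xy^3$;
\item $mc_ma^{2-m}v_2^{m-1}$ should produce $c_my^m$.
\end{itemize}
The matching is done by computing the $A_\infty$ structure on $\operatorname{Ext}^1(\mathcal O_C,\mathcal O_C)=H^0(N)=\kk^2$ via \v{C}ech cochains. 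The higher products $m_k$ come from the degree-$k$ terms of the transition map. Their cyclic symmetrization gives the potential $x^3+xy^3+P(y)$, and $\Lambda_{\mathrm{def}}$ is its completed Jacobi algebra. I expect this computation to be the main obstacle, specifically getting the coefficients and cyclic normalizations exactly right, including the factor $m$ versus $\partial_y y^m$. My first approach is to cite the published gluing-to-potential theorem of Brown--Wemyss and verify only the normalization on the monomial case.

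\emph{Replacing $P$ by $p$.} The final isomorphism $\operatorname{Jac}(x^3+xy^3+P(y))\cong A_p$ follows from Proposition~\ref{ap:prop:tails}-style jet stability. Specifically, Lemma~\ref{ap:lem:jets} applies together with the nilpotence bounds $J^{10}=0$ or $J^{n+7}=0$ from Lemmas~\ref{lem:order-four-upper-bound} and~\ref{lem:weighted-upper-bound}, with lengths supplied by Theorem~\ref{ap:thm:all-parameters}. These hold provided $N\ge n+8$ (respectively $N\ge11$).
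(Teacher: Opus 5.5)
Your plan follows the paper's proof: it writes down the same inverse transition, reads off $N_{C/X_P}$ from the linear part, and identifies $\Lambda_{\mathrm{def}}$ by citing the Brown--Wemyss gluing theorem \cite[Corollary~1.2]{BWCrepant2024}, where your term-by-term matching becomes $\lambda_{3,0}=3$, $\lambda_{1,3}=1$, $\lambda_{0,m}=mc_m$ with necklace polynomials $x^3/3$, $xy^3$, $y^m/m$ (this necklace convention settles the normalization you were worried about), while it cites \cite[Lemma~2.2]{BWCrepant2024} for the separatedness you check directly from the closed graph. One small correction in the truncation step: for $n\ge5$, Lemma~\ref{lem:weighted-upper-bound} does not state $J(A_p)^{n+7}=0$ for general $p$; you do not need that exponent, since finite length gives $J(A_p)^{d}=0$ with $d=\dim A_p$, which is the bound Lemma~\ref{lem:poly-realization} feeds into Lemma~\ref{ap:lem:jets} for truncations of degree $N>d+2$.
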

\begin{proof}
The inverse change of coordinates has
$a=b^{-1}$, $v_2=b^{-1}w_2$ and
\[
 v_1=b^3w_1-3b^{-1}w_2^2-bw_2^3
       -b^2\sum_{m=n}^{N}m c_m w_2^{m-1}.
\]
These formulas are regular on the overlap and show that the charts
glue by an isomorphism.  Smoothness follows from the two affine charts;
the separatedness of precisely this gluing is
\cite[Lemma~2.2]{BWCrepant2024}.  Along $C$ all nonlinear terms
vanish, and the linear transitions are $w_1=a^3v_1$ and
$w_2=a^{-1}v_2$, giving the asserted normal bundle.

In the notation of \cite[Corollary~1.2]{BWCrepant2024}, set
\[
 \lambda_{3,0}=3,\qquad \lambda_{1,3}=1,\qquad
 \lambda_{0,m}=m c_m\ (n\le m\le N),
\]
and set all other gluing coefficients to zero.  The corresponding
free necklace polynomials are
$N_{3,0}=x^3/3$, $N_{1,3}=xy^3$, and
$N_{0,m}=y^m/m$.  Their weighted sum is exactly
$x^3+xy^3+P(y)$, so that corollary identifies the NC deformation
algebra with its completed Jacobi algebra. Tensoring by the invertible
sheaf $\pi^*\mathcal O_{\mathbb P^1}(-1)$, where
$\pi:X_P\to\mathbb P^1$ is the chart projection, identifies the
NC deformation problems of $\mathcal O_C$ and $\mathcal O_C(-1)$.
Thus the usual contraction-algebra convention gives the same algebra
once the local contraction is constructed. The last isomorphism is
the finite-truncation assertion of Lemma~\ref{lem:poly-realization}.
\end{proof}

\begin{proposition}
\label{prop:p-extra-sections}
For every polynomial $P$ in \eqref{eq:p-curve-glue},
$u=av_1+3v_2^2=b^2w_1-w_2^3-bP'(w_2)$
is a global regular function on $X_P$ and vanishes on $C$.
The sections of the projection $X_P\to\mathbb P^1$ are in bijection
with the critical points $(s,t)$ of the \emph{commutative} polynomial
$F(s,t)=s^3+st^3+P(t)$. The corresponding section
$C_{s,t}\cong\mathbb P^1$ is given on $U_a$ by
\[
 v_2=s+at,\qquad v_1=-6st-3at^2.
\]
Along this section, $u=3s^2=-t^3$.
In particular, already for $P(y)=y^4$, the point
$(s,t)=(64/9,-16/3)$ gives a complete curve $C_{s,t}$
distinct from $C$. Hence there is \emph{no} morphism from this
entire $X_P$ to an affine scheme that contracts exactly $C$.
\end{proposition}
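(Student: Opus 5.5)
The plan is to verify each assertion directly from the gluing \eqref{eq:p-curve-glue}, using the function $u$ as the organizing tool.

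First, for the global function, I would substitute \eqref{eq:p-curve-glue} into $b^2w_1-w_2^3-bP'(w_2)$. Since $b^2w_1=av_1+3v_2^2+a^{-3}v_2^3+\sum m c_m a^{-m}v_2^{m-1}$, while $w_2^3=a^{-3}v_2^3$ and $bP'(w_2)=\sum mc_m a^{-m}v_2^{m-1}$, the last two terms cancel. What remains is $av_1+3v_2^2$, so the two chart expressions for $u$ agree on the overlap. Each expression is polynomial on its own chart, hence $u$ is regular on $X_P$. It vanishes on $C=\{v_1=v_2=0\}$, because both of its terms vanish there.

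Next, for the classification of sections, I would write a section over $U_a$ as $v_2=f(a)$, $v_1=g(a)$ with $f,g\in\mathbb C[a]$.
\begin{enumerate}[label=\rm(\alph*)]
\item Regularity of $w_2=a^{-1}f(a)$ in $b=a^{-1}$ forces $\deg f\le1$. So $f=s+ta$, and on $U_b$ we get $w_2=t+sb$.
\item Restricted to the section ($\cong\mathbb P^1$), $u$ is a global regular function and hence a constant $\kappa$.
\item On $U_a$, the identity $ag+3f^2=\kappa$ requires $a\mid \kappa-3f^2$. This gives $\kappa=3s^2$ and $g=-6st-3at^2$.
\item On $U_b$, we have $w_1=b^{-2}\bigl(\kappa+w_2^3+bP'(w_2)\bigr)$, which must be polynomial in $b$. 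Expanding in $b$ with $w_2=t+sb$, the constant and linear coefficients must vanish: $\kappa+t^3=0$ and $3st^2+P'(t)=0$.
\end{enumerate}
Combining $\kappa=3s^2$ with $\kappa=-t^3$ gives $3s^2+t^3=0$. Together with $3st^2+P'(t)=0$, these are exactly $\partial_sF=\partial_tF=0$. Conversely, if $(s,t)$ is a critical point, the same formulas define regular $v_1,v_2,w_1,w_2$, so they give a section. This establishes the bijection, the displayed formulas, and $u=3s^2=-t^3$ along the section.

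For $P(y)=y^4$, the critical equations are $3s^2=-t^3$ and $3st^2+4t^3=0$. Taking $t\ne0$ gives $s=-4t/3$, and then $16t^2/3=-t^3$ gives $t=-16/3$ and $s=64/9$. Since $u=3s^2\neq0$ on $C_{s,t}$ while $u=0$ on $C$, the two curves are distinct, indeed disjoint. Finally, suppose $X_P\to Y$ is a morphism to an affine scheme. Any morphism from a complete connected curve to an affine scheme is constant, so the complete curve $C_{s,t}$ is also contracted. Hence no such morphism contracts exactly $C$.

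The main obstacle is the bookkeeping in step (d), namely checking that only the constant and linear coefficients in $b$ impose conditions. The higher coefficients are automatically divisible by $b^2$, because $w_2^3$ and $bP'(w_2)$ are polynomials in $b$. I would check this carefully rather than expand everything.
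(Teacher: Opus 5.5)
Your proposal is correct and follows essentially the same route as the paper: a direct chart computation that forces $v_2=s+at$, then $v_1=-6st-3at^2$, then the two critical equations as the condition for $w_1$ to be regular at $b=0$, followed by the same $P(y)=y^4$ check and the fact that maps from complete curves to affine schemes are constant. The only difference is organizational: you extract these conditions from the constancy of the global function $u$ on the section (divisibility by $a$ on $U_a$ and by $b^2$ on $U_b$, with the converse gluing coming from the chart identity for $u$), whereas the paper reads off the same coefficients $3s^2+t^3$ and $3st^2+P'(t)$ directly as the positive powers of $a$ in the transition formula for $w_1$.
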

\begin{proof}
Substituting the inverse transition in \eqref{eq:p-curve-glue}
gives the stated expression for $u$ in the $b$-chart.
Conversely, a section must have $v_2=s+at$: regularity of both
$v_2$ and $w_2=a^{-1}v_2$ forces this form. The only powers of
$a$ at least three in the remaining transition are $6st a^3$
and $3t^2 a^4$, so regularity of $w_1$ forces
$v_1=-6st-3at^2$. On this prospective section, the $b$-chart coordinate
$w_2=bs+t$ is regular.  Substitution into the transition for $w_1$
gives a Laurent polynomial in $a$, whose only positive powers are
$a^2(3s^2+t^3)+a\bigl(3st^2+P'(t)\bigr)$.
Both coefficients vanish precisely when $(s,t)$ is critical for $F$.
The remaining Laurent polynomial is regular in $b=a^{-1}$, so the
two charts glue to a section.  Substitution into $u$ gives $3s^2$;
the first critical equation identifies this with $-t^3$.
For $P(y)=y^4$, direct substitution verifies the displayed critical
point and $u|_{C_{s,t}}=4096/27\ne0=u|_C$.  Every morphism from a
complete connected curve to an affine scheme is constant. Thus a map
from all of $X_P$ to an affine scheme necessarily contracts this
second curve as well.
\end{proof}

\begin{corollary}
\label{cor:p-isolate-sections}
Suppose $P(y)=c_ny^n+O(y^{n+1})$ is a polynomial, $c_n\ne0$,
$n\ge4$. Define
\[
 D_P(t)=P'(t)^2+3t^7,
 \qquad G_P(U)=\prod_{\substack{t\ne0\\D_P(t)=0}}(U+t^3),
\]
with each distinct root $t$ appearing once. Then $G_P(0)\ne0$,
and the open neighbourhood $X_P^\circ=\{G_P(u)\ne0\}\subseteq X_P$
contains $C$ but no other section of $X_P\to\mathbb P^1$.
This statement does not rule out compact multisections or establish
properness of a morphism from $X_P^\circ$.
\end{corollary}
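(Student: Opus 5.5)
We combine Proposition~\ref{prop:p-extra-sections} with elementary valuation facts about $D_P$. By that proposition, the sections of $X_P\to\mathbb P^1$ are the curves $C_{s,t}$, one for each critical point $(s,t)$ of $F(s,t)=s^3+st^3+P(t)$. On $C_{s,t}$ the global function $u$ is the constant $3s^2=-t^3$. The curve $C$ itself is the section with $(s,t)=(0,0)$, and $u|_C=0$. Hence $C_{s,t}\subseteq X_P^\circ$ exactly when $G_P(-t^3)\ne0$. We must therefore show two things: $G_P(0)\ne0$, and $G_P(-t^3)=0$ for every critical point other than $(0,0)$.

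First, the critical points. They are the solutions of $3s^2+t^3=0$ and $3st^2+P'(t)=0$. Suppose $t=0$. The second equation holds automatically, because $P'(0)=0$ (as $n\ge4$). The first equation then forces $s=0$, so we get only the point defining $C$. Suppose $t\ne0$. Then $s=-P'(t)/(3t^2)$, and substituting into the first equation gives $P'(t)^2/(3t^4)+t^3=0$. This is equivalent to $D_P(t)=P'(t)^2+3t^7=0$. So every critical point with $t\ne0$ has $t$ a nonzero root of $D_P$, and $G_P(u)$ contains the factor $(u+t^3)$, which vanishes at $u=-t^3=u|_{C_{s,t}}$. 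Consequently every such section meets, and in fact lies in, the complement $\{G_P(u)=0\}$, so it is not contained in $X_P^\circ$.

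Second, $G_P(0)\ne0$. We have $G_P(0)=\prod t^3$ over the distinct nonzero roots of $D_P$, and each factor is nonzero by definition. We also need $G_P$ to be a genuine finite product. Since $P'$ has order $n-1\ge3$, the polynomial $D_P$ has lowest term $3t^7$ when $n\ge5$ and $(16c_4^2+3)t^6$ when $n=4$; in either case $D_P$ is not the zero polynomial. (If $16c_4^2+3=0$, the term of order $7$ or higher takes over, but $D_P$ remains nonzero, since an identity $P'^2=-3t^7$ is impossible by parity of orders.) So $D_P$ has finitely many nonzero roots. Finally, $C\subseteq X_P^\circ$ because $u|_C=0$ and $G_P(0)\ne0$.

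The only delicate point is to state the claim correctly. Membership of a section in $X_P^\circ$ is tested through the constant value of $u$ on it. Two different critical points could share the same value $-t^3$, but this is harmless: every nonzero root appears as a factor, so all of them are excluded. No statement about multisections or properness is needed, as the corollary itself notes.
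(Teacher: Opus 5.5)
Your argument is correct and is essentially the paper's proof: you classify sections by the critical points of $F$ via Proposition~\ref{prop:p-extra-sections}, get $s=0$ when $t=0$ and $D_P(t)=0$ when $t\ne0$, and separate $C$ from the other sections using $u|_C=0$ versus $u|_{C_{s,t}}=-t^3\ne0$. One small slip: for $n=4$ the lowest term of $D_P$ is $16c_4^2t^6$, since the summand $3t^7$ does not contribute in degree six, so your hedge about $16c_4^2+3=0$ is unnecessary; your parity-of-orders remark already shows $D_P\ne0$ in every case.
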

\begin{proof}
At $t=0$, the critical equations give $s=0$. At $t\ne0$ they
give $s=-P'(t)/(3t^2)$ and $D_P(t)=0$. The polynomial $D_P$ is
nonzero: it has order $6$ at zero if $n=4$, and order $7$ if
$n\ge5$. Thus there are finitely many other sections, all with
$u=-t^3\ne0$. The indicated localization removes each of them
and retains $C$, since $u|_C=0$.
For the monomials $P=t^n$, one has
$D_P=t^6(16+3t)$ if $n=4$, and
$D_P=t^7(n^2t^{2n-9}+3)$ if $n\ge5$; thus every full
monomial two-chart model has extra sections before localization.
\end{proof}

\begin{proposition}
\label{prop:p-zero-fibre-curves}
Assume $P\in y^4\mathbb C[y]$. Every complete integral curve
$D\subseteq X_P$ with $u|_D=0$ equals $C$. In particular this holds
inside $X_P^\circ$, independently of the other critical sections.
Moreover, $C$ meets no other complete integral curve in $X_P$.
\end{proposition}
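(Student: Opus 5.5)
The plan is to combine the projection $\pi:X_P\to\mathbb P^1$ with the global function $u$ of \cref{prop:p-extra-sections}. The idea is that the coordinate $v_2$ is a section of a degree-one line bundle, and on $\{u=0\}$ this section is forced to vanish too often to be nonzero.

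\emph{Step 1: reduction to dominant curves.} The fibres of $\pi$ are affine planes: $\operatorname{Spec}\mathbb C[v_1,v_2]$ over $a$, respectively $\operatorname{Spec}\mathbb C[w_1,w_2]$ over $b$. A complete integral curve cannot lie in an affine fibre, so any complete integral $D$ maps finitely onto $\mathbb P^1$, of some degree $\delta\ge1$. Let $\nu:\widetilde D\to D$ be the normalization. Since $w_2=a^{-1}v_2$, the function $v_2$ is a global section of $\pi^*\mathcal O_{\mathbb P^1}(1)$, and its pullback to $\widetilde D$ has degree $\delta$. It therefore suffices to show that, when $u|_D=0$, this section has more than $\delta$ zeros counted with multiplicity unless it vanishes identically.

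\emph{Step 2: local vanishing over $a=0$ and over $b=0$.} On $D\cap\{a\ne0\}$ the equation $u=0$ reads $v_1=-3v_2^2/a$. Take a point of $\widetilde D$ over $a=0$ with ramification index $e$, so $\operatorname{ord}(a)=e$. Regularity of $v_1$ there forces $2\operatorname{ord}(v_2)\ge e$, that is, $\operatorname{ord}(v_2)\ge e/2$.

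In the $b$-chart, $u=0$ reads
\[
 w_1=w_2^3b^{-2}+P'(w_2)b^{-1}.
\]
Take a point over $b=0$ with $\operatorname{ord}(b)=e$ and $\operatorname{ord}(w_2)=k$. Because $P'$ has order at least three, the second term has order at least $3k-e$. This is strictly larger than $3k-2e$, so the order of the right-hand side is exactly $3k-2e$. Regularity of $w_1$ then forces $k\ge 2e/3$.

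Summing over the fibres, the section $v_2$ has at least $\delta/2$ zeros over $a=0$ and at least $2\delta/3$ zeros over $b=0$. Since $\delta/2+2\delta/3>\delta$, a nonzero section is impossible, so $v_2|_D\equiv0$.

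\emph{Step 3: conclusion.} With $v_2=0$ and $u=0$ we get $av_1=0$, hence $v_1=0$ on the dense open set $a\ne0$ of $D$. Therefore $D\subseteq C$, and since both are integral complete curves, $D=C$.

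For the final assertion, let $D'$ be a complete integral curve meeting $C$. The global regular function $u$ is constant on $D'$, and it vanishes at a point of $C\cap D'$. Hence $u|_{D'}=0$, and the first part gives $D'=C$. The statement about $X_P^\circ$ follows because it is an open subset of $X_P$.

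I expect the main obstacle to be the valuation bookkeeping in Step 2. In particular, one must check that at points over $b=0$ the term $w_2^3b^{-2}$ really dominates, which uses $P\in y^4\mathbb C[y]$. One must also treat ramified points of the normalization uniformly; the inequality $\tfrac12+\tfrac23>1$ is what makes the count work.
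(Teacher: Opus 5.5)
Your proposal is correct and matches the paper's proof step for step. Both arguments use the section $v_2$ (glued with $w_2=a^{-1}v_2$) of the pullback of $\mathcal O_{\mathbb P^1}(1)$ to the normalization, with the same bounds $2k\ge e$ over $a=0$ and $3k\ge 2e$ over $b=0$, so the count $\delta/2+2\delta/3>\delta$ forces $v_2\equiv 0$ and hence $D=C$. The final assertion then follows, as in the paper, because the regular function $u$ is constant on any complete integral curve meeting $C$.
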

\begin{proof}
The projection $\pi:X_P\to\mathbb P^1$ has affine-plane fibres.
Thus $\pi|_D$ is nonconstant. On the normalization $B$ of $D$, the
induced map $f:B\to\mathbb P^1$ is finite of some degree $d>0$.
The coordinates $v_2$ and $w_2=a^{-1}v_2$ give a section
$\sigma$ of $L=f^*\mathcal O_{\mathbb P^1}(1)$, with $\deg L=d$.
If $\sigma$ vanishes identically, then $av_1=u-3v_2^2=0$
on the dense open $a\ne0$, and hence $D=C$.

Suppose instead that $\sigma$ is nonzero. Let $q\in f^{-1}(a=0)$
have ramification index $e_q$, and let $k_q$ be the vanishing order
of $v_2$ at $q$. The identity $av_1=-3v_2^2$ implies
$2k_q\ge e_q$. Since the indices over $a=0$ sum to $d$, the
zeros of $\sigma$ there have total degree at least $d/2$.
At $q\in f^{-1}(b=0)$, write $e_q=\operatorname{ord}_q(b)$ and
$k_q=\operatorname{ord}_q(w_2)$. The $b$-chart identity is
\[
 b^2w_1=w_2^3+bP'(w_2).
\]
It first shows $k_q>0$. Since $P'\in y^3\mathbb C[y]$, the
second term on the right has order at least $e_q+3k_q$, strictly
greater than the order $3k_q$ of the first. Therefore
$3k_q\ge2e_q$, and these zeros have total degree at least $2d/3$.
The two collections lie over distinct points of $\mathbb P^1$.
They give more than $d/2+2d/3>d=\deg L$ zeros of a nonzero
section of $L$, a contradiction.
Finally, a regular function is constant on every complete integral
curve. A complete integral curve meeting $C$ has $u=0$ and hence
equals $C$ by the first assertion.
\end{proof}

\begin{proposition}
\label{prop:p-double-section}
For $P(y)=y^4+y^5$, the open neighbourhood $X_P^\circ$ of
\cref{cor:p-isolate-sections} contains a complete integral curve
$D\ne C$ whose projection to $\mathbb P^1$ has degree two. The
global function $u$ is constant on $D$ with value $64/125$.
Consequently, no morphism from all of $X_P^\circ$ to an affine
scheme contracts only $C$.
\end{proposition}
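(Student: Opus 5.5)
The plan is to construct $D$ explicitly as a rational curve $B\cong\mathbb P^1$ mapping $2{:}1$ onto the base of $\pi:X_P\to\mathbb P^1$. The construction will be guided by the local analysis in the proof of \cref{prop:p-zero-fibre-curves}, now run at a nonzero constant value $u\equiv c$.

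\textbf{Ansatz.} Take a parameter $\tau$ on $B$ and write
\[
 a=\frac{g(\tau)}{h(\tau)},\qquad v_2=\frac{k(\tau)}{h(\tau)},
\]
with $g,h,k$ quadratics. This is forced by $\deg f^*\mathcal O(1)=2$: $v_2$ and $w_2=a^{-1}v_2=k/g$ are the two chart expressions of one section of $\mathcal O_B(2)$. Impose $u\equiv c$. Then the remaining coordinates are
\[
 v_1=\frac{c-3v_2^2}{a},\qquad w_1=\frac{c+w_2^3+bP'(w_2)}{b^2}.
\]
So $D$ is a complete curve in $X_P$ exactly when these are regular on $B$.
\begin{itemize}
\item Over the zeros of $g$, regularity of $v_1$ means $3k^2-ch^2$ vanishes there to the order of $g$.
\item Over the zeros of $h$, regularity of $w_1$ means $c\,g^3+k^3+g^2h\,P'(k/g)$ is divisible by $h^2$.
\end{itemize}
Since $P'(y)=4y^3+5y^4$, this last expression becomes a polynomial after clearing $g$. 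I would use the $\mathrm{PGL}_2$ freedom in $\tau$ and in the base coordinate to normalize $g$ and $h$. Two natural normalizations are $h=\tau^2-1$ with $g$ a square (ramified over $a=0$), or the reverse roles. This reduces everything to finitely many polynomial equations in the coefficients of $k$ and in $c$.

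\textbf{Solving and checking.} I expect the main obstacle to be solving this system and isolating the solution with $c=64/125$. I would first try the symmetric ansatz in which the involution $\tau\mapsto-\tau$ is the deck transformation. The equations then split into even and odd parts, and elimination should give a low-degree equation for $c$ whose relevant root is $64/125$. Once a solution is found, verify directly that the charts $v_1,w_1$ are polynomial, which gives a complete curve by the gluing \eqref{eq:p-curve-glue}. Then check that $D$ is integral of degree two and not a section, i.e.\ $v_2$ is not a rational function of $a$ alone; the dependence of $k$ on the odd part of $\tau$ should show this. Note that $D\ne C$ since $u|_D=c\ne0$.

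\textbf{Placing $D$ in $X_P^\circ$.} Compute
\[
 D_P(t)=(4t^3+5t^4)^2+3t^7=t^6(25t+16)(t+1).
\]
The nonzero roots are $t=-1$ and $t=-16/25$, so $G_P(U)=(U-1)(U-4096/15625)$. Since $64/125=8000/15625$ is neither $1$ nor $4096/15625$, $G_P(u)\ne0$ on $D$, hence $D\subseteq X_P^\circ$.

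\textbf{Conclusion.} As in \cref{prop:p-extra-sections}, any morphism from $X_P^\circ$ to an affine scheme is constant on the complete connected curve $D$. Such a morphism therefore contracts $D$, which is disjoint from $C$ by \cref{prop:p-zero-fibre-curves}, so it cannot contract only $C$.
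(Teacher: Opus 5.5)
\textbf{There is a genuine gap: you never construct $D$.} The proposition asserts that one specific complete integral double section with $u=64/125$ exists. Your text stops at ``I expect the main obstacle to be solving this system'' and ``elimination should give a low-degree equation for $c$ whose relevant root is $64/125$.'' That is a search plan. It does not show that the regularity system has any solution, let alone one with this value of $c$.

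The normalization you propose would also fail. The gluing fixes the base coordinate $a$, so there is no $\mathrm{PGL}_2$ freedom on the target. Moreover, the double section in question is unramified over both ends of the base:
\begin{itemize}
\item over $a=0$, the identity $av_1=c-3v_2^2$ gives the fibre $v_2^2=c/3$, which is two distinct points since $c\ne0$;
\item over $a=\infty$, the fibre is $w_2^2+kw_2+k^2=0$ with $k\ne0$, again two distinct points.
\end{itemize}
So an ansatz with $g$ or $h$ a square has no solutions. Only the source can be normalized, for example by making $\tau\mapsto-\tau$ the deck involution while leaving the branch points as unknowns.

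\textbf{The missing idea.} Describe $D$ by a monic quadratic in $v_2$ over $\mathbb C[a]$, not by a rational parametrization. Reducing $b^2w_1=c+w_2^3+bP'(w_2)$ modulo $b$ forces the fibre quadratic at $b=0$ to divide $w^3+c$. Hence it equals $q_0=w^2+kw+k^2$ with $c=-k^3$. The next order forces $P'(w)\equiv\ell(w-k)\pmod{q_0}$. Since $P'(w)=5w^3(w+\tfrac45)$, this holds for $k=-\tfrac45$ and $\ell=5k^3=-\tfrac{64}{25}$; this is exactly where $c=\tfrac{64}{125}$ comes from. The paper then writes down
\[
 v_2^2+kav_2+k^2a^2+\ell a-\tfrac c3=0,\qquad v_1=3kv_2+3k^2a+3\ell .
\]
From these, $av_1+3v_2^2=c$. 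On the $b$-chart, use $w^3+c=(w-k)q_0$ and $q_0=-\ell b+\tfrac c3b^2$; together with $\ell+5c=0$ these give the regular coordinate
\[
 w_1=-5\ell(w-k)^2+\tfrac c3(w-k)+\tfrac{5c}3b(w-k)^2 .
\]
Integrality follows because the discriminant $-3k^2a^2-4\ell a+\tfrac43c$ has two distinct roots, as $16(\ell^2-k^5)\ne0$.

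\textbf{Arithmetic error in your localization check.} One has $(4t^3+5t^4)^2+3t^7=t^6(25t^2+43t+16)$. Your factor $(25t+16)(t+1)$ expands to $25t^2+41t+16$, which is different. So the nonzero critical parameters are $(-43\pm\sqrt{249})/50$, and your $G_P$ is wrong. The conclusion $G_P(64/125)\ne0$ still holds, but it needs a different check. Reducing $t^3+\tfrac{64}{125}$ modulo $25t^2+43t+16$ gives $(1449t+1008)/625$. This vanishes only at $t=-\tfrac{16}{23}$, which is not a root of the quadratic.
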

\begin{proof}
Put
$k=-\frac45, c=-k^3=\frac{64}{125},
 \ell=5k^3=-\frac{64}{25}$.
On the $a$-chart of the total space of $\mathcal O_{\mathbb P^1}(1)$,
take the monic quadratic curve
\begin{equation}\label{eq:p-double-qa}
 q_a=v_2^2+ka v_2+k^2a^2+\ell a-\frac c3=0,
 \qquad v_1=3kv_2+3k^2a+3\ell.
\end{equation}
On the $b$-chart, multiplying $q_a$ by $b^2$ gives
\[
 q_b=w_2^2+kw_2+k^2+\ell b-\frac c3b^2=0.
\]
Write $w=w_2$, $q_0=w^2+kw+k^2$, and observe that
$P'(w)=w^3(4+5w)=5w^3(w-k)$ and
$w^3-k^3=(w-k)q_0$. In the quotient by $q_b$, where
$q_0=-\ell b+(c/3)b^2$, two substitutions give
\[
 c+w^3+bP'(w)=b^2\left[-5\ell(w-k)^2
   +\frac c3(w-k)+\frac{5c}{3}b(w-k)^2\right].
\]
Thus the bracket defines a regular $w_1$ on the $b$-chart.
The identity $av_1+3v_2^2=c$, obtained from \eqref{eq:p-double-qa},
and the displayed $b$-chart identity show that these two curves
glue inside $X_P$. Since $q_a$ and $q_b$ are monic quadratics,
the resulting curve is finite over $\mathbb P^1$ of degree two,
and hence complete. Its discriminant as a quadratic in $v_2$ is
$-3k^2a^2-4\ell a+4c/3$; this polynomial has two distinct roots
because its discriminant is $16(\ell^2-k^5)\ne0$.
Therefore the double section is integral. The equations give
$u|_D=c\ne0$.

It remains to check that the localization in
Corollary~\ref{cor:p-isolate-sections} does not remove $D$. Its nonzero
critical section parameters are roots of
$25t^2+43t+16=0$. If such a root satisfied $-t^3=c$,
reducing $t^3+64/125$ modulo this quadratic would give
$(1449t+1008)/625=0$, hence $t=-16/23$; however the quadratic
has value $-960/529$ there. Thus $G_P(c)\ne0$ and
$D\subset X_P^\circ$. Every morphism from the complete curve
$D$ to an affine scheme is constant, proving the last assertion.
\end{proof}

\begin{remark}
Let $\omega^3=1\ne\omega$ and $k\ne0$. The condition
\[
 \Phi_P(k):=P'(\omega k)+\omega P'(\omega^2k)=0
\]
is equivalent to the existence of $\ell\in\mathbb C$ such that
$P'(w)\equiv\ell(w-k)\pmod{w^2+kw+k^2}$.
Set $c=-k^3$ and use \eqref{eq:p-double-qa} with these $k,c,\ell$.
If $H(w)=(P'(w)-\ell(w-k))/(w^2+kw+k^2)$, the same gluing
calculation defines the regular coordinate
\[
 w_1=-\ell H(w_2)+\frac c3(w_2-k)
        +\frac c3bH(w_2).
\]
When $\ell^2\ne k^5$, the quadratic discriminant has distinct
roots, giving a complete integral double section with $u=-k^3$.
For a polynomial $P$ of order $n\ge4$, $\Phi_P$ is nonzero:
its first coefficient is
$n c_n\omega^{n-1}(1+\omega^n)k^{n-1}$.
This describes a finite explicit set of \emph{candidate} values
for these double sections; it does not classify all compact curves.
\end{remark}

\begin{proposition}
\label{prop:p-all-double-sections}
Let $0\ne P\in y^4\mathbb C[y]$, and let $D\subset X_P$ be a complete
integral curve for which $\pi|_D:D\to\mathbb P^1$ has degree two.
Then $u|_D=c\ne0$.  There is a nonzero root $k$ of
$\Phi_P(k)=P'(\omega k)+\omega P'(\omega^2k)$, uniquely determined
by $D$, such that $c=-k^3$.  Write $\ell$ for the unique scalar with
\[
 P'(w)\equiv\ell(w-k)\pmod{w^2+kw+k^2}.
\]
Then $D$ is exactly the double section of
\eqref{eq:p-double-qa}.  It is integral precisely when
$\ell^2\ne k^5$.  Conversely, each nonzero root $k$ satisfying
$\ell^2\ne k^5$ produces such an integral curve.
Consequently there are only finitely many complete integral curves
of projection degree at most two in $X_P$ besides $C$.  They can all
be removed by a principal open neighbourhood of $C$.
\end{proposition}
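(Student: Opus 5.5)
The proof runs the argument of \cref{prop:p-zero-fibre-curves} in reverse: degree considerations force the curve into an explicit algebraic form, and then the gluing condition is solved. Let $D$ be complete and integral with $\pi|_D$ of degree two.

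\emph{Step 1: $u$ is a nonzero constant on $D$.} The function $u$ is regular, so it is constant on $D$; call its value $c$. If $c=0$, then \cref{prop:p-zero-fibre-curves} gives $D=C$, which has degree one. Hence $c\ne0$.

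\emph{Step 2: $D$ is a monic quadratic in each chart.} Since $\pi|_D$ is finite of degree two, $v_2$ restricted to $D$ satisfies a monic quadratic over $\mathbb C[a]$. The same holds for $w_2$ over $\mathbb C[b]$. Consider the section $\sigma$ of $L=f^*\mathcal O(1)$ given by $(v_2,w_2)$, as in the proof of \cref{prop:p-zero-fibre-curves}. Its characteristic polynomial shows that the coefficients of $v_2$ are polynomials in $a$ of degrees at most one and two respectively. So we may write
\[
 q_a=v_2^2+\alpha(a)v_2+\beta(a),\qquad \deg\alpha\le1,\quad\deg\beta\le2 .
\]
Next, $av_1=c-3v_2^2$ on $D$, and $v_1$ is regular. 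Reducing $3v_2^2$ modulo $q_a$, divisibility by $a$ forces $3\beta(0)=c$ and $\alpha(0)=0$. Thus $\alpha=ka$ and $\beta=c/3+\ell'a+\gamma a^2$, and $v_1$ is linear in $v_2$ with coefficients in $\mathbb C[a]$.

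\emph{Step 3: the $b$-chart condition.} We now impose that
\[
 w_1=b^{-2}\bigl(c+w_2^3+bP'(w_2)\bigr)
\]
be regular on $D$ near $b=0$, i.e.\ in the coordinate ring $\mathbb C[b,w_2]/(q_b)$, where $q_b=w_2^2+kw_2+\gamma+\ell'b+(c/3)b^2$.
\begin{itemize}[label=--]
\item Modulo $b$: the condition says $c+w^3\equiv0$ modulo $w^2+kw+\gamma$. This forces $\gamma=k^2$ and $c=-k^3$, so $k\ne0$.
\item At first order in $b$: substitute $q_0=w^2+kw+k^2\equiv-\ell'b+(c/3)b^2$ into $w^3-k^3=(w-k)q_0$. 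The resulting order-one term is $b\bigl(P'(w)-\ell'(w-k)\bigr)$. Vanishing modulo $q_0$ is exactly $P'(w)\equiv\ell'(w-k)\pmod{q_0}$.
\end{itemize}
The first-order condition identifies $\ell'=\ell$. Evaluating it at the roots $\omega k,\omega^2k$ of $q_0$ gives $\Phi_P(k)=0$. Moreover $k$ is determined by $c=-k^3$ together with the data of $q_a$, hence by $D$. With these values, $q_a$ coincides with \eqref{eq:p-double-qa}, and $v_1$ is then forced to be the displayed expression. The remark preceding the proposition shows that the order-two term is then automatically regular, via $H$.

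\emph{Step 4: integrality, converse, and finiteness.} The curve defined by \eqref{eq:p-double-qa} is integral iff $q_a$ is irreducible over $\mathbb C(a)$. This holds iff its discriminant $-3k^2a^2-4\ell a+4c/3$ is not a square, i.e.\ iff it has distinct roots. Its own discriminant is $16(\ell^2-k^5)$, so integrality is equivalent to $\ell^2\ne k^5$. The converse is the construction in the preceding remark.

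For finiteness, $\Phi_P$ is a nonzero polynomial by that remark, so it has finitely many roots $k$. Sections correspond to critical points, which are finite by \cref{cor:p-isolate-sections}. The curves of projection degree at most two other than $C$ therefore carry finitely many nonzero values of $u$, namely $-t^3$ and $-k^3$. Localizing at the product of the factors $(u-\text{value})$ gives a principal open neighbourhood of $C$ removing all of them.

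The main obstacle is Step 2: bounding the degrees of the coefficients of the monic quadratic, and showing that $v_1$ is linear in $v_2$. This requires the pole-order bookkeeping of $\sigma$ on the normalization. I would first derive it from $\deg L=2$ using the trace and norm of $v_2$, which are sections of $\mathcal O(1)$ and $\mathcal O(2)$ respectively.
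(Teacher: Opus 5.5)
\textbf{Gap: you never show that $D$ equals its plane model.} Steps~2 and~3 read off coefficients by assuming that $1,v_2$ (resp.\ $1,w_2$) is a basis of $\mathcal O_D$ over the $a$-chart (resp.\ the $b$-chart). Concretely, you get $\alpha(0)=0$ and the value of $\beta(0)$ from divisibility of $c+3\alpha v_2+3\beta$ by $a$. You then impose regularity of $w_1$ ``in the coordinate ring $\mathbb C[b,w_2]/(q_b)$''. Both moves presuppose that $D$ coincides with the curve $\{q_a=0\}$ in the total space of $\mathcal O(1)$.

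A priori $\mathcal O_D$ only \emph{contains} $\mathbb C[a][v_2]/(q_a)$, and it can be strictly larger because $v_1,w_1$ are additional coordinates. For instance, if both points of $D$ over $a=0$ had the same $v_2$-value $s$, divisibility by $a$ would give one linear condition rather than two, and $\alpha(0)=0$ would not follow. Likewise, $w_1$ is automatically regular on $D\subset X_P$. If $\mathcal O_D\ne\mathbb C[b,w_2]/(q_b)$ near $b=0$, your Step~3 condition carries no information. The trace/norm or pole-order bookkeeping you propose only yields $\deg\alpha\le1$, $\deg\beta\le2$, which is the easy part; it does not give this generation statement.

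\textbf{The missing idea} is the global-function trick at the heart of the paper's proof. Suppose $v_2\equiv s$ in $\mathcal O_D/(a-a_0)\mathcal O_D$ for some scalar $s$.
\begin{itemize}
\item Then $(v_2-s)/(a-a_0)$ is regular on the $a$-chart of $D$.
\item On the $b$-chart it equals $(w_2-sb)/(1-a_0b)$, which is regular at $b=0$.
\item So it is a regular function on the complete integral curve $D$, hence constant, giving $v_2=s+t(a-a_0)$.
\item Via $av_1=c-3v_2^2$ this makes $\mathbb C(D)=\mathbb C(a)$, contradicting degree two.
\end{itemize}
The same argument with $(w_2-s)/(b-b_0)=(v_2-sa)/(1-b_0a)$ handles the $b$-chart. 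Hence $v_2$ (resp.\ $w_2$) generates every fibre algebra, and by Nakayama $1,v_2$ (resp.\ $1,w_2$) is a basis on each chart.

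Only after this do your reductions become valid: modulo $a$, modulo $b$, and modulo $(q_b,b^2)$, the last using torsion-freeness over $\mathbb C[b]$ to cancel a factor of $b$. They give $\beta(0)=-c/3$, $\gamma=k^2$, $k^3=-c$ and $P'(w)\equiv\ell(w-k)\pmod{w^2+kw+k^2}$, and they identify $D$ with \eqref{eq:p-double-qa}. Note the sign: $\beta(0)=-c/3$, not $+c/3$, so the $b^2$-coefficient of $q_b$ is $-c/3$. Your remaining steps agree with the paper's argument: the discriminant $16(\ell^2-k^5)$ for integrality, the converse from the preceding remark, and finiteness via $\Phi_P\ne0$ with localization at the factors $u+t^3$ and $u+k^3$.
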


\begin{proof}
The degree-one curves are classified by
Proposition~\ref{prop:p-extra-sections}; $u=0$ contains only $C$ by
Proposition~\ref{prop:p-zero-fibre-curves}.  Consider a degree-two curve with
$u=c\ne0$.  The morphism $D\to\mathbb P^1$ is finite: a fibre is
contained in an affine plane and cannot contain a complete curve.
Since $D$ is integral, $\pi_*\mathcal O_D$ is torsion-free of rank two
and hence locally free.  Multiplication by the restriction of $v_2$,
viewed as a section of $\pi^*\mathcal O(1)$, has a characteristic
equation on the $a$-chart
\[
 q_a=v_2^2+A(a)v_2+B(a)=0,
 \qquad \deg A\le1,\quad\deg B\le2.
\]
If $v_2$ were scalar in the generic quadratic function field,
$av_1=c-3v_2^2$ would make that function field rational over
$\mathbb C(a)$, contradicting the degree.  The same argument works
on the fibre $a=0$: if $v_2=s$ in $\mathcal O_D/a\mathcal O_D$,
then $(v_2-s)/a$ extends regularly to $D$, since on the $b$-chart it
equals $w_2-sb$.  A regular function on the complete integral curve
$D$ is constant, whence $v_2=s+at$ and again the function-field
degree is one.  Thus $v_2$ generates the two-dimensional algebra
$\mathcal O_D/a\mathcal O_D$.  Reducing $av_1=c-3v_2^2$ modulo $a$
now shows that $q_a|_{a=0}=v_2^2-c/3$.  Hence
\[
 A(a)=ka,\qquad B(a)=-c/3+\ell a+h a^2.
\]
In the quotient by $q_a$, the identity $av_1=c-3v_2^2$
gives $v_1=3kv_2+3\ell+3ha$; cancellation of $a$ is valid since
$\mathcal O_D$ is torsion-free over $\mathcal O_{\mathbb P^1}$.

On the $b$-chart the characteristic equation is
$q_b=w_2^2+kw_2+h+\ell b-(c/3)b^2=0$.
If $w_2$ were scalar on the fibre $b=0$, then
$(w_2-s)/b=v_2-sa$ would be a constant regular function on $D$,
again contradicting degree two.  Therefore $w_2$ generates that
fibre algebra.  Reducing
$b^2w_1=c+w_2^3+bP'(w_2)$ modulo $b$ shows that
$w^2+kw+h$ divides $w^3+c$.  As $c\ne0$, the latter has three
distinct roots, and its monic quadratic factors have the form
$w^2+kw+k^2$ with $k^3=-c$.  Thus $h=k^2$.
Write $q_0=w^2+kw+k^2$; since $w^3+c=(w-k)q_0$, the relation
$q_b=0$ gives, modulo $b^2$,
\[
 c+w^3+bP'(w)
 \equiv b\bigl(P'(w)-\ell(w-k)\bigr)
 \pmod{(q_b,b^2)}.
\]
The left-hand side is divisible by $b^2$ in $\mathcal O_D$.
As $w_2$ generates the fibre at $b=0$, it follows that
$P'(w)\equiv\ell(w-k)\pmod{q_0}$.  Evaluation at $\omega k$ and
$\omega^2k$ makes this condition equivalent to $\Phi_P(k)=0$.
{
To justify generation over the whole charts, fix any
$a_0\in\mathbb C$.  If $v_2$ were scalar, say $s$, in the fibre of
$D\to\mathbb P^1$ over $a_0$, then
$(v_2-s)/(a-a_0)$ would be regular on the $a$-chart of $D$.
On the $b$-chart its expression is
$(w_2-sb)/(1-a_0b)$, which is regular at $b=0$; hence it is a
global regular function on the complete integral curve $D$, and
therefore constant.  The identities for $u$ and $v_1$ would then
make the function field of $D$ equal to $\mathbb C(a)$, a
contradiction.  Thus $1,v_2$ generate every finite $a$-fibre;
fibrewise Nakayama makes them a basis of the rank-two algebra on
the whole $a$-chart.  The identical argument with
$(w_2-s)/(b-b_0)=(v_2-sa)/(1-b_0a)$ treats every finite $b$-fibre.
The two monic quadratic charts therefore exhaust $D$ scheme
theoretically, with no additional curve hidden at either end.
}
Near $a=0$ the algebra of $D$ is generated by $v_2$, and the formula
for $v_1$ extends that equality across the whole $a$-chart; the
same holds on the $b$-chart using the displayed formula for $w_1$
in the preceding remark.  Thus the equations identify $D$ with
\eqref{eq:p-double-qa}.  Its quadratic discriminant is
$-3k^2a^2-4\ell a+4c/3$, which is a square in $\mathbb C(a)$
exactly when $\ell^2=k^5$.  The converse construction and
integrality follow from the preceding remark and
\cref{prop:p-double-section}.

Finally $\Phi_P$ is not identically zero: its first nonzero
coefficient is $nc_n\omega^{n-1}(1+\omega^n)k^{n-1}$ if
$P=c_ny^n+O(y^{n+1})$.  Let $G_P(U)$ be as in
\cref{cor:p-isolate-sections} and put
\[
 H_P(U)=G_P(U)
 \prod_{\substack{k\ne0,\ \Phi_P(k)=0}}(U+k^3),
\]
listing each distinct factor once.  Then $H_P(0)\ne0$, and
$\{H_P(u)\ne0\}$ contains $C$ but no other complete integral
curve whose projection has degree at most two.
\end{proof}

\begin{remark}\label{rem:contractibility-gate}
The full two-chart threefold can contain complete curves besides
$C$, as Propositons~\ref{prop:p-extra-sections} and \ref{prop:p-double-section} show.
The projective affinization in Theorem~\ref{thm:p-projective-contraction}
may contract those curves to other target points.  A principal
neighbourhood of the distinguished target point removes them.
The valuation argument in Proposition~\ref{prop:p-zero-fibre-curves} excludes
every additional complete curve \emph{over $u=0$}.  The global
function in Proposition~\ref{prop:p-global-boundary-coordinate} rules out
projection degrees greater than three and confines degree-three
curves to finitely many nonzero values of $u$.
\Cref{cor:p-isolate-all-curves} consequently gives a principal
neighbourhood of $C$ with no other complete integral curves.
The explicit properness argument in
\Cref{thm:p-projective-contraction} closes this gate for every
nonzero polynomial $P$ of order at least four.  It uses the
global functions $u,F_P,\mathcal G_P$
and their monic chart equations; finiteness of the NC deformation
algebra alone is not used as a contractibility criterion.
\end{remark}

\begin{proposition}
\label{prop:p-global-boundary-coordinate}
Let $P\in y^4\mathbb C[y]$, and write
$Q(w)=P'(w)=\sum_{d\ge3}q_dw^d$.  With the global function $u$ of
\Cref{prop:p-extra-sections}, define polynomials on the $a$-chart
\[
 T_{3k}=(-1)^k a u^k,\qquad
 T_{3k+1}=(-1)^k v_2u^k,\qquad
 T_{3k+2}=\frac{(-1)^{k+1}}3v_1u^k
 \quad(k\ge1),
\]
and put
\begin{equation}\label{eq:p-global-boundary-function}
 F_P=a^2u-\frac13v_1v_2+\sum_{d\ge3}q_dT_d.
\end{equation}
Then $F_P$ extends to a global regular function on $X_P$.  On the
affine plane $B=\{b=0\}\subset U_b$, with coordinate $w=w_2$,
\[
 u|_B=-w^3,\qquad F_P|_B=w_1+\Theta_P(w),
\quad
\Theta_P(w)=\frac13w^4+Q(w)
 \sum_{d\ge3}\left\lfloor\frac d3\right\rfloor q_dw^{d-3}.
\]
Consequently $(u,F_P)|_B:B\to\mathbb A^2$ is finite flat of
degree three.  In particular, \emph{every} complete integral curve
$D\subset X_P$ projects to $\mathbb P^1$ with degree at most three.
Every degree-three curve other than $C$ has constant value $u=c\ne0$
satisfying
\begin{equation*}
 c=3B_P(c)^2,\qquad
 B_P(U)=\sum_{k\ge1}q_{3k+2}(-U)^k.
\end{equation*}
There are only finitely many such nonzero values $c$.
Properness is established independently in
\Cref{thm:p-projective-contraction}.
\end{proposition}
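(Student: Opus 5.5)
The first step is to check that $F_P$ is regular on the $b$-chart. Each term is a polynomial on $U_a$; for example, $T_{3k}=(-1)^kau^k$ with $k\ge1$ has the factor $u^k$. So regularity only has to be checked on $U_b$. There I will substitute $a=b^{-1}$, $v_2=b^{-1}w_2$, the inverse formula for $v_1$ from \cref{prop:p-geometric-neighbourhood}, and $u=b^2w_1-w_2^3-bQ(w_2)$. I then expand $F_P$ as a Laurent polynomial in $b$ with coefficients in $\mathbb C[w_1,w_2]$ and check that every negative power of $b$ cancels.

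I will organize the cancellation by residue class. The leading terms give
\[
a^2u-\tfrac13v_1v_2=b^{-2}(b^2w_1-w_2^3-bQ)-\tfrac13b^{-1}w_2\bigl(b^3w_1-3b^{-1}w_2^2-bw_2^3-b^2Q\bigr).
\]
In this expansion the $b^{-2}w_2^3$ terms cancel, leaving $w_1+\tfrac13w_2^4-b^{-1}Q(w_2)+\dots$. The polar term $-b^{-1}Q(w_2)=-b^{-1}\sum q_dw_2^d$ should then be cancelled degree by degree by $q_dT_d$. The intended mechanism is $u\equiv-w_2^3 \pmod b$: the factor $(-1)^ku^k$ contributes $w_2^{3k}$ to leading order, and $a$, $v_2$ and $-\tfrac13v_1$ contribute $b^{-1}$, $b^{-1}w_2$ and $b^{-2}w_2^2+\dots$ respectively.

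I expect this bookkeeping to be the main obstacle. The higher-order terms of $u^k$ involve $bQ$, so the cancellation must be checked to all orders. I will try to handle it by induction on $k$ using the relation $T_{d+3}=-uT_d$ for $d\ge3$, which holds by construction. Together with the base cases $d=3,4,5$, this gives $T_d=b^{-1}w_2^d+(\text{regular})$. The regular remainder at $b=0$ is a sum of terms of the shape $\lfloor d/3\rfloor Q(w)w^{d-3}$, one factor $Q$ for each of the $k$ copies of $u$. Setting $b=0$ should then yield exactly $F_P|_B=w_1+\Theta_P(w)$, and $u|_B=-w^3$ is immediate.

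Once the formula on $B$ is in hand, $\mathbb C[w_1,w]$ is free over $\mathbb C[u,F_P]$ with basis $1,w,w^2$, because $w$ satisfies the monic cubic $w^3+u=0$ and $w_1=F_P-\Theta_P(w)$. Hence $(u,F_P)|_B$ is finite flat of degree three.

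For the degree bound, let $D$ be a complete integral curve. The functions $u$ and $F_P$ are constant on $D$, so $D\cap B$ lies in a fibre of $(u,F_P)|_B$, which has length at most $3$. Since $D\to\mathbb P^1$ is finite (fibres of $\pi$ are affine planes) and $B=\pi^{-1}(\infty)$, the degree of $\pi|_D$ equals the scheme-theoretic length of $D\cap B$, provided that intersection is a proper subscheme of $D$. Hence the degree is at most $3$.

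If the degree is exactly $3$, then $D\cap B$ is the whole fibre $\{w^3=-c,\ w_1=F-\Theta_P(w)\}$ with $c=u|_D$. If $c=0$, \cref{prop:p-zero-fibre-curves} forces $D=C$. For $c\ne0$, I will argue as in \cref{prop:p-all-double-sections}: $w_2$ generates the rank-three fibre algebras, so the characteristic equation on the $b$-chart is $w_2^3+\alpha b w_2^2+\dots+c=0$. Expanding $b^2w_1=c+w_2^3+bQ(w_2)$ to first order in $b$ modulo this cubic gives a solvability condition. Reducing $Q(w)$ modulo $w^3+c$, only the $w^2$-coefficient $\sum_k q_{3k+2}(-c)^k=B_P(c)$ is unconstrained by the available linear terms, and matching it against the $a$-chart equation $av_1+3v_2^2=c$ should give $c=3B_P(c)^2$.

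For finiteness, $U-3B_P(U)^2$ is a nonzero polynomial (its linear coefficient is $1$), so it has finitely many nonzero roots. Deriving this degree-three condition is the second delicate point. If the direct reduction fails, I will instead restrict $F_P$ to $D$ and compare its constant value at $a=0$ with its value on $B$.
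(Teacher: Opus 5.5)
Your regularity check, the formula for $\Theta_P$, finite flatness on $B$, and the bound $\deg(\pi|_D)\le3$ all agree with the paper. The recursion $T_{d+3}=-uT_d$ is a clean way to get the coefficient $\lfloor d/3\rfloor$: it gives regular parts with $R_{d+3}|_{b=0}=w^dQ+w^3R_d|_{b=0}$. One slip: $-\tfrac13v_1$ contributes $b^{-1}w_2^2$, not $b^{-2}w_2^2$.

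\textbf{The gap is the degree-three condition.} Your first-order expansion on the $b$-chart yields no constraint at all. Let $W^3+\alpha(b)W^2+\beta(b)W+\gamma(b)$ be the characteristic polynomial of $w_2$, with $\alpha(0)=\beta(0)=0$ and $\gamma(0)=c$. Then the condition $c+w_2^3+bQ(w_2)\in b^2\mathcal O_D$ merely fixes
\[
 \alpha'(0)=B_P(c),\qquad \beta'(0)=\sum_k q_{3k+1}(-c)^k,\qquad \gamma'(0)=\sum_k q_{3k}(-c)^k .
\]
So every residue of $Q$ modulo $w^3+c$ is absorbed. All the content lies over $a=0$, which you leave as an unspecified ``matching''.

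If the matching is done through traces, it gives too little. Since $v_2=aw_2$, the $V^2$-coefficient of the characteristic polynomial of $v_2$ on the fibre over $a=0$ is $B_P(c)$. But $3v_2^2=c$ on that fibre, so $-B_P(c)\in\{\pm s,\pm3s\}$ with $s^2=c/3$, i.e.\ $B_P(c)^2\in\{c/3,3c\}$. The triple-eigenvalue case $B_P(c)^2=3c$ would still have to be excluded separately. Your fallback of comparing the value of $F_P$ at $a=0$ with its value on $B$ also gives nothing: $F_P|_B=w_1+\Theta_P(w)$ attains every value over each root of $w^3=-c$.

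\textbf{The missing idea} is a length count on the opposite plane $A=\{a=0\}\subset U_a$, mirroring the one you did on $B$. On $A$, the term $a^2u$ and every $T_{3k}$ vanish, so
\[
 u|_A=3v_2^2,\qquad
 F_P|_A=v_2\sum_{k\ge1}q_{3k+1}(-u)^k-\frac{v_1}{3}\bigl(v_2+B_P(u)\bigr).
\]
Suppose $c\ne0$ and $c\ne3B_P(c)^2$. Then neither root of $3v_2^2=c$ equals $-B_P(c)$. Hence on $A\cap\{u=c,\,F_P=f\}$ the coordinate $v_1$ is uniquely determined over each of the two simple roots, and this fibre has length two. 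Since $D\cap A$ is a closed subscheme of it of length $\deg(\pi|_D)$, the degree is at most two.

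This is the paper's argument. It covers the triple-eigenvalue case automatically, because there $3B_P(c)^2=9c\ne c$.
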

\begin{proof}
On the $b$-chart write $w=w_2$ and
$u=-w^3-bQ(w)+b^2w_1$.  The inverse gluing gives
\[
 v_2=b^{-1}w,\qquad
 v_1=b^3w_1-3b^{-1}w^2-bw^3-b^2Q(w).
\]
Thus the first two terms of \eqref{eq:p-global-boundary-function}
expand as
\[
 a^2u-\frac13v_1v_2
 =w_1-b^{-1}Q(w)+\frac13w^4
   -\frac13b^2w_1w+\frac13bQ(w)w.
\]
For each $d\ge3$ the displayed definitions give
$T_d=b^{-1}w^d+O(b^0)$, where $O(b^0)$ is a polynomial in
$b,w_1,w$.  Hence the sum cancels the only negative power of $b$
and $F_P$ is regular on both charts.  Expanding $u^k$ one order
further shows uniformly in the three residue classes of $d$ that
\[
 T_d|_{b=0}\ \text{after removing }b^{-1}w^d
 =\left\lfloor\frac d3\right\rfloor w^{d-3}Q(w).
\]
This proves the formula for $\Theta_P$.  On $B$ the change of
coordinates $(w,w_1)\mapsto(w,F_P)$ is triangular.  Its map to
$(u,F_P)$ is then $(w,F_P)\mapsto(-w^3,F_P)$, a finite free
extension with basis $1,w,w^2$.
Now let $D\subset X_P$ be a complete integral curve.  Projection
to $\mathbb P^1$ is finite of degree $d>0$; since $D$ is integral,
the finite module $\pi_*\mathcal O_D$ is locally free of rank $d$.
Thus $D\cap B$ is a zero-dimensional scheme of length $d$.
Both $u$ and $F_P$ are constant on $D$, say $c$ and $f$.
The closed subscheme $D\cap B$ is contained in
$B\cap\{u=c,F_P=f\}$, whose coordinate ring is
$\mathbb C[w]/(w^3+c)$ and has length three.  Hence $d\le3$.
On the opposite affine plane $A=\{a=0\}\subset U_a$, put
$v=v_2$.  The definitions above give
\[
 u|_A=3v^2,\qquad
 F_P|_A=v\sum_{k\ge1}q_{3k+1}(-u)^k
           -\frac{v_1}{3}\bigl(v+B_P(u)\bigr).
\]
If $c\ne0$ and $c\ne3B_P(c)^2$, neither of the two roots of
$3v^2=c$ equals $-B_P(c)$.  Thus fixing $u=c,F_P=f$ determines
$v_1$ uniquely at each root; the scheme-theoretic fibre on $A$
has length two.  Since $D\cap A$ has length $d$ and is a closed
subscheme of that fibre, $d\le2$.  The case $c=0$ contains only
$C$ by \Cref{prop:p-zero-fibre-curves}.  Finally $B_P(0)=0$, so
$3B_P(U)^2-U$ is a nonzero polynomial (its linear term is $-U$),
with only finitely many nonzero roots.
\end{proof}

Put $L=\pi^*\mathcal O_{\mathbb P^1}(1)$ and define
\[
 R_P=H^0(X_P,\mathcal O_{X_P}),\qquad
 S_P=\bigoplus_{m\ge0}H^0(X_P,L^m).
\]

\begin{theorem}
\label{thm:p-projective-contraction}
For every nonzero $P\in y^4\mathbb C[y]$, the threefold $X_P$ of
\eqref{eq:p-curve-glue} has a projective birational morphism
\[
 \rho_P:X_P\longrightarrow
 Y_P=\operatorname{Spec}H^0(X_P,\mathcal O_{X_P}).
\]
The target is a normal affine threefold, and the fibre over
$o=\rho_P(C)$ has support exactly $C$.  Near $o$ this is a small
crepant contraction onto a Gorenstein terminal threefold, and is
an isomorphism away from $C$.  Furthermore, $S_P$ is finitely generated and
$(S_P)_+^N\subseteq(s_0,s_1)S_P$ for some $N$.
\end{theorem}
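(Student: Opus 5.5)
The approach is to realize $X_P$ as an open subset of a projective variety over an affine base, using the graded ring $S_P$ built from $L=\pi^*\mathcal O_{\mathbb P^1}(1)$. Let $s_0,s_1\in H^0(X_P,L)$ be the pullbacks of the homogeneous coordinates, so that $U_a=\{s_0\neq0\}$ and $U_b=\{s_1\neq0\}$.

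\textbf{Step 1: finite generation and affineness of $Y_P$.} A section of $L^m$ is a polynomial in $(a,v_2,v_1)$ whose transform under \eqref{eq:p-curve-glue} is regular in $(b,w_2,w_1)$ after multiplication by $b^m$. I would exhibit explicit generators of $R_P$ over $\mathbb C$: the functions $u$ and $F_P$ (Propositions~\ref{prop:p-extra-sections} and \ref{prop:p-global-boundary-coordinate}), and, if needed, one further $\mathcal G_P$ built in the same way by cancelling the $b^{-1}$ terms of $a^2F_P$ or $a^3u^2$. In $L$-degree one the generators are $s_0,s_1$ together with $v_2=s_0\cdot(\text{local})$ and its companions. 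To prove generation I would filter by the $a$-degree and the $v$-weight and compare leading terms with the monomial model $P=y^n$, where the sections are torus-semi-invariant and can be listed directly. Noetherianity and normality of $Y_P$ then follow, since $R_P$ is the ring of invariants of a normal variety.

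\textbf{Step 2: projectivity and the irrelevant ideal.} The key claim is $(S_P)_+^N\subseteq(s_0,s_1)S_P$. By a graded Nakayama argument, this is equivalent to $S_P/(s_0,s_1)$ having only finitely many nonzero graded pieces. I would check this chart by chart. On $\{a=0\}$ and $\{b=0\}$, the monic equations of Proposition~\ref{prop:p-global-boundary-coordinate} show that $B$ is finite over $(u,F_P)$, and this finiteness transfers to degree bounds. Granting this, the natural map
\[
X_P\to \operatorname{Proj}_{R_P}S_P
\]
is defined everywhere, because $s_0$ and $s_1$ have no common zero on $X_P$. It is an open immersion, since $S_P$ localized at $s_0$ recovers $\mathcal O(U_a)$. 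It is surjective because any point of $\operatorname{Proj}$ lies off $V_+(s_0,s_1)=V_+(S_+)=\emptyset$. Hence $\rho_P$ is projective.

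\textbf{Step 3: the fibre and local properties.} The fibre over $o$ is the locus where $u=F_P=\mathcal G_P=0$ together with the other generators. It is proper by Step 2, so it is a union of complete curves. Complete curves with $u=0$ are only $C$ by Proposition~\ref{prop:p-zero-fibre-curves}. Isolated points in the fibre are excluded by Zariski's main theorem combined with birationality. Birationality itself holds because $\rho_P$ is an isomorphism on the complement of all complete curves, which is quasi-affine there by the separating functions. Away from $C$ near $o$ the map is therefore an isomorphism, and the contraction is small because the exceptional locus is a curve in a threefold. Crepancy follows since $\omega_{X_P}$ is trivial: the transition Jacobian is $a^3\cdot a^{-1}\cdot(-a^{-2})$, a unit. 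Gorenstein terminality follows from the standard fact that a small crepant contraction from a smooth threefold has Gorenstein terminal target, with $\omega_Y$ Cartier since $\rho_*\omega_X=\omega_Y$.

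The main obstacle is Step 2, namely proving the irrelevant-ideal containment uniformly in $P$. I would first try to reduce to the weighted-homogeneous leading model via a filtration on $S_P$ whose associated graded ring is $S_{y^n}$, and then lift the containment by completeness.
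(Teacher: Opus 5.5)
\textbf{There is a genuine gap.} Your Steps 1 and 2 carry the whole content of the theorem: finite generation, properness, and the irrelevant-ideal containment. Neither step contains an argument that works.

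\emph{The proposed mechanism rests on a false premise.} For both steps you propose comparing with the monomial model $P=y^n$, where `the sections are torus-semi-invariant'. But $x^3+xy^3+y^n$ is not quasi-homogeneous for any integer $n$: the conditions $3w_x=w_x+3w_y=nw_y$ force $n=9/2$. Correspondingly, a diagonal $\mathbb C^*$-action on $(a,v_2,v_1)$ compatible with \eqref{eq:p-curve-glue} must have weights $(\gamma,3\gamma,5\gamma)$ and satisfy $(2n-9)\gamma=0$, so it is trivial.

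The only weighted-homogeneous member of the family is $P=0$, and there the affinization contracts a divisor (Proposition~\ref{prop:p-zero-divisorial}). In any case a weight filtration only embeds $\operatorname{gr}H^0(X_P,\mathcal O)$ into the degenerate ring, so generators need not lift.

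\emph{Further problems with Steps 1 and 2.}
\begin{itemize}
\item Normality of $R_P$ is fine, as an intersection of the two chart rings. Noetherianity, however, \emph{is} finite generation, which Step 1 does not deliver.
\item Finiteness of $(u,F_P)$ on the single plane $\{b=0\}$ gives no control over the rest of $\mathbb P^1$.
\item The points of $V_+(s_0,s_1)$ you must exclude are not points of $X_P$, so they cannot be detected by inspecting the fibres over $a=0$ or $b=0$. "This finiteness transfers to degree bounds" is not an argument.
\item Without finite generation of $S_P$ over $R_P$, even the containment would not make $\operatorname{Proj}S_P\to\operatorname{Spec}R_P$ projective.
\end{itemize}

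\textbf{The missing idea.} Show that $(\pi,u,F_P,\mathcal G_P)\colon X_P\to\mathbb P^1\times\mathbb A^3$ is \emph{finite}, using monic equations on the full charts.
\begin{itemize}
\item On $U_a$, write $F_P=a^2u+aA(u)+v_2D(u)-\tfrac13v_1(v_2+B(u))$. Multiply by $a$ and substitute $av_1=u-3v_2^2$; this gives a monic cubic for $v_2$ over $\mathbb C[a,u,F_P]$.
\item A third function $\mathcal G_P=v_1^2+27av_2^3+9a^2v_1v_2+9\sum_dq_dT_{d+1}$ is a monic quadratic in $v_1$. Some such function is necessary, since $u$ and $F_P$ both vanish on the whole line $\{a=v_2=0\}$.
\item On $U_b$ one gets a monic cubic for $w_2$ over $\mathbb C[b,u,F_P]$ and a polynomial formula for $w_1$.
\end{itemize}

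Everything then follows from this finiteness:
\begin{itemize}
\item $h=(u,F_P,\mathcal G_P)$ is projective.
\item $R_P$ is finite over $\mathbb C[u,F_P,\mathcal G_P]$ by proper pushforward, so no list of generators is ever needed.
\item Stein factorization gives $\rho_P$.
\item A nonzero Jacobian of $h$ at a point of $\{a=v_2=0\}$ gives generic finiteness, hence birationality.
\item Finiteness of $X_P\to\mathbb P^1\times Y_P$ makes $L$ relatively ample, so $X_P\cong\operatorname{Proj}S_P$ with $S_P$ finitely generated.
\item The containment $(S_P)_+^N\subseteq(s_0,s_1)S_P$ is then a \emph{consequence} of $D_+(s_0)\cup D_+(s_1)$ covering $\operatorname{Proj}S_P$, not an input.
\end{itemize}

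\textbf{Step 3.} This is acceptable once properness is available: the fibre equals $C$ via Proposition~\ref{prop:p-zero-fibre-curves} and Zariski connectedness, crepancy comes from the constant transition Jacobian $-1$, and terminality from smallness. Your birationality sentence, however, needs exactly the generic-finiteness input above.
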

\begin{proof}
Write $Q(w)=P'(w)=\sum_{d\ge3}q_dw^d$ and retain the global
$u,F_P,T_d$ of \Cref{prop:p-global-boundary-coordinate}.  Define
\[
 A(U)=\sum_{k\ge1}q_{3k}(-U)^k,\quad
 B(U)=\sum_{k\ge1}q_{3k+2}(-U)^k,\quad
 D(U)=\sum_{k\ge1}q_{3k+1}(-U)^k,
\]
\[
 H(w,U)=A(U)+wD(U)+w^2B(U),\qquad
 K(w,U)=\frac{Q(w)-H(w,U)}{w^3+U}\in\mathbb C[w,U].
\]
The last inclusion follows by polynomial division.  On $U_a$ put
\begin{equation}\label{eq:p-third-global-function}
 \mathcal G_P=v_1^2+27av_2^3+9a^2v_1v_2
             +9\sum_{d\ge3}q_dT_{d+1}.
\end{equation}
After substitution on $U_b$, the first three terms have only
one negative Laurent coefficient, $-9b^{-1}wQ(w)$: their
$b^{-4}$ and $b^{-2}$ terms cancel.  Each $T_{d+1}$ has
negative term $b^{-1}w^{d+1}$ and no other negative term.
Thus $\mathcal G_P$ is global and
$\mathcal G_P|_{\{a=v_2=0\}}=v_1^2$.

Put $v=v_2$ on $U_a$.  Direct substitution gives
\begin{equation}\label{eq:p-f-a-monic}
 F_P=a^2u+aA(u)+vD(u)-\frac{v_1}{3}(v+B(u)).
\end{equation}
Using $av_1=u-3v^2$ and multiplying by $a$ yields
the following monic equation for $v$:
\begin{equation}\label{eq:p-v-monic}
 v^3+B(u)v^2-\frac{u}{3}v-\frac{uB(u)}3
 +a\bigl(a^2u+aA(u)+vD(u)-F_P\bigr)=0.
\end{equation}
Every $T_{d+1}$ is at most linear in $v_1$ over
$\mathbb C[a,u,v]$.  Equation \eqref{eq:p-third-global-function}
is therefore a monic quadratic for $v_1$ over
$\mathbb C[a,u,F_P,\mathcal G_P,v]$.  Consequently
$\mathbb C[a,v,v_1]$ is finite over
$\mathbb C[a,u,F_P,\mathcal G_P]$.

On $U_b$ let $w=w_2$, $z=w_1$.  The identity
$v_1=bu-3b^{-1}w^2$ gives
\[
 F_P=b^{-2}(u+w^3)-\frac{uw}{3}
            +b^{-1}H(w,u)-\frac{buB(u)}3.
\]
Clearing denominators gives a monic cubic for $w$:
\begin{equation}\label{eq:p-w-monic}
 w^3+bw^2B(u)+bwD(u)+bA(u)+u
       -\frac{b^2uw}{3}-\frac{b^3uB(u)}3-b^2F_P=0.
\end{equation}
For $R=F_P+uw/3+buB(u)/3$ we have
$u+w^3+bH=b^2R$.  Combining
$u+w^3+bQ=b^2z$ with $Q=H+(w^3+u)K$ gives the
polynomial formula
\begin{equation}\label{eq:p-z-polynomial}
 z=(1+bK(w,u))R-K(w,u)H(w,u).
\end{equation}
Thus $\mathbb C[b,w,z]$ is finite over
$\mathbb C[b,u,F_P]$, and in particular over
$\mathbb C[b,u,F_P,\mathcal G_P]$.  Finiteness on the two
standard base charts proves that
\[
 (\pi,u,F_P,\mathcal G_P):
 X_P\longrightarrow\mathbb P^1\times\mathbb A^3
\]
is finite.  Its composition $h=(u,F_P,\mathcal G_P)$
with the projective projection is projective.  At
$(a,v,v_1)=(0,0,t)$, $t\ne0$, the Jacobian of $h$ has
determinant $-2t^3/3$; hence $h$ is dominant and
generically finite.

Set $R_P=H^0(X_P,\mathcal O_{X_P})$.  Proper pushforward
makes $R_P$ finite over $\mathbb C[u,F_P,\mathcal G_P]$.
On the generic point of $\mathbb A^3$, the fibre is a
finite integral scheme, so localization of $R_P$ there
has fraction field $\mathbb C(X_P)$.  Moreover
$R_P=B_a\cap B_b$ is normal.  Stein factorization now
gives the stated projective birational $\rho_P$ and a
finite $Y_P\to\mathbb A^3$.

The common zero set of $u,F_P,\mathcal G_P$ is $C$.
On $a\ne0$, $u=0$ implies $v_1=-3v^2/a$ and
\eqref{eq:p-f-a-monic} gives $F_P=v^3/a$.
On $a=0$, $u=0$ implies $v=0$ and then
$\mathcal G_P=v_1^2$.  On $b=0$, $u=-w^3$
and $F_P=z+\Theta_P(w)$.  These observations prove the
assertion, including at both ends of $C$.
Surjectivity of the Stein morphism shows that there is
only one point of $Y_P$ over the origin of $\mathbb A^3$,
and its fibre under $\rho_P$ is supported on $C$.

By \Cref{cor:p-isolate-all-curves}, after restricting to
a principal neighbourhood of $o$, every complete curve
in $X_P$ is $C$.  Thus every other fibre of the projective
birational map is zero-dimensional; it is finite and hence
an isomorphism there by normality of $Y_P$.  The exceptional
locus is $C$, so the map is small.  The transition Jacobian
of \eqref{eq:p-curve-glue} is a nonzero constant, making
$K_{X_P}$ trivial.  The small map transfers this rational
volume form to a Cartier canonical divisor on $Y_P$ and
$K_{X_P}=\rho_P^*K_{Y_P}$.  Every exceptional divisorial
valuation over the smooth $X_P$ has positive discrepancy,
so $Y_P$ is terminal; terminal singularities are
Cohen--Macaulay, and the Cartier canonical divisor makes
it Gorenstein.

{
For clarity, the neighbourhood and discrepancy assertions can be
checked without assuming properness of an arbitrary curve
neighbourhood.  Let $K_P$ be the polynomial of
\Cref{cor:p-isolate-all-curves} and put $h=K_P(u)\in R_P$.
Since $h(o)=K_P(0)\ne0$, restriction of the projective Stein map
to $Y_P[h^{-1}]$ retains $C$.  Every positive-dimensional fibre
contains a complete integral curve, while
$X_P[h^{-1}]$ has no such curve besides $C$.
Thus the map is quasi-finite over $Y_P[h^{-1}]\setminus\{o\}$;
projective plus quasi-finite is finite, and a finite birational
map to this normal open is an isomorphism.  In particular no
exceptional divisor occurs near $o$.

The Jacobian of the coordinate change
$(a,v_2,v_1)\mapsto(b,w_2,w_1)$ equals $-1$, so the nowhere-zero
three-form on $X_P$ descends over the isomorphism locus.
The complement of that locus in $Y_P[h^{-1}]$ has codimension three.
Normality extends the descended form as a generator of the
reflexive canonical sheaf; hence $K_{Y_P}$ is Cartier and
$K_{X_P}=\rho_P^*K_{Y_P}$.  If a divisorial valuation has centre
in the exceptional point on $Y_P$, its centre on the smooth
$X_P$ is not a divisor, because $\rho_P$ is small.  Its discrepancy
is therefore the strictly positive discrepancy over a smooth
threefold.  This establishes terminality at $o$ and, since
terminal singularities are rational, the Cohen--Macaulay property
needed for the Gorenstein assertion.  The argument also makes clear
why the other complete curves on the full $X_P$ do not interfere
with the local contraction.
}

Finally $X_P\to\mathbb P^1\times Y_P$ is finite.
Hence $L=\pi^*\mathcal O_{\mathbb P^1}(1)$ is
relatively ample, $S_P$ is finitely generated over $R_P$,
and $X_P\simeq\operatorname{Proj}S_P$.
The two opens $D_+(s_0),D_+(s_1)$ cover this Proj,
so $(S_P)_+\subseteq\sqrt{(s_0,s_1)}$ and finite
generation supplies the exponent $N$.
\end{proof}

\begin{proposition}
\label{prop:p-zero-divisorial}
For $P=0$, the three global functions
$u,F_0,\mathcal G_0$ still give a projective affinization
$X_0\to\operatorname{Spec}H^0(X_0,\mathcal O_{X_0})$.
It is divisorial in every neighbourhood of $C$: a divisor
containing $C$ is swept out by complete integral double
sections.  Thus the nonzero-parameter hypothesis in the
smallness assertion of \cref{thm:p-projective-contraction}
is necessary.
\end{proposition}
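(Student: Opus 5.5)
For $P=0$ the gluing \eqref{eq:p-curve-glue} reduces to $w_1=a^3v_1+3a^2v_2^2+a^{-1}v_2^3$, and the plan is to rerun the finiteness part of the proof of \cref{thm:p-projective-contraction}. Then we exhibit an explicit one-parameter family of complete double sections sweeping out a surface through $C$.

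For the projective affinization, I would note that with $Q=0$ all of $A,B,D,H,K$ vanish. The global functions become $u=av_1+3v_2^2$, $F_0=a^2u-\tfrac13v_1v_2$ and $\mathcal G_0=v_1^2+27av_2^3+9a^2v_1v_2$. Their regularity on $U_b$ is the $q_d=0$ case of the Laurent cancellation already checked. The monic equations \eqref{eq:p-v-monic}, the quadratic \eqref{eq:p-third-global-function} and \eqref{eq:p-w-monic}, \eqref{eq:p-z-polynomial} specialize to
\[
 v^3-\tfrac u3v+a(a^2u-F_0)=0,\qquad
 w^3+u-\tfrac{b^2uw}{3}-b^2F_0=0,\qquad z=R.
\]
So $(\pi,u,F_0,\mathcal G_0)$ is again finite to $\mathbb P^1\times\mathbb A^3$. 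The Jacobian computation at $(0,0,t)$ did not use $P$, so the map is generically finite. Stein factorization then gives the projective birational map to $\operatorname{Spec}H^0(X_0,\mathcal O_{X_0})$ exactly as before. Nothing new is needed here beyond observing that no step used $P\ne0$ except \cref{cor:p-isolate-all-curves}, which is the smallness input.

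For divisoriality I would use the double-section construction of \eqref{eq:p-double-qa} and the subsequent remark. When $P=0$ we have $\Phi_P\equiv0$, so \emph{every} $k\ne0$ is admissible, with $\ell=0$ since $P'=0\equiv0\cdot(w-k)$. The curve $D_k$ is then given by $q_a=v_2^2+kav_2+k^2a^2+k^3/3=0$ and $v_1=3kv_2+3k^2a$. On the $b$-chart it is $q_b=w_2^2+kw_2+k^2+(k^3/3)b^2$ with $w_1=-\tfrac{k^3}{3}(w_2-k)$, which one checks from $c+w^3=(w-k)q_0$. The integrality condition $\ell^2\ne k^5$ reads $0\ne k^5$, which holds. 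Each $D_k$ is therefore a complete integral double section with $u|_{D_k}=-k^3$.

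The main step is to check that these curves sweep a divisor $E$ whose closure contains $C$ and that they accumulate on $C$. I would write down the surface $E$ obtained by eliminating $k$ on $U_a$ from $q_a=0$ and $v_1=3kv_2+3k^2a$; scaling shows it is irreducible of dimension two. As $k\to0$ the equations degenerate to $v_2^2=0$, $v_1=0$, so the flat limit of $D_k$ is $2C$. Hence $C\subset\overline E$, and every neighbourhood of $C$ meets $D_k$ for small $k$. Since each $D_k$ is complete, it maps to a point of the target, and $\rho$ contracts the surface $E$ to a curve (parametrized by $u=-k^3$) passing through $o$. The map is therefore not small near $o$. Necessity of $P\ne0$ in \cref{thm:p-projective-contraction} follows, because there \cref{prop:p-all-double-sections} gives only finitely many such $k$, as $\Phi_P\not\equiv0$.
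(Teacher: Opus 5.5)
Your proposal is correct and follows the paper's proof essentially step for step. You specialize the monic equations, the $v_1$-quadratic and the Jacobian computation to $q_d=0$ to get projectivity and birational Stein factorization, and then use the same explicit family $D_k$ ($k\in\mathbb C^\times$, $\ell=0$, $c=-k^3$) of integral double sections, whose closure is a surface through $C$ contracted to a curve. Your extra remarks (that the family is finite flat over $\mathbb A^1_k\times\mathbb P^1$, so the $k=0$ fibre $v_2^2=v_1=0$ lies in the closure, and that the surface is irreducible) only make explicit what the paper states briefly.
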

\begin{proof}
The monic equations \eqref{eq:p-v-monic} and
\eqref{eq:p-w-monic}, the polynomial expression
\eqref{eq:p-z-polynomial}, and the Jacobian calculation in
the proof of \cref{thm:p-projective-contraction} remain valid
when all $q_d$ vanish.  They prove projectivity and the
birational Stein factorization for $P=0$.

For each $k\in\mathbb C^\times$ set $c=-k^3$.  On the
$a$-chart define
\[
 v_2^2+ka v_2+k^2a^2+\frac{k^3}{3}=0,\qquad
 v_1=3kv_2+3k^2a.
\]
On the $b$-chart the same curve is defined by
\[
 w_2^2+kw_2+k^2+\frac{k^3}{3}b^2=0,\qquad
 w_1=-\frac{k^3}{3}(w_2-k).
\]
The two descriptions agree on the overlap: the first
quadratic gives $u=c$, and on the second chart
$w_2^3+c=(w_2-k)(w_2^2+kw_2+k^2)$ gives
$b^2w_1=w_2^3+c$.  Both quadratics are monic, so these
curves are finite flat of degree two over $\mathbb P^1$.
Their discriminant is not a square over $\mathbb C(a)$
for $k\ne0$, hence they are integral.  At $k=0$ the same
equations have underlying support $C$.

Let $E$ be the closure of their union in $X_0$.
The parameter $k$ varies in one dimension and the values
$u|_{D_k}=-k^3$ vary, so $E$ has dimension two and
contains $C$.  Every regular function on $X_0$ is constant
on each complete $D_k$.  Thus the affinization maps $E$
to a curve, contracting a divisor through $C$.
\end{proof}

\begin{corollary}
\label{cor:p-formal-flop-realization}
For every $p(y)\in y^4\mathbb C[[y]]$ of finite order,
there is a smooth threefold flop with contraction algebra $A_p$.
In particular $A_p$ is symmetric and four-periodic as a
bimodule. The uniqueness and GV statements will then follow
from \Cref{thm:full-p,thm:gv-all-n}.
\end{corollary}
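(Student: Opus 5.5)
The plan has three steps: reduce to a polynomial potential, build the flop from the projective contraction of \cref{thm:p-projective-contraction} on a neighbourhood of its exceptional point, and then read off the algebraic consequences.

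\textbf{Step 1: polynomial truncation.} By \cref{ap:thm:all-parameters}, $A_p$ is finite dimensional. Hence $J(A_p)^{M}=0$ for some $M$: one may take $M=10$ when $n=4$, and $M=n+7$ when $n\ge5$, by \cref{lem:order-four-upper-bound} and the length count. By \cref{ap:lem:jets}, replacing $p$ by its truncation $P(y)=\sum_{m=n}^{N}c_my^m$ with $N\ge M+2$ does not change the closed Jacobi ideal. This is the content of Lemma~\ref{lem:poly-realization}. So $A_p\cong A_P$, and $P$ is a nonzero polynomial in $y^4\mathbb C[y]$.

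\textbf{Step 2: the flop.} Form $X_P$ as in \cref{prop:p-geometric-neighbourhood}. That proposition identifies the one-pointed NC deformation algebra of $C$ with $A_P\cong A_p$.

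\cref{thm:p-projective-contraction} gives a projective birational $\rho_P\colon X_P\to Y_P$. After restricting to a principal neighbourhood $Y_P[h^{-1}]$ of $o=\rho_P(C)$, this map is a small crepant contraction of the single curve $C$ onto a Gorenstein terminal threefold. Since $X_P$ is smooth, the local contraction is a smooth flopping contraction of an irreducible curve. The flop exists by standard results: for a Gorenstein terminal target, the flop is obtained from the involution of the cDV singularity. One may also pass to the complete local ring at $o$, which is isolated cDV because $\rho_P$ is small.

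By Donovan--Wemyss, the contraction algebra of $\rho_P$ at $o$ is the NC deformation algebra of $\mathcal O_C(-1)$. The twisting remark in the proof of \cref{prop:p-geometric-neighbourhood} identifies this with that of $\mathcal O_C$, hence with $A_p$. Passing to the complete local ring does not change the algebra, since NC deformations of $\mathcal O_C$ only see a formal neighbourhood of $C$.

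\textbf{Step 3: symmetry and periodicity.} Contraction algebras of smooth threefold flops are symmetric and $4$-periodic as bimodules, as recalled via \cite{JKM} before \cref{lem:h1h2}. This gives the stated properties of $A_p$. The final sentence of the statement is then only a pointer to \cref{thm:full-p}, whose proof uses exactly this periodicity through \cref{lem:h1h2} and \cref{lem:smooth-criterion}.

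\textbf{Main obstacle.} I expect the delicate point to be matching conventions in Step 2. The deformation algebra computed by \cite{BWCrepant2024} is for the curve in the explicitly glued $X_P$. The contraction algebra needs a contraction to a complete local (or affine) base with $C$ as the full exceptional fibre. That requirement is exactly why the localisation by $h=K_P(u)$ in \cref{thm:p-projective-contraction} is needed. I would check that localisation commutes with the NC deformation functor, which holds because the functor depends only on a formal neighbourhood of $C$. I would also confirm the twist by $\pi^*\mathcal O(-1)$ identifies the right pointed functor.
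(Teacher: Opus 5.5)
Your proposal is correct and follows the paper's own route. You truncate to a polynomial $P$ via \cref{lem:poly-realization}, realize $A_P\cong A_p$ as the contraction algebra of the small contraction from \cref{prop:p-geometric-neighbourhood} and \cref{thm:p-projective-contraction}, and then quote \cite{JKM} for symmetry and four-periodicity. Two small bookkeeping points: take finite-dimensionality from \cref{ap:thm:finite} rather than \cref{ap:thm:all-parameters}, whose order-four centre step itself invokes this corollary; and $J(A_p)^{n+7}=0$ is not established for non-monomial $p$ with $n\ge5$, but $J(A_p)^{\dim A_p}=0$ already suffices for the truncation.
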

\begin{proof}
Choose a sufficiently long polynomial truncation $P$.
By the finite-truncation assertion of \Cref{lem:poly-realization},
$A_P\simeq A_p$; by
\Cref{prop:p-geometric-neighbourhood} and \Cref{thm:p-projective-contraction},
$A_P$ is the contraction algebra of a smooth local flop.
Symmetry and bimodule four-periodicity are the standard
properties of this contraction algebra \cite{JKM}. No exact length
formula, centre formula, or uniqueness assertion is used to construct the flop.
\end{proof}

\begin{corollary}
\label{cor:p-isolate-all-curves}
For each nonzero polynomial $P\in y^4\mathbb C[y]$, there is an explicit
polynomial $K_P(U)$ with $K_P(0)\ne0$ such that
$\{K_P(u)\ne0\}\subset X_P$ contains $C$ and no other complete
integral curve.  One can take the product of $H_P(U)$ in
\Cref{prop:p-all-double-sections} with
\[
 \prod_{\substack{c\ne0\\ c=3B_P(c)^2}}(U-c),
\]
listing each distinct factor once.  The independent finite-map
argument of \Cref{thm:p-projective-contraction} proves properness.
\end{corollary}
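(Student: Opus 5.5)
The argument has two halves: show that the principal open set $\{K_P(u)\ne0\}$ retains $C$, and show that it contains no other complete integral curve. Set
\[
 K_P(U)=H_P(U)\prod_{\substack{c\ne0\\ c=3B_P(c)^2}}(U-c).
\]
First, $K_P(0)\ne0$. By \Cref{prop:p-all-double-sections}, $H_P(0)\ne0$, and each additional factor $U-c$ has $c\ne0$. By \Cref{prop:p-global-boundary-coordinate} these values $c$ are finitely many, so $K_P$ is a genuine polynomial. Since $u|_C=0$ by \Cref{prop:p-extra-sections}, the open set $\{K_P(u)\ne0\}$ contains $C$.

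Now let $D\ne C$ be a complete integral curve in $X_P$. The function $u$ is global and regular, so it is constant on $D$, with value $c$ say. By \Cref{prop:p-zero-fibre-curves}, $c=0$ forces $D=C$, so $c\ne0$. The projection $\pi|_D$ is nonconstant because the fibres are affine planes. By \Cref{prop:p-global-boundary-coordinate} its degree $d$ satisfies $1\le d\le3$. We treat the three degrees in turn.

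\begin{enumerate}
\item If $d=1$, then $D$ is a section $C_{s,t}$ with $t\ne0$ (when $t=0$ one has $s=0$ and the section is $C$). By \Cref{cor:p-isolate-sections}, $u|_D=-t^3$ with $D_P(t)=0$. So $U+t^3$ is a factor of $G_P$, which divides $H_P$, and hence $K_P(u)$ vanishes on $D$.
\item If $d=2$, then \Cref{prop:p-all-double-sections} gives $c=-k^3$ for a nonzero root $k$ of $\Phi_P$. The factor $U+k^3$ of $H_P$ kills $D$.
\item If $d=3$, then \Cref{prop:p-global-boundary-coordinate} shows $c=3B_P(c)^2$ with $c\ne0$. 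The corresponding factor $U-c$ of $K_P$ vanishes on $D$.
\end{enumerate}

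In every case $D\cap\{K_P(u)\ne0\}=\emptyset$. Any complete integral curve lying inside the open set is also a complete integral curve of $X_P$, so it must be $C$.

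The main obstacle is the degree-three bound together with the constraint $c=3B_P(c)^2$, which relies on the length count on the two boundary planes in \Cref{prop:p-global-boundary-coordinate}. Since that is stated earlier, the remaining care is a bookkeeping check: each case of the classification must land in a factor of $K_P$, which requires $G_P\mid H_P$ as built, and the sign conventions $u=-t^3$ and $u=-k^3$ must match the factors $U+t^3$ and $U+k^3$.

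Properness of the morphism is not part of this statement. It is supplied by the finite-map argument of \Cref{thm:p-projective-contraction}, which uses this corollary only to exclude other curves near $o$; there is no circularity, because the curve classification above never uses the contraction.
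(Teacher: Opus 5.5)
Your proposal is correct and follows the paper's own argument. It uses the degree bound $d\le 3$ from \Cref{prop:p-global-boundary-coordinate}, removes the degree-one and degree-two curves by the factors of $H_P$, removes the degree-three curves by the displayed product, and notes that every excluded value of $u$ is nonzero, so $C$ survives. You simply spell out the case-by-case bookkeeping that the paper's three-line proof leaves implicit, including the $u=0$ case via \Cref{prop:p-zero-fibre-curves} and the inclusion of the factors of $G_P$ in $H_P$.
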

\begin{proof}
Every complete integral curve has degree at most three by
\Cref{prop:p-global-boundary-coordinate}.  Degrees one and two
are removed by $H_P$, while the displayed finite product removes
every possible degree-three value of $u$.  All excluded values
are nonzero, so the open set retains $C$.
\end{proof}

\section{Geometric GV invariants for all \texorpdfstring{$n$}{n}}
\label{sec:gv-comparison}

The existence of a semisimple deformation of an abstract algebra does
not by itself identify its blocks with geometric GV invariants.  Here
the extra input is that the deformation in \eqref{ap:eq:deformation}
comes from a \emph{potential}.  We spell out the resulting conservation
argument, including its passage from formal to polynomial potentials.

Here $\chi$ denotes compactly supported Euler characteristic,
interpreted through \mbox{$\ell$-adic} cohomology on a geometric
generic fibre; it agrees with topological Euler characteristic
over $\mathbb C$. For a finite-dimensional algebra $D$, write
\(\operatorname{Hilb}_j(D)\) for its scheme of cyclic $j$-dimensional
left modules and set
\[
  \operatorname{DT}_j(D)
  =\chi\bigl(\operatorname{Hilb}_j(D),
                \nu_{\operatorname{Hilb}_j(D)}\bigr).
\]
For the algebras $A_p$ used here, word reversal fixing
$x,y$ preserves the Jacobi relations and gives an anti-automorphism.
It identifies the framed cyclic left- and right-module schemes, so
this convention does not alter their DT numbers.

\begin{lemma}
\label{lem:jac-dt-conservation}
Suppose the polynomial Jacobi algebra of a one-parameter polynomial
potential has a finite locally free direct factor over a henselian
neighborhood of $t=0$.  Write $\mathcal D$ for its completion and
$D_0$ for its special fibre.  Then, after
extension of the fraction field to an algebraic closure, for every
$j\ge1$ one has
\[
  \operatorname{DT}_j(D_0)
  =\operatorname{DT}_j(\mathcal D_{\overline{\kk((t))}}).
\]
The assertion concerns the isolated factor followed in the deformation;
other critical points of the polynomial potential play no role.
\end{lemma}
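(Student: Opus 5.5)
The plan is to realize each $\operatorname{Hilb}_j$ as a critical locus and then use deformation invariance of Behrend-weighted Euler characteristics for a proper family of critical loci. Write the potential as $W_t\in\kk[t]\langle x,y\rangle$, cyclic. The framed moduli space of $j$-dimensional cyclic modules over the free algebra is the smooth variety
\[
 M_j=\{(X,Y,v)\in\Mat_j^2\times\kk^j: v\ \text{cyclic}\}/\operatorname{GL}_j.
\]
The trace function $\operatorname{tr}W_t(X,Y)$ descends to $M_j$, and its critical locus is the scheme of framed cyclic modules over the polynomial Jacobi algebra. This holds uniformly in $t$, so we obtain a relative function $f:M_j\times\mathbb A^1_t\to\mathbb A^1$ over the $t$-line, and $\operatorname{Crit}(f_t)=\operatorname{Hilb}_j(\Jac W_t)$ as schemes. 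The Behrend function of a critical locus is computed by the Milnor fibre of $f_t$: $\nu=(-1)^{\dim M_j}(1-\chi(MF_{f_t}))$.

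Next I would isolate the factor. Since the chosen factor $\mathcal D$ is finite locally free over a henselian neighbourhood $S$ of $0$, it is cut out by a central idempotent. Hence $\operatorname{Hilb}_j(\mathcal D)$ is an open and closed subscheme of the relative critical locus over $S$. Modules over $\mathcal D$ have $x,y$ acting through a finite algebra, so the matrices $X,Y$ satisfy integral equations whose coefficients come from the characteristic polynomials of the generators of $\mathcal D$ over $\mathcal O_S$. This makes $Z=\operatorname{Hilb}_j(\mathcal D)\to S$ proper: it is a closed subscheme of a Quot scheme of the finite $\mathcal O_S$-module $\mathcal D^{\oplus 1}$, which is projective over $S$. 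Openness in the total critical locus is the point where the ``other critical points play no role'' clause enters. The idempotent splits the cyclic-module scheme as a disjoint union indexed by how many dimensions go to each factor, and we keep the component entirely supported on $\mathcal D$. The completion $\mathcal D$ and the henselian algebra agree on finite modules, so we may pass freely between them.

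The core step is the conservation law: for a family of functions $f$ on a smooth $M\times S$ whose relative critical locus contains a component $Z$ proper over $S$, the weighted Euler characteristic $\chi(Z_s,\nu_{Z_s})$ is constant in $s$. I would prove this via the vanishing-cycle sheaf. Over $\mathbb C$, the total Euler characteristic $\chi(Z_s,\phi_{f_s})$ is locally constant because $\phi_f$ for the family restricts, by properness of $Z$ and the relative Milnor fibration (which is topologically locally trivial near a proper critical set), to a constructible complex whose pushforward to $S$ is locally constant on a neighbourhood of $0$. Alternatively, I would use Behrend's theorem that $\chi(Z,\nu_Z)$ equals the degree of the symmetric obstruction theory class, which is deformation invariant for proper families carrying a relative symmetric obstruction theory; the relative critical locus supplies one. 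I expect this step, namely the justification that the Milnor fibration is locally trivial uniformly in $t$ (no critical points escaping from or entering $Z$ along $S$), to be the main obstacle. Properness of $Z$ and its being open and closed handle this: nearby critical points of $f_t$ either lie in $Z_t$ or stay in the complementary open-closed part.

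Finally, I would pass to the generic geometric fibre. Henselianness lets us compare the special fibre with a geometric point over $\overline{\kk((t))}$ via the specialization map, and with $\ell$-adic cohomology the constancy gives $\operatorname{DT}_j(D_0)=\operatorname{DT}_j(\mathcal D_{\overline{\kk((t))}})$. For general $\kk$ I would reduce to $\mathbb C$ by the finitely generated subfield argument already used in the proof of Theorem~\ref{ap:thm:all-parameters}.
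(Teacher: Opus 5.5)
\textbf{Assessment.} Your set-up matches the paper. It uses the framed smooth moduli space $M_j$ of dimension $j^2+j$, the identification $\operatorname{Hilb}_j(\Jac W_t)=\operatorname{Crit}(\operatorname{tr}W_t)$, the central idempotent making the $\mathcal D$-part an open and closed component, and properness via the Quot/Grassmannian of the finite module $\mathcal D$. The trouble is the conservation step as you lead with it, and the obstacle you flagged there is real.

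\textbf{Why the vanishing-cycle route fails.} The relative Milnor fibration is \emph{not} topologically locally trivial near a proper relative critical locus. Already $f_s(x)=x^3-sx$ shows this. At $s=0$ it has one degenerate critical point with Milnor number $2$. For $s\ne0$ it has two Morse points with distinct critical values. In the case at hand the topology changes drastically: $H_0=\operatorname{Hilb}_j(D_0)$ is a singular, typically non-reduced scheme, while the generic fibre is a disjoint union of products of Grassmannians.

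Restricting the total-space vanishing cycles does not rescue this. The complex $\phi_f$ of the function on $M_j\times S$ is supported on $\{d_Mf=0,\ \partial_sf=0\}$, which can be much smaller than $Z$. For $x^3-sx$ it is just the origin, where the total Milnor number is $1$, not $2$. So $\phi_f|_{Z_s}$ does not compute $\phi_{f_s}$. The fact that no critical points enter or leave $Z$ gives properness, not local constancy of Milnor data. What is conserved is an intersection number, not a topological type. As written, this route has a genuine gap.

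\textbf{The route that works.} Your ``alternatively'' route is correct and is exactly the paper's proof. Take $E=\Omega_{U_j/S}$ with section $d_{U_j/S}F$ and form the localized top Chern class. Restrict it to the open-closed proper component $H$ to get $\alpha\in A_1(H)$. Its proper pushforward to $S$ is $m[S]$. Restriction to the generic fibre and refined Gysin pullback to the closed point both commute with that pushforward, so both fibre degrees equal $m$. On each fibre the class is the virtual class of the symmetric obstruction theory $[T|_{H_\xi}\to\Omega|_{H_\xi}]$. Behrend's theorem then identifies its degree with $\chi(H_\xi,\nu_{H_\xi})$.

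This argument needs neither flatness of $H/S$ nor any analysis of Milnor fibrations. You should replace the vanishing-cycle paragraph with it.
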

\begin{proof}
Let $U_j$ be the smooth relative scheme of triples $(X,Y,v)$ with
$v$ cyclic for $X,Y$, modulo the free $\mathrm{GL}_j$ action.  The
trace of the polynomial potential is a function $F$ on $U_j$.  The
cyclic derivatives and nondegeneracy of the trace pairing give a
scheme-theoretic identity
\[
  \operatorname{Hilb}_j(\operatorname{Jac}(W_t))
      =\operatorname{Crit}_{U_j}(F_t).
\]
The part corresponding to $\mathcal D$ is an open-and-closed part of
this critical locus.  It is projective over the base: it is the
closed scheme of rank-$j$ quotient modules of the finite free algebra
$\mathcal D$, inside its relative Grassmannian.

Restrict the relative critical-locus obstruction theory
\( [T_{U_j/S}|_H\xrightarrow{\mathrm{Hess}(F)}
                      \Omega_{U_j/S}|_H] \)
to this proper part $H$.  The localized top Chern class of the
relative one-form $d_{U_j/S}F$ specializes to the corresponding class
on either fibre \cite{BF1997}.  On each fibre its degree is the
intrinsic Behrend-weighted Euler characteristic of the critical locus
\cite{Behrend2009}.  Proper pushforward therefore makes these degrees
equal.  Flatness of the Hilbert scheme itself is not required.

{
Here is the precise specialization behind the preceding sentence.
Work over a smooth henselian trait $S$ on which the polynomial
Jacobi algebra has the stated finite direct factor $\mathcal D$.
The idempotent projecting to $\mathcal D$ acts on a flat rank-$j$
cyclic quotient as an idempotent matrix; the condition that its rank
is $j$ is open and closed.  Hence the pure-$\mathcal D$ locus $H$
is an open-and-closed component of the relative critical scheme.
It is proper over $S$: a cyclic quotient of $\mathcal D$ is a
left-ideal quotient, and the left-ideal condition cuts out a
closed subscheme of the relative Grassmannian of the finite locally
free module $\mathcal D$.

Let $r=\dim(U_j/S)=j^2+j$ and $E=\Omega_{U_j/S}$.
The zero scheme of $s=d_{U_j/S}F\in\Gamma(U_j,E)$ is the relative
critical scheme.  The localized top Chern class of $(E,s)$,
restricted to its open-and-closed component $H$, defines
$\alpha\in A_1(H)$.  Write $j^*\alpha$ for restriction to the
generic fibre and $i_0^!\alpha$ for the refined pullback to the
closed fibre.  Compatibility of the zero-section Gysin map with
both operations gives, for $\xi=\eta,0$ respectively,
\[
 \alpha_\xi
 =c_r(E_\xi,s_\xi)\cap[U_{j,\xi}]\big|_{H_\xi}
 \in A_0(H_\xi).
\]
This is the fibrewise critical-locus virtual class
\cite[\S14.1]{Fulton1998}\cite{BF1997}.  Since $H\to S$ is proper,
its pushforward is $m[S]\in A_1(S)$ for one integer $m$.
Generic restriction and refined special pullback commute with
proper pushforward, so the degrees of both displayed zero-cycles
equal $m$.  Finally the
critical obstruction theory
$[T_{U_{j,\xi}}|_{H_\xi}\to\Omega_{U_{j,\xi}}|_{H_\xi}]$
is symmetric; Behrend's degree theorem identifies its virtual
degree with $\chi(H_\xi,\nu_{H_\xi})$ \cite{Behrend2009}.
This also explains why neither flatness of $H/S$ nor constancy of
its ordinary Euler characteristic is being assumed.
}
\end{proof}

\begin{lemma}
\label{lem:poly-realization}
Let $p\in\kk[[y]]$ have initial order $n\ge4$, and let $d=\dim A_p<\infty$.  If $P$ is a polynomial truncation
of $p$ through degree $N>d+2$, then $A_P\cong A_p$.  Moreover the
finite free smoothing of $A_P$ in \eqref{ap:eq:deformation} is the
completion of an isolated factor of a polynomial Jacobi family.
\end{lemma}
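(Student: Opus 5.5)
The plan has two parts: first show $A_P\cong A_p$ by a jet argument, then exhibit the smoothing of $A_P$ as the completion of an isolated factor of a polynomial Jacobi family.

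\emph{First part.} Since $d=\dim A_p<\infty$ and $A_p$ is local, its radical $J(A_p)$ is nilpotent. The powers $J^i$ strictly decrease until they vanish, so $J(A_p)^{d}=0$. Equivalently, $\mathfrak m^{d}\subseteq I_p$, where $I_p$ is the closed Jacobi ideal in $F=\kk\langle\!\langle x,y\rangle\!\rangle$.

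The generators of $I_p$ and $I_P$ are $3x^2+y^3$, shared by both, and $xy^2+yxy+y^2x+p'(y)$ versus the same expression with $P'(y)$. Their difference is $p'(y)-P'(y)$. Since $P$ agrees with $p$ through degree $N$, this difference lies in $y^{N}\kk[[y]]\subseteq\mathfrak m^{N}$. Because $N>d+2$, we have $N\ge d+1$, so the difference lies in $\mathfrak m^{d+1}$.

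Lemma~\ref{ap:lem:jets}, applied with its $N$ taken to be $d$, then gives $I_P=I_p$. Hence $A_P\cong A_p$; in fact the two algebras are equal as quotients of $F$.

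\emph{Second part.} Take the polynomial potential
\[
W_t=x^3+xy^3+P(y)+ty^3/3-b_0t^2y
\]
over $\kk[t]$, and its uncompleted Jacobi algebra $\mathcal J=\kk[t]\langle x,y\rangle/(\partial_xW_t,\partial_yW_t)$.

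Apply Theorem~\ref{ap:thm:all-parameters} to $P$, which has the same order $n$ and leading coefficient as $p$. It shows that the completion $\mathcal A_P$ of $\mathcal J$ at $(t,x,y)$ is finite free of rank $d$ over $\kk[[t]]$.

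It remains to see that $\mathcal A_P$ is the completion of a finite locally free direct factor of $\mathcal J$ over a henselian neighbourhood of $t=0$. The plan is as follows.
\begin{enumerate}[label=(\alph*)]
\item The special fibre $\mathcal J/t\mathcal J$ is the polynomial Jacobi algebra of $x^3+xy^3+P(y)$. Its finite-dimensional modules supported at the origin form a block whose completion is $A_P$, which is finite dimensional.
\item Since the relations are polynomial, the origin is an isolated point of the representation-theoretic support; equivalently, $(x,y)$ defines an isolated maximal ideal of $\mathcal J/t\mathcal J$.
\item Base change to the henselization $\kk[t]^h_{(t)}$, or to an étale neighbourhood. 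Lift the central idempotent cutting out the origin block; idempotents lift over henselian bases for finite algebras.
\item The resulting factor $e\mathcal J^h$ is finite over the base, because it is finitely generated with finite special fibre and the base is henselian. Its $t$-adic completion is $\mathcal A_P$, which is free. So $e\mathcal J^h$ is itself free of rank $d$.
\end{enumerate}

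\emph{Main obstacle.} The hard step is (c)--(d): producing the idempotent for a noncommutative, a priori infinite-dimensional algebra $\mathcal J$. I would first pass to the centre. The element $z=y^3=-3x^2$ is central, and $\mathcal J$ is a finitely generated module over $\kk[t,z]$ using the twelve-word spanning set from Theorem~\ref{ap:thm:finite}. So $\mathcal J$ is a finite algebra over the commutative noetherian ring $\kk[t,z]$. Henselizing $\kk[t,z]$ along $(t,z)$ then splits off the factor supported at the origin. Finiteness over $\kk[t]^h$ follows from quasi-finiteness of the support over $t=0$.
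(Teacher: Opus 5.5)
Your first part is exactly the paper's argument. The architecture of your second part is also the paper's: twelve-word spanning over the central subring $\kk[t,z]$, henselization, a central idempotent coming from $z$, $t$-adic completion, and freeness by faithful flatness of $\kk[t]^{\mathrm h}_{(t)}\to\kk[[t]]$.

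The gap is that the two facts this architecture needs are asserted rather than proved. Write $\mathcal J_0=\mathcal J/t\mathcal J$.
\begin{itemize}
\item[(i)] You need $\mathcal J_0$ to be a torsion $\kk[z]$-module, i.e.\ the support of $\mathcal J$ must be quasi-finite over $t=0$. Otherwise nothing finite over $\kk[t]^{\mathrm h}_{(t)}$ splits off. Your reason in (d) does not substitute for this: a finitely generated algebra over a henselian base with finite special fibre need not be finite, e.g.\ $\kk[t]^{\mathrm h}_{(t)}[z]/(tz-1)$.
\item[(ii)] You need the $z=0$ component of $\mathcal J_0$ to be exactly $A_P$. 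Otherwise the split-off factor has a larger special fibre, its completion is not $\mathcal A_P$, and the rank-$d$ conclusion fails.
\end{itemize}
Your justification of (b), ``since the relations are polynomial'', is not valid. For $P=0$ the relations are polynomial, yet the matrices of Theorem~\ref{thm:Ainf-exact} (with $x$ rescaled by a square root of $-1/3$) give simple two-dimensional modules at every $z\ne0$. So the support is the whole $z$-line and the origin is not isolated. The finite order of $P$ has to enter the argument.

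The paper proves (i) by exhibiting a polynomial in $z$, monic over $\kk[t]$, a power of which annihilates $\mathcal J$. It is built from resultants constraining the $z$-values of simple modules of dimension at most three, multiplied by $z$. The paper proves (ii) by a direct matrix check that at $t=z=0$ the only simple module is $x=y=0$.

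Both facts also follow quickly from $\dim A_P=d$ using your own ingredients:
\begin{itemize}
\item Modulo $z\mathcal J_0$ the term $P'(y)\in y^3\kk[y]$ vanishes. This leaves $\kk\langle x,y\rangle/(x^2,y^3,xy^2+yxy+y^2x)$, a graded algebra spanned by the twelve words, all of length at most five. Hence $(x,y)^6\subseteq z\mathcal J_0$.
\item So the $(x,y)$-adic and $z$-adic completions of $\mathcal J_0$ coincide, and both equal $A_P$.
\item A finitely generated $\kk[z]$-module whose $z$-adic completion is finite dimensional has no free summand. So $\mathcal J_0$ is torsion, which is (i).
\item The summand of $\mathcal J_0$ supported at $z=0$ is that completion, namely the local algebra $A_P$, which is (ii).
\end{itemize}
With these in hand, the henselian decomposition of a quasi-finite scheme over $\kk[t]^{\mathrm h}_{(t)}$ produces your finite factor $D$ with $D/tD=A_P$. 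The remaining steps then go through as in the paper.
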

\begin{proof}
The radical of the local algebra $A_p$ satisfies $J^d=0$.
The derivatives of $p$ and $P$ differ by an element of
$(x,y)^{d+1}$. Applying \Cref{ap:lem:jets} proves equality
of the two closed Jacobi ideals. This argument uses only finite
dimensionality, independently of the centre computation.

For completeness, the polynomial perturbation really has a finite
algebraic factor.  Put $z=y^3=-3x^2$ and write the polynomial
$y$-derivative, including its $t$ terms, as
$q_t(y)=a_t(z)+yb_t(z)+y^2c_t(z)$.  The four rewriting rules preceding
\Cref{ap:prop:overlap} span its polynomial Jacobi algebra over
$\kk[t,z]$ by the same twelve words.  On a geometric fibre with
$z\ne0$, a simple module has dimension at most three.  Its possible
$z$-values are roots of monic polynomials over $\kk[t]$: in dimension
one use the resultant of $y^3-z$ and $q_t(y)^2+3y^7$; in dimension
two use the resultant of $y^3-z$ and
\[
 q_t(y)+\omega q_t(\omega y),\qquad \omega^3=1\ne\omega;
\]
and in dimension three use $z+3c_t(z)^2$. Indeed $X^2=-z/3$
has two distinct eigenvalues when $z\ne0$; $X$ cannot be scalar
in a three-dimensional simple module. Their multiplicities are
therefore two and one, and $\operatorname{tr}X=-c_t(z)$ is
one of $\pm\sqrt{-z/3}$, giving the stated polynomial.  The leading coefficients
of the first two resultants are nonzero constants: the polynomial
$q_t$ has leading $y$-coefficient independent of $t$, and
$1+\omega^m\ne0$ for every integer $m$.  The third polynomial also
has a nonzero constant leading coefficient.  The value $z=0$ is
accounted for by multiplying these polynomials by $z$.  Hence the
support of the twelve-generated $\kk[t,z]$-module lies over the zero
locus of a polynomial monic in $z$; a power of that polynomial
annihilates the module.  The polynomial Jacobi algebra is therefore
finite over $\kk[t]$ near $t=0$.  Its special-fibre factor supported
at $(x,y)=(0,0)$ splits off after henselization and completes to the
family in \eqref{ap:eq:deformation}.  Its freeness is exactly
\Cref{ap:thm:all-parameters}.
{
Here is the factor and completion argument explicitly.  Put
$R=\kk[t]_{(t)}^{\mathrm h}$ and let $B$ be the polynomial
Jacobi algebra over $R$.  The preceding monic annihilator makes
$B$ a finite $R$-module.  In $B/tB$, the central character $z=0$
has only the simple module $x=y=0$.  Indeed, setting $z=0$ gives
$x^2=y^3=0$ and $xy^2+yxy+y^2x=0$.  The twelve-word spanning
bound makes a simple module at most three-dimensional.  If
$Y^2=0$, then $YXY=0$ and the ideal generated by $X,Y$ is
nilpotent.  In the remaining three-dimensional case put
$Y=E_{12}+E_{23}$.  The displayed relation forces
$X_{31}=0$ and $X_{32}=-X_{21}$; the $(3,1)$ entry of
$X^2=0$ then gives $X_{21}=X_{32}=0$.  Hence $X$ and $Y$
are simultaneously upper triangular, which precludes a
nontrivial simple module.

Choose a monic polynomial $f(Z)\in R[Z]$ annihilating the action
of $z$ on $B$, and write its special fibre as
$f_0(Z)=Z^e g(Z)$ with $g(0)\ne0$.  Hensel factorization of
these coprime factors and the Chinese remainder theorem give
a central idempotent in $R[z]/(f)$ and therefore a direct factor
$D\subset B$ whose special fibre is precisely the block at
$x=y=0$.  This fibre is $A_P$: its augmentation-adic
completion changes nothing because the finite block has
nilpotent radical.  Completing $D$ in $t$ is the same as
completing the polynomial Jacobi presentation in $(t,x,y)$,
since $x,y$ are radical modulo $t$; thus
$\widehat D\cong\mathcal A_P$.  By
\Cref{ap:thm:all-parameters}, $\widehat D$ is free of rank $d$
over $\kk[[t]]$.  Faithful flatness of
$R\longrightarrow\kk[[t]]$ and finiteness of $D$ imply that
$D$ is finite free of rank $d$ over the local ring $R$.
}
\end{proof}

\begin{theorem}\label{thm:gv-all-n}
Let $\kk=\mathbb C$.  Suppose $A_p$ in Theorem \ref{thm:full-p} is the contraction algebra of a
smooth threefold flop with a single smooth exceptional curve.  Its
geometric genus-zero GV invariants are
\[
 (n_1,n_2,n_3)=
 \begin{cases}(6,3,1),&n=4,\\(7,n-1,1),&n\ge5.\end{cases}
\]
All invariants $n_j$ with $j>3$ vanish.
\end{theorem}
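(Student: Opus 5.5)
The plan is to route the geometric GV invariants through DT invariants of the contraction algebra and then through the conservation lemma to the semisimple generic fibre. For a smooth flop with one exceptional curve, Toda's formula expresses $\operatorname{DT}_j$ of the contraction algebra in terms of the genus-zero GV invariants. The generating series is
\[
 \sum_{j\ge0}\operatorname{DT}_j(A_p)\,q^j
 =\prod_{k\ge1}\bigl(1-(-q)^k\bigr)^{k\,n_k}
\]
up to the standard sign convention. This expresses $\operatorname{DT}_j(A_p)$ as a polynomial in $n_1,\dots,n_j$ that is triangular in $n_j$, with coefficient $\pm j$. Hence the full sequence $(\operatorname{DT}_j(A_p))_j$ determines $(n_k)_k$. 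So it suffices to compute $\operatorname{DT}_j(A_p)$ for all $j$.

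First, replace $p$ by a polynomial truncation $P$ with $A_P\cong A_p$, using \cref{lem:poly-realization}. That lemma also shows that the finite free smoothing \eqref{ap:eq:deformation} is the completion of an isolated finite locally free factor of a polynomial Jacobi family over a henselian trait. This is exactly the input required by \cref{lem:jac-dt-conservation}, so $\operatorname{DT}_j(A_p)=\operatorname{DT}_j(S)$ for every $j$. Here $S$ is the geometric generic fibre, which by \cref{ap:thm:all-parameters} is $\kk^{n_1'}\times\Mat_2^{n_2'}\times\Mat_3$ with $(n_1',n_2')=(6,3)$ or $(7,n-1)$. We extend to $\overline{\kk((t))}$ and use characteristic-zero invariance of weighted Euler characteristics to compare with $\mathbb C$-values.

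Second, compute $\operatorname{DT}_j$ for a split semisimple algebra. The Hilbert scheme of cyclic modules of a product decomposes as a product over the factors, so the DT series is multiplicative. For $\Mat_k$, a cyclic module is $V^{\oplus m}$ with $m\le k$ (where $V$ is the simple module), and cyclic quotients of dimension $km$ are parametrized by the Grassmannian $\operatorname{Gr}(m,k)$, which is smooth. Its Behrend weight is $(-1)^{\dim}$, so its weighted Euler characteristic is $(-1)^{m(k-m)}\binom km$. One must still check that the Hilbert scheme is smooth and reduced here, i.e. that its scheme structure equals the Grassmannian; this holds since $\Mat_k$ is separable and its module varieties are smooth. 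Multiplying the factors gives
\[
 \prod_k\Bigl(\sum_{m=0}^{k}(-1)^{m(k-m)}\tbinom km q^{km}\Bigr)^{n_k'},
\]
and $\sum_m(-1)^{m(k-m)}\binom km q^{km}$ is exactly the factor $(1-(-q)^k)^k$ in Toda's form, after matching signs (a binomial expansion with the sign $(-1)^{m(k-m)+m(k+1)}$). Hence the two products agree, and triangularity gives $n_k=n_k'$ for $k\le3$ and $n_k=0$ for $k>3$.

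The main obstacle is the sign bookkeeping between the Behrend-weighted Grassmannian counts and the precise normalization of Toda's formula, which relates DT of the contraction algebra to GV via noncommutative DT of the flopping curve. I would fix conventions by checking the Atiyah flop ($A=\kk$, $n_1=1$) and the Type $A$ examples, where the answer is known. Legitimacy of \cref{lem:jac-dt-conservation} is already settled, so the remainder is formal.
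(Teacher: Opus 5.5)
Your proposal is correct and follows essentially the same route as the paper. You truncate to a polynomial via \cref{lem:poly-realization}, transfer DT invariants to the split semisimple fibre via \cref{lem:jac-dt-conservation}, evaluate each $\Mat_k$ block by the Behrend-weighted Grassmannian count $(-1)^{m(k-m)}\binom km$ (your sign check is automatic, since $m(k-m)\equiv m(k+1)\pmod 2$), and then compare with the Hua--Toda Euler product, whose triangularity in $n_j$ recovers the GV invariants. The only difference is that the paper also writes out the first few coefficients and cross-checks against $\dim A_p=\sum m^2n_m$; your triangularity argument makes that check unnecessary.
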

\begin{proof}
By Lemmas \ref{lem:poly-realization} and \ref{lem:jac-dt-conservation} and Theorem 
\ref{thm:full-p}, the DT series of $A_p$ equals that of its displayed
split semisimple smoothing $S$.  For one factor $\Mat_m(\kk)$,
the cyclic modules of dimension $mr$ form $\operatorname{Gr}(r,m)$.
Its Behrend-weighted Euler characteristic is
$(-1)^{r(m-r)}\binom mr$.  Multiplication over the factors gives
\[
 1+\sum_{j\ge1}\operatorname{DT}_j(A_p)u^j
 =\prod_{m=1}^{3}\bigl(1-(-1)^m u^m\bigr)^{m b_m},
 \qquad
 (b_1,b_2,b_3)=
 \begin{cases}(6,3,1),&n=4,\\(7,n-1,1),&n\ge5.\end{cases}
\]
Hua--Toda's theorem identifies the same series for a smooth flop
with $\prod_{m\ge1}(1-(-1)^mu^m)^{mn_m}$
\cite[Theorem~4.4]{HT2018}.  Comparison of coefficients, or unique
factorization of this formal Euler product, yields $n_m=b_m$ for
all $m$.  More explicitly, when $n\ge5$ the first coefficients are
\[
 \operatorname{DT}_1(A_p)=7,
 \qquad \operatorname{DT}_2(A_p)=\binom72-2(n-1)=23-2n,
 \qquad \operatorname{DT}_3(A_p)=52-14n.
\]
Thus $n_1=7$ and
$23-2n=\binom{n_1}{2}-2n_2$, whence $n_2=n-1$.
Finally $\dim A_p=\sum_m m^2n_m=4n+12$
\cite[Theorem~4.1]{HT2018}; since comparison of the full Euler products has already excluded
blocks of size greater than three,
$n_3=1$.  The $n=4$ computation similarly gives $(6,3,1)$.
\end{proof}

\section{Formal uniqueness for the full \texorpdfstring{$E_6$}{E6} parameter family}
\label{sec:remaining}
The equalities of length and centre in \cref{thm:full-p} are
statements about every formal $p$ of the indicated order.  They also
give a finite free deformation to the displayed semisimple product.
The uniqueness assertion has a different logical input, recorded in
the following intrinsic form.

\begin{proposition}\label{prop:intrinsic-gate}
For any $p$ in \cref{thm:full-p}, the equality
$h^1(A_p)=h^2(A_p)$ implies that the displayed semisimple product
is the only semisimple flat-deformation target.
For the formal family considered here, centre equality alone does
not supply the stated Hochschild equality.
\end{proposition}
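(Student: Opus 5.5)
The first assertion is a direct assembly of results already proved. Fix $p$ as in \cref{thm:full-p} and let $d$ be $27$ or $4n+12$ according as $n=4$ or $n\ge5$. \Cref{ap:thm:all-parameters} provides the finite free deformation $\mathcal A_p$ of \eqref{ap:eq:deformation}. Its geometric generic fibre is the split semisimple product $S$ (namely $S_4$ or $S_n$), and $\dim_\kk Z(A_p)=\dim Z(S)$ (ten, respectively $n+7$ blocks). Via \cite[Theorem~2.20]{JS2026}, the deformation places $[A_p]$ in $\overline{O_S}\subseteq\cM_d$. Adding the hypothesis $h^1(A_p)=h^2(A_p)$, all assumptions of \cref{lem:smooth-criterion} hold, so $[A_p]$ is a smooth point of $\cM_d$. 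Now let $T$ be any semisimple algebra to which $A_p$ flatly deforms. Then \cref{prop:unique-principle} applied to $S$ and $T$ gives $T\cong S$. Nothing specific to the parameter $p$ enters beyond the length, centre and deformation statements of \cref{ap:thm:all-parameters}, so the argument is uniform in $p$ and works over any algebraically closed field of characteristic zero.

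For the second assertion, I will make explicit that the tangent-space count in \eqref{eq:tangentdim}--\eqref{eq:orbitdim} uses $h^1$ and $h^2$ separately. Centre equality only gives
\[
 \dim T_{[A_p]}\cM_d-\dim O_S=h^2(A_p)-h^1(A_p).
\]
It therefore says nothing about the sign or vanishing of the right-hand side, and smoothness needs that difference to be zero. The Hochschild equality is instead supplied by \cref{lem:h1h2}, which uses symmetry together with $\Omega^4_{A^e}(A)\cong A$. For the formal family, neither property follows from the presentation as a Jacobi algebra alone; both are imported through the geometric realization in \cref{cor:p-formal-flop-realization} and the results of \cite{JKM}. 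I would phrase this as a logical remark rather than a counterexample: the proof of \cref{thm:full-p} must route through geometry (or through the jet-descent to $\mathbb C$) to obtain $h^1=h^2$.

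The main obstacle is the precise scope of ``centre equality alone does not supply''. A rigorous non-implication would require an algebra with matching centre and semisimple smoothing but $h^1\ne h^2$, and producing such an example inside this family is not something the earlier sections provide. My plan is to interpret the sentence as stating that the derivation does not pass through centre equality, and to justify it by the displayed dimension identity above. I would then point out that any abstract proof would need an independent input such as symmetry plus periodicity, and that \cref{ap:cor:bound} already flags that $\HH_0$ bounds do not transfer to $Z$ without a symmetrizing form.
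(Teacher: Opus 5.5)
Your proposal is correct and follows the paper's proof. For the first assertion, you take the finite free deformation and centre equality from \cref{ap:thm:all-parameters}, note via \eqref{eq:tangentdim}--\eqref{eq:orbitdim} that $\dim T_{[A_p]}\cM_d-\dim O_S=h^2(A_p)-h^1(A_p)$, and then conclude with \cref{lem:smooth-criterion} and \cref{prop:unique-principle}; for the second, your remark that centre equality fixes no value of this difference is exactly the paper's argument.

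The explicit non-implication you thought was unavailable is the paper's next example, $C=\kk[u]/(u^2)\otimes\kk[u]/(u^2)$, which deforms flatly to $\kk^4$ with equal centres yet has $\dim\HH^1(C)=4\ne5=\dim\HH^2(C)$. It lies outside the family, so no in-family example is needed.
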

\begin{proof}
Write $d=\dim A_p$, $c=\dim Z(A_p)$ and let $S_p$ denote the
constructed semisimple target. Its number of simple blocks
gives $\dim Z(S_p)=c$. The orbit and tangent formulas of
\Cref{lem:smooth-criterion} yield
\[
\dim T_{[A_p]}\mathcal M_d-\dim O_{S_p}
=\dim\HH^2(A_p,A_p)-\dim\HH^1(A_p,A_p).
\]
If the right side is zero, the existence of the $S_p$-deformation
forces smoothness at $[A_p]$ and then uniqueness by
\Cref{prop:unique-principle}.  The equality of centres gives no
value for the right side without further information.
\end{proof}

\begin{example}[centre equality does not imply the Hochschild gate]
Let $B=\kk[u]/(u^2)$ and $C=B\otimes_{\kk}B$.
The rank-four deformation
\[
\kk[[t]][u,v]/(u^2-t,v^2-t)
\]
has special fibre $C$ and split geometric generic fibre $\kk^4$.
Both centres have dimension four.  On the other hand
$\dim\HH^0(B)=2$ and $\dim\HH^1(B)=\dim\HH^2(B)=1$.
The Hochschild K\"unneth formula therefore gives
\[
\dim\HH^1(C)=1\cdot2+2\cdot1=4,\qquad
\dim\HH^2(C)=1\cdot2+1\cdot1+2\cdot1=5.
\]
Thus even a flat semisimple deformation with unchanged centre
does not establish the Hochschild equality needed in
\Cref{prop:intrinsic-gate}.  This example makes no claim about
uniqueness for the specific family $A_p$.
\end{example}

Over $\mathbb C$, \Cref{cor:p-formal-flop-realization} removes the
geometric qualification for this two-generator formal family:
every $A_p$ is isomorphic to the contraction algebra of a
polynomial-truncation flop.  Its symmetric four-periodic structure
then supplies the Hochschild equality. Descent to a finite coefficient
field extends that equality to all algebraically closed fields of
characteristic zero, as in the proof of \Cref{thm:full-p}.  Smoothness of contraction algebras in general is
also singled out by Jelisiejew--Smoktunowicz
\cite[Conjecture~1.4(1)]{JS2026}; their separate centre-map
assertion in Conjecture~1.4(2) is not used here.

The remaining finite Type~$E$ classes belong to the companion
universal-flop manuscript.

We next turn to the one-dimensional order $E_{6,\infty}$.

\section{The one-dimensional limit \texorpdfstring{$E_{6,\infty}$}{E6,infinity}}
\label{sec:E6inf}

We now pass from the finite-order family to the one-dimensional normal form
\[
 f_\infty=x^3+xy^3.
\]
After rescaling the generators, its completed Jacobi algebra can be written
\begin{equation*}
 A_\infty=
 \frac{\kk\langle\!\langle x,y\rangle\!\rangle}
 {\overline{(x^2-y^3,\,xy^2+yxy+y^2x)}}.
\end{equation*}
Put
\[
 z=x^2=y^3,
 \qquad R_0=\kk[[z]].
\]
Then $z$ is central.

\begin{theorem}\label{thm:Ainf-exact}
The algebra $A_\infty$ is free of rank $12$ over $R_0$.  With
\[
 D_i=y^i,\quad C_i=y^ix,\quad B_i=y^ixy,\quad A_i=y^ixyx
 \qquad(0\le i\le2),
\]
a basis is
\begin{equation}\label{eq:Ainf-basis}
 \{D_i,C_i,B_i,A_i:0\le i\le2\}.
\end{equation}
Moreover, if
\[
 h=xyx+zy,\qquad a=yxyx,
\]
then
\begin{equation}\label{eq:Ainf-center-basis}
 Z(A_\infty)=R_0\oplus R_0h\oplus R_0a,
\end{equation}
and
\begin{equation}\label{eq:Ainf-center-rel}
 h^2=za,\qquad ha=-z^3,\qquad a^2=-z^2h.
\end{equation}
Consequently
\begin{equation*}
 Z(A_\infty)\cong \kk[[u^3,u^4,u^5]],
 \qquad z\mapsto u^3,\quad h\mapsto-u^4,\quad a\mapsto u^5.
\end{equation*}
If $F=\kk((u))=\operatorname{Frac}Z(A_\infty)$, then
\begin{equation*}
 A_\infty\otimes_{Z(A_\infty)}F\cong \Mat_2(F).
\end{equation*}
\end{theorem}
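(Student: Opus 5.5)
The plan has two halves: an upper bound from rewriting, and a lower bound from an explicit $2\times2$ matrix realization over $\kk[[u]]$.

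\textbf{Spanning.} Over $R_0=\kk[[z]]$ the rewriting rules of Theorem~\ref{ap:thm:finite} apply with $p=0$, after the rescaling $x^2=y^3=z$:
\[
x^2\to z,\qquad y^3\to z,\qquad xy^2\to -yxy-y^2x,\qquad xyxy\to yxyx.
\]
The last rule comes from the commutator of the second relation with $y$. These rules give a surjection $R_0^{12}\twoheadrightarrow A_\infty$ onto the span of the listed words $D_i,C_i,B_i,A_i$. Convergence uses the $z$-adic argument of that proof: modulo $z$, long words vanish.

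\textbf{Freeness via a faithful matrix model.} I will produce a homomorphism $\rho:A_\infty\to\Mat_2(\kk[[u]])$ with $z\mapsto u^3$. Take
\[
\rho(y)=u\,\operatorname{diag}(1,\omega),\qquad \rho(x)=\begin{pmatrix}0&u^{a}\\u^{b}&0\end{pmatrix},\qquad a+b=3,
\]
for example $a=1$, $b=2$. This gives $\rho(x)^2=u^3=\rho(y)^3$. The second relation holds because $1+\omega+\omega^2=0$ kills the off-diagonal entries, as in Proposition~\ref{prop:two}. Next I compute the images of the twelve words. The diagonal words give the pairs listed in $\mathscr D$ of Lemma~\ref{lem:homogeneous-order-quotient}, and the off-diagonal words give those in $\mathscr O$. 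I then check that these twelve images are linearly independent over $\kk((u^3))$. Since $\Mat_2(\kk((u)))$ has dimension $12$ over $\kk((u^3))$, this reduces to a determinant computation. The eigenvalue pattern $(1,1),(\omega,\omega^2),(\omega^2,\omega)$ at distinct $u$-residues mod $3$ makes that determinant a Vandermonde-type product, which is nonzero.

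Independence over $\kk((u^3))$ implies the surjection $R_0^{12}\to A_\infty$ is injective. So $A_\infty$ is free of rank $12$ and $\rho$ is injective. The same count shows $A_\infty\otimes_{R_0}\kk((u^3))\cong\Mat_2(\kk((u)))$, since the dimensions agree.

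\textbf{Centre.} Because $\rho$ is injective, the centre is $A_\infty\cap \kk((u))\cdot I$, which equals $\mathscr D\cap\kk((u))(1,1)$. I compute $\rho(h)$ and $\rho(a)$ directly and expect $-u^4 I$ and $u^5 I$, with signs fixed by the chosen normalization. Lemma~\ref{lem:homogeneous-order-quotient} already records the scalar part as $\kk[[u^3,u^4,u^5]]$. I will still re-derive it from the lattice: the only scalar $R_0$-combinations come from $D_0$, from $xyx+zy$ in the span of the $u(\omega,\omega^2)$ and $u^4(\omega^2,\omega)$ type terms, and from $a$. The relations \eqref{eq:Ainf-center-rel} are then identities among $u^3,-u^4,u^5$: $u^8=u^3\cdot u^5$, $-u^9=-z^3$, and $u^{10}=-z^2\cdot(-u^4)$. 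The fraction-field statement follows from $Z\otimes F=\kk((u))$ together with the dimension count above.

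\textbf{Main obstacle.} The delicate step is the bookkeeping in $\rho$. I have to verify the independence determinant, and I have to pin down the sign and normalization conventions so that $h=xyx+zy$ really maps to a scalar, with the cancellation of its non-scalar part, under the rescaled relation $x^2=y^3$. I would first fix $\rho$ and tabulate all twelve images explicitly before attempting the centre intersection.
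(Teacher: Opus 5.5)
Your proposal is correct and is essentially the paper's argument: rewriting gives the twelve-word $R_0$-span, and the $2\times2$ model over $\kk[[u]]$ gives independence. For independence you use a block-Vandermonde determinant on $u$-residues mod $3$, whereas the paper notes that $uI=-h/z$ lies in the image and that $X,Y$ generate $\Mat_2(\kk((u)))$. The centre is then read off as the scalar part of the diagonal lattice.

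A few minor fixes:
\begin{enumerate}
\item $xyxy=yxyx$ comes from commuting the second relation with $x$, using that $x^2$ is central. Commuting with $y$ only gives $y^3x=xy^3$.
\item With $\rho(y)=u\operatorname{diag}(1,\omega)$ you get $h\mapsto-\omega^2u^4I$ and $a\mapsto\omega u^5I$. To match the stated normalization, use $\operatorname{diag}(\omega u,\omega^2u)$ or substitute $u\mapsto\omega^2u$.
\item Compute the lattice tables directly rather than citing Lemma~\ref{lem:homogeneous-order-quotient}, since that lemma is itself derived from this theorem.
\end{enumerate}
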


\begin{proof}
The defining relations imply
\[
 xy^2=-yxy-y^2x.
\]
Taking the commutator of the second relation with $x$ and using centrality of
$x^2=y^3$ gives
\[
 xyxy=yxyx.
\]
These two reductions show recursively that every word is an $R_0$-linear
combination of the twelve elements in \eqref{eq:Ainf-basis}.  Since
$y^3=z$, indices may always be reduced modulo three.

To prove independence, choose a primitive cube root $\omega\in\kk$ and put
$F=\kk((u))$, $u^3=z$.  The matrices
\begin{equation}\label{eq:XYgeneric}
 X=\begin{pmatrix}0&u\\u^2&0\end{pmatrix},
 \qquad
 Y=\begin{pmatrix}\omega u&0\\0&\omega^2u\end{pmatrix}
\end{equation}
satisfy
\[
 X^2=Y^3=u^3I,
 \qquad
 XY^2+YXY+Y^2X=0.
\]
They therefore give a homomorphism
\[
 A_\infty\otimes_{R_0}\kk((z))\longrightarrow \Mat_2(F).
\]
Under this map
\[
 h\longmapsto-u^4I,
 \qquad
 a\longmapsto u^5I.
\]
Hence the image contains $uI=-h/z$, and therefore contains the scalar field
$F$.  Over $F$, the diagonal matrix $Y$ has two distinct eigenvalues and $X$
connects the two eigenspaces, so $X,Y$ generate $\Mat_2(F)$.  Since
$[F:\kk((z))]=3$, the target has dimension $12$ over $\kk((z))$.  The twelve
spanning elements are therefore independent.

The generic centre has dimension three over $\kk((z))$, and $1,h,a$ are
three independent central elements.  Comparing the four matrix entries in
\eqref{eq:XYgeneric} for an arbitrary $R_0$-linear combination of the basis
\eqref{eq:Ainf-basis} shows that a scalar matrix occurs precisely for an
$R_0$-linear combination of $1,h,a$.  This proves
\eqref{eq:Ainf-center-basis}.  The identities \eqref{eq:Ainf-center-rel} may
be checked in the generic matrix representation; freeness over $R_0$ then
implies that they already hold in $A_\infty$.  They identify the centre with
$\kk[[u^3,u^4,u^5]]$.  Finally, after extending scalars from the centre to
$F$, the representation \eqref{eq:XYgeneric} is a surjection between
four-dimensional $F$-algebras and hence is an isomorphism.
\end{proof}

\subsection{Normalization and conductor of the centre}

Let
\[
 Z=Z(A_\infty)=\kk[[u^3,u^4,u^5]],
 \qquad R=\kk[[u]].
\]

\begin{proposition}\label{prop:center-normalization}
The normalization and conductor are
\begin{equation*}
 \widetilde Z=R,
 \qquad
 (Z:\widetilde Z)=u^3R=(u^3,u^4,u^5)\subset Z.
\end{equation*}
\end{proposition}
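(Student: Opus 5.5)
The plan is to treat $Z=\kk[[u^3,u^4,u^5]]$ purely as a numerical semigroup ring inside $R=\kk[[u]]$, using the identification from \cref{thm:Ainf-exact}. Its semigroup is $\Sigma=\langle 3,4,5\rangle=\{0,3,4,5,6,\dots\}=\mathbb N\setminus\{1,2\}$. Since the inclusion $Z\subseteq R$ sends $z,h,a$ to $u^3,-u^4,u^5$, a power series $\sum c_ju^j\in R$ lies in $Z$ if and only if $c_1=c_2=0$. This monomial description drives everything.

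First I will compute the normalization. $R$ is a DVR, hence integrally closed, and it has the same fraction field $F=\kk((u))$ as $Z$, because $u=u^4/u^3\in\operatorname{Frac}Z$. Moreover $u$ is integral over $Z$: it satisfies $T^3-u^3=0$. Hence $R=Z[u]$ is integral over $Z$. This gives $\widetilde Z=R$.

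Next I will compute the conductor $(Z:R)=\{f\in F: fR\subseteq Z\}$. Any such $f$ satisfies $f=f\cdot1\in Z\subseteq R$, so $f\in R$. The condition $fR\subseteq Z$ then says that the coefficients of $u^1$ and $u^2$ vanish in $fu^k$ for every $k\ge0$. For $k=0$ this gives $c_1=c_2=0$. Taking $k=1$, the coefficient of $u^1$ in $fu$ is $c_0$, so $c_0=0$; the same conclusion also follows from $k=2$. Conversely, $u^3R$ consists of series with all exponents at least $3$, all of which lie in $\Sigma$, so $u^3R\subseteq Z$ and $u^3R\cdot R=u^3R$. Hence $(Z:R)=u^3R$.

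Finally I will check that $u^3R=(u^3,u^4,u^5)Z$. The inclusion $\supseteq$ is clear. For $\subseteq$, write $f=\sum_{j\ge3}c_ju^j$ and group its terms by the residue of $j$ modulo $3$. This expresses $f$ as $u^3g_0+u^4g_1+u^5g_2$ with each $g_i\in\kk[[u^3]]\subseteq Z$. In particular the conductor is the maximal ideal of $Z$.

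No step here is a genuine obstacle; the only point needing care is that the statement's identification $Z\cong\kk[[u^3,u^4,u^5]]$ is compatible with the embedding into $R$, which the generic matrix representation in \cref{thm:Ainf-exact} already supplies.
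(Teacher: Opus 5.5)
Your proposal is correct and follows the paper's route: the semigroup $\langle3,4,5\rangle=\{0,3,4,5,\dots\}$, integrality of $u=u^4/u^3$ via $T^3-u^3$ together with $R$ being a DVR, and the conductor obtained by excluding elements of valuation $0,1,2$, which you do through the coefficients $c_0,c_1,c_2$. You also write out the equality $u^3R=(u^3,u^4,u^5)Z$ by splitting exponents modulo $3$, which the paper only asserts.
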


\begin{proof}
The numerical semigroup generated by $3,4,5$ is
\[
 \langle3,4,5\rangle=\{0,3,4,5,6,\ldots\}.
\]
The element $u=u^4/u^3$ belongs to $\operatorname{Frac}Z$ and is integral
because it satisfies $T^3-u^3=0$.  Thus $R$ is finite over $Z$; since $R$ is
a DVR, it is the normalization.  Every power $u^m$ with $m\ge3$ lies in
$Z$, so $u^3R\subseteq(Z:R)$.  No element of valuation $0,1$, or $2$ can
belong to the conductor, which proves the reverse inclusion.
\end{proof}

\subsection{The normalized pullback and its torsion}

The naive normalization pullback is
\[
 B:=A_\infty\otimes_ZR.
\]
It is not torsion-free.

\begin{theorem}\label{thm:normalization-torsion}
Set in $B$
\[
 \tau=xy+yx+ux,
 \qquad
 \eta=y^2+uy+u^2.
\]
Then
\begin{equation}\label{eq:torsion-module}
 \operatorname{Tor}_R(B)
 =R\tau+R(y\tau)+R(y^2\tau)+R(\tau y)+R\eta
 \cong (R/u^3R)^{\oplus5},
\end{equation}
and, as an $R$-module,
\begin{equation}\label{eq:Bmodule}
 B\cong R^{\oplus4}\oplus(R/u^3R)^{\oplus5}.
\end{equation}
The torsion-free quotient
\[
 \Lambda:=B/\operatorname{Tor}_R(B)
\]
is free of rank four over $R$ and has the presentation
\begin{equation}\label{eq:Lambda-presentation}
 \Lambda\cong
 \frac{R\langle X,Y\rangle}
 {(X^2-u^3,\;Y^2+uY+u^2,\;XY+YX+uX)}.
\end{equation}
\end{theorem}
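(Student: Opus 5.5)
The strategy is to compare $B$ with its generic fibre and read off the module structure from two fibre dimensions. Since $F=\operatorname{Frac}R=\operatorname{Frac}Z$, \cref{thm:Ainf-exact} gives $B\otimes_RF\cong A_\infty\otimes_ZF\cong\Mat_2(F)$. Hence $B$ is a finite $R$-module of free rank four, and $\operatorname{Tor}_R(B)$ is exactly the kernel of $\varphi\colon B\to\Mat_2(F)$. Here $\varphi$ is induced by the matrices \eqref{eq:XYgeneric}, with $u$ acting as the scalar $uI$, which is compatible with $z\mapsto u^3$, $h\mapsto-u^4$, $a\mapsto u^5$. Consequently $\Lambda=\varphi(B)$ is the $R$-subalgebra of $\Mat_2(F)$ generated by $X$ and $Y$.

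\emph{Step 1 (torsion elements).} First check directly that $\varphi(\eta)=\operatorname{diag}(1+\omega+\omega^2,\,1+\omega^2+\omega^4)u^2=0$. Similarly the off-diagonal entries of $XY+YX+uX$ are $(\omega+\omega^2+1)u^2$ and $(\omega+\omega^2+1)u^3$, so $\varphi(\tau)=0$. Thus $\tau,y\tau,y^2\tau,\tau y,\eta$ are torsion. Next check in $B$ itself that $u^3$ kills them. Using $hx=xyx^2+zyx=z(xy+yx)$, we get $u^3\tau=z(xy+yx)-hx=0$. Using $hy=xyxy+zy^2=a+zy^2$, we get $u^3\eta=zy^2-hy+a=0$.

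\emph{Step 2 (presentation of $\Lambda$).} Let $\Lambda'$ be the right side of \eqref{eq:Lambda-presentation}. The three relations rewrite every word into the $R$-span of $1,X,Y,XY$. These four elements map to $R$-independent matrices in $\Mat_2(F)$, so $\Lambda'\cong\Lambda$ and $\Lambda$ is free of rank four. The relations of $A_\infty$ hold in $\Lambda'$: for instance $Y^3-u^3=(Y-u)(Y^2+uY+u^2)=0$, and $XY^2+YXY+Y^2X$ reduces to zero using $XY=-YX-uX$. The images of $z,h,a$ are $u^3,-u^4,u^5$, which I will confirm by reduction. So there is a surjection $B\to\Lambda'$ whose kernel contains the five elements and is the torsion.

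\emph{Step 3 (counting).} This is the main obstacle: we need $\dim_\kk A_\infty/\mathfrak m_ZA_\infty=9$, where $\mathfrak m_Z=(z,h,a)$. I would compute this from the twelve-word basis modulo $z$. Modulo $z$, the element $h$ becomes $xyx$ and $a$ becomes $yxyx$. One then checks that the two-sided ideal they generate, together with $z$, cuts the twelve basis words down by exactly three. Granting this, $B/uB=A_\infty/\mathfrak m_ZA_\infty$ has dimension nine. By the structure theorem over the DVR $R$, $B\cong R^4\oplus\bigoplus_{j=1}^5R/u^{e_j}$.

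To pin down the exponents, note that $u^3R=(Z:R)=\mathfrak m_Z$ by \cref{prop:center-normalization}. Hence $B/u^3B=A_\infty\otimes_Z R/\mathfrak m_ZR$, where $R/\mathfrak m_ZR=R/u^3R$ is a three-dimensional $Z/\mathfrak m_Z=\kk$-space. This gives $\dim B/u^3B=3\cdot9=27=12+\sum\min(e_j,3)$. So $\sum\min(e_j,3)=15$ with five summands, which forces all $\min(e_j,3)=3$.

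It remains to show that the $e_j$ do not exceed $3$, equivalently that the torsion is spanned by the five displayed elements. Let $T$ be their $R$-span. Modulo $T$, the relations $\eta=0$ and $\tau=0$ are exactly those of $\Lambda'$, so $B/T$ is spanned by $1,x,y,xy$ and is a quotient of $\Lambda'$, which is free of rank four. Since $B\to\Lambda'$ factors through $B/T$ and $B/T$ surjects onto $\Lambda'$, the map $B/T\to\Lambda'$ is an isomorphism, and therefore $\operatorname{Tor}_R(B)=T$. Step 1 shows $T$ is killed by $u^3$, so every $e_j\le3$. This yields \eqref{eq:torsion-module} and \eqref{eq:Bmodule}. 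The dimension-nine fibre computation in Step 3 is the only genuinely computational point, and I would verify it by explicit reduction of the twelve words modulo $z$.
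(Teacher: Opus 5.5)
Your route differs from the paper's, and most of it is sound.

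The paper works with the universal presentation of $B=A_\infty\otimes_ZR$: the free $R$-module on the twelve words modulo $(h+u^4)w$ and $(a-u^5)w$. It eliminates the three words $y^ixyx$ and changes basis to $1,y,x,yx,\tau,y\tau,y^2\tau,\tau y,\eta$. It then reads off the surviving relations as $u^3$ times the last five, and states that these generate all the others.

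You instead identify $\operatorname{Tor}_R(B)$ with $\ker\varphi$ via the generic fibre, and get the presentation of $\Lambda$ from the matrix embedding. You replace the paper's ``no further relations'' claim by a length count. First, $\dim B/uB=\dim A_\infty/\mathfrak m_ZA_\infty=9$; your mod-$z$ check is right, since $\mathfrak m_ZA_\infty$ reduces to the span of the three words $y^ixyx$. Second, because the conductor is $\mathfrak m_Z$, you have $R/u^3R\cong(Z/\mathfrak m_Z)^3$ as $Z$-modules. Hence $\dim B/u^3B=27$, which forces every torsion exponent to be at least $3$. This is a neat use of the conductor computation, and it makes the absence of extra relations automatic.

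The final paragraph, however, does not work as written. You define $T$ as an $R$-span and never show it is a two-sided ideal. So ``$B/T$ is a quotient of $\Lambda'$'' has no meaning. The rewriting in $\Lambda'$ uses two-sided multiples such as $x\eta$, $\eta x$, $y\tau y$, so it does not prove that $B/T$ is spanned by four elements.

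The missing input is exactly the paper's elimination step. In $B$ one has $xyx=-u^3y-u^4$, $yxyx=u^5$ and $y^2xyx=u^5y$. Using $xy^2=-yxy-y^2x$,
\[
 \eta=y^2+uy+u^2,\quad \tau=xy+yx+ux,\quad \tau y=-y^2x+uxy,\quad
 y\tau=yxy+y^2x+uyx,\quad y^2\tau=y^2xy+u^3x+uy^2x .
\]
Modulo $R1+Ry+Rx+Ryx$, these are triangular with unit leading coefficients in $y^2,xy,y^2x,yxy,y^2xy$. Hence the nine elements span $B$ over $R$. Then $B/T$ is $4$-generated and surjects onto the free rank-four module $\Lambda$, so $T=\ker\varphi$, and the rest of your argument goes through.

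Alternatively, you could take $T$ to be the two-sided ideal generated by $\tau$ and $\eta$. Since $u$ is central, $u^3T=0$, and this rescues the module decomposition of $B$ and the presentation of $\Lambda$. But identifying $\operatorname{Tor}_R(B)$ with the $R$-span of the five specific elements still needs the spanning above. Their images must span $\operatorname{Tor}_R(B)/u\operatorname{Tor}_R(B)$.
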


\begin{proof}
The central base change imposes
\[
 xyx+u^3y=-u^4,
 \qquad
 yxyx=u^5.
\]
The presentation of $Z$ in \cref{thm:Ainf-exact} shows that
the underlying module of $B$ is obtained from the free module on
the twelve normal words by the relations
\[
 (h+u^4)w=0,\qquad(a-u^5)w=0
 \quad\text{for each of the twelve basis words }w.
\]
This is the universal module presentation for tensoring over $Z$;
no flatness of $Z\to R$ is assumed. The relations for the three
$y^ixyx$ eliminate these three generators with unit coefficients.
Using $x^2=y^3=u^3$, $xy^2=-yxy-y^2x$, and $xyxy=yxyx$,
the nine remaining generators become
\[
 1,\ y,\ x,\ yx,\ \tau,\ y\tau,\ y^2\tau,\ \tau y,\ \eta.
\]
Substitution in the displayed universal presentation gives precisely
\[
 u^3\tau=u^3y\tau=u^3y^2\tau=u^3\tau y=u^3\eta=0;
\]
the relations themselves can be read directly from
\[
\begin{aligned}
 (h+u^4)y^ix&=u^3y^i\tau\quad(0\le i\le2),\\
 (h+u^4)xy&=u^3\tau y,\\
 (h+u^4)y-(a-u^5)&=u^3\eta.
\end{aligned}
\]
Conversely, these five relations give every relation
$(h+u^4)w=(a-u^5)w=0$ after the three eliminations.
Thus this is a presentation by a free rank-four summand and five
cyclic summands annihilated by $u^3$.
This proves \eqref{eq:torsion-module}--\eqref{eq:Bmodule}.  Modulo torsion,
$\tau=\eta=0$, which gives exactly the three relations in
\eqref{eq:Lambda-presentation}.  The four elements $1,Y,X,YX$ form an
$R$-basis.
\end{proof}

Fix the same primitive cube root $\omega$ as above.  Sending
\begin{equation*}
 X\longmapsto
 \begin{pmatrix}0&u\\u^2&0\end{pmatrix},
 \qquad
 Y\longmapsto
 \begin{pmatrix}\omega u&0\\0&\omega^2u\end{pmatrix}
\end{equation*}
embeds $\Lambda$ into $\Gamma=\Mat_2(R)$.

\begin{proposition}\label{prop:Lambda-explicit}
Inside $\Gamma$ one has
\begin{equation}\label{eq:Lambda-explicit}
 \Lambda=
 \left\{
 \begin{pmatrix}a&uc\\u^2d&b\end{pmatrix}:
 a,b,c,d\in R,\quad
 a\equiv b\pmod u,\quad c\equiv d\pmod u
 \right\}.
\end{equation}
Its generic fibre is $\Mat_2(\kk((u)))$.
\end{proposition}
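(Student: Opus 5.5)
The plan is to show that the $R$-span of the images of the basis $1,Y,X,YX$ from \cref{thm:normalization-torsion} is exactly the set on the right of \eqref{eq:Lambda-explicit}, which I denote $\Lambda'$. Since $\Lambda$ is free on these four elements and the map to $\Gamma$ is an embedding, this gives the description of $\Lambda$.

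First I compute the four images. They are
\[
 1=\begin{pmatrix}1&0\\0&1\end{pmatrix},\quad
 Y=\begin{pmatrix}\omega u&0\\0&\omega^2u\end{pmatrix},\quad
 X=\begin{pmatrix}0&u\\u^2&0\end{pmatrix},\quad
 YX=\begin{pmatrix}0&\omega u^2\\\omega^2u^3&0\end{pmatrix}.
\]
Each of these lies in $\Lambda'$. For the diagonal entries, $1\equiv1$ and $\omega u\equiv\omega^2u\equiv0\pmod u$. For the off-diagonal entries written as $(uc,u^2d)$: $X$ gives $(c,d)=(1,1)$, and $YX$ gives $(c,d)=(\omega u,\omega^2u)$, both congruent modulo $u$.

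Next I check that $\Lambda'$ is an $R$-submodule of $\Gamma$. The two congruence conditions are $R$-linear, so this is clear. It is not even necessary to check closure under multiplication, because equality of modules suffices.

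For the reverse inclusion, I treat the diagonal and off-diagonal parts separately.

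\emph{Diagonal part.} A pair $(a,b)$ with $a\equiv b\pmod u$ can be written as $a=b+u\delta$. The pairs $(1,1)$ and $(\omega u,\omega^2u)$ span over $R$ all pairs $(\alpha+\omega u\beta,\ \alpha+\omega^2u\beta)$. Solving $\alpha+\omega u\beta=a$ and $\alpha+\omega^2u\beta=b$ gives $u(\omega-\omega^2)\beta=a-b=u\delta$. Since $\omega-\omega^2\ne0$ in characteristic zero, $\beta=\delta/(\omega-\omega^2)\in R$, and then $\alpha\in R$.

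\emph{Off-diagonal part.} The same computation applies to $(c,d)$ with $c\equiv d\pmod u$, using $X\leftrightarrow(1,1)$ and $YX\leftrightarrow(\omega u,\omega^2u)$.

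Hence $\Lambda'=R\cdot1+RY+RX+RYX$, which is the image of $\Lambda$.

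For the generic fibre, $\Lambda'$ contains $u\Gamma$, since the congruence conditions are vacuous there. Therefore $\Lambda\otimes_R\kk((u))=\Gamma\otimes_R\kk((u))=\Mat_2(\kk((u)))$.

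The only point needing care is the invertibility of $\omega-\omega^2$, together with injectivity of the map from $\Lambda$ to $\Gamma$. Injectivity holds because $\Lambda$ is free of rank four and the four images are $\kk((u))$-independent: their determinant in the coordinates $(a,b,c,d)$ is a nonzero multiple of a power of $u$.
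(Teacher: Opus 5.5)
Your identification of $\Lambda$ with $\Lambda'$ is correct and is the paper's argument. The images of $1,Y,X,YX$ satisfy the two congruences, and conversely the congruences let you solve for the four $R$-coefficients because $\omega-\omega^2$ is a unit. Your injectivity check via the nonzero determinant is also fine.

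The generic-fibre step, however, rests on a false claim: $\Lambda'$ does not contain $u\Gamma$. The congruences are not the only constraints in \eqref{eq:Lambda-explicit}. The $(1,2)$ and $(2,1)$ entries must also lie in $uR$ and $u^2R$ respectively, so $uE_{21}\notin\Lambda'$. Even $u^2E_{21}\notin\Lambda'$, since it corresponds to $(c,d)=(0,1)$ with $c\not\equiv d\pmod u$. Indeed, the largest $\Gamma$-ideal inside $\Lambda$ is $u^3\Gamma$, as in Proposition~\ref{prop:Lambda-conductor-radical}.

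The conclusion still holds, and either of two fixes works:
\begin{enumerate}
\item Check $u^3\Gamma\subseteq\Lambda'$ directly. Take $c=u^2b'$ and $d=uc'$, both $\equiv0\pmod u$.
\item Use the $\kk((u))$-linear independence of the four images, which you already proved for injectivity. Then $\Lambda\otimes_R\kk((u))$ is a $4$-dimensional subspace of $\Mat_2(\kk((u)))$, hence all of it.
\end{enumerate}
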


\begin{proof}
The $R$-span of $1,Y,X,YX$ consists precisely of the matrices in
\eqref{eq:Lambda-explicit}: the two diagonal entries differ by a multiple of
$u$, and after writing the off-diagonal entries as $uc$ and $u^2d$, the
parameters $c,d$ have the same residue modulo $u$.  Conversely these two
congruences allow one to solve uniquely for the four coefficients of
$1,Y,X,YX$.
\end{proof}

\section{Conductors, idealizers, and the Bruhat--Tits branch}
\label{sec:BT}

Write $E_{ij}$ for the standard matrix units in $\Gamma=\Mat_2(R)$.  An
$R$-basis of $\Lambda$ is
\begin{equation}\label{eq:Lambda-basis-matrix}
 I,\qquad p=uE_{22},\qquad
 q=uE_{12}+u^2E_{21},\qquad r=u^3E_{21}.
\end{equation}

\begin{proposition}\label{prop:Lambda-conductor-radical}
The largest two-sided ideal of $\Gamma$ contained in $\Lambda$ is
\begin{equation}\label{eq:matrix-conductor}
 \mathfrak f_{\Lambda\subset\Gamma}=u^3\Gamma.
\end{equation}
Moreover
\begin{equation}\label{eq:Lambda-radical}
 J(\Lambda)=
 \left\{
 \begin{pmatrix}ua&uc\\u^2d&ub\end{pmatrix}:
 c\equiv d\pmod u
 \right\},
 \qquad
 \Lambda/J(\Lambda)\cong\kk.
\end{equation}
\end{proposition}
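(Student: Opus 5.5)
The plan is to work entirely inside the explicit description of $\Lambda$ from \cref{prop:Lambda-explicit}:
\[
 \Lambda=\{\textstyle\begin{pmatrix}a&uc\\u^2d&b\end{pmatrix}: a\equiv b,\ c\equiv d \pmod u\}.
\]
Both claims then reduce to valuation bookkeeping on the four entries.

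\textbf{Conductor.} First I check that $u^3\Gamma\subseteq\Lambda$. A matrix $u^3M$ has diagonal entries in $u^3R$, so $a\equiv b\equiv0$. Its off-diagonal entries are $u\cdot(u^2m_{12})$ and $u^2\cdot(u m_{21})$, so $c,d\in uR$ and both vanish mod $u$. Since $u^3\Gamma$ is a two-sided ideal of $\Gamma$, it lies in the conductor.

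For the reverse inclusion, let $I\subseteq\Lambda$ be a two-sided $\Gamma$-ideal. Two-sided ideals of $\Mat_2(R)$ are of the form $u^k\Gamma$, because $R$ is a DVR and ideals of $\Mat_2(R)$ correspond to ideals of $R$. It therefore suffices to show $u^2\Gamma\not\subseteq\Lambda$. The witness is $u^2E_{21}$: it has $d=1$ and $c=0$, so $c\not\equiv d$. (Similarly $u^2E_{11}$ fails the condition $a\equiv b$ only when $k=0$, so the off-diagonal witness is the decisive one.) Hence the conductor is $u^3\Gamma$.

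\textbf{Radical.} Consider the map $\varepsilon:\Lambda\to\kk$ sending a matrix to $a \bmod u$ (equivalently $b\bmod u$). I verify it is a ring homomorphism. The diagonal entries of a product are $a a' + u^3 c d'$ and $b b' + u^3 d c'$, and modulo $u$ these are $aa'$ and $bb'$. The map is surjective, so its kernel is a two-sided ideal of codimension one. That kernel is exactly the displayed set $\{a,b\in uR,\ c\equiv d\}$, written with $ua,ub$.

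Next I show the kernel is contained in the radical. Since $\Lambda$ is $u$-adically complete and finite over $R$, it suffices to show that the kernel $\mathfrak m$ is topologically nilpotent, for instance $\mathfrak m^k\subseteq u\Lambda$ for some $k$. Then $1+\mathfrak m$ consists of units, so $\mathfrak m\subseteq J(\Lambda)$. To see this, note that every entry of an element of $\mathfrak m$ lies in $uR$, so $\mathfrak m\subseteq u\Gamma$. Then $\mathfrak m^3\subseteq u^3\Gamma\subseteq u\cdot u^2\Gamma$. To land in $u\Lambda$ I need $u^2\Gamma\subseteq\Lambda$, which fails; instead I use $\mathfrak m^4\subseteq u^4\Gamma=u\cdot u^3\Gamma\subseteq u\Lambda$ by the conductor computation. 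Alternatively, since $\mathfrak m$ is maximal with residue field $\kk$ and $\mathfrak m^3\subseteq u^3\Gamma$, the conductor argument makes it topologically nilpotent directly.

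Conversely, $J(\Lambda)\subseteq\mathfrak m$ because $\mathfrak m$ is a maximal two-sided ideal. This gives equality and $\Lambda/J(\Lambda)\cong\kk$.

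The main care point is the reverse inclusion for the conductor, namely that no larger $\Gamma$-ideal fits inside $\Lambda$. This is settled by the classification of ideals of $\Mat_2(R)$ together with the single witness $u^2E_{21}$. Everything else is routine congruence checking.
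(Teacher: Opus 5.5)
Your proposal is correct and is essentially the paper's proof: two-sided ideals of $\Mat_2(R)$ are the $u^k\Gamma$, with $u^3\Gamma\subseteq\Lambda$ and $u^2E_{21}\notin\Lambda$. The diagonal-residue map $\Lambda\to\kk$ has kernel nilpotent modulo $u\Lambda$, so completeness of $R$ identifies that kernel with $J(\Lambda)$.

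The only variation is that you obtain $\mathfrak m^4\subseteq u^4\Gamma=u\cdot u^3\Gamma\subseteq u\Lambda$ from the conductor, whereas the paper reads off nilpotence in the basis $I,p,q,r$; this is a tidy shortcut. Your parenthetical about $u^2E_{11}$ is muddled but plays no role in the argument.
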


\begin{proof}
Every two-sided ideal of $\Gamma$ is $u^m\Gamma$.  The inclusion
$u^3\Gamma\subseteq\Lambda$ is immediate from
\eqref{eq:Lambda-explicit}, whereas $u^2E_{21}\notin\Lambda$.  This proves
\eqref{eq:matrix-conductor}.  Reduction of the common diagonal residue gives
a surjective algebra map $\Lambda\to\kk$; its kernel is the right-hand side
of \eqref{eq:Lambda-radical}.  In the basis
\eqref{eq:Lambda-basis-matrix}, the quotient of this kernel by $u\Lambda$ is
nilpotent, so completeness of $R$ identifies it with the Jacobson radical.
\end{proof}

The radical-idealizer chain is also explicit.
We use left modules: the opposite endomorphism ring
of a left ideal is identified with its right multiplier order.
The anti-involution $M\mapsto\operatorname{tr}(M)1-M$ preserves
all the orders and radicals displayed below. Thus the left
multiplier computations in the proof also determine the right
multiplier orders.

\begin{theorem}\label{thm:idealizer-chain}
Put $\Lambda_0=\Lambda$.  Then
\begin{equation*}\begin{aligned}
 \Lambda_1&=
 \left\{
 \begin{pmatrix}a&uc\\u^2d&b\end{pmatrix}:
 a\equiv b\pmod u
 \right\},\end{aligned}\end{equation*}
\begin{equation}\begin{aligned}
 \Lambda_2&=
 \begin{pmatrix}R&R\\uR&R\end{pmatrix}.\label{eq:Lambda2}
\end{aligned}\end{equation}
and
\[
 \Lambda_{i+1}=\operatorname{End}_{\Lambda_i}
 (J(\Lambda_i))^{\mathrm{op}}\qquad(i=0,1).
\]
More precisely,
\[
 J(\Lambda_1)=
 \begin{pmatrix}uR&uR\\u^2R&uR\end{pmatrix},
 \qquad J(\Lambda_2)=
 \begin{pmatrix}uR&R\\uR&uR\end{pmatrix},
 \qquad
 \operatorname{End}_{\Lambda_2}(J(\Lambda_2))^{\mathrm{op}}=\Lambda_2.
\]
Thus the sequence stabilizes at the hereditary Iwahori order
$\Lambda_2$, which is not the maximal order $\Gamma$.
\end{theorem}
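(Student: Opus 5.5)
The plan is to compute everything inside $\Gamma=\Mat_2(R)$, using the explicit description of $\Lambda$ in \cref{prop:Lambda-explicit} and of $J(\Lambda)$ in \cref{prop:Lambda-conductor-radical}. For a full $R$-lattice $M\subseteq\Gamma$, its right multiplier order $\{g\in\Mat_2(\kk((u))):Mg\subseteq M\}$ is the opposite endomorphism ring of $M$ as a left module. So each $\Lambda_{i+1}$ is obtained by solving $J(\Lambda_i)g\subseteq J(\Lambda_i)$ entrywise. The anti-involution $M\mapsto\operatorname{tr}(M)1-M$ lets me check only one side.

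\textbf{Step 1: $\Lambda_1$.} Write $g=\begin{pmatrix}\alpha&\beta\\\gamma&\delta\end{pmatrix}$ with entries in $\kk((u))$. Test $J(\Lambda)g\subseteq J(\Lambda)$ on the $R$-generators of $J(\Lambda)$:
\[
uE_{11},\qquad uE_{22},\qquad uE_{12}+u^2E_{21},\qquad u^2E_{12},\qquad u^3E_{21}.
\]
\begin{itemize}
\item From $uE_{11}g$ and $uE_{22}g$: $\alpha,\delta\in R$, $\beta\in R$ and $\gamma\in uR$. In particular $g\in\begin{pmatrix}R&R\\uR&R\end{pmatrix}$.
\item From $u^2E_{12}g$: the off-diagonal entry $u^2\delta$ is automatic, and the diagonal entry $u^2\gamma$ lies in $u^3R$.
\item The generator $uE_{12}+u^2E_{21}$ gives the condition that $g$ preserves the $(1,2)/(2,1)$ matching $c\equiv d$. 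Modulo the earlier constraints, this should reduce to $\beta\in uR$, $\gamma\in u^2R$ and $\alpha\equiv\delta\pmod u$.
\end{itemize}
Together these give the stated $\Lambda_1$, where the $(c,d)$ congruence is dropped but $a\equiv b$ is kept.

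\textbf{Step 2: $J(\Lambda_1)$.} The algebra $\Lambda_1$ surjects onto $\kk$ via the common diagonal residue. Its kernel is $\begin{pmatrix}uR&uR\\u^2R&uR\end{pmatrix}$, and the kernel modulo $u\Lambda_1$ is nilpotent. The same completeness argument as in \cref{prop:Lambda-conductor-radical} then identifies it with $J(\Lambda_1)$.

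\textbf{Step 3: $\Lambda_2$.} Compute the right multipliers of $J(\Lambda_1)=\begin{pmatrix}uR&uR\\u^2R&uR\end{pmatrix}$. Conjugating by $\operatorname{diag}(1,u)$ turns this lattice into $u\begin{pmatrix}R&R\\R&R\end{pmatrix}$ up to an index shift. More directly, its multiplier ring is the Iwahori order $\begin{pmatrix}R&R\\uR&R\end{pmatrix}$, since $J(\Lambda_1)=u\cdot\begin{pmatrix}R&R\\uR&R\end{pmatrix}$ and a principal ideal has the same multiplier ring as the order itself.

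\textbf{Step 4: stabilization.} The standard facts about the Iwahori order $\Lambda_2$ give $J(\Lambda_2)=\begin{pmatrix}uR&R\\uR&uR\end{pmatrix}$ and $\Lambda_2/J(\Lambda_2)\cong\kk\times\kk$. Moreover $J(\Lambda_2)=\Lambda_2\pi$ with $\pi=\begin{pmatrix}0&1\\u&0\end{pmatrix}$ invertible in $\Mat_2(\kk((u)))$, so its right multiplier ring is $\pi^{-1}\Lambda_2\pi=\Lambda_2$. This shows that $\Lambda_2$ is hereditary and that the chain stabilizes. It is not $\Gamma$, since $E_{21}\notin\Lambda_2$.

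\textbf{Main obstacle.} I expect the main obstacle to be the bookkeeping in Step 1. The generator $uE_{12}+u^2E_{21}$ mixes entries, and one must verify both that $g$ fixes the linked congruence and that nothing larger than $\Lambda_1$ survives. I would first write $g$ in terms of the basis $I,uE_{22},uE_{12},u^2E_{21}$ together with fractional corrections, and check each candidate direction.
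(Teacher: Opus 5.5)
Your strategy matches the paper's: compute each $\Lambda_{i+1}$ as a multiplier ring by testing the $R$-generators of $J(\Lambda_i)$. Using right multipliers directly is literally $\operatorname{End}_{\Lambda_i}(J)^{\mathrm{op}}$ for left modules, so you do not need the adjugate anti-involution that the paper uses to transfer its left-multiplier computation. Steps 2--4 are fine. The identifications $J(\Lambda_1)=u\Lambda_2$ and $J(\Lambda_2)=\Lambda_2\pi$ are tidier than the paper's generator-by-generator colon checks. However, Step 1 as you sketch it does not produce $\Lambda_1$.

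\textbf{The gap in Step 1.} The mixed generator cannot force $\beta\in uR$ or $\gamma\in u^2R$. Indeed
\[
(uE_{12}+u^2E_{21})g=\begin{pmatrix}u\gamma&u\delta\\ u^2\alpha&u^2\beta\end{pmatrix}.
\]
Membership of this matrix in $J(\Lambda)$ only requires $\gamma\in R$, $\beta\in u^{-1}R$, and (reading $c=\delta$, $d=\alpha$) $\alpha\equiv\delta\pmod u$.

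With the conditions you actually derive, the solution set is $\{\alpha,\delta\in R,\ \beta\in R,\ \gamma\in uR,\ \alpha\equiv\delta\}$. This set contains $E_{12}$, and $E_{12}$ passes the mixed-generator test, since $(uE_{12}+u^2E_{21})E_{12}=u^2E_{22}\in J(\Lambda)$. Yet $uE_{11}E_{12}=uE_{12}\notin J(\Lambda)$, because its parameters $c=1$, $d=0$ violate $c\equiv d$.

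The error is that you tested $uE_{11}g$ and $uE_{22}g$ only against the ambient lattice $\begin{pmatrix}uR&uR\\u^2R&uR\end{pmatrix}$ and forgot the linked congruence. In $uE_{11}g=\begin{pmatrix}u\alpha&u\beta\\0&0\end{pmatrix}$ the zero $(2,1)$ entry forces $d=0$, hence $c=\beta\equiv0$, i.e.\ $\beta\in uR$. Likewise $uE_{22}g=\begin{pmatrix}0&0\\u\gamma&u\delta\end{pmatrix}$ forces $\gamma\in u^2R$. The mixed generator contributes only $\alpha\equiv\delta\pmod u$. This is exactly how the paper's computation obtains $m_{12}\in uR$ and $m_{21}\in u^2R$.

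\textbf{Smaller points.}
\begin{itemize}
\item In Step 3, the aside that conjugating by $\operatorname{diag}(1,u)$ carries $J(\Lambda_1)$ to $u\Mat_2(R)$ ``up to an index shift'' is false: an Iwahori order is not conjugate to a maximal order. Your direct argument does not use it, so drop it.
\item In Step 4, record that $\pi\Lambda_2=\Lambda_2\pi$ (both equal $J(\Lambda_2)$). That is what justifies $\pi^{-1}\Lambda_2\pi=\Lambda_2$.
\item Heredity of $\Lambda_2$ follows from $J(\Lambda_2)=\Lambda_2\pi$ being projective, or from the standard Iwahori facts. It does not follow from the multiplier equality itself.
\end{itemize}
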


\begin{proof}
Write $E_{ij}$ for matrix units. The radical $J(\Lambda)$ is generated
as an $R$-module by
\[
uE_{11},\quad uE_{22},\quad uE_{12}+u^2E_{21},\quad u^3E_{21}.
\]
For a general matrix $M=(m_{ij})$ over $\kk((u))$, imposing
$MJ(\Lambda)\subseteq J(\Lambda)$ on these four generators gives
$m_{11},m_{22}\in R$, $m_{12}\in uR$, $m_{21}\in u^2R$, and
$m_{11}\equiv m_{22}\pmod u$. This is $\Lambda_1$.

Its radical is $J(\Lambda_1)=uRE_{11}\oplus uRE_{22}\oplus
uRE_{12}\oplus u^2RE_{21}$. Multiplying these four generators by
$M$ gives, necessarily and sufficiently,
$m_{11},m_{12},m_{22}\in R$ and $m_{21}\in uR$.
This proves \eqref{eq:Lambda2}. In particular $E_{12}$ belongs to this
idealizer, since $E_{12}J(\Lambda_1)\subseteq J(\Lambda_1)$.
Finally $\Lambda_2$ is the intersection of the two maximal orders
$\Mat_2(R)$ and $\operatorname{End}_R(Re_1\oplus Ru e_2)$.
Its displayed radical is an invertible two-sided ideal, so its
left multiplier order is $\Lambda_2$ itself. Equivalently, the same
four-generator colon calculation gives this directly.
\end{proof}

\begin{remark}
The intermediate order $B_0=\begin{psmallmatrix}R&uR\\uR&R\end{psmallmatrix}$
is not the radical idealizer of $\Lambda_1$. It will instead be
used as an auxiliary order in the expanded construction below.
\end{remark}

We next determine all maximal overorders.  Maximal $R$-orders in
$\Mat_2(\kk((u)))$ are endomorphism rings of rank-two $R$-lattices, up to
homothety.  We use the corresponding Bruhat--Tits tree language; the set of
maximal orders containing a fixed order is its branch, a subtree of the
Bruhat--Tits tree \cite{ArenasBravo}.

Let $e_1,e_2$ be the standard basis of $F^2$, $F=\kk((u))$.  Put
$L_\circ=Re_1\oplus Re_2$.
For $\lambda\in\kk$ define
\[
 L_\lambda=R(e_1+\lambda e_2)\oplus Ru e_2,
\]
and define
\[
 L_\infty=Re_2\oplus Ru e_1.
\]
Finally, for $\mu\in\kk$ put
\[
 L_{0,\mu}=R(e_1+\mu u e_2)\oplus Ru^2e_2.
\]

\begin{theorem}\label{thm:BT-branch}
The maximal $R$-orders containing $\Lambda$ are exactly
\begin{equation}\label{eq:max-overorders}
 \operatorname{End}_R(L_\circ),\qquad
 \operatorname{End}_R(L_\lambda)\ (\lambda\in\mathbf P^1(\kk)),
 \qquad
 \operatorname{End}_R(L_{0,\mu})\ (\mu\in\kk).
\end{equation}
In the Bruhat--Tits tree the branch is a two-level broom:
$[L_\circ]$ is joined to all $[L_\lambda]$, and the distinguished neighbour
$[L_0]$ is in turn joined to all $[L_{0,\mu}]$; there are no further
vertices.
\end{theorem}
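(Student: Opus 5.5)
Every maximal order has the form $\operatorname{End}_R(L)$ for a rank-two lattice $L\subset F^2$, unique up to homothety. Moreover $\Lambda\subseteq\operatorname{End}_R(L)$ holds exactly when $L$ is a $\Lambda$-submodule, that is, when $L$ is stable under the $R$-generators $I,p,q,r$ of \eqref{eq:Lambda-basis-matrix}. The plan is therefore to classify the $\Lambda$-stable lattices up to homothety. We normalize each class by scaling $L$ so that $L\subseteq L_\circ$ but $L\not\subseteq uL_\circ$. The matrix $p=uE_{22}$ is already in $u\Gamma$, so it imposes little. The essential constraints come from $q=uE_{12}+u^2E_{21}$, together with the conductor inclusion $u^3\Gamma\subseteq\Lambda$ of \cref{prop:Lambda-conductor-radical}.

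First we bound the index. Stability under $\Lambda\supseteq u^3\Gamma$ gives $u^3\Gamma L\subseteq L$. Since $\Gamma L=u^{-k}L_\circ$ for some $k$, we get $u^3\Gamma L\subseteq L\subseteq\Gamma L$, so every stable class lies within distance $3$ of $[\Gamma L]$. After normalization $\Gamma L=L_\circ$. Then $L_\circ/L$ is a cyclic $R$-module $R/u^m$ with $m\le 3$: it is cyclic because $L\not\subseteq uL_\circ$, so elementary divisors give $L=R v\oplus Ru^m w$ for a basis $v,w$ of $L_\circ$.

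Next we do the case analysis on $m$. For $m=0$ we get $L_\circ$. For $m=1$, $L$ corresponds to a line in $L_\circ/uL_\circ\cong\kk^2$. Since $J(\Lambda)\subseteq u\Gamma$, every such lattice is stable, and this yields all $L_\lambda$, $\lambda\in\mathbf P^1(\kk)$. For $m=2$, write $L=R(e_1+\alpha e_2)+u^2L_\circ$ or $L=R(e_2+\beta u e_1)+u^2L_\circ$, and test $q$ and $p$ directly. For example, $q(e_1+\mu ue_2)=u^2e_2+\mu u^2e_1\in u^2L_\circ$ is fine, so $L_{0,\mu}$ survives. By contrast, for $v=e_1+\alpha e_2$ with $\alpha$ a unit, the requirement $p v=u\alpha e_2\in L$ fails. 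For $v=e_2+\cdots$, the image $qv=ue_1+\cdots$ is not in $L$. This leaves exactly the family $L_{0,\mu}$. For $m=3$, a similar computation should show that no lattice is stable, since $q$ and $p$ push a generator out. Since $\Gamma L=L_\circ$ must itself hold, we also double-check that the normalization is consistent: the alternative normalization $\Gamma L\ne L_\circ$ just amounts to a different representative of the same class, so it introduces nothing new.

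Finally we read off the tree structure. The classes $[L_\lambda]$ are the $\kk$-points of $\mathbf P^1$ adjacent to $[L_\circ]$. Each $[L_{0,\mu}]$ contains $u\,L_\circ$'s neighbour $L_0=Re_1\oplus Rue_2$ with index $u$, so it is adjacent to $[L_0]$ and not equal to $[L_\circ]$. Because the classes found are all at distance $\le2$ from $[L_\circ]$, the branch is the stated broom and contains no further vertices.

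The main obstacle is the $m=2,3$ exclusion, that is, making sure no other index-$u^2$ or index-$u^3$ lattices are $\Lambda$-stable. This needs careful bookkeeping of the valuations of the image of the cyclic generator under $p$, $q$ and $r$; I would organize it by reducing modulo $u^3L_\circ$ and working in the finite module $L_\circ/u^3L_\circ$, on which $\Lambda$ acts through the $\kk$-algebra $\Lambda/u^3\Gamma$.
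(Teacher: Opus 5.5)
Your proof is correct, and it takes a different route from the paper's.

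\textbf{The paper's route.} The paper never bounds the radius of the branch. It walks outward from $[L_\circ]$. At each vertex $[L]$ it computes the image of $\Lambda$ in $\operatorname{End}_\kk(L/uL)$, via the conjugations by $B_\lambda$ and $C_\mu$, so that the stable neighbours are exactly the invariant lines. The reduction is scalar at $[L_\circ]$ and at $[L_0]$, so all their neighbours survive. At $[L_\lambda]$ for $\lambda\ne0$ (including $\infty$), and at every $[L_{0,\mu}]$, the reduction contains $E_{21}$. The only invariant line of $E_{21}$ is the edge back, so the walk stops there. Finally the subtree property of branches rules out every other vertex.

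\textbf{Your route.} You use $u^3\Gamma\subseteq\Lambda$ to confine every stable class to the ball of radius three about $[L_\circ]$. You then enumerate that ball by normal forms of cyclic sublattices.

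\textbf{Comparison.} Your approach does not need the connectivity theorem for branches. It also gives a general a priori bound: a conductor $u^k\Gamma$ confines the branch to radius $k$. The cost is that you must treat the whole ball, including the radius-three layer, which the invariant-line argument disposes of automatically.

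\textbf{The radius-three layer.} This is the only place where your write-up is just a sketch (``should show''). It closes in a few lines. Suppose $L_\circ/L\cong R/u^3$ and $L$ is stable.
\begin{itemize}
\item Then $L+u^2L_\circ$ is stable of colength two, so it equals some $L_{0,\mu}$.
\item Hence $L=Rv+u^3L_\circ$ with $v=e_1+(\mu u+\nu u^2)e_2$.
\item The vector $qv$ has $e_1$-coordinate in $u^2R$ and $e_2$-coordinate exactly $u^2$.
\item But every element of $L$ whose $e_1$-coordinate lies in $u^2R$ has $e_2$-coordinate in $u^3R$, so $qv\notin L$.
\end{itemize}

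\textbf{A small correction.} Your adjacency sentence is garbled. It should say $uL_0\subset L_{0,\mu}\subset L_0$, with each inclusion of colength one.
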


\begin{proof}
For finite $\lambda$ use the basis-change matrix
\[
 B_\lambda=\begin{pmatrix}1&0\\\lambda&u\end{pmatrix}.
\]
For the generators $p,q,r$ in \eqref{eq:Lambda-basis-matrix}, one finds
\[
 B_\lambda^{-1}pB_\lambda=
 \begin{pmatrix}0&0\\\lambda&u\end{pmatrix},
\]
\[
 B_\lambda^{-1}qB_\lambda=
 \begin{pmatrix}\lambda u&u^2\\u-\lambda^2&-\lambda u\end{pmatrix},
 \qquad
 B_\lambda^{-1}rB_\lambda=
 \begin{pmatrix}0&0\\u^2&0\end{pmatrix}.
\]
Thus every first neighbour is $\Lambda$-stable.  If $\lambda\ne0$, the
reduction modulo $u$ contains $E_{21}$, whose unique invariant line is the
line corresponding to the edge back to $[L_\circ]$.  The same holds for the
neighbour $L_\infty$.

For $\lambda=0$, all non-scalar generators reduce to zero, hence every line
in $L_0/uL_0$ is invariant.  The outgoing neighbours other than the edge
back to $L_\circ$ are precisely the lattices $L_{0,\mu}$.  With
\[
 C_\mu=\begin{pmatrix}1&0\\\mu u&u^2\end{pmatrix}
\]
one computes
\[
 C_\mu^{-1}pC_\mu=
 \begin{pmatrix}0&0\\\mu&u\end{pmatrix},
\]
\[
 C_\mu^{-1}qC_\mu=
 \begin{pmatrix}\mu u^2&u^3\\1-\mu^2u&-\mu u^2\end{pmatrix},
 \qquad
 C_\mu^{-1}rC_\mu=
 \begin{pmatrix}0&0\\u&0\end{pmatrix}.
\]
Again the reduction contains $E_{21}$, and its unique invariant line is the
edge back to $[L_0]$.  Hence there is no third layer.  Since the branch of
an order is a subtree, this exhausts all maximal overorders.
\end{proof}

\begin{corollary}\label{cor:hereditary-overorders}
Every hereditary overorder of $\Lambda$ is either one of the maximal orders
in \eqref{eq:max-overorders}, or the Iwahori order given by the intersection
of the two maximal orders attached to an edge of the branch in
\Cref{thm:BT-branch}.  Conversely every such vertex or edge order contains
$\Lambda$.
\end{corollary}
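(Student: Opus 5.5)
We will combine standard structure theory of hereditary orders in $\Mat_2(F)$ with the branch computed in \cref{thm:BT-branch}. Over the complete DVR $R$ with algebraically closed residue field, every hereditary $R$-order in $\Mat_2(F)$ is conjugate either to $\Gamma$ or to the Iwahori order $\begin{psmallmatrix}R&R\\uR&R\end{psmallmatrix}$. Equivalently, it is either a maximal order $\operatorname{End}_R(L)$ or the intersection $\operatorname{End}_R(L)\cap\operatorname{End}_R(L')$ for adjacent vertices $[L],[L']$ of the Bruhat--Tits tree. We will cite this classification (Brzezinski, Reiner's \emph{Maximal Orders}), stating the translation in tree language: a hereditary order $H$ equals the intersection of the maximal orders containing it. Those maximal orders form a simplex of the tree, that is, one vertex or one edge.

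Given a hereditary overorder $H\supseteq\Lambda$, every maximal order containing $H$ also contains $\Lambda$. Its vertices therefore lie in the branch described by \cref{thm:BT-branch}. If $H$ is maximal, it is one of the orders in \eqref{eq:max-overorders}. Otherwise $H=\operatorname{End}(L)\cap\operatorname{End}(L')$ for an edge $\{[L],[L']\}$. Both endpoints are in the branch, and since the branch is a subtree, the edge is an edge of the branch. The two-level broom description lists exactly these edges: $[L_\circ]$ to $[L_\lambda]$, and $[L_0]$ to $[L_{0,\mu}]$.

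For the converse, every vertex order contains $\Lambda$ by \cref{thm:BT-branch}. An edge order is the intersection of two orders each containing $\Lambda$, so it also contains $\Lambda$. It is hereditary because it is conjugate to the standard Iwahori order. The \emph{Iwahori} form follows by conjugating the adjacent pair to $L_\circ$ and $L_0$, which gives $\Lambda_2$ from \cref{thm:idealizer-chain}.

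The main obstacle is justifying cleanly that a hereditary order is determined by the set of maximal orders containing it, and that this set is a vertex or an edge. We would handle this in two steps. First, apply the standard chain-of-lattices description: a hereditary order is $\bigcap\operatorname{End}(L_i)$ over a periodic chain $\cdots\supset L_0\supset L_1\supset uL_0$. In rank two such a chain has period one or two. Second, check directly that the maximal overorders of the Iwahori order $\Lambda_2$ are exactly $\Gamma$ and $\operatorname{End}(Re_1\oplus Rue_2)$. This check uses the same invariant-line argument as in the proof of \cref{thm:BT-branch}: reduction of $\Lambda_2$ modulo $u$ acts on each neighbour's residue plane with a unique invariant line.
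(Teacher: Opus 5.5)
Your proposal is correct and follows the paper's argument: the paper likewise cites the classification of hereditary orders in $\Mat_2(F)$ over a DVR as maximal (vertex) orders or Iwahori (edge) orders, and reads off the vertices and edges from the branch in \Cref{thm:BT-branch}. Your additional steps spell out details the paper leaves implicit: that both endpoint maximal orders of an edge order contain $\Lambda$, that the subtree property places the edge inside the branch, and that the edge order is conjugate to $\Lambda_2$.
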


\begin{proof}
Over a DVR, a hereditary order in $\Mat_2(F)$ is either maximal or an
Iwahori order, the latter being the intersection of the two maximal orders
at the endpoints of an edge.  The assertion therefore follows from
\Cref{thm:BT-branch}.
\end{proof}

\begin{remark}\label{rem:missing-Ringel}
The canonical radical-idealizer chain gives the
nine-dimensional algebra below. A $22/40$-dimensional pair is obtained using an expanded chain whose auxiliary choices
are conjugate under automorphisms of $\Lambda$; see
\Cref{thm:expanded-choice} and \Cref{prop:expanded-quotient,prop:expanded-Ringel}.
Consequently both constructions give isomorphism invariants, but
they give different algebras. More precisely,
$Q_{\mathrm{can}}\cong eQ_{\mathrm{exp}}e/(da)$, where
$e=e_0+e_1$. In particular the relation is a quotient of a corner,
and the $22/40$ construction does not come from the canonical
radical-idealizer chain. We use left modules and define each Ringel dual as the opposite
endomorphism algebra of its characteristic tilting module.
\end{remark}

\subsection{\texorpdfstring{The finite quotient of the radical-idealizer chain}
{The finite quotient of the radical-idealizer chain}}
\begingroup
Throughout this subsection and the next, modules are left modules
and endomorphisms are multiplied by ordinary composition. We put
$R(Q)=\operatorname{End}_Q(T)^{\mathrm{op}}$ for the characteristic
left tilting module $T$. Paths are still multiplied from left to
right: an arrow $i\to j$ in $e_iQe_j$ acts on a left module from
the space at $j$ to the space at $i$. Write
$D_R(L)=\operatorname{Hom}_R(L,R)$ and
$D_\kk(V)=\operatorname{Hom}_\kk(V,\kk)$.
For an overorder $L$, the lattice $D_R(L)$ is a left $\Lambda$-module
via $(a\varphi)(\ell)=\varphi(\ell a)$.

Write $I=\Lambda_2$, $J_i=J(\Lambda_i)$ for $i=0,1$, and set
\[
 \mathcal K_{\mathrm{can}}
 =\operatorname{End}_{\Lambda_0}
   (D_R(\Lambda_0)\oplus D_R(\Lambda_1)\oplus D_R(I))^{\mathrm{op}}.
\]
Let $e_I$ project onto the $D_R(I)$-summand. The following finite algebra is
intrinsic to this specified radical-idealizer chain; it differs from the expanded quotient studied below.

\begin{theorem}\label{thm:corrected-Koenig-quotient}
Identifying the $(i,j)$ entry with
\[
 \operatorname{Hom}_{\Lambda_0}(D_R(\Lambda_i),D_R(\Lambda_j)),
\]
one has
\[
 \mathcal K_{\mathrm{can}}=
 \begin{pmatrix}
  \Lambda_0&J_0&u^2I\\
  \Lambda_1&\Lambda_1&J_1\\
  I&I&I
 \end{pmatrix}.
\]
The quotient $Q_{\mathrm{can}}=\mathcal K_{\mathrm{can}}/
\mathcal K_{\mathrm{can}}e_I\mathcal K_{\mathrm{can}}$ is basic of
dimension nine over $\kk$.  It has vertices $0,1$, arrows
$a_1,a_2,a_3:0\to1$ and $d:1\to0$, and relations
\[
 da_1=da_2=da_3=0.
\]
Paths are multiplied from left to right.  A basis consists of
$e_0,e_1,a_1,a_2,a_3,d,a_1d,a_2d,a_3d$.
It is quasi-hereditary for the order $0<1$.
\end{theorem}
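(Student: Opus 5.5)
The plan is to compute every entry of $\mathcal K_{\mathrm{can}}$ as a colon lattice inside $\Mat_2(F)$, then compute the ideal generated by $e_I$ entrywise, and finally read off the quiver, relations and heredity chain from the resulting finite-dimensional quotients.

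\emph{Step 1: the matrix of colon lattices.} Each $\Lambda_i$ is an $R$-lattice, and $D_R$ is an exact anti-equivalence on lattices. It therefore gives
\[
\operatorname{Hom}_{\Lambda_0}(D_R(\Lambda_i),D_R(\Lambda_j))\cong\operatorname{Hom}_{\Lambda_0^{\mathrm{op}}}(\Lambda_j,\Lambda_i),
\]
the homomorphisms of right $\Lambda_0$-lattices. After tensoring with $F$, such a map is left multiplication by some $m\in\Mat_2(F)$, because $\Lambda_j\otimes F=\Mat_2(F)$ and right $\Mat_2(F)$-endomorphisms are left multiplications. Hence the $(i,j)$ entry is the left colon $\{m:m\Lambda_j\subseteq\Lambda_i\}$. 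I would fix the left-to-right convention once by checking that composition matches matrix multiplication of colons.

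The entries are then computed directly from the explicit descriptions of $\Lambda_0,\Lambda_1,I=\Lambda_2$ and their radicals in \cref{prop:Lambda-explicit}, \cref{prop:Lambda-conductor-radical} and \cref{thm:idealizer-chain}.
\begin{itemize}[label=--]
\item Since $1\in\Lambda_j$, every colon lies in $\Lambda_i$.
\item Since $\Lambda_0\subseteq\Lambda_1\subseteq I$, the row $i=2$ is $I$, and the $(1,0)$ and $(1,1)$ entries are $\Lambda_1$.
\item The entries $(0,1)=J_0$, $(1,2)=J_1$ and $(0,2)=u^2I$ follow by testing $m$ against the four $R$-generators of $\Lambda_1$ and of $I$. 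This is the same four-generator colon computation as in the proof of \cref{thm:idealizer-chain}. That these colons are exactly the radicals reflects $\Lambda_{i+1}=\operatorname{End}(J_i)^{\mathrm{op}}$.
\end{itemize}

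\emph{Step 2: the ideal $\mathcal K e_I\mathcal K$.} Its $(i,j)$ entry is the product of the $(i,2)$ and $(2,j)$ entries. This gives $u^2I$ on row $0$ and $J_1$ on row $1$, because $u^2I\cdot I=u^2I$ and $J_1I=J_1$ (the latter to be checked), and $I$ on row $2$. Thus $Q_{\mathrm{can}}$ is the $2\times2$ matrix algebra of the quotients
\[
\Lambda_0/u^2I,\quad J_0/u^2I,\quad \Lambda_1/J_1,\quad \Lambda_1/J_1.
\]
The first check is $u^2I\subseteq J_0$, using the congruence $c\equiv d\pmod u$ as in Step 1. I then count $\kk$-dimensions by comparing $R$-bases, expecting the dimensions $4,3,1,1$:
\begin{itemize}[label=--]
\item $\Lambda_1/J_1\cong\kk$ in both bottom entries. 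These give $e_1$ and $d$, the latter being the image of $1\in\Lambda_1$ in the $(1,0)$ slot.
\item $J_0/u^2I$ should be three-dimensional, spanned by $uE_{11}$, $uE_{22}$ and $uE_{12}+u^2E_{21}$ modulo $u^2I$. These are the arrows $a_1,a_2,a_3$.
\item $\Lambda_0/u^2I$ should be four-dimensional, spanned by $e_0$ and the products $a_kd$, where $J_0\Lambda_1\subseteq J_0$ maps onto $J_0/u^2I\cdot\overline1$.
\end{itemize}
The total is $1+3+1+4=9$.

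\emph{Step 3: relations, basicness and heredity.} The relation $da_k=0$ holds because $d\cdot a_k$ lands in $\Lambda_1\cdot J_0\subseteq J_1$, which is zero in the $(1,1)$ slot. Generation by $e_0,e_1,a_k,d$ follows from the spanning sets in Step 2. The tops at both vertices are $\kk$, so $Q_{\mathrm{can}}$ is basic.

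For quasi-heredity with order $0<1$, take the ideal $Qe_1Q$. Since $da_k=0$, we have $e_1Qe_1=\kk e_1$. Multiplication $Qe_1\otimes_{\kk}e_1Q\to Qe_1Q$ is injective: $Qe_1$ has basis $e_1,a_1,a_2,a_3$, $e_1Q$ has basis $e_1,d$, and the eight products are linearly independent by the basis above. Thus $Qe_1Q$ is a heredity ideal, and $Q/Qe_1Q\cong\kk$ finishes the check.

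The main obstacle I expect is Step 1, specifically the precise entries $(0,2)=u^2I$ and $(0,1)=J_0$ together with the left/right conventions. This is where a transpose or an off-by-one power of $u$ could break the dimension count. I would verify each entry directly on generators, using the anti-involution $M\mapsto\operatorname{tr}(M)1-M$ to cross-check the left and right versions.
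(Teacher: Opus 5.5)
Your proposal is correct and follows the paper's proof essentially step by step. You identify each entry with the left colon $\{M : M\Lambda_j\subseteq\Lambda_i\}$, compute the three nontrivial colons $J_0$, $u^2I$ and $J_1$ on generators, note that composing through $I$ fills row $i$ with $(\Lambda_i:I)$, read off the block lengths $4,3,1,1$, and verify that $Qe_1Q$ is a heredity ideal by the eight-product multiplication check. Your choice $a_1=uE_{11}$ in place of the paper's $u1$ is harmless, since either one together with $p$ and $q$ gives a basis of $J_0/u^2I$.
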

\begin{proof}
Dualizing an entry gives an $R$-linear map
$\Lambda_j\to\Lambda_i$ commuting with multiplication on the right.
After extension to $F=\kk((u))$, it is left multiplication by a
matrix $M$. Since $1\in\Lambda_j$, one has $M\in\Lambda_i$;
the remaining condition is $M\Lambda_j\subseteq\Lambda_i$.
Duality reverses composition, which is compensated by the opposite
endomorphism ring in the definition of $\mathcal K_{\mathrm{can}}$.
Applying this condition to the
matrix-unit generators in \cref{thm:idealizer-chain} gives
\[
 (\Lambda_0:\Lambda_1)=J_0,\qquad
 (\Lambda_0:I)=u^2I
 =\begin{pmatrix}u^2R&u^2R\\u^3R&u^2R\end{pmatrix},
 \qquad (\Lambda_1:I)=J_1.
\]
All other displayed entries follow from the inclusions of orders.
Composing through $I$ gives the entire lattice
$(\Lambda_i:I)$ in each block of row $i$, because
$(I:\Lambda_j)=I$ and $1\in I$.  Consequently the four blocks of
$Q_{\mathrm{can}}$ are
\[
 \begin{pmatrix}
 \Lambda_0/u^2I&J_0/u^2I\\
 \Lambda_1/J_1&\Lambda_1/J_1
 \end{pmatrix}.
\]
The respective $\kk$-lengths are $4,3,1,1$.  Indeed an explicit
basis of $\Lambda_0/u^2I$ is
$1,u1,p,q$, where $p=uE_{22}$ and
$q=uE_{12}+u^2E_{21}$; the last three also form a basis of
$J_0/u^2I$.  The two lower blocks are represented by $1$.
Take $a_1=u1$, $a_2=p$, $a_3=q$ in the $(0,1)$ block and $d=1$
in the $(1,0)$ block.  Then the $a_id$ give the three independent
radical classes in the $(0,0)$ block, whereas each $da_i$ lies in
$J_1$ and hence vanishes in the $(1,1)$ block.  The displayed
relations kill every path of length at least three; the nine listed
paths exhaust the block lengths, proving the presentation.

Finally $e_1Q_{\mathrm{can}}e_1=\kk$, and multiplication
$Q_{\mathrm{can}}e_1\otimes_{\kk}e_1Q_{\mathrm{can}}
\longrightarrow Q_{\mathrm{can}}e_1Q_{\mathrm{can}}$
is an isomorphism: its eight basis tensors map to
$e_1,d,a_i,a_id$.  Thus $Q_{\mathrm{can}}e_1Q_{\mathrm{can}}$
is a heredity ideal and the quotient is $\kk e_0$.
\end{proof}

\begin{theorem}
\label{thm:corrected-Ringel}
For left modules over $Q_{\mathrm{can}}$, the characteristic tilting
module for $0<1$ is $T=T(0)\oplus T(1)$, where $T(0)=S_0$ and
\[
 T(1)_0=\kk^4=\langle v_1,v_2,v_3,w\rangle,
 \qquad T(1)_1=\kk z,\qquad
 a_i(z)=v_i,\qquad d(w)=z,\qquad d(v_i)=0.
\]
Its Ringel dual $\mathcal R_{\mathrm{can}}=
\operatorname{End}_{Q_{\mathrm{can}}}(T)^{\mathrm{op}}$ has dimension nine.
With the same vertex labels, it has arrows $h:1\to0$ and
$f_1,f_2,f_3:0\to1$, and relations $f_ih=0$ for $i=1,2,3$.
Consequently $\mathcal R_{\mathrm{can}}\cong Q_{\mathrm{can}}^{\mathrm{op}}$
after swapping vertices $0$ and $1$; its highest-weight order is
reversed. In particular $Q_{\mathrm{can}}$ is not Ringel self-dual.
\end{theorem}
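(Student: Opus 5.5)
The proof is a direct computation inside the nine-dimensional algebra $Q_{\mathrm{can}}$ of \cref{thm:corrected-Koenig-quotient}, with vertices $0<1$. We first write down the standard and costandard modules for the order $0<1$. Since $0$ is minimal, $\Delta(0)=\nabla(0)=S_0$. The projective $P_1=Q_{\mathrm{can}}e_1$ has basis $e_1,a_1,a_2,a_3$, because $da_i=0$. An arrow $i\to j$ acts from the space at $j$ to the space at $i$. Hence $P_1$ has top $S_1$ and its radical is $S_0^{3}$. Since $1$ is maximal, $\Delta(1)=P_1$. Dually, $\nabla(1)$ is the injective hull $I_1$. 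It is spanned by the dual basis of $e_1Q_{\mathrm{can}}=\langle e_1,d\rangle$, so it is two-dimensional with socle $S_1$ and a single $S_0$ on top, linked by $d$.

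Next we construct $T(1)$ as the universal extension of $\Delta(1)$ by copies of $\Delta(0)=S_0$, in Ringel's recipe. We check that the module $T(1)$ in the statement is a valid left module. The only nontrivial relation to test is $da_i=0$, and indeed $d(a_iz)=d(v_i)=0$.

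We then verify the tilting properties.
\begin{itemize}
\item $T(1)$ contains $\Delta(1)=\langle z,v_1,v_2,v_3\rangle$ as a submodule with quotient $S_0=\langle w\rangle$. This gives a $\Delta$-filtration.
\item It also has $\nabla(1)=\langle z,w\rangle$ as a submodule with quotient $S_0^3$. This gives a $\nabla$-filtration, since $\nabla(0)=S_0$.
\item It is indecomposable, because $z$ generates $\mathrm{soc}$ at vertex $1$ together with the $v_i$, and $w$ maps onto $z$.
\end{itemize}
$T(0)=S_0=\Delta(0)=\nabla(0)$ is trivially tilting. Uniqueness of characteristic tilting modules then identifies $T=T(0)\oplus T(1)$.

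The Ringel dual is $\operatorname{End}(T)^{\mathrm{op}}$, and we compute it blockwise.
\begin{itemize}
\item $\operatorname{End}(T(0))=\kk$.
\item $\operatorname{Hom}(S_0,T(1))$ consists of maps into the socle part at vertex $0$ killed by $d$. This is the span of $v_1,v_2,v_3$, which has dimension $3$; $w$ is excluded because $dw\ne0$.
\item $\operatorname{Hom}(T(1),S_0)$ consists of maps to the top at vertex $0$. The top of $T(1)$ is $\langle w\rangle$, since $v_i=a_iz$ lies in the radical, so this Hom space has dimension $1$.
\item $\operatorname{End}(T(1))$ is determined by the image of $w$, because $w$ generates $T(1)$. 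The possible images are $w\mapsto\alpha w+\sum\beta_iv_i$, which gives dimension $4$.
\end{itemize}
The total is $1+3+1+4=9$. In $\operatorname{End}(T(1))$, the maps $w\mapsto v_i$ are exactly the composites $T(1)\to S_0\to T(1)$. So the radical of $\operatorname{End}(T(1))$ factors through $T(0)$. The composites $S_0\to T(1)\to S_0$ vanish, because $v_i$ lies in the radical of $T(1)$.

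For the Ringel dual we name the arrows as follows: $h:1\to0$ for the one-dimensional space $\operatorname{Hom}(T(1),S_0)$, and $f_1,f_2,f_3:0\to1$ for $\operatorname{Hom}(S_0,T(1))$. Here vertex $0$ indexes $T(0)$ and the multiplication is in the opposite ring. The vanishing composite gives $f_ih=0$, and the three nonzero composites $hf_i$ span $\operatorname{rad}\operatorname{End}(T(1))$.

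It remains to compare with $Q_{\mathrm{can}}^{\mathrm{op}}$. The opposite algebra $Q_{\mathrm{can}}^{\mathrm{op}}$ has arrows $a_i^{\mathrm{op}}:1\to0$, $d^{\mathrm{op}}:0\to1$ and relations $a_i^{\mathrm{op}}d^{\mathrm{op}}=0$. Swapping the labels $0\leftrightarrow1$ turns this into three arrows $0\to1$ and one arrow $1\to0$ with relations (three arrows)$\cdot$(one arrow)$=0$. This matches $f_ih=0$, so $\mathcal R_{\mathrm{can}}\cong Q_{\mathrm{can}}^{\mathrm{op}}$. The highest-weight order on $\mathcal R_{\mathrm{can}}$ is the reverse, as always for Ringel duality.

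For the non-self-duality, note that $Q_{\mathrm{can}}$ has three arrows $0\to1$ and one arrow $1\to0$. Any isomorphism $Q_{\mathrm{can}}\cong\mathcal R_{\mathrm{can}}$ must match arrow multiplicities. It is therefore forced to send vertex $0$ to vertex $0$, which does not carry the order $0<1$ to the reversed order on $\mathcal R_{\mathrm{can}}$. An isomorphism as quasi-hereditary algebras is thus impossible.

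The main obstacle is bookkeeping rather than substance. The left-to-right path convention and the opposite ring can easily flip arrow directions, so the directions of $h$ and $f_i$ (and hence the side on which the zero relation sits) must be tracked carefully. I would settle the arrow directions first, by recomputing $\operatorname{Hom}$ spaces directly against the module action $a_i(z)=v_i$, and only then read off the relations.
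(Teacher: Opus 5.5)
Your route is the paper's: you identify $\Delta(1)=P_1$ and $\nabla(1)=I_1$, check both filtrations of $T(1)$, obtain the Hom dimensions $1,3,1,4$, read $f_ih=0$ and $hf_i\neq0$ off the composites, and compare with $Q_{\mathrm{can}}^{\mathrm{op}}$. Your bookkeeping of arrows and relations in those last steps is correct.

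However, your costandard filtration is wrong as stated. The span $\langle z,w\rangle$ is not a submodule of $T(1)$, because $a_i(z)=v_i\notin\langle z,w\rangle$. In fact $\nabla(1)$ cannot embed in $T(1)$ at all: its socle is $S_1$, while $\operatorname{soc}T(1)=\langle v_1,v_2,v_3\rangle$ lies entirely at vertex $0$. The filtration must go the other way, as in the paper. The submodule is $\langle v_1,v_2,v_3\rangle\cong\nabla(0)^{\oplus3}$, and the quotient, spanned by the classes $\bar w,\bar z$ with $d(\bar w)=\bar z$, is $\nabla(1)$.

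Your stated reason for indecomposability is also garbled, since $z$ is not in the socle. Use instead the simple top $\langle w\rangle$, or the local endomorphism ring you compute afterwards. For that computation, knowing that the image of $w$ determines the map only gives $\dim\operatorname{End}(T(1))\le4$. You also need every assignment $w\mapsto\alpha w+\sum\beta_iv_i$ to extend to a homomorphism. It does: the map $Q_{\mathrm{can}}e_0\to T(1)$, $e_0\mapsto w$, is a surjection between five-dimensional modules. Hence $T(1)\cong P_0$ and $\operatorname{Hom}(T(1),M)\cong e_0M$, which also gives the other Hom spaces at once.

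For the final assertion, your argument only rules out an isomorphism compatible with the weight orders. The paper proves more: there is no algebra isomorphism at all, i.e.\ $Q_{\mathrm{can}}\not\cong Q_{\mathrm{can}}^{\mathrm{op}}$. The invariant you are missing is the dimension of the corners. In $Q_{\mathrm{can}}$ one has $\dim e_0Q_{\mathrm{can}}e_0=4$ and $\dim e_1Q_{\mathrm{can}}e_1=1$. In $\mathcal R_{\mathrm{can}}$ the corner at $0$ is $\operatorname{End}(T(0))=\kk$, and the corner at $1$ is $\operatorname{End}(T(1))$, of dimension $4$. Hence any algebra isomorphism $Q_{\mathrm{can}}\to\mathcal R_{\mathrm{can}}$ must swap the two vertices. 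This contradicts your arrow count, which forces such an isomorphism to fix them.
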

\begin{proof}
For $0<1$, $\Delta(0)=\nabla(0)=S_0$,
$\Delta(1)=P_1=Q_{\mathrm{can}}e_1$, and
$\nabla(1)=I_1=D_\kk(e_1Q_{\mathrm{can}})$.
The displayed maps satisfy $da_i=0$, since on left modules this
path acts as $d\circ a_i$. The subrepresentation spanned by
$v_1,v_2,v_3,z$ is $P_1$, with quotient $S_0$.
The subrepresentation $\langle v_1,v_2,v_3\rangle$ is
$S_0^{\oplus3}$, with quotient $I_1$ on $w,z$.
Thus $T(1)$ has both required filtrations. Its endomorphism ring
has scalar diagonal and three maps $w\mapsto v_i$, so it is local;
$T(1)$ is indecomposable.
This identifies the characteristic tilting module.

There is one independent map $h:T(1)\to T(0)$, taking $w$ to $1$
and killing $v_i,z$, and three maps $f_i:T(0)\to T(1)$ with
images $\kk v_i$. Hence $h\circ f_i=0$, while
the three composites $f_i\circ h$ are linearly independent nilpotent
endomorphisms of $T(1)$.  The four Hom-space dimensions between
the two tilting summands are $1,3,1,4$, summing to nine.
In $\operatorname{End}(T)^{\mathrm{op}}$, path multiplication is
composition in its displayed order, so $f_ih$ in the quiver
represents $h\circ f_i=0$, and $hf_i$ represents
$f_i\circ h\ne0$. The nine surviving paths give all nine
endomorphisms. Swapping the vertices identifies this quiver with
the opposite of that in \cref{thm:corrected-Koenig-quotient}, so
$\mathcal R_{\mathrm{can}}^{\mathrm{op}}\cong Q_{\mathrm{can}}$.
The latter is not isomorphic to its opposite: its primitive corner
dimensions $4,1$ force an isomorphism to preserve the vertices,
whereas the arrow-space dimensions $3,1$ are interchanged by
passing to the opposite algebra.
\end{proof}

\subsection{The expanded chain and its Ringel dual}

The following construction gives a second finite quotient with a
precise invariance statement. It does not require the expanded endomorphism
order to be a Koenig resolution.

\begin{theorem}
\label{thm:expanded-choice}
Let $I$ be the hereditary terminal order of the canonical chain.
Choose a maximal order $\Gamma'$ containing $I$, and a semisimple
complement $S\cong\kk\times\kk$ to the radical of $I/u\Gamma'$.
Let $B$ be the inverse image of $S$ in $I$. Set
\[
 \mathcal E=\operatorname{End}_{\Lambda}
 (D_R(\Lambda)\oplus D_R(\Lambda_1)\oplus D_R(B)\oplus D_R(\Gamma'))^{\mathrm{op}},\qquad
 Q_{\mathrm{exp}}=\mathcal E/\mathcal E e_{\Gamma'}\mathcal E.
\]
Here $e_{\Gamma'}$ projects onto $D_R(\Gamma')$.
The isomorphism class of $Q_{\mathrm{exp}}$, with weights
$0<1<\{+,-\}$ attached to $\Lambda,\Lambda_1,B$, is independent
of both choices. Thus it is an isomorphism invariant of $A_\infty$.
\end{theorem}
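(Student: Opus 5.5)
The plan is to reduce the independence statement to a transitivity statement. I will show that the group of $R$-algebra automorphisms of $\Lambda$ that also preserve $\Lambda_1$ and $I=\Lambda_2$ acts transitively on the set of admissible pairs $(\Gamma',S)$. Any such automorphism $\sigma$ is conjugation by some $g\in\operatorname{GL}_2(F)$. It carries the lattice $D_R(L)$ to $D_R(gLg^{-1})$, compatibly with the $\Lambda$-actions twisted by $\sigma$. Hence it induces an isomorphism
\[
\operatorname{End}_{\Lambda}\bigl(D_R(\Lambda)\oplus D_R(\Lambda_1)\oplus D_R(B)\oplus D_R(\Gamma')\bigr)^{\mathrm{op}}
\cong
\operatorname{End}_{\Lambda}\bigl(D_R(\Lambda)\oplus D_R(\Lambda_1)\oplus D_R(gBg^{-1})\oplus D_R(g\Gamma'g^{-1})\bigr)^{\mathrm{op}}.
\]
This isomorphism sends $e_{\Gamma'}$ to $e_{g\Gamma'g^{-1}}$, so it descends to the quotients $Q_{\mathrm{exp}}$. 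It fixes the summands attached to $\Lambda$ and $\Lambda_1$, and it maps the two idempotents of $B$ to the two idempotents of $gBg^{-1}$, possibly interchanged. Since the weights $+,-$ are only listed as an unordered pair, the weighted isomorphism class is preserved. Finally, $\Lambda$ and the chain $\Lambda_1,\Lambda_2$ are produced canonically from $A_\infty$ by \cref{thm:normalization-torsion} and \cref{thm:idealizer-chain}. So once independence of the choices is known, $Q_{\mathrm{exp}}$ is an invariant of $A_\infty$.

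The first choice is $\Gamma'$. By \cref{thm:idealizer-chain}, $I$ is the Iwahori order of an edge, so there are exactly two choices: $\Gamma=\Mat_2(R)$ and $\Gamma''=\operatorname{End}_R(Re_1\oplus Rue_2)$. Conjugation by $w=\begin{psmallmatrix}0&1\\u&0\end{psmallmatrix}$ sends
\[
\begin{pmatrix}a&uc\\u^2d&b\end{pmatrix}\longmapsto\begin{pmatrix}b&ud\\u^2c&a\end{pmatrix}.
\]
This visibly preserves the descriptions of $\Lambda$ in \eqref{eq:Lambda-explicit} and of $\Lambda_1$. It normalizes $I$, because $I$ is the normalizer-stable Iwahori order, and it swaps $\Gamma$ and $\Gamma''$. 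We may therefore fix $\Gamma'=\Gamma$.

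The second choice is $S$. Here $I/u\Gamma$ is the upper triangular algebra $\begin{psmallmatrix}\kk&\kk\\0&\kk\end{psmallmatrix}$, whose radical is $\kk E_{12}$. By Wedderburn--Malcev, any two semisimple complements are conjugate by some element $1+cE_{12}$ with $c\in\kk$. The inverse image $B$ depends only on $S$. So it suffices to find, for each $c\in\kk$, an element $g_c$ that normalizes $\Lambda$, $\Lambda_1$, $I$ and $\Gamma$ and satisfies $g_c\equiv1+cE_{12}\pmod{u\Gamma}$.

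My first candidate is
\[
g_c=1+c\,X/u=1+c(E_{12}+uE_{21}).
\]
It lies in $F[X]$, so it commutes with $X$. It is a unit of $\Gamma$, since its determinant is $1-c^2u$, and it lies in $I$. I then need to check that $g_cYg_c^{-1}\in\Lambda$ by computing its entries against the congruence conditions of \eqref{eq:Lambda-explicit}, and that $g_c$ preserves $\Lambda_1$. If this check fails, I would instead search among products of elements of $F(X)^\times$, diagonal matrices $\operatorname{diag}(1,\nu)$, and $w$. As a fallback, one could try to show directly that the two resulting $\mathcal E$-quotients are isomorphic via an explicit $\Lambda$-linear isomorphism $D_R(B)\cong D_R(B_c)$ of lattices, where $B_c$ is the inverse image of the conjugated complement; such an isomorphism does not require conjugation by a normalizer of $\Lambda$.

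The main obstacle is this last step: producing a symmetry of $\Lambda$, or at least an isomorphism of the relevant $\Lambda$-lattices, that realizes the unipotent conjugation relating different complements $S$. The rest of the argument consists of routine functoriality checks.
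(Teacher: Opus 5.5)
Your overall reduction is the same as the paper's. You transport the whole module sum by conjugation with elements normalizing $\Lambda$. You use $W=\begin{psmallmatrix}0&1\\u&0\end{psmallmatrix}$ to swap the two maximal overorders of $I$. You then realize the unipotent conjugation by $1+tE_{12}$ of $I/u\Gamma$ by an element that normalizes $\Lambda$.

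\textbf{The gap.} The last step is the only non-formal point of the proof, and you leave it open. Your candidate also fails there. With $g_c=1+c(E_{12}+uE_{21})$ and $p=uE_{22}\in\Lambda$,
\[
 g_c\,p\,g_c^{-1}=\frac{1}{1-c^2u}\begin{pmatrix}-c^2u^2&cu\\-cu^2&u\end{pmatrix}.
\]
In the notation of \eqref{eq:Lambda-explicit}, the two off-diagonal parameters have residues $c$ and $-c$ modulo $u$. So the condition $c\equiv d\pmod u$ fails for every $c\ne0$, and the same happens for $Y$. Making $g$ commute with $X$ does not help; the element that works does not commute with $X$.

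\textbf{What the paper uses.} The working element is
\[
 g_t=\begin{pmatrix}1&t\\-ut&1\end{pmatrix}=1+t(E_{12}-uE_{21}),
 \qquad D:=\det g_t=1+ut^2 .
\]
Here $D$ is a unit, and $g_t^{-1}=D^{-1}g_{-t}$. Direct multiplication gives
\[
 g_tpg_t^{-1}=D^{-1}\bigl(u^2t^2\,1+(1-ut^2)p+tq\bigr),\qquad
 g_tqg_t^{-1}=D^{-1}\bigl(2u^2t\,1-4utp+(1-ut^2)q\bigr),
\]
\[
 g_trg_t^{-1}=D^{-1}\bigl(u^3t\,1-2u^2tp-u^2t^2q\bigr)+r .
\]
All three lie in $\Lambda$; for instance the first is $D^{-1}\begin{psmallmatrix}u^2t^2&tu\\tu^2&u\end{psmallmatrix}$, with matching off-diagonal residues $t,t$. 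These formulas hold for every $t\in\kk$. Applying them to $-t$ gives $g_t^{-1}\Lambda g_t\subseteq\Lambda$, hence $g_t\Lambda g_t^{-1}=\Lambda$.

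Moreover $g_t\in\operatorname{GL}_2(R)$ and $g_t\equiv1+tE_{12}\pmod{u\Gamma}$, so $g_tB_0g_t^{-1}=B_t$. You do not need to check $\Lambda_1$ and $I$ separately. They are obtained from $\Lambda$ by radical idealizers, so every automorphism of $\Lambda$ preserves them.

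Incidentally, $g_t=\operatorname{diag}(1,i)^{-1}(1-itW)\operatorname{diag}(1,i)$ with $i^2=-1$. So $g_t$ does lie in your fallback search space, although neither factor normalizes $\Lambda$ by itself.

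Once this element is inserted, the rest of your argument goes through as in the paper. That includes the functoriality of $D_R$, the matching of $e_{\Gamma'}$, and the possible interchange of the incomparable weights $+,-$.
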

\begin{proof}
For $\Gamma=\Mat_2(R)$, the algebra $I/u\Gamma$ is the upper
triangular $2\times2$ algebra over $\kk$. Its semisimple complements
are the conjugates of the diagonal algebra by
$\left(\begin{smallmatrix}1&t\\0&1\end{smallmatrix}\right)$,
$t\in\kk$. Their inverse images are the orders $B_t$; at $t=0$,
\[
 B_0=\begin{pmatrix}R&uR\\uR&R\end{pmatrix}.
\]
Here $u\Gamma$ is a two-sided ideal of $I$. The image of
$\Lambda_1$ in $I/u\Gamma$ is the scalar subalgebra, which belongs
to every unital semisimple complement. Thus
$\Lambda\subset\Lambda_1\subset B_t\subset I$, so each chosen
$D_R(B_t)$ is indeed a left $\Lambda$-module. Replacing $u$ by any
other uniformizer does not change the ideal $u\Gamma$.
All these choices are conjugate by automorphisms of $\Lambda$.
Indeed put
\[
 g_t=\begin{pmatrix}1&t\\-ut&1\end{pmatrix},\qquad D=1+ut^2.
\]
In the basis $1,p,q,r$ of \eqref{eq:Lambda-basis-matrix}, direct
matrix multiplication gives
\[
\begin{aligned}
 g_tpg_t^{-1}&=D^{-1}\bigl(u^2t^2\,1+(1-ut^2)p+tq\bigr),\\
 g_tqg_t^{-1}&=D^{-1}\bigl(2u^2t\,1-4utp+(1-ut^2)q\bigr),\\
 g_trg_t^{-1}&=D^{-1}\bigl(u^3t\,1-2u^2tp-u^2t^2q\bigr)+r.
\end{aligned}
\]
As $D$ is a unit and $g_t^{-1}=D^{-1}g_{-t}$, these identities
prove $g_t\Lambda g_t^{-1}=\Lambda$. Also $g_t\Gamma g_t^{-1}=\Gamma$
and $g_tB_0g_t^{-1}=B_t$.

There are exactly two maximal overorders of $I$, namely
$\Gamma$ and $\Gamma^{\dagger}=
\left(\begin{smallmatrix}R&u^{-1}R\\uR&R\end{smallmatrix}\right)$.
The matrix
\[
 W=\begin{pmatrix}0&1\\u&0\end{pmatrix}
\]
interchanges them, preserves $I$, and normalizes $\Lambda$, since
\[
 WpW^{-1}=u1-p,\qquad WqW^{-1}=q,\qquad WrW^{-1}=uq-r.
\]
It therefore reduces the second maximal-order choice to the first.
Conjugation transports the entire module sum, its endomorphism
algebra, and the ideal through $\Gamma'$. It fixes the first two
weights and at most permutes the two incomparable top weights.
This proves independence. Finally, normalization of the centre,
the torsion-free normalized order, and its radical-idealizer chain
are preserved by algebra isomorphisms; hence the construction
depends only on the isomorphism class of $A_\infty$.
\end{proof}

\begin{proposition}
\label{prop:expanded-quotient}
The algebra $Q_{\mathrm{exp}}$ is a $22$-dimensional algebra:
it has vertices $0,1,+,-$, arrows
\[
 a,b:0\to1,\quad c:0\to+,\quad d:1\to0,\quad
 e:1\to-,\quad f:+\to1,\quad g:-\to1,
\]
and relations, with paths multiplied from left to right,
\[
 be=db=dc=fe=ge=0,\qquad da=eg.
\]
Writing $\epsilon=e_0+e_1$, one has
\[
 \dim_\kk\epsilon Q_{\mathrm{exp}}\epsilon=13,\qquad
 Q_{\mathrm{can}}\cong
 \epsilon Q_{\mathrm{exp}}\epsilon/(da),
\]
where $(da)$ is the ideal inside the corner and has basis
$eg,aeg,egd,aegd$.
\end{proposition}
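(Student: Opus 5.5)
We can fix the choices once and for all, by \Cref{thm:expanded-choice}: take $\Gamma'=\Gamma=\Mat_2(R)$ and $B=B_0=\begin{psmallmatrix}R&uR\\uR&R\end{psmallmatrix}$. The computation then follows the proof of \cref{thm:corrected-Koenig-quotient}, with the chain $\Lambda\subset\Lambda_1\subset B_0\subset\Gamma$ replacing $\Lambda\subset\Lambda_1\subset I$. One caveat: $B_0$ is not contained in $I$, and $\Lambda_1\subset B_0$ holds because both diagonal conditions are compatible. Arguing as there, $\operatorname{Hom}_{\Lambda}(D_R(L_i),D_R(L_j))$ is identified with the colon lattice $(L_i:L_j)=\{M\in L_i: ML_j\subseteq L_i\}$, using $1\in L_j$. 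Here $L_0=\Lambda$, $L_1=\Lambda_1$, $L_+$ and $L_-$ are the two summands of $D_R(B_0)$, and $L_\Gamma=\Gamma$. The next paragraph explains where $L_\pm$ come from.

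The vertices $\pm$ arise because $D_R(B_0)$ decomposes over $\Lambda$. The order $B_0$ contains the diagonal idempotents $E_{11},E_{22}$, so $D_R(B_0)=D_R(B_0E_{11})\oplus D_R(B_0E_{22})$ as left $\Lambda$-modules. Accordingly, $\mathcal E$ has five vertices $0,1,+,-,\Gamma$, and the Hom-spaces into and out of $\pm$ are the column lattices $(L_i:B_0E_{kk})$. I will check that these summands are non-isomorphic and that neither is isomorphic to $D_R(\Lambda)$ or $D_R(\Lambda_1)$. This is needed for $Q_{\mathrm{exp}}$ to have exactly four vertices, and it reduces to comparing the colon lattices.

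Next I compute the ideal generated by $e_\Gamma$. As in the canonical case, composing through $\Gamma$ yields the whole lattice $(L_i:\Gamma)(\Gamma:L_j)=(L_i:\Gamma)$ in each block of row $i$, because $(\Gamma:L_j)=\Gamma\ni1$ and $(L_i:\Gamma)$ is a right $\Gamma$-submodule. So the blocks of $Q_{\mathrm{exp}}$ are $(L_i:L_j)/(L_i:\Gamma)$ for $i,j\in\{0,1,+,-\}$. I will compute the four conductors $(L_i:\Gamma)$ directly; for instance, $(\Lambda:\Gamma)$ should be $u^2I$-like, matching the canonical $(0,0)$ block of length $4$. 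I will then tabulate the $\kk$-lengths of the sixteen blocks and check that they sum to $22$, with the $\{0,1\}$-corner summing to $13$. After that, I choose generators: $a=u1$ and $b=p$ in the $(0,1)$ block (or suitable combinations), $d=1$ in the $(1,0)$ block, and the evident matrix-unit classes for $c,e,f,g$. For each relation $be=db=dc=fe=ge=0$ and $da=eg$, the claim is either that the product lands in the conductor or that two products coincide modulo it; I will verify these by explicit matrix multiplication. To finish, I show that the paths modulo these relations span a space of dimension at most $22$, and compare with the block lengths, exactly as in \cref{thm:corrected-Koenig-quotient}.

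For the corner statement, $\epsilon Q_{\mathrm{exp}}\epsilon$ consists of the paths between $0$ and $1$, including those passing through $\pm$. Paths through $\pm$ contribute $eg$, together with its multiples $aeg$, $egd$, $aegd$. The relation $da=eg$ identifies the ideal $(da)$ with the span of these four paths, so the quotient has dimension $13-4=9$. I will then match its generators and relations ($a,b$ and the third class, $d$, and $da_i=0$) with the presentation in \cref{thm:corrected-Koenig-quotient}; here the third arrow $a_3$ corresponds to a path through $+$, namely $cf$. I expect the main obstacle to be the bookkeeping of the colon lattices involving $B_0E_{kk}$ and $\Gamma$. In particular, I need to make sure the vertex $+$ receives only $c$ and the vertex $-$ only $e$, and that no further irreducible maps survive modulo the conductors. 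I would first compute $(L_\pm:\Lambda)$ and $(L_\pm:\Gamma)$ explicitly, since these determine whether the arrow count is exactly seven.
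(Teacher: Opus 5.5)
Your route is the paper's: fix $B=B_0$ and $\Gamma'=\Gamma$, identify Hom-spaces with colon lattices, and split $B_0$ by $E_{11},E_{22}$. Then kill the maps through $\Gamma$, which fill $(L_i:\Gamma)$ in row $i$. Finally, tabulate block lengths, pick arrow representatives, check the relations and count normal paths. However, several concrete steps in your plan would fail as written.

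Your ``caveat'' is false: $B_0\subseteq I=\Lambda_2$, and indeed $B$ is defined as an inverse image in $I$. This one is harmless. The splitting, however, is on the wrong side. $D_R(L)$ uses the \emph{right} $\Lambda$-action on $L$, and the columns $B_0E_{kk}$ are not right $\Lambda$-stable: $E_{11}q=uE_{12}\notin B_0E_{11}$. You need $B_0=E_{11}B_0\oplus E_{22}B_0$. Homs into $\pm$ are then the columns $(L_i:B_0)E_{kk}$, and Homs out of $\pm$ are the rows $E_{kk}(B_0:L_j)$. Here $E_{kk}\in E_{kk}B_0$ replaces your use of $1\in L_j$.

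More substantively, $(\Lambda:\Gamma)$ is not $u^2I$. It is $\begin{psmallmatrix}u^2R&u^2R\\u^3R&u^3R\end{psmallmatrix}$, which misses $u^2E_{22}\in u^2I$, since $u^2E_{22}\cdot E_{21}\notin\Lambda$. Similarly $(\Lambda_1:\Gamma)=\begin{psmallmatrix}uR&uR\\u^2R&u^2R\end{psmallmatrix}$ misses $uE_{22}\in J_1=(\Lambda_1:I)$. So the $(0,0)$ block has length $5$, not the canonical $4$, and the corner blocks have lengths $5,4,2,2$ rather than $4,3,1,1$. The four extra classes $(\Lambda_i:I)/(\Lambda_i:\Gamma)$ are exactly $aegd,aeg,egd,eg$, i.e.\ the ideal $(da)$. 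With your expected lengths the corner would be $9$-dimensional, not $13$.

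Your choice $b=p$ also fails. Since $p=u1-uE_{11}=a-cf$, the elements $a,p,cf$ are linearly dependent, so $p$ is not a new arrow. Moreover, $pe=u^2E_{22}$ is the nonzero generator of the $(0,-)$ block $(\Lambda:B_0)E_{22}/(\Lambda:\Gamma)E_{22}$, so $be=0$ would be false. Take $b=q$, as the paper does: then $qe=u^2E_{12}$ and $dq=q$ lie in the relevant conductors. With this choice, $(a,b,cf)$ corresponds to $(a_1,a_3,a_1-a_2)$ in \cref{thm:corrected-Koenig-quotient}, not to $(a_1,a_2,a_3)$. This is harmless, because the relations $da_i=0$ are invariant under a change of basis of the three arrows.
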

\begin{proof}
Use $B=B_0$ and $\Gamma'=\Gamma$. The requisite additional colons are
\[
 (\Lambda:B)=\begin{pmatrix}uR&u^2R\\u^3R&u^2R\end{pmatrix},\quad
 (\Lambda:\Gamma)=\begin{pmatrix}u^2R&u^2R\\u^3R&u^3R\end{pmatrix},
\]
\[
 (\Lambda_1:B)=J_1,\qquad
 (\Lambda_1:\Gamma)=\begin{pmatrix}uR&uR\\u^2R&u^2R\end{pmatrix},
 \qquad (B:\Gamma)=u\Gamma.
\]
They follow by testing multiplication by the four matrix-unit
generators of $B$ or $\Gamma$. Composing through $\Gamma$ removes
$(L:\Gamma)$ from every block with target $L$.
After splitting $B$ by $E_{11},E_{22}$, the block dimensions are
\[
 \begin{pmatrix}5&4&1&1\\2&2&0&1\\1&1&1&0\\1&1&0&1\end{pmatrix}.
\]
Represent the arrows by
\[
 a=u1,\ b=q,\ c=uE_{11},\ d=1,\ e=uE_{22},\ f=E_{11},\ g=E_{22}
\]
in their respective blocks. Multiplication verifies all six
relations. Besides the four idempotents and seven arrows, the
normal paths are
\[
 ad,ae,bd,cf,eg,fd,gd;\qquad aeg,cfd,egd;\qquad aegd.
\]
Their matrix classes form bases of the displayed blocks, proving
the presentation and dimension. The corner has basis
\[
 e_0,e_1,a,b,d,ad,bd,cf,eg,aeg,cfd,egd,aegd.
\]
Its ideal generated by $da=eg$ is exactly the four-dimensional
span stated above. Modulo that ideal, the three arrows $a,b,cf$
from $0$ to $1$ and the arrow $d$ satisfy
$da=db=d(cf)=0$, and their three composites with $d$ are
independent. This is the nine-dimensional presentation in
\cref{thm:corrected-Koenig-quotient}. The identification also
respects the lattice construction: the ideals of maps through
$\Gamma$ in the corner are contained in the ideals of maps
through $I$, and their four block quotients have length one each.
\end{proof}

\begin{proposition}
\label{prop:expanded-Ringel}
For $0<1<\{+,-\}$, the algebra $Q_{\mathrm{exp}}$ is
quasi-hereditary and its Ringel dual has dimension $40$.
The ordinary endomorphism algebra is the opposite of this
Ringel dual. Both isomorphism classes are invariants of
$A_\infty$ under the expanded construction above.
Its multiplication is specified by the characteristic tilting
module in the proof; the opposite algebra uses ordinary composition.
\end{proposition}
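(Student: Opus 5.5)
We will prove quasi-heredity by the same heredity-ideal peeling used in \cref{thm:corrected-Koenig-quotient}. We then build the characteristic tilting module $T=T(0)\oplus T(1)\oplus T(+)\oplus T(-)$ explicitly from the presentation in \cref{prop:expanded-quotient}, and read off $\dim\operatorname{End}(T)$. The first step is to compute the indecomposable projectives $P_i=Q_{\mathrm{exp}}e_i$ from the normal-path basis there. The left-to-right convention makes $P_i$ spanned by paths ending at $i$, and the column sums of the block-dimension matrix give $\dim P_0=8$, $\dim P_1=7$, $\dim P_+=2$, $\dim P_-=2$ (total $19$ plus the idempotent counts, matching $22$). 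For the order $0<1<\{+,-\}$ we peel the maximal weights first.

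We show that $Q e_\pm Q$ is a heredity ideal. From the block matrix, $e_+Qe_+=\kk=e_-Qe_-$ and $e_+Qe_-=e_-Qe_+=0$. It then suffices to check that multiplication $Qe_\pm\otimes_\kk e_\pm Q\to Qe_\pm Q$ is injective. We verify this by listing the basis paths through $\pm$, namely $c,cf,f,cfd,fd,\dots$ and $e,g,eg,aeg,egd,aegd,\dots$, and matching them with the tensor products of the two-element bases of $Qe_\pm$ and $e_\pm Q$. The quotient by $e_++e_-$ kills $c,e$ and their composites. In it, $da=eg$ becomes $da=0$, leaving arrows $a,b:0\to1$, $d:1\to0$ with $da=db=0$. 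This is a seven-dimensional algebra of the same shape as $Q_{\mathrm{can}}$, and we treat it exactly as in \cref{thm:corrected-Koenig-quotient}: $e_1$ generates a heredity ideal, and the remaining quotient is $\kk e_0$.

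For the Ringel dual, we compute the standard modules $\Delta(i)$, the largest quotients of $P_i$ with composition factors of weight $\le i$. Dually, the costandard modules $\nabla(i)$ are the largest submodules of the injectives $I_i=D_\kk(e_iQ)$ with that property. So $\Delta(\pm)=P_\pm$, $\Delta(1)=P_1/(\text{paths through }\pm)$, $\Delta(0)=\nabla(0)=S_0$, and similarly for $\nabla$.

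We then construct $T(\lambda)$ by the Ringel recursion, taking universal extensions of $\Delta(\lambda)$ by the lower $\Delta$'s, as was done for $T(1)$ in \cref{thm:corrected-Ringel}. We expect $T(0)=S_0$ and $T(1)$ analogous to the earlier five-dimensional module. $T(\pm)$ should be obtained by extending $P_\pm$ by copies of $\Delta(1)$ and $\Delta(0)$ until the result also admits a $\nabla$-filtration. For each $T(\lambda)$ we confirm both filtrations and indecomposability via a local endomorphism ring.

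The dimension $40$ is obtained without writing every map. Because $T$ has both filtrations, we have
\[
\dim\operatorname{Hom}(T,T)=\sum_{\lambda}(T:\Delta(\lambda))\,(T:\nabla(\lambda)),
\]
using $\dim\operatorname{Hom}(\Delta(\lambda),\nabla(\mu))=\delta_{\lambda\mu}$ and the vanishing of $\operatorname{Ext}^1$. So we only need the $\Delta$- and $\nabla$-multiplicities of each $T(\lambda)$, and we check that these sum to $40$. The statement about the ordinary endomorphism algebra being the opposite is the convention $\mathcal R=\operatorname{End}(T)^{\mathrm{op}}$. Invariance under $A_\infty$ follows from \cref{thm:expanded-choice}, since $T$ is unique up to isomorphism given the weight poset, and the swap of $+,-$ only permutes summands.

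The main obstacle is the explicit construction of $T(\pm)$. The relation $da=eg$ couples the vertex $-$ into the $0,1$ part, so the universal extensions at $-$ differ from those at $+$. Getting the multiplicities right there, rather than by symmetry, is where errors are likely. We would first compute $\operatorname{Ext}^1(\Delta(\pm),\Delta(\mu))$ from the projective presentations. Then we would cross-check the resulting tilting dimensions against the dimension count $\sum_\lambda\dim T(\lambda)$ derived from the $\Delta$-multiplicities.
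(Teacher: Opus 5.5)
Your strategy is the paper's. You peel the heredity ideal generated by $e_++e_-$ and then $e_1$, write down the four characteristic tilting summands, and get $\dim\operatorname{End}(T)$ from $\Delta$/$\nabla$ multiplicities via $\operatorname{Hom}(\Delta(\lambda),\nabla(\mu))=\delta_{\lambda\mu}\kk$ and $\operatorname{Ext}^1(\Delta,\nabla)=0$.

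The gap is that the proposal never produces the number $40$. The multiplicities of $T(\pm)$ are the whole content of that claim, and you only promise them (``we check that these sum to $40$''). The data you do supply are also wrong in ways that would break the checks:
\begin{itemize}
\item The column sums of the block matrix give $\dim P_0,\dim P_1,\dim P_+,\dim P_-=9,8,2,3$, not $8,7,2,2$; the idempotents are already counted in that matrix.
\item Only $Qe_+=\langle e_+,c\rangle$ is two-dimensional. The spaces $e_+Q=\langle e_+,f,fd\rangle$, $Qe_-=\langle e_-,e,ae\rangle$ and $e_-Q=\langle e_-,g,gd\rangle$ are three-dimensional. So the heredity check compares $2\cdot3+3\cdot3=15$ tensors with the fifteen normal paths through $\pm$. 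With your two-element bases you would get $8$ tensors against $15$ paths, and the check would fail.
\item $T(1)$ is not like the five-dimensional canonical module. The resolution $0\to P_1\xrightarrow{\cdot d}P_0\to S_0\to0$ gives $\operatorname{Ext}^1(S_0,\Delta(1))\cong\kk$, so $T(1)$ has dimension vector $(3,1,0,0)$.
\end{itemize}

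The missing step is exactly what the paper supplies.
\begin{itemize}
\item $T(+)$ is $T(1)$ with $u_c$ added at $0$ and $w$ at $+$, where $c(w)=u_c$ and $f(v)=w$; its dimension vector is $(4,1,1,0)$.
\item $T(-)$ has dimension vector $(4,2,0,1)$. A copy of $T(1)$ sits over $\Delta(-)=\langle w,e(w),ae(w)\rangle$, glued by $g(p_1)=w$. The relation $da=eg$ then forces $d(a(p_1))=e(w)\neq0$.
\end{itemize}
This coupling is the one you anticipated. It is why $T(-)$ acquires a $\nabla(1)$ factor while $T(+)$ does not.

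In the order $T(0),T(1),T(+),T(-)$, the $\Delta$-multiplicity rows are $(1,0,0,0),(1,1,0,0),(1,1,1,0),(1,1,0,1)$. The $\nabla$-multiplicity rows are $(1,0,0,0),(2,1,0,0),(3,0,1,0),(2,1,0,1)$. Your own formula then gives $\sum_\lambda(T:\Delta(\lambda))(T:\nabla(\lambda))=4\cdot8+3\cdot2+1\cdot1+1\cdot1=40$, which is the entry sum of the paper's $D_\Delta D_\nabla^{\mathsf T}$.

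Until these modules are exhibited with both filtrations and local endomorphism rings, the dimension $40$ is asserted rather than proved. The quasi-heredity and invariance parts of your plan are fine once the counts are corrected.
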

\begin{proof}
The top heredity ideal generated by $e_++e_-$ has dimension $15$;
the multiplication map from the tensor product of the corresponding
projective bimodules is an isomorphism, as is checked in the normal
path basis above. Its quotient has basis
$e_0,e_1,a,b,d,ad,bd$. In this quotient the ideal generated by
$e_1$ is a six-dimensional heredity ideal, with quotient $\kk e_0$.
Thus the indicated weight order is quasi-hereditary.

Here are explicit characteristic tilting summands. Unspecified
spaces and maps are zero. Take $T_0=S_0$. For $T_1$ take bases
$(u_0,u_a,u_b)$ at $0$ and $v$ at $1$, with
$d(u_0)=v$, $a(v)=u_a$, $b(v)=u_b$.
For $T_+$ add $u_c$ at $0$ and $w$ at $+$, with
$c(w)=u_c$ and $f(v)=w$. For $T_-$ take bases
$(p_0,q_0,r_0,s_0)$ at $0$, $(p_1,q_1)$ at $1$, and $w$ at $-$,
with
\[
\begin{gathered}
 d(p_0)=p_1,\quad d(q_0)=q_1,\quad g(p_1)=w,\quad e(w)=q_1,\\
 a(p_1)=q_0,\quad a(q_1)=r_0,\quad b(p_1)=s_0.
\end{gathered}
\]
The standard modules have dimension vectors
\[
 (1,0,0,0),\quad(2,1,0,0),\quad(1,0,1,0),\quad(1,1,0,1),
\]
with nonzero maps, respectively: none; the two inclusions $a,b$;
$c$; and $e,a$ acting consecutively from $-$ to $1$ to $0$.
The costandard modules have dimension vectors
\[
 (1,0,0,0),\quad(1,1,0,0),\quad(1,1,1,0),\quad(1,1,0,1),
\]
with nonzero maps, respectively: none; $d$; $d,f$; and $d,g$.
These follow from the projectives $Q_{\mathrm{exp}}e_i$ and
injectives $D_\kk(e_iQ_{\mathrm{exp}})$ by the weight truncations.

In $T_1$ the submodule spanned by $u_a,u_b,v$ is $\Delta(1)$,
with quotient $S_0$. The submodules $\langle u_c,w\rangle$ of
$T_+$ and $\langle r_0,q_1,w\rangle$ of $T_-$ are
$\Delta(+)$ and $\Delta(-)$, respectively, with quotient $T_1$.
These give standard filtrations. For costandard filtrations,
$\langle u_a,u_b\rangle\subset T_1$ has quotient $\nabla(1)$,
and $\langle u_a,u_b,u_c\rangle\subset T_+$ has quotient
$\nabla(+)$. In $T_-$ first take $\langle r_0,s_0\rangle$
and then adjoin $q_0,q_1$; the successive factors are
$S_0^{\oplus2},\nabla(1),\nabla(-)$.
The endomorphism ring of each summand is local: its common scalar
diagonal has a nilpotent kernel. Hence these are the four
indecomposable characteristic tilting summands.

For a quasi-hereditary algebra, standard--costandard
orthogonality gives
\[
 \dim\operatorname{Hom}(M,N)
 =\sum_\lambda[M:\Delta(\lambda)][N:\nabla(\lambda)]
\]
whenever $M$ has a standard filtration and $N$ has a costandard
filtration. This follows by induction on both filtrations from
$\operatorname{Hom}(\Delta(\lambda),\nabla(\mu))
=\delta_{\lambda\mu}\kk$ and
$\operatorname{Ext}^1(\Delta(\lambda),\nabla(\mu))=0$
\cite{Ringel1991}.
The filtrations above give the multiplicity matrices
\[
 D_\Delta=\begin{pmatrix}1&0&0&0\\1&1&0&0\\1&1&1&0\\1&1&0&1\end{pmatrix},
 \qquad
 D_\nabla=\begin{pmatrix}1&0&0&0\\2&1&0&0\\3&0&1&0\\2&1&0&1\end{pmatrix}.
\]
Hence
\[
 \bigl(\dim\operatorname{Hom}(T_i,T_j)\bigr)_{i,j}
 =D_\Delta D_\nabla^{\mathsf T}
 =\begin{pmatrix}1&2&3&2\\1&3&3&3\\1&3&4&3\\1&3&3&4\end{pmatrix}.
\]
The entries sum to $40$, proving the dimension of the Ringel
dual without solving intertwining equations or enumerating paths.
Its opposite is the ordinary endomorphism algebra of the four
explicit tilting summands; this intrinsic description fixes its
multiplication and avoids a second quiver convention.
Finally \Cref{thm:expanded-choice}
preserves the weight order, so it also preserves the characteristic
tilting object, its endomorphism algebra, and its opposite algebra.
\end{proof}

\begin{remark}
The expanded and canonical quotient/Ringel-dual pairs have dimensions
$22/40$ and $9/9$, respectively. Taking opposite algebras does not
change these dimensions. The former have four simple modules and the latter
two. Hence no algebra in the expanded pair is Morita equivalent
or bounded-derived equivalent to either algebra in the canonical
pair: their Grothendieck groups have ranks four and two.
Within the expanded pair, however, $Q_{\mathrm{exp}}$ and its
Ringel dual are derived equivalent via the characteristic tilting
module. They are not Morita equivalent, since they are basic and
have different dimensions. The canonical Ringel dual is isomorphic to
$Q_{\mathrm{can}}^{\mathrm{op}}$, rather than $Q_{\mathrm{can}}$.
The ordinary tilting endomorphism algebra, being the opposite of
$R(Q_{\mathrm{exp}})$, is derived equivalent to
$Q_{\mathrm{exp}}^{\mathrm{op}}$; its derived equivalence to
$Q_{\mathrm{exp}}$ is not asserted here.
The two constructions are related by the corner quotient described
in \Cref{prop:expanded-quotient}.
The invariance statements follow from
\Cref{thm:expanded-choice} and \Cref{prop:expanded-Ringel};
in particular, they do not require a global-dimension bound for
$\mathcal E$ or an identification of $\mathcal E$ with the
endomorphism order attached to the canonical radical-idealizer chain.
\end{remark}
\endgroup

\section{A finite-to-infinite bridge}\label{sec:finite-infinite}

The rank twelve of $A_\infty$ over $\kk[[z]]$ explains the recurrent
increment by twelve in the finite $E_{6,n}$ family.  There is also an
explicit formal bridge.  For $n\ge5$ set
\begin{equation*}
 \mathfrak A_n=
 \frac{\kk[[s]]\langle\!\langle x,y\rangle\!\rangle}
 {(x^2-y^3,\;xy^2+yxy+y^2x+s^{2n-9}y^{n-1})}.
\end{equation*}
Then
\[
 \mathfrak A_n/(s)\cong A_\infty.
\]
After inverting $s$, the change of variables
\[
 X=s^3x,\qquad Y=s^2y
\]
identifies the augmentation-adic completion of the generic fibre
with the algebra defined by the same
two normalized relations as the finite $E_{6,n}$ form; a further
constant rescaling of $X,Y$ identifies this with $A_n$ over
$\kk((s))$. Indeed
\[
 X^2-Y^3=s^6(x^2-y^3)
\]
and
\[
 XY^2+YXY+Y^2X+Y^{n-1}
 =s^7\bigl(xy^2+yxy+y^2x+s^{2n-9}y^{n-1}\bigr).
\]
Thus $E_{6,\infty}$ is a punctured-isotrivial limit in this completed sense of every
$E_{6,n}$ with $n\ge5$.  The identity
\[
 \dim A_{n+3}-\dim A_n=12
 =\operatorname{rank}_{\kk[[z]]}A_\infty
\]
then acquires a structural interpretation.

\appendix
\section{Uniform terminal cyclic identities and the centre}\label{app:center}
Brown--Wemyss list the higher coranks
\(\operatorname{Crk}_i=\dim_\kk J(A_n)^i/J(A_n)^{i+1}\)
through \(i=n+6\) and give total length \(4n+12\)
\cite[\S1.3 and Definition~4.9]{BW2025}. The listed dimensions,
including \(\operatorname{Crk}_0=1\) and \(\operatorname{Crk}_1=2\),
already sum to \(4n+12\). Thus all later quotients vanish and,
by nilpotence of the radical of this finite-dimensional algebra,
\begin{equation}\label{eq:cyc-nil}
 J(A_n)^{n+7}=0\qquad(n\ge5).
\end{equation}
The order-four endpoint is proved by terminal relations in \cref{app:four}.
Throughout this appendix $[a]$ denotes the class of $a$ in the
\emph{vector-space} quotient $\HH_0(A_n)=A_n/[A_n,A_n]$;
the commutator subspace is not treated as a two-sided ideal.
In \(A_n\) put \(z=y^3=-3x^2\), a central element, and write \(n-1=3q+r\) with \(r=0,1,2\) and \(u=nz^q\). The relations give
\begin{equation}\label{eq:cyc-rules}
x^2=-z/3,\quad y^3=z,\quad
xy^2=-yxy-y^2x-uy^r,\quad
xyxy=yxyx-u(xy^r-y^rx).
\end{equation}
The last relation follows by commuting \(x\) with the second defining relation \eqref{eq:r2}. If \(r=2\), its right side, after one application of the third rule, is \(yxyx+u(yxy+2y^2x)+u^2y^2\).
\begin{lemma}\label{lem:cyc-rho}
For the appropriate \(r\), the following element is zero in the algebra \(A_n\):
\[
\begin{array}{c|l}
r&\rho_r\\ \hline
0&u(xyx+zy/3)\\
1&u\,yxyx+u^2yx-\tfrac23uz\,y^2\\
2&u^3z-\tfrac13uz^2+2u^2zx+2u^2y^2xy+2u\,y^2xyx.
\end{array}
\]
\end{lemma}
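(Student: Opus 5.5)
The plan is to obtain $\rho_r$ as the difference of two reductions of a single overlap word, as in Proposition~\ref{ap:prop:overlap}. The natural choice is the six-letter word $xyxxyy$. It contains the subword $x^2$ (positions 3--4) and the subword $xy^2$ (positions 4--6), so it can be reduced in two ways. Both reductions equal $xyxxyy$ in $A_n$, hence their difference is zero in $A_n$.

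In fact the lemma is the specialization of Proposition~\ref{ap:prop:overlap} to $p(y)=y^n$. There $p'(y)=ny^{n-1}=nz^qy^r$, so exactly one of $a,b,c$ is nonzero: $a=u$ if $r=0$, $b=u$ if $r=1$, and $c=u$ if $r=2$. Substituting $(a,b,c)=(u,0,0)$, $(0,u,0)$, $(0,0,u)$ into $\mathcal R(a,b,c)$ gives, respectively:
\begin{itemize}
\item[] for $r=0$: $u(xyx+zy/3)$, which is $\rho_0$ directly;
\item[] for $r=1$: $u(yxyx-\tfrac23zy^2)+u^2yx$, which is $\rho_1$ directly;
\item[] for $r=2$: $u(4y^2xyx-\tfrac23z^2)+4u^2(y^2xy+zx)+2u^3z$, which is $2\rho_2$.
\end{itemize}
This matches the consistency check recorded after Proposition~\ref{ap:prop:overlap}, so the lemma follows at once (for $r=2$, divide by the nonzero scalar $2$ in characteristic zero). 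The scaling between the relation $x^2=-z/3$ here and the one there is the same, since both use $3x^2+y^3=0$.

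To keep the appendix self-contained, I would also redo the reduction directly with the rules \eqref{eq:cyc-rules}.

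\emph{First route.} Write $xy(x^2)yy=-\tfrac z3\,xy^3=-\tfrac{z^2}{3}x$.

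\emph{Second route.} Write $xyx(xy^2)=xyx(-yxy-y^2x-uy^r)$. Then repeatedly apply $xyxy\mapsto yxyx-u(xy^r-y^rx)$, $xy^2\mapsto\cdots$ and $x^2\mapsto-z/3$ until only normal words $y^i,y^ix,y^ixy,y^ixyx$ with $i\le2$ remain, using centrality of $u$ and $z$ to move coefficients to the left.

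Subtracting the two normal forms yields $\rho_r$. The case split by $r$ enters only through the term $uy^r$: for $r=1,2$ the word $xy^r$ must itself be reduced further, which produces the $u^2$ and $u^3$ terms.

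The main obstacle is the bookkeeping in the $r=2$ case. There the correction terms re-enter the $xy^2$ rule and generate a cascade, for example $xyx\cdot y^2x$ and $u\,xy^2\cdot\ldots$. Tracking the terms of order $u^2$ and $u^3$ and the powers of $z$ is error-prone. I would organize the computation by degree in $u$ and verify the final identity by evaluating both sides in the generic $2\times2$ and scalar representations. Alternatively, I would rely on the already proved general identity $\mathcal R(a,b,c)=0$, which makes the specialization argument above the primary proof.
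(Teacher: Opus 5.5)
Your proposal is correct and is essentially the paper's proof: the paper reduces the overlap word $xyxxyy$ once through its middle $x^2$ and once through its final $xy^2$, applies the rules \eqref{eq:cyc-rules}, and reads off $\rho_0,\rho_1,2\rho_2$ as the differences, which is exactly your direct route. Your primary argument, specializing $\mathcal R(a,b,c)$ of Proposition~\ref{ap:prop:overlap} at $(u,0,0)$, $(0,u,0)$, $(0,0,u)$, is the same overlap computation with general central coefficients (the paper records this specialization as a consistency check); it is not circular, since that proposition is proved independently, and your displayed substitutions do give $\rho_0$, $\rho_1$ and $2\rho_2$.
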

\begin{proof}
Reduce the overlap word \(xyxxyy\) in two ways: apply the rule for \(x^2\) to its middle \(xx\), or apply the rule for \(xy^2\) to its final \(xyy\). Reduce the resulting words again by the same four rules. The difference between the two paths is respectively
\[
u\,xyx+\tfrac13uz\,y,\qquad
u\,yxyx+u^2yx-\tfrac23uz\,y^2,
\]
\[
2u^3z-\tfrac23uz^2+4u^2zx+4u^2y^2xy+4u\,y^2xyx.
\]
These expressions are \(\rho_0,\rho_1,2\rho_2\), respectively. Both paths represent the same original word, so the differences vanish. All rules strictly lower either word length or length-lexicographic order \(x>y\); this finite computation does not use confluence or a numerically guessed pattern.
\end{proof}
Two cyclic reductions suffice for the trace calculation. For \(j\ge2\), rotating the second relation \eqref{eq:r2} gives
\begin{equation}\label{eq:cyc-one}
[y^jx]=-\frac n3[y^{n+j-3}].
\end{equation}
Put \(T_i=[y^ixyx]\), \(i=0,1,2\). Rotating the multiples \(r_yx\) and \(r_yxy\), then using \(x^2=-y^3/3\) and \eqref{eq:cyc-one}, gives
\begin{equation}\label{eq:cyc-two}
T_0=-\tfrac13[y^4],\qquad
T_1=\tfrac23[y^5]+\tfrac{n^2}{3}[y^{2n-4}],\qquad
T_2=\tfrac16[y^6]+\tfrac{n^2}{6}[y^{2n-3}].
\end{equation}
For clarity, \(r_yx\) has two cyclic summands equal to \(-[y^5]/3\); \(r_yxy\) has two equal summands \(T_2\). Their last terms are evaluated by \eqref{eq:cyc-one}.
\begin{proposition}\label{prop:cyc-terminal}
For every \(n\ge4\),
\begin{equation}\label{eq:cyc-terminal}
\boxed{[y^{n+4}]+\frac{n^2}{3}[y^{3n-5}]=0,\qquad
[y^{n+5}]+\frac{n^2}{3}[y^{3n-4}]=0.}
\end{equation}
Consequently, for every \(n\ge6\), \([y^{n+4}]=[y^{n+5}]=0\). For \(n=5\) the exact exceptional relations are
\([y^9]+\frac{25}{3}[y^{10}]=0\) and
\([y^{10}]+\frac{25}{3}[y^{11}]=0\).
\end{proposition}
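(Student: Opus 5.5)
The plan is to feed the vanishing overlap element $\rho_r$ of \cref{lem:cyc-rho} into the cyclic quotient $\HH_0(A_n)$, after multiplying by $y$ and by $y^2$. Every term of $\rho_r$ is central-coefficient times a word with at most two letters $x$. Once rotated, such a word is evaluated by \eqref{eq:cyc-one} and \eqref{eq:cyc-two}. The case $r$ is fixed by $n-1=3q+r$, and $u=nz^q$ is central with $z=y^3$. So $u$ and powers of $z$ may be absorbed into pure powers of $y$ inside the brackets: multiplying a class by $z$ raises the $y$-exponent by $3$, using $z=y^3$ and centrality. This is the same computation the paper already records in \cref{ap:thm:alltraces} for general $(a,b,c)$. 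As a consistency check I will specialise $F_1,F_2$ there to a single nonzero coefficient; the plan, however, is to do the monomial computation directly, one residue class at a time.

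For $r=0$, multiply $\rho_0=u(xyx+zy/3)$ on the right by $y$ and by $y^2$. The term $[uxyxy]$ rotates to $u\,T_1$, up to a shift by the central $z^q$. Likewise $[uxyxy^2]$ gives $u\,T_2$, after one application of the cyclic rotation $y^2\cdot xyx \sim xyx\cdot y^2$. The terms $uz[y^2]$ and $uz[y^3]$ become $n[y^{3q+5}]$ and $n[y^{3q+6}]$. Substituting \eqref{eq:cyc-two} then produces a combination of $[y^{n+4}]$ and $[y^{n+5}]$, together with the quadratic-in-$n$ terms $[y^{2n-4+3q}]$ and $[y^{2n-3+3q}]$. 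Since $3q=n-1$, these are $[y^{3n-5}]$ and $[y^{3n-4}]$. The main bookkeeping is to see that the linear coefficients combine to a single nonzero multiple of $n$: for example $\tfrac23+\tfrac13=1$ in the first identity. Dividing by that multiple gives \eqref{eq:cyc-terminal}.

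For $r=1$ and $r=2$ the same recipe applies, multiplying $\rho_r$ by $y^{j}$ with $j$ chosen so that total degrees land at $n+4$ and $n+5$. The terms linear in $u$ give $[y^{n+4}]$ and $[y^{n+5}]$. The terms in $u^2$, such as $u^2yx$ or $u^2zx$ and $u^2y^2xy$, involve one $x$ and are evaluated by \eqref{eq:cyc-one}, which introduces a further factor of $n$. The $u^3$ term in $\rho_2$ is pure. After collection, these produce the $n^2[y^{3n-5}]$ terms, and for $r=2$ also $n^3$ pieces which must cancel or merge. I expect this to be the main obstacle: verifying that, in each residue class, the quadratic and cubic contributions collapse to exactly $\tfrac{n^2}{3}$ times the single class $[y^{3n-5}]$ (respectively $[y^{3n-4}]$). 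The overall normalising factor ($u$, or $2u$ for $\rho_2$) must also be nonzero. I would first do $r=2$ carefully, since $\rho_2$ has five terms, cross-checking against the $c$-only specialisation of $F_1,F_2$.

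The consequences are then immediate. For $n\ge6$ one has $3n-5\ge n+7$, so by \eqref{eq:cyc-nil} $y^{3n-5}$ and $y^{3n-4}$ lie in $J^{n+7}=0$. Hence $[y^{n+4}]=[y^{n+5}]=0$. For $n=5$, \eqref{eq:cyc-terminal} reads literally $[y^9]+\tfrac{25}{3}[y^{10}]=0$ and $[y^{10}]+\tfrac{25}{3}[y^{11}]=0$, since $3n-5=10<12=n+7$ gives no vanishing.
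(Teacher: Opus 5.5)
Your plan is correct and is the paper's own argument: multiply $\rho_r$ by $y$ and by $y^2$, pass to $\HH_0(A_n)$, evaluate with \eqref{eq:cyc-one} and \eqref{eq:cyc-two}, and then remove the high powers using \eqref{eq:cyc-nil}. Carrying the central factor $z^q$ along is justified because $[A_n,A_n]$ is stable under multiplication by $Z(A_n)$, and the step you flagged as the obstacle does work out: with $U=[y^{n+4}]+\frac{n^2}{3}[y^{3n-5}]$ and $V=[y^{n+5}]+\frac{n^2}{3}[y^{3n-4}]$ one gets $([y\rho_r],[y^2\rho_r])=(nU,\frac n2V)$, $(-\frac n2U,-nV)$, $(-nU,nV)$ for $r=0,1,2$, with all the $n^3$ contributions in each residue class merging into $\pm\frac{n^3}{3}$ times the single high class, exactly as in the paper's table.
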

\begin{proof}
Multiply the algebra identity \(\rho_r=0\) first by \(y\), then by \(y^2\), before passing to the cyclic quotient. Substitution of \eqref{eq:cyc-one} and \eqref{eq:cyc-two}, with \(y^3=z\), yields the complete coefficient table
\[
\begin{array}{c|cc}
r&[y\rho_r]&[y^2\rho_r]\\ \hline
0&nU&(n/2)V\\
1&-(n/2)U&-nV\\
2&-nU&nV
\end{array}
\quad
\begin{aligned}
U&=[y^{n+4}]+\tfrac{n^2}{3}[y^{3n-5}],\\
V&=[y^{n+5}]+\tfrac{n^2}{3}[y^{3n-4}].
\end{aligned}
\]
For example \([y\rho_0]=u(T_1+\frac z3[y^2])=nU\); all entries follow by the same displayed trace reductions. Every table coefficient is nonzero in characteristic zero. When \(n\ge6\), the exponents \(3n-5,3n-4\) exceed \(n+6\), so the corresponding elements vanish in \(A_n\) itself by \eqref{eq:cyc-nil}.
\end{proof}

\subsection{The cyclic spanning bound}
\begin{lemma}\label{lem:cyc-span}
The algebra \(A_n\) is spanned over the central subring \(\kk[z]\) by the twelve words
\begin{equation}\label{eq:cyc-twelve}
y^i,\quad y^ix,\quad y^ixy,\quad y^ixyx,\qquad 0\le i\le2.
\end{equation}
Consequently \(\HH_0(A_n)\) is spanned over \(\kk\) by \([y^j]\) (\(j\ge0\)), \([x]\), and \([xy]\).
\end{lemma}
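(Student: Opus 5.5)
The plan is to run a rewriting argument with the four rules \eqref{eq:cyc-rules}, treating the central element $z$ as a scalar, in the same spirit as the spanning step of Theorem~\ref{ap:thm:finite}. First we record that the rules are legitimate in $A_n$. The relation $x^2=-z/3$ comes from \eqref{eq:r1}, and $y^3=z$ is the definition of $z$. The rule $xy^2=-yxy-y^2x-uy^r$ is \eqref{eq:r2}, rewritten via $ny^{n-1}=nz^qy^r=uy^r$. The rule for $xyxy$ is the commutator of $x$ with \eqref{eq:r2}, using that $x^2$ is central. Fix the length-lexicographic order with $x>y$. Each rule replaces its left side, one of $x^2,y^3,xy^2,xyxy$, by a $\kk[z]$-combination of words that are strictly smaller, once powers of $z$ are counted as scalars. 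Thus any word reduces, after finitely many steps, to a $\kk[z]$-combination of words avoiding these four subwords.

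Next we check that the avoiding words are exactly the twelve in \eqref{eq:cyc-twelve}. Such a word has no $xx$ and no $yyy$, so it alternates single $x$'s with blocks $y$ or $yy$. A block $yy$ can never follow an $x$, since $xy^2$ is forbidden, so only the initial $y$-block may have length two. Once an $x$ has occurred, the word continues as $xyxy\cdots$, and the subword $xyxy$ is forbidden, so after the initial $y^i$ the tail is one of $\emptyset,x,xy,xyx$. Since $i\le2$ because $y^3$ is forbidden, this gives exactly the twelve words. This yields spanning of the polynomial image; completeness then gives spanning of $A_n$ itself. Here $z$ is nilpotent in the finite-dimensional $A_n$ (it lies in $J$, and $J^{n+7}=0$ by \eqref{eq:cyc-nil}, with Appendix~\ref{app:four} covering $n=4$), so $\kk[z]$ and $\kk[[z]]$ act through the same finite quotient and no convergence issue arises. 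Alternatively, elements of $J^N$ vanish for large $N$, so only polynomial truncations matter.

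For the cyclic statement, rotation identifies the class of each spanning word times $z^k$ with a class of the form $[y^{j}w]$. The pure-$y$ words give $[y^j]$. For one-$x$ words, rotation gives $[y^ix]$ and $[y^ixy]=[y^{i+1}x]$, and for $j\ge2$ formula \eqref{eq:cyc-one} rewrites $[y^jx]$ as a multiple of a pure power. The only one-$x$ classes left are therefore $[x]$ and $[yx]=[xy]$. Two-$x$ words $y^ixyx$, multiplied by powers of $z$, give $T_i$ shifted by $z^k$. By \eqref{eq:cyc-two}, and the same rotation with extra factors $y^{3k}$, which are central, these become pure powers. The main obstacle is justifying that \eqref{eq:cyc-two} persists after multiplying by $z^k$: the cyclic identities must be rederived with an extra central $y^{3k}$ inserted. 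I expect this to go through verbatim, because $y^{3k}$ commutes with everything, so the rotations are unaffected and the exponents merely shift by $3k$.
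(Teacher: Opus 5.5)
Your proposal is correct and follows essentially the same route as the paper: a length-lexicographic reduction by the four rules \eqref{eq:cyc-rules} down to the twelve normal words, followed by cyclic rotation together with \eqref{eq:cyc-one} and \eqref{eq:cyc-two}. Two of your steps are handled more carefully than in the paper, and both are fine: the passage from the polynomial image to the completed algebra via nilpotence of $J$, and the $z^k$-multiples of the $T_i$. The latter is in fact immediate: for central $c$ one has $c(ab-ba)=(ca)b-b(ca)$, so $\HH_0(A_n)$ is a $\kk[z]$-module, and multiplying \eqref{eq:cyc-two} by $z^k$ simply shifts every exponent by $3k$.
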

\begin{proof}
Use \eqref{eq:cyc-rules}, with its indicated reduction for \(r=2\). Each substitution lowers the word in length-lex order; the words avoiding \(x^2,y^3,xy^2,xyxy\) are exactly \eqref{eq:cyc-twelve}. Confluence is unnecessary for spanning. Cyclic rotation and \eqref{eq:cyc-one} reduce \(y^ix\) for \(i=2\); \(y^ixy\) rotates to \(xy^{i+1}\); and the three \(y^ixyx\) reduce by \eqref{eq:cyc-two}. The cases \(i=0,1\) of \(y^ix\) give \([x]\) and \([xy]\).
\end{proof}
For \(n\ge5\), \eqref{eq:cyc-nil} leaves at most \(n+7\) pure-power classes \([1],\ldots,[y^{n+6}]\) and two additional classes. The relations \eqref{eq:cyc-terminal} are linearly independent as relations on this spanning list: if \(n\ge6\), they eliminate the distinct columns \(n+4,n+5\); if \(n=5\), their restrictions to columns \(9,10,11\) are independent. Thus
\begin{equation*}
\dim_{\kk}\HH_0(A_n)\le n+7\qquad(n\ge5).
\end{equation*}
At \(n=4\), Appendix~\ref{app:four} gives \(y^{10}=0\), so there are initially twelve candidate classes. The two independent equations \eqref{eq:cyc-terminal} reduce the bound to ten.

\subsection{Saturation, explicit cyclic bases, and rank}
\begin{theorem}
\label{thm:cyc-rank}
For every $n\ge5$ one has
\[
 \dim_\kk\HH_0(A_n)=\dim_\kk Z(A_n)=n+7,
 \qquad \operatorname{rank}\Gamma_n=3n+5,
\]
so $\operatorname{rank}\Gamma_{n+1}-\operatorname{rank}\Gamma_n=3$.
For the endpoint, $\dim A_4=27$, $\dim Z(A_4)=10$,
and $\operatorname{rank}\Gamma_4=17$.
\end{theorem}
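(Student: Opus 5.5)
The proof is a sandwich argument: the cyclic spanning bound just established caps $\dim\HH_0(A_n)$ from above, and specialization of the finite free deformation of \cref{thm:main-existence} bounds it from below.

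\emph{Upper bound.} The discussion after \cref{lem:cyc-span} already gives $\dim_\kk\HH_0(A_n)\le n+7$ for $n\ge5$. This uses the nilpotence \eqref{eq:cyc-nil}, the spanning list of pure powers together with $[x],[xy]$, and the two independent terminal relations \eqref{eq:cyc-terminal}. For $n=4$, the same argument together with $y^{10}=0$ from \cref{app:four} gives $\dim\HH_0(A_4)\le10$.

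\emph{Lower bound.} By \cref{thm:main-existence}, $\A_n$ is finite free over $R=\kk[[t]]$ with special fibre $A_n$ and geometric generic fibre $K^7\times\Mat_2(K)^{n-1}\times\Mat_3(K)$, which has $n+7$ blocks. Consider the commutator map
\[
\Gamma:\A_n\longrightarrow\A_n\oplus\A_n,\qquad a\longmapsto([x,a],[y,a]),
\]
written as a matrix over $R$ in a basis of the free module. Its rank over $K$ equals $\dim B_n-\dim Z(B_n)=4(n+3)-(n+7)$. Rank can only drop under specialization $t\mapsto0$, since a nonvanishing minor over $\kk$ lifts to a nonvanishing minor over $R$. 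Hence
\[
\dim Z(A_n)=\dim\ker\Gamma_n\ge n+7.
\]
Because $x,y$ generate, the kernel of $\Gamma_n$ is exactly the centre, so no separate check is needed there.

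\emph{Matching the two bounds.} Contraction algebras are symmetric (\cite{JKM}). For a symmetric algebra, the symmetrizing form identifies $Z(A_n)$ with $\HH_0(A_n)^*$, because the annihilator of $[A_n,A_n]$ under the form consists of the central elements. Therefore
\[
n+7\le\dim Z(A_n)=\dim\HH_0(A_n)\le n+7,
\]
and all these quantities equal $n+7$. Then $\operatorname{rank}\Gamma_n=4(n+3)-(n+7)=3n+5$, and the increment $\operatorname{rank}\Gamma_{n+1}-\operatorname{rank}\Gamma_n=3$ follows immediately.

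\emph{Endpoint $n=4$.} The equality $\dim A_4=27$ comes from the terminal relations of \cref{app:four}, or from \cref{lem:order-four-upper-bound} combined with the $27$-dimensional generic quotient and \cref{lem:topNak}. The same specialization argument with the ten generic blocks gives $\dim Z(A_4)\ge10$. Symmetry gives $\dim Z(A_4)=\dim\HH_0(A_4)\le10$. Hence $\operatorname{rank}\Gamma_4=27-10=17$.

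\emph{Main obstacle.} The step that needs care is the upper bound at $n=5$, where the two terminal relations do not kill single columns. I would check it by writing the restriction of the two relations to the columns $[y^9],[y^{10}],[y^{11}]$ as the $2\times3$ matrix
\[
\begin{pmatrix}1&25/3&0\\0&1&25/3\end{pmatrix},
\]
which visibly has rank two. I would also confirm that the pure-power list really stops at $y^{11}=y^{n+6}$, using the corank data of \cite{BW2025} behind \eqref{eq:cyc-nil}.
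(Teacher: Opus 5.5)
Your proposal is correct and follows the paper's own sandwich argument: the cyclic spanning upper bound on $\HH_0(A_n)$, a specialization lower bound from the finite free semisimple deformation of \cref{thm:main-existence}, the symmetric-algebra duality $Z(A_n)\cong\HH_0(A_n)^*$, and $\ker\Gamma_n=Z(A_n)$ for the ranks. The only difference is cosmetic. You specialize the rank of $\Gamma$ to bound $\dim Z(A_n)$ from below, whereas the paper specializes the commutator map to bound $\dim\HH_0(A_n)$ from below; the paper therefore gets $\dim\HH_0(A_n)=n+7$ before using symmetry, and needs symmetry only to pass to the centre.
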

\begin{proof}
By \Cref{thm:main-existence}, the completed deformation
$\mathcal A_n$ is free over $\kk[[t]]$ and has semisimple geometric
generic fibre $S_n$.  The rank of the commutator map cannot increase
under specialization from the generic fibre to $A_n$; equivalently,
\[
 \dim\HH_0(A_n)\ge\dim\HH_0(S_n)
 =\dim Z(S_n)=
 \begin{cases}10,&n=4,\\ n+7,&n\ge5.\end{cases}
\]
This lower bound saturates the cyclic spanning upper bound proved above.
Explicit bases of the \emph{vector-space} quotient are
\[
\begin{array}{c|l}
n&\text{basis of }\HH_0(A_n)\\ \hline
4&[1],[y],\ldots,[y^7],\ [x],[xy]\\
5&[1],[y],\ldots,[y^8],\ [y^{11}],\ [x],[xy]\\
n\ge6&[1],[y],\ldots,[y^{n+3}],\ [y^{n+6}],\ [x],[xy].
\end{array}
\]
For $n=5$ the two relations in \eqref{eq:cyc-terminal} express
$[y^9]$ and $[y^{10}]$ in terms of $[y^{11}]$; for $n\ge6$
they remove $[y^{n+4}]$ and $[y^{n+5}]$.  For $n=4$ they remove
$[y^8]$ and $[y^9]$ using $[y^7]$.
Since $A_n$ is symmetric, its symmetrizing form identifies
$Z(A_n)$ with the dual of $\HH_0(A_n)$.
Finally $\ker\Gamma_n=Z(A_n)$; subtracting the centre dimension
from $\dim A_n=4n+12$ for $n\ge5$, or $27$ for $n=4$,
gives the displayed ranks and the increment.
\end{proof}

\section{The order-four endpoint by terminal relations}\label{app:four}
The special case of \cref{lem:order-four-upper-bound} admits
exact terminal relations, rather than only initial forms.

\begin{proposition}\label{prop:four-terminal}
For $p(y)=cy^4$, $c\ne0$, put $A=4c$ and $z=y^3$.
In $A_p$ one has
\[
\begin{gathered}
 xyxy=yxyx,\qquad zxyx=-zy^4/3,\qquad
 z^2xy=z^2yx,\\
 y^8x=-Ay^9/3,\qquad y^{10}=0.
\end{gathered}
\]
Consequently $\dim A_p\le27$ and $J^{10}=0$.
For $c=1$, the semisimple quotient constructed in
\cref{prop:scalar,prop:two,prop:three} saturates this bound.
\end{proposition}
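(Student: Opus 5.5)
The plan is to derive the five identities in order, each from its predecessors, working in $A_p$ with the four rewriting rules
\[
x^2=-z/3,\qquad y^3=z,\qquad xy^2=-yxy-y^2x-Ay^3,\qquad xyxy=yxyx+\cdots,
\]
specialised to $p=cy^4$. Here $p'(y)=Ay^3=Az$, so in the notation of \cref{ap:prop:overlap} we have $a=Az$, $b=c=0$. Commuting $x$ with the second Jacobi relation, and using that $x^2$ and $z$ are central, gives $x\cdot Az-Az\cdot x=0$. Hence the correction term disappears and $xyxy=yxyx$ holds exactly; this is the first identity.

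Next I would specialise the overlap identity of \cref{ap:prop:overlap} to $(a,b,c)=(Az,0,0)$. This yields $Az(xyx+\tfrac z3 y)=0$. Multiplying by $y$ on the left, or equivalently reducing $zyxyx$ by the first identity, should give $z\,xyx=-zy^4/3$ up to the nonzero factor $A$. The precise bookkeeping is to rewrite $z\,xyx$ via $xyx=-\tfrac z3y$ times $A^{-1}$-free terms. The concern is that we only know $A z(\dots)=0$, not $z(\dots)=0$, but $A$ is a nonzero scalar, so division is harmless.

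For $z^2xy=z^2yx$, I would take the commutator of $zxyx+zy^4/3=0$ with $x$ on the left and right. The term $x\cdot zxyx-zxyx\cdot x$ reduces via $x^2=-z/3$ to $-\tfrac{z^2}{3}(yx-xy)$. The term $[x,zy^4]=z^2[x,y]$ then appears with coefficient $1/3$. Collecting these should give a nonzero multiple of $z^2(xy-yx)=0$. I expect this sign-and-coefficient check to be the only delicate step, so it must be verified that the two contributions do not cancel.

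Finally, multiplying the second Jacobi relation $xy^2+yxy+y^2x+Ay^3=0$ by $y^6=z^2$ and using $z^2xy=z^2yx$ collapses the three middle terms to $3y^8x$ (after writing $z^2xy^2=z^2yxy=\dots$). This gives $y^8x=-Ay^9/3$. Multiplying this on the right by $x$ and using $x^2=-z/3$ gives $-y^{11}/3=-Ay^9x/3$, which equals $A^2y^{10}/9$ by the same identity applied to $y^9x=y\cdot y^8x$. Multiplying $y^8x=-Ay^9/3$ on the left by $y$ and comparing then produces $(1+\text{const}\cdot A^2 y^{-1}\cdots)$-type relations. The cleanest way to finish is to mimic \cref{lem:order-four-upper-bound}: the two relations give $y^{10}\in (y^{11})$, and $y^{10}=0$ then follows by nilpotence in the finite-dimensional algebra from \cref{ap:thm:finite}.

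With these seven relations I would count words avoiding the leading terms, exactly as in \cref{lem:order-four-upper-bound}, to get degree counts $1,2,3,4,4,4,3,3,2,1$, hence $\dim A_p\le27$ and $J^{10}=0$. Saturation for $c=1$ follows from the $27$-dimensional quotient built in \cref{prop:scalar,prop:two,prop:three} together with \cref{lem:topNak}.
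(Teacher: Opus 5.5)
Your proposal is correct and follows the paper's proof essentially step for step. The steps match: the commutator with $x$ gives $xyxy=yxyx$; the overlap specialises to $Az(xyx+\tfrac z3y)=0$; its commutator with $x$ gives $\tfrac23 z^2(xy-yx)=0$, so nothing cancels; then comes multiplication of the Jacobi relation by $z^2$, right multiplication by $x$ and left multiplication by $y$, and finally the initial-form word count of \cref{lem:order-four-upper-bound} together with \cref{lem:topNak}. The differences are cosmetic. You finish with $y^{10}=-\tfrac{3}{A^2}y^{11}$ and nilpotence of $y$, whereas the paper writes $y^{10}(A^2+3y)=0$ with $A^2+3y$ a unit; this is the same point and needs no appeal to \cref{ap:thm:finite}. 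In the second step you should simply divide by $A$ and use $z^2y=zy^4$; the asides about multiplying by $y$ or rewriting via $xyx=-\tfrac z3y$ should be dropped, since only $z(xyx+\tfrac z3y)=0$ is available.
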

\begin{proof}
Commuting the second Jacobi relation with $x$ gives the first
identity. The overlap identity is
$Az(xyx+zy/3)=0$, giving the second. Its commutator with $x$
gives the third. Multiplying the second Jacobi relation by $z^2$
and using the third gives $3y^8x+Ay^9=0$.
Multiplication by $x$ and by $y$, respectively, gives
\[
 y^{11}=Ay^9x,\qquad 3y^9x+Ay^{10}=0.
\]
Therefore $y^{10}(A^2+3y)=0$. The second factor is a unit in
the completed local algebra, so $y^{10}=0$.
The initial forms of these identities and the Jacobi relations
are the seven homogeneous reductions in
\Cref{lem:order-four-upper-bound}. They give the same $27$
spanning words and no words in the associated graded algebra of
degree ten or higher. This proves the bounds. The generic
semisimple quotient has dimension
$6+3\cdot4+9=27$, so \Cref{lem:topNak} proves equality.
\end{proof}

\end{document}